\documentclass[11pt,letterpaper]{article}
\usepackage{amsfonts, amsmath, amssymb, amscd, amsthm, color, graphicx, mathabx, mathrsfs, wasysym, setspace, mdwlist, calc, float, mathtools, tikz-cd}
\usepackage{setspace}
\usepackage{comment}

\usepackage{enumitem}
\usepackage{tocloft}
\usepackage{xcolor}

\usepackage{hyperref}
\hypersetup{linktocpage}

\hypersetup{colorlinks,
    linkcolor={red!50!black},
    citecolor={blue!80!black},
    urlcolor={blue!80!black}}
\usepackage{float}

\numberwithin{equation}{section}

\usepackage{bbm}

\renewcommand{\phi}{\varphi}
\newcommand{\WR}{\mathcal{WR}}

\newcommand{\C }{\mathcal C}

\renewcommand{\P }{\mathcal P}
\renewcommand{\kappa }{\varkappa}

\renewcommand{\wr}{{ \rm wr }}

\newcommand{\M}{\mathcal M}

\newcommand{\ra}{\rightarrow}
\newcommand{\ca}{\curvearrowright}
\newcommand{\A}{\mathcal A}

\newcommand{\D}{\mathcal D}

\newcommand{\Q}{\mathcal Q}
\newcommand{\R}{\mathcal R}

\newcommand{\W}{\mathcal W}

\newcommand{\sP}{\mathscr P}
\newcommand{\sU}{\mathscr U}
\newcommand{\sN}{\mathscr N}

\newcommand{\sZ}{\mathscr Z}
\newcommand{\sR}{\mathscr R}

\newcommand{\sM}{\mathscr M}

\newcommand{\La}{\Lambda}

\newcommand{\oo}{\bar\otimes}
\newtheorem{thm}{Theorem}[section]
\newtheorem*{thm*}{Theorem}
\newtheorem{cor}[thm]{Corollary}
\newtheorem{lem}[thm]{Lemma}
\newtheorem{claim}[thm]{Claim}
\newtheorem{prop}[thm]{Proposition}

\theoremstyle{definition}
\newtheorem{defn}[thm]{Definition}

\theoremstyle{remark}
\newtheorem{rem}[thm]{Remark}

\let\OLDthebibliography\thebibliography
\renewcommand\thebibliography[1]{
  \OLDthebibliography{#1}
  \setlength{\parskip}{1.5pt}
  \setlength{\itemsep}{1.5pt plus 0.3ex}
}

\begin{document}


\title{Non-amenable C$^*$-superrigid groups that are not W$^*$-superrigid}

\author{Juan Felipe Ariza Mejía, Ionu\c{t} Chifan, Adriana Fernández Quero}

\date{}

\maketitle

\begin{abstract}
\noindent
Using techniques at the intersection of deformation/rigidity theory, geometric group theory, and the theory of $C^*$-algebras, we construct a continuum of nonamenable groups $G$ that can be completely reconstructed from their reduced $C^*$-algebras $C_r^*(G)$, but not from their group von Neumann algebras $\mathrm{L}(G)$. These groups arise as infinite direct sums of amalgamated free product groups and constitute the first known examples of nonamenable groups exhibiting this phenomenon. 

In addition, we provide examples of finite direct products of amalgamated free product groups that are simultaneously $C^*$-superrigid and $W^*$-superrigid. Finally, for a fairly large subclass of these amalgamated free product groups $G$, we show that all $\ast$-endomorphisms of $C_r^*(G)$ are weakly inner.
\end{abstract}

\vspace{3mm}



\section{Introduction}


Let $G$ be a countable discrete group, and denote by $\mathbb{C}[G]$ its group ring. Concretely, $\mathbb{C}[G]$ consists of finite linear combinations of elements of $G$, and acts on the Hilbert space $\ell^2G$ of square-summable sequences on $G$ via the left regular representation $u: G \to \mathcal{B}(\ell^2G)$, defined by $(u_g \xi)(h) = \xi(g^{-1}h)$ for every $g,h \in G,\:\xi \in \ell^2G$. The group ring $\mathbb{C}[G]$ can thus be identified with the $\ast$-subalgebra of $\mathcal{B}(\ell^2G)$ generated by $\{u_g \mid g \in G\}$. Two natural operator-algebraic completions of $\mathbb{C}[G]$ arise from this representation, yielding the \emph{reduced group $C^*$-algebra} $C_r^*(G)$, and the \emph{group von Neumann algebra} $\text{L}(G)$ of $G$. Their relationship can be expressed as $\mathbb{C}[G] \;\subseteq\; C_r^*(G) \;\subseteq {\rm L}(G)$, where $C_r^*(G)$ is the closure of $\mathbb{C}[G]$ in the operator norm on $\mathcal{B}(\ell^2G)$, and ${\rm L}(G)$ is the closure of $\mathbb{C}[G]$ in the weak operator topology on $\mathcal{B}(\ell^2G)$.

As observed by Murray and von Neumann in \cite{MvN43}, the group von Neumann algebra ${\mathrm L}(G)$ has trivial center (and is therefore a $\mathrm{II}_1$ factor) if and only if all nontrivial conjugacy classes of $G$ are infinite. In this case, $G$ is called an \emph{icc} (infinite conjugacy class) group. 
This situation, in which ${\mathrm L}(G)$ behaves as a ``simple'' object, is the most interesting one to study, and it will be the prevailing setting considered in this paper.

A fundamental question is to understand to what extent the original group $G$ can be completely reconstructed from the algebraic categories naturally associated with it, namely the group ring $\mathbb{C}[G]$, the reduced group $C^*$-algebra $C_r^*(G)$, and the group von Neumann algebra ${\mathrm L}(G)$. 
More precisely, we recall the following rigidity notions. A group $G$ is called \emph{ring-superrigid} if for every group $H$ and every $*$-isomorphism $\mathbb{C}[G] \cong \mathbb{C}[H]$ one necessarily has $G \cong H$, \emph{$C^*$-superrigid} if for every group $H$ and every $*$-isomorphism $C_r^*(G) \cong C_r^*(H)$ one necessarily has $G \cong H$; and \emph{$W^*$-superrigid} if for every group $H$ and every $*$-isomorphism ${\text L}(G) \cong {\text L}(H)$ one necessarily has $G \cong H$.

Even in the purely algebraic setting of group rings, identifying superrigid groups is a highly nontrivial problem.  For instance, if $G_1$ and $G_2$ are finite abelian groups with $|G_1| = |G_2| = k$, then there are canonical isomorphisms $\mathbb{C}[G_1] \cong \mathbb{C}[G_2] \cong \mathbb{C}^k$. In particular, no finite abelian group of composite order is ring-superrigid. At the opposite end of the spectrum, one can easly see that a group $G$ is ring-superrigid whenever $\mathbb{C}[G]$ satisfies Kaplansky’s unit conjecture. 
However, the full extent of ring-superrigidity remains unknown, especially in light of the intricate counterexamples to Kaplansky’s conjecture that have emerged in recent years, \cite{Ga21,Mu21}.

Since the completions $C_r^*(G)$ and ${\mathrm L}(G)$ are taken with respect to progressively weaker topologies (with the weak operator topology being, in a sense, the weakest natural topology one may impose on operators), the corresponding reconstruction problems become increasingly subtle. As the resulting algebras grow larger, they tend to retain progressively less information about the original group structure, making rigidity phenomena substantially more difficult to detect.

When $G$ is amenable, the group von Neumann algebra ${\rm L}(G)$ typically retains very little information about the underlying group structure. This phenomenon is well illustrated by the following classical examples. First, if $G$ is an infinite abelian group, then standard results in von Neumann algebra theory imply that ${\rm L}(G)$ is always isomorphic to the abelian von Neumann algebra $L^\infty([0,1],\lambda)$, where $\lambda$ denotes the Lebesgue measure. In particular, in this case ${\rm L}(G)$ depends only on the fact that $G$ is infinite and abelian, and not on any finer algebraic properties of $G$. More strikingly, Connes' celebrated theorem asserts that for any icc amenable group $G$, the associated group von Neumann algebra ${\rm L}(G)$ is isomorphic to $\mathcal{R}$, the unique hyperfinite $\mathrm{II}_1$ factor in the sense of Murray and von Neumann \cite{MvN43}. Consequently, aside from amenability (and the icc condition), ${\rm L}(G)$ does not encode any additional algebraic information about $G$. Thus, in both the abelian and the icc amenable settings, the passage from a group to its von Neumann algebra results in a dramatic loss of group-theoretic information.

In the $\mathrm{C}^*$-algebraic setting, the situation is radically different. A classical classification result from \cite{Sc74} for homogeneous $\mathrm{C}^*$-algebras implies that every torsion-free infinite abelian group $G$ is $\mathrm{C}^*$-superrigid; this depicts a sharp contrast with the case of abelian group von Neumann algebras discussed before. Moreover, in recent years additional examples of amenable $\mathrm{C}^*$-superrigid groups have been discovered. These include certain torsion-free virtually abelian groups in \cite{CKRTW17} and \cite{CLMW25}, as well as more structurally complex classes like two-step nilpotent groups \cite{ER18} and free nilpotent groups \cite{Om18}. These results demonstrate that, although amenability often weakens rigidity properties, the reduced group $\mathrm{C}^*$-algebra can still retain enough information to recover the underlying group in a variety of amenable settings. One should mention at this time we do not know if there exists an icc amenable $C^*$-superrigid group. 

In the nonamenable case, the situation is considerably more complex, both at the level of $C^*$-algebras and von Neumann algebras. Finding examples of $W^*$-superrigid nonamenable groups has been an active research direction over the past decades and has been studied almost exclusively through methods in deformation/rigidity theory. The first class of $W^*$-superrigid groups was discovered by Ioana, Popa, and Vaes \cite{IPV10}, where a large class of generalized wreath product groups was shown to have this property. Subsequently, additional examples of $W^*$-superrigid groups arising from coinduced groups, amalgamated free products, tree groups, and semidirect products with nonamenable cores were found in \cite{BV13, Ber14, CI17, CD-AD20, CD-AD21}. Some of these examples are also $C^*$-superrigid \cite{CI17, CD-AD20, CD-AD21}. 

Although it is natural to expect that there should exist a broader variety of nonamenable $C^*$-superrigid groups than $W^*$-superrigid ones, relatively little is known in this direction. In particular, it was previously unknown whether there exist nonamenable groups that are $C^*$-superrigid but not $W^*$-superrigid. One of the main goals of this paper is to address this question by providing a large class of such examples.

\begin{thm}\label{Thm:main}
There exists a family of countable groups $\{G_i\}_{i \in I}$ with $|I| = 2^{\aleph_0}$ such that $G_i \not\cong G_j$ whenever $i \neq j$, and each group $G_i$ is $C^*$-superrigid but not $W^*$-superrigid.
\end{thm}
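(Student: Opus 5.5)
The plan is to take $G = \bigoplus_{n\ge 1} \Gamma_n$, an infinite direct sum of amalgamated free product groups $\Gamma_n = A_n \ast_{C_n} B_n$. The parameters will be chosen from a family indexed by $2^{\aleph_0}$ data, for instance subsets $S\subseteq\mathbb N$ that decide which building blocks appear. We then argue the two halves of the statement separately.

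\textbf{Failure of $W^*$-superrigidity.} Since $G$ is an infinite direct sum of icc nonamenable groups, ${\rm L}(G) = \bar\otimes_n {\rm L}(\Gamma_n)$ is McDuff-like in the following sense: the tail algebras $\bar\otimes_{n\ge N}{\rm L}(\Gamma_n)$ absorb $\mathcal R$. Concretely, I would exhibit a second group $H$ with ${\rm L}(H)\cong {\rm L}(G)$ but $H\not\cong G$. The cleanest choice is $H = G \times K$ with $K$ an icc amenable group, or the same sum with one block replaced by $\Gamma_1\times K$. This uses the absorption ${\rm L}(G)\cong{\rm L}(G)\bar\otimes\mathcal R$, which holds when infinitely many factors ${\rm L}(\Gamma_n)$ contain a copy of $\mathcal R$ in a suitably central-sequence way. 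To make this work I would arrange that each $\Gamma_n$ itself has a direct factor, or at least an amalgam piece, yielding McDuffness. Alternatively, one can use that an infinite tensor product of II$_1$ factors has property Gamma, hence is McDuff, and so absorbs $\mathcal R$. Then $H \ncong G$ follows because $G$ has no nontrivial amenable normal subgroup, which the amalgams are chosen to ensure, whereas $H$ does.

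\textbf{$C^*$-superrigidity.} Suppose $\theta: C^*_r(G)\to C^*_r(H)$ is a $*$-isomorphism. The key tools, which I would state as preparatory results, are as follows.
\begin{enumerate}
\item The canonical traces are unique: $G$ is $C^*$-simple with unique trace, since it has trivial amenable radical. Hence $\theta$ is trace-preserving and extends to ${\rm L}(G)\cong{\rm L}(H)$. By the same argument $H$ has trivial amenable radical.
\item A deformation/rigidity analysis of the extended von Neumann isomorphism, using known product rigidity and amalgamated free product results in the style of \cite{CD-AD20,CD-AD21}. This shows that, after unitary conjugacy, each finite block ${\rm L}(\Gamma_1\times\dots\times\Gamma_N)$ is identified up to amplification and tensoring with amenable pieces, which are ruled out at the $C^*$ level. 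The groups $\Gamma_n$ are designed to be individually $W^*$-superrigid among such factors.
\item The $C^*$-level input, which is the crucial difference from the von Neumann setting. An amenable group $K$ with ${\rm L}(G)\cong {\rm L}(H_0)\bar\otimes{\rm L}(K)$ cannot appear, because the amenable radical of $H$ is trivial. This converts the von Neumann conclusion ``$H\cong G$ up to amenable direct factors and virtual isomorphism'' into genuine isomorphism.
\end{enumerate}
Concretely, I would show that $H$ decomposes as $\bigoplus_n \Lambda_n$ with ${\rm L}(\Lambda_n)\cong{\rm L}(\Gamma_n)^{t_n}$. Since a $C^*$-isomorphism also preserves $K$-theoretic data such as the image of the trace on $K_0$, and since $C^*_r(\Lambda_n)$ has trivial projections, the amplifications satisfy $t_n=1$. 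Finally, the $W^*$-superrigidity of each $\Gamma_n$ among groups without amenable radical gives $\Lambda_n\cong\Gamma_n$.

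\textbf{Continuum of pairwise non-isomorphic groups.} Vary the set of blocks. Isomorphism classes of infinite direct sums are detected by the multiset of indecomposable factors, via a unique prime decomposition type argument for groups with trivial amenable radical. Hence distinct subsets $S$ give $G_S\not\cong G_{S'}$, and this yields $2^{\aleph_0}$ groups.

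\textbf{Main obstacle.} I expect the main obstacle to be step (2) for an infinite direct sum. Product rigidity results handle finitely many tensor factors, but here one has an infinite tensor product with no spectral gap at the tail. My first attempt is a finite-stage argument: intertwine each finite subproduct ${\rm L}(\Gamma_1\times\dots\times\Gamma_N)$ into a finite subproduct on the $H$ side using the $C^*$-algebraic fact that $\theta(u_g)$ is norm-approximated by finitely supported elements. This approximation is exactly what fails for arbitrary W$^*$-isomorphisms, and it is why the passage from $C^*_r$ to ${\rm L}$ loses superrigidity.
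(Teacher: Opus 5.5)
\textbf{Gap 1: the step $t_n=1$.} The isomorphisms $\mathrm{L}(\Lambda_n)\cong\mathrm{L}(\Gamma_n)^{t_n}$ come out of the von Neumann analysis and are implemented by unitaries of $\mathrm{L}(H)$. They do not carry $C^*_r(\Gamma_n)$ onto $C^*_r(\Lambda_n)$. So neither the trace on $K_0$ nor the absence of projections in $C^*_r(\Lambda_n)$ constrains $t_n$. Indeed, torsion-free groups such as $\mathbb F_\infty$ have group factors with full fundamental group.

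There is also no global constraint. The tail $\mathrm{L}(\bigoplus_{j>n}\Gamma_j)$ is McDuff, hence has fundamental group $\mathbb R_+$ and absorbs any product $t_1\cdots t_n$. The finite-product trick ($t_1\cdots t_n=t\le 1$ plus rigidity for $t\le 1$) is therefore unavailable.

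What is actually needed is two things:
\begin{enumerate}
\item[(i)] Extract torsion-freeness of $H$ from the $C^*$-isomorphism. One shows that $G$ satisfies Baum--Connes (via subgroups of hyperbolic groups, extensions with Haagerup kernel, amalgams, and direct sums). Since $G$ is torsion free, $C^*_r(G)\cong C^*_r(H)$ then has no nontrivial projections, so $H$, and hence each $\Lambda_n$, is torsion free.
\item[(ii)] A $W^*$-superrigidity theorem for the blocks that holds for \emph{every} amplification $t>0$, provided the target group is torsion free; this is Theorem \ref{super+torsion}(b). There torsion-freeness enters the comultiplication argument directly. Every $h\neq 1$ generates an infinite group, so no irreducible $\mathcal S\subseteq\mathrm{L}(\Gamma_n)$ intertwines into $\mathrm{L}(C_H(h))$. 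Hence $\Delta(\mathcal S)'$ is trivial in the amplified tensor product, which removes all cut-down projections and eventually forces $t=1$.
\end{enumerate}
Rigidity ``among groups with trivial amenable radical'' is not the hypothesis that gives $t_n=1$; torsion-freeness is.

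\textbf{Gap 2: the infinite-sum splitting.} Your proposed fix, norm-approximating $\theta(u_g)$ by finitely supported elements, gives no control that is uniform over the unitaries of a diffuse subalgebra. So it cannot yield a Popa intertwining, and $H$ has no product structure yet to intertwine into. Your diagnosis is also off: what a $W^*$-isomorphism loses is not finite support but the amenable tail and the amplification constants.

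The paper uses no $C^*$-structure in this step. It works with the comultiplication $\Delta(v_h)=v_h\otimes v_h$ on $\mathrm{L}(G)=\mathrm{L}(H)$ and the amalgam structure of each block (almost malnormal, infinite-index amalgam). Applying the Ioana--Vaes dichotomy to the commuting subalgebras $\Delta(\mathrm{L}(G_{\hat j}))$ and $\Delta(\mathrm{L}(G_j))$ gives $\Delta(\mathrm{L}(G_{\hat j})^t)\prec^{s}\mathrm{L}(G_{\hat j})^t\bar\otimes\mathrm{L}(G_{\hat j})^t$ (Theorem \ref{DeltaGhatjintertwines}). The splitting criterion of Theorem \ref{decompositionofH} then peels off $H_1,H_2,\dots$ one at a time; its condition on towers of finite subgroups is again supplied by torsion-freeness of $H$. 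The result is $H=(\bigoplus_n H_n)\times A$ with $A=\bigcap_n K_n$ icc amenable. No spectral gap at the tail is needed: $A$ is exactly the $W^*$-ambiguity, and the unique trace forces $H$ to have trivial amenable radical, so $A=\{1\}$.

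\textbf{Minor points.}
\begin{itemize}
\item ``Gamma, hence McDuff'' is false in general. Infinite tensor products of II$_1$ factors are nonetheless McDuff, for instance via matrix units in the $n$-th tensor factor.
\item Giving each $\Gamma_n$ an amenable direct factor would destroy the trivial amenable radical you rely on later.
\item Trivial amenable radical gives unique trace, not $C^*$-simplicity; only unique trace is used.
\item The continuum claim needs a real argument that block decompositions are unique. For example, blocks with trivial amenable radical and amenable centralizers of nontrivial elements are directly indecomposable, and any isomorphism of direct sums must match them.
\end{itemize}
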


\noindent The groups $G_i$ arise as infinite direct sums $G_i = \bigoplus_{n} K_n$, where each summand $K_n$ is an amalgamated free product of wreath-like product groups with property (T), constructed in \cite{CIOS1,CIOS5}. The proof of Theorem \ref{Thm:main} builds on and extends several techniques developed in earlier works, including \cite{CdSS15,CI17,CU18,CIOS1,AMCOS25,CIOS5}.

\subsection{Main results}\label{ATdef}

Before stating our main results, we consider the following class of groups.

\begin{defn}\label{def:AT} Denote by $\mathcal{AT}$ the collection of all amalgamated free product groups $G = G_1 \ast_C G_2$ satisfying the following conditions:

\begin{enumerate}
    \item[(i)] For $j=1,2$, the group $G_j\in\WR(A_j,B_j \curvearrowright I_j)$ is a wreath-like product with property (T); see Definition \ref{wlp}.
    \item[(ii)] Each $A_j$ is a nontrivial abelian group, and each $B_j$ is a nontrivial icc subgroup of a hyperbolic group.
    \item[(iii)] The action $B_j \curvearrowright I_j$ is such that, for every $i \in I_j$, the stabilizer $\mathrm{Stab}_{B_j}(i)$ is amenable.
    \item[(iv)] For every nontrivial element $b\in B_j$ its centralizer $C_{B_j}(b)$ is amenable.   
    \item[(v)] The common subgroup $C < G_1,G_2$ is icc, nonamenable, with Haagerup property, and it is almost malnormal in both $G_1$ and $G_2$. 
\end{enumerate}
\end{defn}

\noindent Many examples of groups satisfying the prior conditions have already been constructed using deep group-theoretic methods, such as Dehn filling \cite{Osi06,DGO11} and the Cohen--Lyndon property \cite{Sun}; see, for instance, \cite{CIOS1,CIOS3,CIOS4}. Many of these groups naturally contain malnormal free subgroups, as required by condition (v). In fact, the class $\mathcal{AT}$ is quite vast, containing continuum many elements.

Our $C^*$-superrigidity results rely heavily on corresponding $W^*$-superrigidity phenomena for amalgamated free products arising from groups in the class $\mathcal{AT}$. For this reason, we begin by presenting our rigidity results in the von Neumann algebraic setting. 

The first examples of $W^*$-superrigid amalgamated free products were obtained in \cite{CI17}, where the authors studied free products of product groups amalgamated over amenable subgroups. Subsequent developments further expanded this framework; in particular, \cite{CD-AD20,CD26} produced new examples allowing amalgamation over nonamenable subgroups. 

In the present paper, we further extend this line of research by adding the groups in class $\mathcal{AT}$ to the list of known W$^*$-superrigd amalgamated free product groups. These examples give rise to non--property~(T), solid $\mathrm{II}_1$ factors, thereby enriching the class of known $W^*$-superrigid von Neumann algebras and providing new sources of rigidity phenomena beyond the previously studied settings.

\begin{thm}\label{ATrigid} Let $G\in \mathcal{AT}$. Let $H$ be an arbitrary group and assume that $\theta \colon \mathrm{L}(G)^t \to \mathrm{L}(H)$ is a $*$-isomorphism, for some $0<t\leq 1$. Then $t=1$ and $G\cong H$. Moreover, there exist a group isomorphism $\delta \colon G \to H$, a character $\eta \colon G \to \mathbb{T}$, and a unitary $w \in \mathrm{L}(H)$ such that
\[
\theta(u_g) = \eta(g)  w v_{\delta(g)} w^*, \text{ for all } g \in G.
\]
\end{thm}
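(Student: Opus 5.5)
We will use the standard strategy for $W^*$-superrigidity of amalgamated free products from \cite{CI17,CD-AD20}, with the comultiplication of Popa--Vaes and Ioana as the main tool. Set $M=\mathrm{L}(G)^t=\mathrm{L}(H)$ and consider the $*$-homomorphism $\Delta\colon M\to M\oo M$ given by $\Delta(v_h)=v_h\otimes v_h$. Inside $\M=M\oo M$ we have $\Delta(M)$ and two amalgamated free product decompositions, coming from $G=G_1\ast_C G_2$ in each tensor factor.

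\textbf{Step 1: locate the vertex algebras.} Each $\mathrm{L}(G_j)$ has property (T), since $G_j$ does. Ioana's intertwining results for amalgamated free products (used in the form of \cite{CI17}) then give $\Delta(\mathrm{L}(G_j))\prec_{\M} \mathrm{L}(G_{k})\oo \mathrm{L}(G_{l})$ for suitable $k,l$. Rigid subalgebras of an amalgam intertwine into a vertex algebra unless they intertwine into the edge algebra. The latter is ruled out because $C$ has the Haagerup property while $\mathrm{L}(G_j)$ is rigid, and relative rigidity would then force amenability.

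\textbf{Step 2: upgrade to the edge algebra.} We use the almost malnormality of $C$ in (v), together with the fact that $C$ is nonamenable and icc, to control normalizers and relative commutants. This lets us pass from intertwining to actual containments, up to unitary conjugacy, of the form $\Delta(\mathrm{L}(C))\subseteq \mathrm{L}(C)\oo\mathrm{L}(C)$ and $\Delta(\mathrm{L}(G_j))\subseteq \mathrm{L}(G_j)\oo \mathrm{L}(G_j)$. With these containments, the general height/"Ioana's ultrapower" argument from \cite{IPV10,CI17} gives the following: there is a unitary $w$ such that the groups $w\,\T G\,w^*$ and $\T H$ share a finite-index commensurable part. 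Equivalently, we show that $\Delta(u_g)$ has large height, and then apply \cite[Theorem 3.1]{IPV10} to obtain the form $\theta(u_g)=\eta(g)wv_{\delta(g)}w^*$.

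\textbf{The main obstacle.} The hardest part is showing that $\Delta$ restricted to $\mathrm{L}(G_j)$ "splits", that is, recovers the group $G_j$ up to a character and unitary conjugacy. Our plan is to use the wreath-like product structure and invoke the $W^*$-superrigidity results for property (T) wreath-like products of \cite{CIOS1,CIOS5}, whose hypotheses are conditions (ii)--(iv). Concretely, we would restrict the comultiplication to $\mathrm{L}(G_j)$. We would then use that $B_j$ is a subgroup of a hyperbolic group with amenable centralizers and amenable stabilizers, via the $\mathrm{L}(A^{(I)})$-Cartan rigidity developed in \cite{CIOS1}, to identify $\Delta(\mathrm{L}(A_j^{(I_j)}))$ and then the $G_j$-group structure. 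The pieces for $G_1$ and $G_2$ must then be glued along $C$ using the malnormality in (v).

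Finally, $t=1$ follows because the resulting unitary conjugacy identifies $\mathrm{L}(G)^t$ with $\mathrm{L}(H)$ through group unitaries, so traces of projections match. Alternatively, $t=1$ follows because $G_j$ has trivial fundamental group, by property (T) and the rigidity from \cite{CIOS1}.
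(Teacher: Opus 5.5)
Your Step 1 matches the paper's first step. There, \cite[Theorem 5.1]{IPP05} is applied to the property (T) algebra $\Delta(\mathrm{L}(G_i))$, and the edge is excluded because \cite[Lemma 1]{HPV11} would push an intertwining into $\mathrm{L}(G)\bar\otimes\mathrm{L}(C)$ further into $\mathrm{L}(G)\bar\otimes 1$, which Lemma \ref{comultiply}(a) forbids. Your overall skeleton is also the paper's: untwist $\Delta$ on each vertex algebra, glue along $C$, then finish with a height argument and \cite{IPV10} or \cite{KV15}.

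Step 2, however, is not a valid step. Containments such as $\Delta(\mathrm{L}(G_i))\subseteq\mathrm{L}(G_j)\bar\otimes\mathrm{L}(G_k)$ give no control on the heights of the $\Delta(u_g)$. In the paper the height estimate (Claim \ref{positiveheight}) is run only after one already has $\Delta(u_g)=\mu(g)yu_{\gamma(g)}y^*$ for all $g\in G$. The containment $\Delta(\mathrm{L}(C))\subseteq\mathrm{L}(C)\bar\otimes\mathrm{L}(C)$ is neither justified in your sketch nor needed. Nor can you arrange a priori that the target of $\Delta(\mathrm{L}(G_i))$ is $\mathrm{L}(G_i)\bar\otimes\mathrm{L}(G_i)$. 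The pair $(j,k)$ depends on the corner, $j=k$ is obtained only after untwisting (flip symmetry plus Proposition \ref{singlesupport}), and $j=i$ is never needed.

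So everything sits in your ``main obstacle'', and there the $W^*$-superrigidity theorems of \cite{CIOS1,CIOS5} cannot be invoked as a black box. The restriction $\Delta|_{\mathrm{L}(G_i)}$ is not a comultiplication of $\mathrm{L}(G_i)$; it is an embedding into a corner $f\mathcal N f$ of $\mathcal N=\mathrm{L}(G_j)\bar\otimes\mathrm{L}(G_k)\bar\otimes\mathbb M_n(\mathbb C)$. The paper reruns the embedding argument of \cite[Theorem 4.1]{CIOS3} for this product target:
\begin{enumerate}
\item It shows $(\Delta(\mathrm{L}(A_i^{(I_i)}))f)'\cap f\mathcal Nf\prec^s\mathrm{L}(A_j^{(I_j)})\bar\otimes\mathrm{L}(A_k^{(I_k)})\bar\otimes\mathbb D_n(\mathbb C)$, via Theorem \ref{relativeT} in each tensor coordinate and Theorem \ref{solidity}.
\item It conjugates Cartan subalgebras (Lemmas \ref{conj1}, \ref{conj2}) to get an action of $B_i$ whose orbits lie inside those of $B_j\times B_k\times\mathbb Z/n\mathbb Z$.
\item It verifies the escaping-sequence hypotheses of Theorem \ref{SOE} from (iii) and (iv).
\item It untwists by cocycle superrigidity on a finite index $K\leqslant G_i$, and extends to $G_i$ via Proposition \ref{singlesupport}.
\end{enumerate}

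Your argument for $t=1$ also fails. Triviality of the fundamental group of $\mathrm{L}(G_j)$, or even of $\mathrm{L}(G)$, says nothing about whether $\mathrm{L}(G)^t\cong\mathrm{L}(H)$ for some other group $H$. Your other justification presupposes the group-like formula on all of $\mathrm{L}(G)$, but for $t<1$ the unitaries $u_g$ do not lie in $\mathrm{L}(G)^t$. One must amplify $\Delta$ to a map $\mathrm{L}(G)\to p(\mathrm{L}(G)\bar\otimes\mathrm{L}(G))p$ with $\tau(p)=t$, and proving $p=1$ is part of the hard step. In the paper this comes out of the orbit equivalence rigidity. Each weakly mixing component of $X_0$ is carried by an element of the full group onto a copy of $X$. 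Hence each corner $f$ has integer trace at most $n=1$, so $\tau(p)\in\mathbb N$, which forces $p=1$ and $t=1$.

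Lastly, the gluing along $C$ does not rest on malnormality per se. It rests on $C$ being nonamenable while centralizers of nontrivial elements of $G$ are amenable. This gives $C_{G\times G}(\delta_1(C_0))=\{1\}$ for every finite index $C_0\leqslant C$, so Proposition \ref{singlesupport} forces $z_2^*z_1$ to be a scalar multiple of a group unitary.
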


\noindent By recycling earlier techniques on product rigidity developed in \cite{CdSS15,CdSS17,CU18}, we also establish that the groups in the class $\mathcal{AT}$ satisfy this rigidity phenomenon. More precisely, given groups $G_1, \ldots, G_n \in \mathcal{AT}$, their direct product $G = G_1 \times \cdots \times G_n$ retains the product feature in the von Neumann algebraic framework: \emph{any group $H$ satisfying $\mathrm{L}(G) \cong \mathrm{L}(H)$, up to amplifications, must admit a direct product decomposition $H = H_1 \times \cdots \times H_n$ such that, up to amplification, one also has $\mathrm{L}(H_i) \cong \mathrm{L}(G_i)$ for each $1 \leqslant i \leqslant n$.} 

Combining this product rigidity result with the rigidity established above, we obtain new $W^*$-superrigidity results for direct products of groups in the class $\mathcal{AT}$.

\begin{thm}\label{superr} For every $1\leq i\leq n$, let $G_i\in\mathcal{AT}$, set $G=G_1 \times \cdots \times G_n$. Let $H$ be an arbitrary group and assume that $\theta \colon {\rm L}(G)^t\ra {\rm L}(H)$ is a $\ast$-isomorphism, for some $0<t\leq 1$. Then $t=1$ and $G \cong H$. In addition, there exist a group isomorphism $\delta \colon G \to H$, a character $\eta \colon G \to \mathbb{T}$, and a unitary $w \in \mathrm{L}(H)$ such that 

\begin{equation*}
    \theta(u_g) = \eta(g)  w v_{\delta(g)} w^*, \text{ for all } g \in G.
\end{equation*}
\end{thm}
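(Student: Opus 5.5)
We prove Theorem \ref{superr} in two stages: first we split the isomorphism $\theta$ along the product structure, then we apply Theorem \ref{ATrigid} to each factor and reassemble.

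\emph{Step 1: product rigidity.} Write $M=\mathrm{L}(H)$. Using the comultiplication $\Delta\colon M\to M\bar\otimes M$, $\Delta(v_h)=v_h\otimes v_h$, we transfer the tensor decomposition
\[
\theta(\mathrm{L}(G)^t)=\theta(\mathrm{L}(G_1)\bar\otimes\cdots\bar\otimes \mathrm{L}(G_n))^t
\]
into $M\bar\otimes M$. Each $\mathrm{L}(G_i)$ is a nonamenable solid II$_1$ factor. Indeed, it is an amalgamated free product over the Haagerup group $C$, with property (T) factors $\mathrm{L}(G_j)$ that are relatively solid by the wreath-like structure over hyperbolic-type $B_j$ with amenable centralizers and stabilizers. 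By Popa's intertwining techniques and Ozawa--Popa/Ioana-type dichotomies for amalgamated free products, for each $i$ we get
\[
\Delta(\theta(\mathrm{L}(G_i)))\prec_{M\bar\otimes M} M\bar\otimes \mathrm{L}(\Sigma_i)
\]
for some subgroup $\Sigma_i<H$, following the scheme of \cite{CdSS15,CdSS17,CU18}. The relative commutants $\mathrm{L}(\widehat G_i)$, where $\widehat G_i=\prod_{k\ne i}G_k$, are handled in the same way. The commuting-subgroup argument of \cite{CdSS15} (via height/ultrapower bounds \cite{IPV10}) then produces commuting subgroups $H_1,\dots,H_n<H$ with $H=H_1\times\cdots\times H_n$, together with scalars $t_1\cdots t_n=t$ and a unitary $w$ such that
\[
w\,\theta(\mathrm{L}(G_i)^{t_i})\,w^*=\mathrm{L}(H_i)
\]
up to the tensor identification. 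The icc property of $H$ comes for free because $\mathrm{L}(H)$ is a factor. Getting the exact splitting, rather than only commensurability, uses that each $\mathrm{L}(G_i)$ is prime and has no amenable direct summands. I expect this step to be the main obstacle. It requires verifying, for groups in $\mathcal{AT}$, the key dichotomy: that a property-(T)-free but nonamenable subalgebra commuting with a diffuse one must intertwine into a factor $\mathrm{L}(G_i)$. My first approach would be to use the rigid subalgebras $\mathrm{L}(G_1),\mathrm{L}(G_2)$ (which have property (T)) inside each $\mathrm{L}(G_i)$, locating them by Ioana's theorem for AFP deformations, and then to pass to the whole factor using the almost malnormality of $C$ to control normalizers.

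\emph{Step 2: apply the single-factor theorem.} Each restriction $\theta_i\colon\mathrm{L}(G_i)^{t_i}\to\mathrm{L}(H_i)$ is a $*$-isomorphism. By rescaling, we may assume every $t_i\le 1$ except possibly one. Applying Theorem \ref{ATrigid} (after inverting if $t_i>1$) gives $t_i=1$, so $t=1$. It also gives isomorphisms $\delta_i\colon G_i\to H_i$, characters $\eta_i$, and unitaries $w_i\in\mathrm{L}(H_i)$ with
\[
\theta_i(u_g)=\eta_i(g)\,w_i v_{\delta_i(g)} w_i^*.
\]
The scalar ambiguity in the tensor splitting (amplifications $t_i$ with product $1$) is absorbed at this point.

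\emph{Step 3: assemble.} Set $\delta=\delta_1\times\cdots\times\delta_n$ and $\eta=\prod_i\eta_i$, and take the unitary $w\,(w_1\otimes\cdots\otimes w_n)$. Since $\theta$ restricted to each tensor factor agrees with $\mathrm{Ad}(w)\circ\theta_i$, and $\theta$ is multiplicative on the commuting generators $u_{(g_1,\dots,g_n)}=\prod_i u_{g_i}$, these choices yield the asserted formula for $\theta$.
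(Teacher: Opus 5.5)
Your architecture matches the paper's: split $H$ via the comultiplication and the \cite{CdSS15,CdSS17,CU18} criterion, apply Theorem \ref{ATrigid} factorwise, then reassemble. However, Step 1, which carries essentially all of the content, is not supplied. The one intertwining you do write down is also not the right input, since $\Delta(\theta(\mathrm{L}(G_i)))\prec M\bar\otimes\mathrm{L}(\Sigma_i)$ for an unspecified $\Sigma_i<H$ holds trivially with $\Sigma_i=H$. What Theorem \ref{decompositionofH} and Proposition \ref{3isnotnecessary} actually need is
\[
\Delta(\mathrm{L}(G_{\hat j})^t)\prec^{s}_{M\bar\otimes M}\mathrm{L}(G_{\hat j})^t\bar\otimes\mathrm{L}(G_{\hat j})^t .
\]
The paper proves this in Theorem \ref{DeltaGhatjintertwines}:
\begin{itemize}
\item Write each $G_k=\Lambda_{k,1}\ast_{\Sigma_k}\Lambda_{k,2}$ and view $M\bar\otimes M$ as an amalgam over $M\bar\otimes\mathrm{L}(G_{\hat k})^t\bar\otimes\mathrm{L}(\Sigma_k)$.
\item Apply Theorem \ref{commutlocinamalgams} to the commuting nonamenable pair $\Delta(\mathrm{L}(G_{\hat j})^t)$, $\Delta(\mathrm{L}(G_j))$.
\item Exclude the remaining alternatives using \cite[Proposition 7.2]{IPV10}, $[\Lambda_{k,s}:\Sigma_k]=\infty$, the fact that $G_k$ is not amenable relative to $\Sigma_k$, and almost malnormality of $\Sigma_k$.
\item Use the flip symmetry of $\Delta$ and \cite{Dr20} to get $k=j$.
\end{itemize}
Your alternative, locating the property (T) pieces $\Delta(\mathrm{L}(\Lambda_{i,s}))$ via \cite{IPP05}, only tells you which amalgam factor they fall into in each coordinate. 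It does not remove the $j$-th coordinate, which is exactly what the commuting-pair argument achieves.

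Exact splitting, as opposed to commensurability, also does not come from primeness of $\mathrm{L}(G_i)$. The crux is that $\Lambda_1\cap vC_H(\Lambda_1)$ is finite, where $\Lambda_1<H$ is the subgroup produced by \cite[Theorem 4.1]{DHI16}. Proposition \ref{3isnotnecessary} establishes this via \cite[Theorem A]{Va13} in the amalgams $G_l$ for $l\geq 2$, almost malnormality of $\Sigma_l$ together with \cite[Theorem 4.10]{AMACK25}, and \cite[Lemma 2.8(2)]{DHI16}.

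Step 2 contains an invalid move. Theorem \ref{ATrigid} is one-sided: the source must be $\mathrm{L}(G_i)^s$ with $G_i\in\mathcal{AT}$ and $s\le 1$. Inverting $\mathrm{L}(G_i)^{t_i}\cong\mathrm{L}(H_i)$ with $t_i>1$ gives $\mathrm{L}(H_i)^{1/t_i}\cong\mathrm{L}(G_i)$. Its source group $H_i$ is not known to lie in $\mathcal{AT}$, so nothing follows; amplifications $t>1$ are precisely what is unavailable for arbitrary target groups.

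Nor can you rescale. Once $H_i$ is fixed, $t_i$ is pinned down by $\mathrm{L}(H_i)\cong\mathrm{L}(G_i)^{t_i}$, because Theorem \ref{ATrigid} itself shows that $\mathrm{L}(G_i)$ has trivial fundamental group. For $n\ge 3$, several $t_i$ may exceed $1$ a priori.

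The fix is the paper's iteration. Since $t_1\cdots t_n=t\le 1$, some $t_{i_0}\le 1$, so Theorem \ref{ATrigid} gives $t_{i_0}=1$ and $G_{i_0}\cong H_{i_0}$. The product of the remaining $t_i$ is still $t\le 1$, and repeating yields $t_i=1$ for all $i$. Step 3 then goes through as you describe.
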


\noindent However, if one considers infinite direct sums $G = \bigoplus_{n \in \mathbb{N}} G_n$ with $G_n \in \mathcal{AT}$, one immediately observes that such groups are never $W^*$-superrigid. Indeed, in this case one has ${\rm L}(G) \cong \bar\otimes_{n \in \mathbb{N}} \mathcal M_n$, where $\mathcal M_n = {\rm L}(G_n)$ and it is well known that infinite tensor products of II$_1$ factors are always McDuff factors \cite{Mc69}. To see this, first note that, using matrix units and amplifications, one always has $\mathcal M_n \cong \mathcal M_n^{1/2} \bar\otimes \mathbb M_2(\mathbb C)$. Moreover, if $\mathcal R$ denotes the Murray--von Neumann hyperfinite II$_1$ factor \cite{MvN43}, then $\mathcal R \bar\otimes \mathcal R \cong \mathcal R$. Combining these observations, we obtain
\begin{equation}\begin{split}
\bar\otimes_{n\in \mathbb N} \mathcal M_n 
&\cong \bar \otimes_{n\in\mathbb N} (\mathcal M_n ^{1/2}\otimes \mathbb M_2(\mathbb C))
\cong (\bar\otimes_{n\in \mathbb N} \mathcal M_n^{1/2})\bar\otimes (\bar\otimes_{n\in \mathbb N} \mathbb M_2(\mathbb C))\\
&\cong (\bar\otimes_{n\in \mathbb N} \mathcal M_n^{1/2})\bar\otimes \mathcal R
\cong(\bar\otimes_{n\in \mathbb N} \mathcal M_n^{1/2})\bar\otimes (\mathcal R \bar \otimes \mathcal R)\\
&\cong((\bar\otimes_{n\in \mathbb N} \mathcal M_n^{1/2})\bar\otimes \mathcal R) \bar \otimes \mathcal R
\cong (\bar\otimes_{n\in \mathbb N} \mathcal M_n) \bar\otimes \mathcal R.
\end{split}\end{equation}

\noindent Thus, we conclude that ${\rm L}(G) \cong {\rm L}(G)  \bar{\otimes}  \mathcal{R}$. Since, by \cite{Co76}, the hyperfinite factor $\mathcal R\cong {\rm L}(A)$, for any icc amenable group $A$, it follows that
${\rm L}(\bigoplus_{n\in\mathbb N} G_n) \cong {\rm L}\left ((\bigoplus_{n\in\mathbb N} G_n) \times A\right )$.
On the other hand, one easily checks that for every icc group $A$, the groups $\bigoplus_{n\in\mathbb N}G_n$ and $ (\bigoplus_{n\in\mathbb N} G_n)\times A$ are never isomorphic as each $G_n$ has trivial amenable radical.

Remarkably, despite this, the torsion-free groups in class $\mathcal{AT}$ still satisfy strong rigidity phenomena at the level of their reduced $C^*$-algebras. More precisely, by combining von Neumann algebraic deformation/rigidity techniques with several $C^*$-algebraic properties—such as uniqueness of the trace and the absence of nontrivial projections, as ensured by groups satisfying the Baum-Connes conjecture—we obtain the following $C^*$-superrigidity result. 
\begin{thm}\label{superr2}
For any $n\in \mathbb N$, let $G_n \in \mathcal{AT}$ be any torsion free group and consider the infinite direct sum $G = \bigoplus_n G_n$. Let $H$ be an arbitrary countable group and $\theta \colon \mathrm{C}^*_r(G) \to \mathrm{C}^*_r(H)$ a $*$-isomorphism. Then, $G \cong H$. Moreover, there exist a group isomorphism $\delta \colon G \to H$, a character $\eta \colon G \to \mathbb{T}$, and an automorphism $\psi \in Aut(\mathrm{C}_r^*(H))$ 
satisfying

\begin{equation*}
    \theta(u_g) = \psi (\eta(g)v_{\delta(g)}),\text{ for all } g \in G.
\end{equation*}
\end{thm}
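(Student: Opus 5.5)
The plan is to extend $\theta$ to the von Neumann algebras, use product rigidity to locate the summands inside $H$, apply Theorem \ref{ATrigid} to each summand, and finally use the $C^*$-structure (no projections, unique trace) to rule out the amplification and amenable-factor obstructions that break $W^*$-superrigidity for infinite sums.

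\emph{Step 1: extension to a trace-preserving isomorphism.} Each $G_n\in\mathcal{AT}$ has trivial amenable radical (in fact it is acylindrically hyperbolic), so $C^*_r(G)$ has a unique trace. Hence $\tau_H\circ\theta=\tau_G$, and $\theta$ extends to a $*$-isomorphism $\bar\theta\colon \mathrm L(G)\to\mathrm L(H)$ with $\bar\theta(C^*_r(G))=C^*_r(H)$. Torsion-free amalgamated free products of property (T) hyperbolic-like groups satisfy Baum--Connes. Hence $C^*_r(G)$, and therefore $C^*_r(H)$, is projectionless. This forces $H$ to be torsion-free, since a finite subgroup $F$ would give the nontrivial projection $|F|^{-1}\sum_{f\in F} v_f$. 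This projectionlessness is the substitute for "$t=1$".

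\emph{Step 2: decompose $H$.} Write $G=G_{\le N}\times G_{>N}$ with $G_{\le N}=G_1\times\cdots\times G_N$. I would run the product-rigidity argument of \cite{CdSS15,CdSS17,CU18} (the one behind Theorem \ref{superr}) in this setting, where the complementary factor $\mathrm L(G_{>N})$ is an arbitrary II$_1$ factor. Using comultiplication $\Delta(v_h)=v_h\otimes v_h$ and intertwining-by-bimodules with the solidity and malnormality features of $\mathcal{AT}$, this should yield commuting subgroups $H_{\le N}, H'_N\le H$ with $H=H_{\le N}\times H'_N$ and $\bar\theta(\mathrm L(G_{\le N}))=u\,\mathrm L(H_{\le N})^{s}u^*$ for some $s>0$ and unitary $u$. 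Applying Theorem \ref{superr} to the $G_{\le N}$ factor then gives $G_{\le N}\cong H_{\le N}$ with an explicit implementation $\theta(u_g)=\eta(g)\,u\,v_{\delta_N(g)}u^*$ on $G_{\le N}$, up to the amplification parameter $s$.

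\emph{Step 3: kill the amplification and assemble.} This step is the main obstacle. At the von Neumann level the parameter $s$ can be arbitrary, which is exactly how $\mathcal R$ gets absorbed. I would try to show $s=1$ and that $u$ can be chosen compatibly, using $C^*$-information. For a generator, $\theta(u_g)\in C^*_r(H)$ is close in $\|\cdot\|_2$ to a unitary conjugate of a scalar multiple of $v_{h}$. Spectral/$K$-theoretic constraints should force the mismatch projection coming from an $s\neq1$ amplification to live in $C^*_r(H)$, contradicting projectionlessness. Concretely, I would compare traces of spectral projections via $K_0$ and use $\tau_*(K_0(C^*_r(H)))=\mathbb Z$, which follows from Baum--Connes for torsion-free $H$.

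Once $s=1$ for every $N$, the maps $\delta_N$ are compatible by construction: $H_{\le N}\le H_{\le N+1}$ and $\delta_{N+1}$ extends $\delta_N$ after adjusting $u$, using weak innerness and the triviality of the centralizers. This gives an injective $\delta\colon G\to H$. Surjectivity follows because $\bigcup_N \mathrm L(H_{\le N})$ is dense and the complementary subgroups $H'_N$ decrease to the trivial group. The step I expect to need the most care is the $u$-adjustments: they must converge, or at least be chosen in such a way that the resulting map $\psi=\theta\circ(\eta\cdot\delta)^{-1}$ defines an automorphism of $C^*_r(H)$. I would define $\psi$ directly as $\theta\circ\Phi^{-1}$, where $\Phi\colon C^*_r(G)\to C^*_r(H)$ is the canonical isomorphism $u_g\mapsto\eta(g)v_{\delta(g)}$. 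This makes the final formula tautological once $G\cong H$ is established, so the real content is the existence of $\delta$.
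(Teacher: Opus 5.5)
Your Step 1 matches the paper: unique trace gives the extension via Proposition \ref{extendingembedding}, and Baum--Connes gives projectionlessness, hence $H$ is torsion free. You are also right that the ``moreover'' clause is tautological once $G\cong H$. The gap is in Steps 2--3, at both places where W$^*$-superrigidity fails for infinite sums.

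\textbf{The amplification.} Theorem \ref{superr} only covers $\mathrm{L}(G_{\le N})^t\cong\mathrm{L}(H_{\le N})$ with $0<t\le 1$, so it cannot be invoked in Step 2 for an arbitrary $s$. Your $K$-theoretic mechanism for forcing $s=1$ also has no traction. The parameter $s$ and the unitary $u$ are von Neumann-level objects coming from a product decomposition of $\mathrm{L}(H)$. Nothing relates $\theta(C^*_r(G_{\le N}))$ to $C^*_r(H_{\le N})$ at the norm-closed level, so nothing forces a ``mismatch projection'' into $C^*_r(H)$. Likewise, $\tau_*(K_0(C^*_r(H)))=\mathbb Z$ says nothing about projections of $\mathrm{L}(H)$: here $\mathrm{L}(G)$ is McDuff with full fundamental group while $C^*_r(G)$ is projectionless. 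Your claim that $\theta(u_g)$ is $\|\cdot\|_2$-close to a conjugate of a multiple of $v_h$ already presupposes $s=1$.

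The missing idea is that the torsion-freeness you obtained in Step 1 passes to the factors $H_j\le H$. Theorem \ref{super+torsion}(b) gives W$^*$-superrigidity of each $G_j\in\mathcal{AT}$ for \emph{arbitrary} $t>0$ whenever the target group is torsion free. Torsion-freeness gives $\mathcal S\nprec\mathrm{L}(C_H(h))$ for irreducible $\mathcal S$, hence trivial relative commutants of $\Delta(\mathcal S)$; this forces $p=f=1$ and eventually $n=1$. So the paper splits off one factor at a time (Theorem \ref{productforamalgamated}), obtaining $\mathrm{L}(G_j)^{t_j}\cong\mathrm{L}(H_j)$ up to unitary conjugacy with $H_j$ torsion free. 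It then applies Theorem \ref{super+torsion}(b) to each $j$ separately to get $t_j=1$ and $G_j\cong H_j$. No product formula for the $t_j$ is available, since the tail absorbs any amplification. Nor is a compatible choice of implementing unitaries needed, because only the abstract isomorphism matters.

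\textbf{The amenable tail.} Your assertion that the complements $H'_N$ decrease to the trivial group (and with it the density of $\bigcup_N\mathrm{L}(H_{\le N})$) is unjustified. It is precisely what fails at the von Neumann level. Product rigidity only yields $H=(\oplus_j H_j)\times A$ with $A=\bigcap_N H'_N$ an icc amenable group, and $\mathrm{L}(G)\cong\mathrm{L}(G\times A)$ shows that W$^*$ data alone cannot exclude it. What kills $A$ is the $C^*$-input you mention in your plan but never use. $C^*_r(H)\cong C^*_r(G)$ has unique trace, so by \cite{BKKO14} $H$ has trivial amenable radical. The normal amenable subgroup $A$ is therefore trivial.
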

\noindent In some cases, one can pick the automorphism $\psi \in \text{Aut}(C_r^*(H))$ to be approximately weakly inner---there is a sequence of unitaries $(w_n)_n \subset {\rm L}(H)$ with $\|\psi(x)w_n -w_n x\|_\infty\ra 0$ for all $x\in C_r^*(H)$. 

Notice that Theorem \ref{superr2} highlights once again how much ``smaller'' the C$^*$-algebras of groups are compared with their von Neumann algebraic counterparts. It would be interesting to see if the groups in the previous theorem satisfy the McDuff rigidity phenomenon discovered recently in \cite{AMCOS25}. If so, the $W^*$-superrigidity would only be missed by an ``amenable room'' in the ``tail'' of the infinite tensor product. 

We note that these groups, together with those independently considered by Curda and Drimbe, provide the first known examples of non-amenable product groups that are C$^*$-superrigid \cite{CD26}. In their work, the authors treat only finite products; however, unlike our Theorem \ref{superr}, in \cite[Theorem A]{CD26} the authors were able to establish W$^*$-superrigidity for \emph{arbitrary} amplifications at the level of the associated von Neumann algebras.

The methods developed to prove the superrigidity result in Theorem \ref{ATrigid} can also be successfully combined with $C^*$-algebraic techniques on the uniqueness of the trace to describe all endomorphisms of the $C^*$-algebras of groups in the class $\mathcal{AT}$. These results complement previous studies of the symmetries of reduced group $C^*$-algebras \cite{CI17,CD-AD20,CD-AD21}.

\begin{thm}\label{ATembeddings} 
Let $G \in \mathcal{AT}$ and let $\theta \colon C_r^*(G) \to C_r^*(G)$ be any $*$-endomorphism. Then there exist a group monomorphism $\delta \colon G \to G$, a character $\eta \colon G \to \mathbb{T}$, and a unitary $w \in \mathrm{L}(G)$ such that
\[
\theta(u_g) = \eta(g)  w v_{\delta(g)} w^*, \quad \text{for all } g \in G.
\]
\end{thm}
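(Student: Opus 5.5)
Throughout, $v_h$ denotes the canonical unitaries of ${\rm L}(G)$ (here $H=G$).

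The plan is to reduce Theorem \ref{ATembeddings} to a von Neumann algebraic statement about embeddings ${\rm L}(G)\hookrightarrow {\rm L}(G)$, and then run the deformation/rigidity analysis behind Theorem \ref{ATrigid}.

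\emph{Step 1: extend to von Neumann algebras.} Groups in $\mathcal{AT}$ have trivial amenable radical; $C$ is icc and nonamenable, and the $G_j$ are non-elementary acylindrically hyperbolic. Hence $C_r^*(G)$ has a unique trace $\tau$ (Breuillard--Kalantar--Kennedy--Ozawa). Consequently $\tau\circ\theta$ is a trace on $C_r^*(G)$, so $\tau\circ\theta=\tau$. Being trace preserving, $\theta$ extends to a normal trace-preserving unital $*$-embedding $\tilde\theta\colon {\rm L}(G)\to {\rm L}(G)$. Put $\mathcal N=\tilde\theta({\rm L}(G))\subseteq {\rm L}(G)=\mathcal M$. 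Then $\mathcal N$ is a II$_1$ subfactor isomorphic to ${\rm L}(G)$, and it carries the unitary group $\{\theta(u_g)\}$.

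\emph{Step 2: rigid pieces.} Each $G_j$ has property (T), so $\tilde\theta({\rm L}(G_j))$ is a property (T) subalgebra of $\mathcal M={\rm L}(G_1)\ast_{{\rm L}(C)}{\rm L}(G_2)$. Ioana--Peterson--Popa free-product deformations then show that $\tilde\theta({\rm L}(G_j))\prec_{\mathcal M}{\rm L}(G_{k})$ for some $k$. Now ${\rm L}(C)$ has the Haagerup property and is diffuse, while the intertwined algebra has (T). Combining this with the almost malnormality of $C$ and Popa's control of normalizers in amalgamated free products, we upgrade the intertwining to a unitary conjugacy $w_j\tilde\theta({\rm L}(G_j))w_j^*\subseteq {\rm L}(G_{k_j})$. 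The same argument, using the non-amenability of $C$ and the Haagerup property of ${\rm L}(C)$, places $\tilde\theta({\rm L}(C))$ inside a corner of ${\rm L}(C)$. A single unitary can then be chosen for both $j$, since the two images share $\tilde\theta({\rm L}(C))$, which has trivial relative commutant.

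\emph{Step 3: recover the group.} Inside each ${\rm L}(G_{k})$ we apply the wreath-like product rigidity of \cite{CIOS1,CIOS5} for embeddings. The relevant hypotheses are: $A_j$ abelian, $B_j$ icc hyperbolic-type, amenable stabilizers, and amenable centralizers. This rigidity says that an embedding of ${\rm L}(G_j)$ of this kind sends the canonical unitaries to scalar multiples of group unitaries, up to conjugation. It yields monomorphisms $\delta_j\colon G_j\to G_{k_j}$ and characters $\eta_j$ with $w\theta(u_g)w^*=\eta_j(g)v_{\delta_j(g)}$. The $\delta_j$ and $\eta_j$ agree on $C$ because $C$ is icc, so the height/scalar data is forced. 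By the universal property of the amalgam, they then glue to a homomorphism $\delta\colon G\to G$ and a character $\eta$. Finally, $\delta$ is injective: $\theta$ is injective, and distinct group unitaries are orthogonal in $\ell^2$.

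The main obstacle is Step 3, together with the end of Step 2. Theorem \ref{ATrigid} handles isomorphisms, where surjectivity supplies symmetric intertwinings in both directions. For a mere endomorphism, $\mathcal N$ may have infinite index, so that symmetry is unavailable. I would first try to use the reverse intertwining $\mathcal N'\cap\mathcal M=\mathbb C$ together with the height technique of Ioana--Popa--Vaes. In this approach, the images $\theta(u_g)$ have Fourier coefficients bounded below, and the comultiplication $\Delta(v_h)=v_h\otimes v_h$ composed with $\tilde\theta$ gives the group-like structure without needing a surjectivity hypothesis.
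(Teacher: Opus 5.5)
Your outline has the same architecture as the paper's proof: extend $\theta$ to $\tilde\theta\colon{\rm L}(G)\to{\rm L}(G)$ via uniqueness of the trace and Proposition \ref{extendingembedding}, localize the property (T) pieces $\tilde\theta({\rm L}(G_i))$ with \cite{IPP05}, run the wreath-like embedding rigidity on each $G_i$, and glue over $C$. However, the steps you treat loosely are exactly where the content lies, and the obstacle you single out is misdiagnosed.

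In the proof of Theorem \ref{super+torsion}, $\Delta$ is used only as a unital embedding of ${\rm L}(G_i)$ into a known group von Neumann algebra, all the way through Claim \ref{grouphom}. Flip symmetry, coassociativity and heights enter only in Claims \ref{j=k}, \ref{positiveheight}, \ref{deltaid} and in the appeal to \cite{KV15,IPV10}, and their sole purpose is to identify the unknown group $H$. For an endomorphism the target is already ${\rm L}(G)$. So $\tilde\theta$ itself plays the role of $\Delta$, and Claims \ref{cornerofMiintoMjMk}--\ref{grouphom} go through verbatim with target ${\rm L}(G)={\rm L}(G_1)\ast_{{\rm L}(C)}{\rm L}(G_2)$ and $n=1$. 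No index or symmetry issue arises.

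Your proposed fallback would not work, for three reasons:
\begin{itemize}
\item It needs $\tilde\theta({\rm L}(G))'\cap{\rm L}(G)=\mathbb C$, which is not available a priori for an arbitrary endomorphism. For $\Delta$ this came from Lemma \ref{comultiply}(d) and torsion-freeness of $H$.
\item A lower bound on Fourier coefficients is a consequence of the group-like form (Claim \ref{positiveheight}), not an input.
\item Analyzing $\Delta\circ\tilde\theta$ is no easier than analyzing $\tilde\theta$ directly.
\end{itemize}

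Relatedly, your Step 2 claims too much. \cite[Theorem 5.1]{IPP05} only gives projections in $\tilde\theta({\rm L}(G_i))'\cap{\rm L}(G)$ that cut the image into corners conjugated into ${\rm L}(G_1)$ or ${\rm L}(G_2)$. Since that relative commutant is not known to be trivial, you cannot upgrade to full conjugacy at that stage. In the paper it is Theorem \ref{SOE} that forces each nonzero corner $f$ to have $\tau(f)\in\mathbb N$, hence $f=1$ because $\tau(f)\leq 1$. Your claim that $\tilde\theta({\rm L}(C))$ lands in a corner of ${\rm L}(C)$ ``by the same argument'' is unsupported, since rigidity arguments do not localize a Haagerup subalgebra. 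It is also not needed.

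The gluing is the other genuine gap. The per-factor rigidity gives $\tilde\theta(u_g)=\mu_i(g)z_iu_{\delta_i(g)}z_i^*$ on $G_i$ with \emph{different} unitaries $z_1,z_2$, so agreement on $C$ is not automatic. You must show that $z=z_2^*z_1$, which satisfies $z\,u_{\delta_1(c)}=t(c)\,u_{\delta_2(c)}\,z$ for $c\in C$, lies in $\mathbb T u_h$ for some $h\in G$. This is done via Proposition \ref{singlesupport}, as in Claim \ref{grouphomtotal}. Its hypothesis is $C_G(\delta_1(C_0))=\{1\}$ for every finite index $C_0\leqslant C$, which is a property of the image $\delta_1(C)$ inside $G$. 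It comes from amenability of centralizers of nontrivial elements of $G$ (condition (iv) and \cite{KS07}) together with nonamenability of $C$. That $C$ is icc does not give it. Triviality of $\tilde\theta({\rm L}(C))'\cap{\rm L}(G)$ is likewise only known once the group-like form is in hand.

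A smaller point: the $G_j$ are not acylindrically hyperbolic, since $A_j^{(I_j)}$ is an infinite abelian normal subgroup, so your justification of trivial amenable radical is wrong. Argue instead at the level of the amalgam, as in Lemma \ref{productCamalgamated}, using \cite{Va13} and almost malnormality of $C$. Then \cite{BKKO14} gives your Step 1.
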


\noindent A group $G = G_1 \ast_C G_2 \in \mathcal{AT}$ is said to belong to the subclass $\mathcal{AT}_0$ if, in addition, it satisfies $\text{End}(G) = \text{Inn}(G)$ and has trivial abelianization. Concrete examples can be obtained by taking the free factors $G_1$ and $G_2$ to be property (T) wreath-like product groups as constructed in \cite{CIOS4}, as shown in Theorem \ref{afpwithnoend}. In particular, by specializing the preceding result to groups in the subclass $\mathcal{AT}_0$, we obtain the first examples of reduced $C^*$-algebras of nonamenable groups whose automorphism groups consist solely of weakly inner automorphisms.

\begin{cor}\label{AT0inner} Let $G \in \mathcal{AT}_0$. For every $\theta \in \mathrm{Aut}(C_r^*(G))$, there exists a unitary $w \in \mathrm{L}(G)$ such that $\theta = \mathrm{Ad}(w)$.
\end{cor}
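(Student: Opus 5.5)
The plan is to apply Theorem \ref{ATembeddings} to $\theta$ and then use the two extra hypotheses defining $\mathcal{AT}_0$, namely $\mathrm{End}(G)=\mathrm{Inn}(G)$ and trivial abelianization, to remove the group-theoretic data. Since $\theta\in \mathrm{Aut}(C_r^*(G))$ is in particular a $*$-endomorphism, Theorem \ref{ATembeddings} gives a monomorphism $\delta\colon G\to G$, a character $\eta\colon G\to\mathbb T$ and a unitary $w\in \mathrm{L}(G)$ with $\theta(u_g)=\eta(g)\,w v_{\delta(g)} w^*$ for all $g\in G$. Here $v$ denotes the canonical unitaries of the target copy of $C_r^*(G)$, which coincide with $u$.

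First I would kill the character. Since $\mathbb T$ is abelian, $\eta$ factors through the abelianization $G/[G,G]$. This quotient is trivial for $G\in\mathcal{AT}_0$, so $\eta\equiv 1$.

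Next I would handle $\delta$. By hypothesis $\mathrm{End}(G)=\mathrm{Inn}(G)$, so there is $h\in G$ with $\delta(g)=hgh^{-1}$ for all $g$. Hence $u_{\delta(g)}=u_h u_g u_h^*$, and
\[
\theta(u_g)=w u_h u_g u_h^* w^*=(wu_h)\,u_g\,(wu_h)^*\quad\text{for all } g\in G.
\]
Set $w':=wu_h$, which is a unitary in $\mathrm{L}(G)$.

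Finally I would extend from the generators to the whole algebra. The maps $\theta$ and $\mathrm{Ad}(w')$ restricted to $C_r^*(G)$ are both $*$-homomorphisms into $\mathrm{B}(\ell^2G)$ that are norm continuous. They agree on the group unitaries $u_g$, hence on $\mathbb C[G]$, and by density they agree on $C_r^*(G)$. This gives $\theta=\mathrm{Ad}(w')$.

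There is essentially no obstacle once Theorem \ref{ATembeddings} is available; all of the analytic difficulty lives in that theorem. The only points to check are that endomorphisms of $G$ being inner applies to the monomorphism $\delta$, which is immediate, and that the conjugating unitary may lie in $\mathrm{L}(G)$ rather than in $C_r^*(G)$. The statement allows this, since "weakly inner" is exactly what is claimed.
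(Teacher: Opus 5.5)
Your proposal is correct and follows the paper's argument: apply Theorem \ref{ATembeddings} to $\theta$, then use that every injective endomorphism of $G\in\mathcal{AT}_0$ is inner to absorb $u_{\delta(g)}=u_hu_gu_h^*$ into the unitary. The paper leaves two steps implicit, and you handle both correctly: that $\eta\equiv 1$ because $G$ has trivial abelianization, and the norm-density extension from $\mathbb{C}[G]$ to $C_r^*(G)$.
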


\noindent For certain amalgams in $\mathcal{AT}$ we also obtain a nonembeddability result in the same spirit as the recent results from \cite{CIOS4} in the von Neumann algebraic setting. Namely consider the continuum family $\mathcal C_1$ consisting of all property (T) wreath-like groups $G_i\subset \mathcal{WR}(A_i, B_i \ca I_i)$ satisfying the conditions (i)-(iv) in Definition \ref{def:AT}, and such that for every $i\neq j$ we have that $G_i \not\hookrightarrow G_j$. These groups have been constructed in \cite[Corollary 1.7]{CIOS1} and \cite[Corollary 1.5]{CIOS4}. Consider the subclass $\mathcal{AT}_1\subset \mathcal{AT}$ consisting off all amalgams $G= G_1\ast_C G_2$ satisfying condition (v) for two distinct group $G_1, G_2 \in \mathcal C_1$ which contain a copy of a malnormal free group $C$. Notice that $\mathcal {AT}_1$ contains a continuum of elements as well.

\begin{cor}\label{nonembeddingAt1} For any two distinct groups $H_1, H_2\in \mathcal {AT}_1$ we have that $C^*_r(H_1)\not \hookrightarrow C^*_r(H_2)$.
\end{cor}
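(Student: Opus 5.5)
Suppose, towards a contradiction, that $H_1=G_1\ast_C G_2$ and $H_2=G_3\ast_{C'} G_4$ are distinct elements of $\mathcal{AT}_1$ and that $\theta\colon C^*_r(H_1)\hookrightarrow C^*_r(H_2)$ is an injective $*$-homomorphism. The plan is to turn $\theta$ into a group embedding $H_1\hookrightarrow H_2$ and then, using property (T) of the vertex groups, into an embedding of one group of $\mathcal C_1$ into another, which the definition of $\mathcal C_1$ forbids.

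\textbf{Step 1: from $C^*$-embeddings to trace-preserving von Neumann embeddings.} First I check that $\theta$ is unital, or reduce to that case. Groups in $\mathcal{AT}$ are icc, nonamenable amalgams over an almost malnormal nonamenable subgroup, hence $C^*$-simple. So $C^*_r(H_2)$ has a unique trace $\tau$, and $\tau\circ\theta$ is a tracial state on $C^*_r(H_1)$. Since $C^*_r(H_1)$ also has a unique trace, $\tau\circ\theta=\tau$. If $\theta(1)=p$ is a nonunital projection, this gives $\tau(p)=1$, and faithfulness of $\tau$ forces $p=1$. Because $\theta$ is trace preserving, it extends to a normal trace-preserving embedding $\tilde\theta\colon \mathrm{L}(H_1)\to \mathrm{L}(H_2)$.

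\textbf{Step 2: identify $\tilde\theta$ with a group embedding.} I then run the argument behind Theorem \ref{ATembeddings}, which is stated for endomorphisms but should only use $\mathcal{AT}$-structure on both sides. The $G_i$ have property (T) and $\mathrm{L}(H_2)$ is an amalgam over $\mathrm{L}(C')$, where $C'$ has the Haagerup property. Intertwining techniques for amalgamated free products therefore give $\tilde\theta(\mathrm{L}(G_i))\prec \mathrm{L}(G_{\sigma(i)})$ for some $\sigma(i)\in\{3,4\}$. Inside the wreath-like factors I would then use the rigidity of $\mathrm{L}$ of property (T) wreath-like groups with abelian core (as in \cite{CIOS1,CIOS5}). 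This should upgrade the intertwining to unitary conjugacy of the form
\[
\tilde\theta(u_g)=\eta(g)\,w v_{\delta(g)}w^*,\qquad g\in G_i,
\]
with $\delta_i\colon G_i\to G_{\sigma(i)}$ an injective homomorphism. Almost malnormality of $C,C'$ is what lets the two partial conjugacies glue on $C$ into a global monomorphism $\delta\colon H_1\to H_2$.

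\textbf{Step 3: conclude, and where the difficulty lies.} The restriction $\delta|_{G_1}\colon G_1\hookrightarrow H_2$ lands in a conjugate of $G_3$ or $G_4$, since property (T) groups acting on the Bass--Serre tree have a fixed point. Because $\mathcal{AT}_1$ requires the vertex groups to be pairwise non-embeddable, this forces $G_1$ to coincide with $G_3$ or $G_4$, and likewise for $G_2$. This is where the argument is incomplete: it gives a contradiction only if $\{G_1,G_2\}\ne\{G_3,G_4\}$. If $H_1,H_2$ share the same vertex groups and differ only in the amalgamated free subgroup $C$, the vertex-group argument yields no contradiction, and one would need further input, such as a rigidity statement for the embedding of $C$. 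I expect the main obstacle, however, to be Step 2. It requires the full deformation/rigidity machinery of Theorem \ref{ATrigid} to work for embeddings between different amalgams, not just for isomorphisms, and in particular it needs the intertwining of the property (T) vertex algebras to be upgraded to genuine unitary conjugacy even though $\tilde\theta$ is not surjective.
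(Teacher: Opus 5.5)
Your outline is the paper's proof. You use Proposition \ref{extendingembedding} to pass to an embedding $\mathrm{L}(H_1)\to\mathrm{L}(H_2)$, then the embedding part of the proof of Theorem \ref{super+torsion} (as in Theorem \ref{ATembeddings}) to get a monomorphism $H_1\to H_2$, and then the group-level Theorem \ref{AT1nonembedding}.

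What you call the main obstacle in Step 2 is not an extra difficulty. The comultiplication in Theorem \ref{super+torsion} is itself a non-surjective trace-preserving embedding, and the paper notes in the proof of Theorem \ref{ATembeddings} that nothing before Claim \ref{j=k} uses surjectivity or the coalgebra structure. The only $\Delta$-specific input, Lemma \ref{comultiply}(a), is replaced by the trivial fact that a diffuse property (T) algebra cannot intertwine into $\mathbb C1$, and hence, via the Haagerup property, not into $\mathrm{L}(C')$ either.

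Two smaller remarks. The gluing over $C$ in Claim \ref{grouphomtotal} comes from nonamenability of $C$ together with amenability of centralizers in the target. This makes the virtual centralizer of $\delta(C)$ trivial, so Proposition \ref{singlesupport} applies; it is not a consequence of malnormality. Also, your use of property (FA) for property (T) groups in Step 3 is a cleaner substitute for the intertwining argument the paper borrows from Theorem \ref{afpwithnoend}.

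There are two genuine gaps.

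\textbf{Step 1 (unitality).} Your argument is circular. If $p=\theta(1)\neq 1$, then $\tau\circ\theta$ is a tracial positive functional of mass $\tau(p)$, not a state. Uniqueness of the trace then only gives $\tau\circ\theta=\tau(p)\,\tau$, which holds for every $p$. You need an extra input. One option is the absence of nontrivial projections in $C^*_r(H_2)$, available when $H_2$ is torsion free via Baum--Connes as in Lemma \ref{productCamalgamated}. The other is to run Step 2 for embeddings into a corner $p\,\mathrm{L}(H_2)p$, where the integrality of traces coming from Theorem \ref{SOE} (as in case (a) of Theorem \ref{super+torsion}) should force $p=1$. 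The paper's Proposition \ref{extendingembedding} tacitly assumes $\theta$ is unital.

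\textbf{Step 3 (same vertex groups).} The case you flag is a real gap, and the paper does not supply the missing idea. The proof of Theorem \ref{AT1nonembedding} simply asserts, ``by the choice of these groups'', that $H_1\ncong H_2$ forces the vertex groups to differ. In effect it treats each member of $\mathcal{AT}_1$ as determined by its unordered pair of vertex groups from $\mathcal C_1$. Under that reading your Step 3 finishes the proof: some vertex group of $H_1$ is neither vertex group of $H_2$, so by the defining property of $\mathcal C_1$ it embeds into neither, contradicting the fixed-point argument.

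Without that convention, neither your argument nor the paper's covers two amalgams with the same vertex groups that differ only in $C$ or in how $C$ is embedded. There one would need additional information about the vertex groups and the amalgamation data; compare the role of inner injective endomorphisms for $\mathcal C_0$ in Theorem \ref{afpwithnoend}.
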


\paragraph{Outline of the paper.} Apart from the introduction, the paper has four more sections. In Section \ref{Sec:WR}, we construct examples in the classes $\mathcal{AT}$, $\mathcal{AT}_0$, and $\mathcal{AT}_1$. Section \ref{SEC:vNalgprel} collects von Neumann algebra preliminaries, which are used in Section \ref{SEC:superresults} to prove Theorem \ref{superr}. In Section \ref{SEC:product}, we establish a general product superrigidity result, based on \cite{CU18,CdSS15,CdSS17}, and prove Theorem \ref{superr2}. This section also contains the proofs of our embedding results for C$^*$-algebras, including Theorem \ref{ATembeddings} and Corollaries \ref{AT0inner} and \ref{nonembeddingAt1}.

\paragraph{Acknowledgments.}
The first author was supported by NSF Grants FRG-DMS-1854194 and DMS-2154637, the Graduate College Post-Comprehensive Research Fellowship, and the Erwin and Peggy Kleinfeld Scholar Fellowship. The second author was supported by NSF Grants DMS-2154637 and DMS-2452247. The third author was supported by FWO research project G016325N of the Research Foundation Flanders.

\section{Preliminaries on wreath-like product groups}\label{Sec:WR}

We recall the definition of the groups constructed in \cite{CIOS1}.

\begin{defn}\label{wlp} 
Let $A$, $B$ be arbitrary groups, $I$ an abstract set, $B\curvearrowright I$ a (left) action of $B$ on $I$.
We say that a group $W$ is a \emph{wreath-like product} of groups $A$ and $B$ corresponding to the action $B\curvearrowright I$ if $W$
is an extension of the form

\begin{equation}\label{ext}
    1\longrightarrow \bigoplus_{i\in I}A_i \longrightarrow  W \stackrel{\pi}\longrightarrow B\longrightarrow 1,
\end{equation}

\noindent where the following conditions are satisfied:

\begin{enumerate}
    \item[(a)] $A_i\cong A$, for each $i\in I$, and
    \item[(b)] the action of $W$ on $A^{(I)}=\bigoplus_{i\in I}A_i$ by conjugation satisfies the rule $wA_iw^{-1} = A_{\pi(w)\cdot i}$, for all $i\in I$.
\end{enumerate}


\noindent If the action $B\curvearrowright I$ is regular (i.e., free and transitive), we say that $W$ is a \emph{regular wreath-like product} of $A$ and $B$. The set of all wreath-like  products of groups $A$ and $B$ corresponding to an action $B\curvearrowright I$ (respectively, all regular wreath-like products) is denoted by $\WR(A, B\curvearrowright I)$ (respectively, $\WR(A,B)$).
\end{defn}

The notion of a wreath-like product generalizes the ordinary (restricted) wreath product of groups. Indeed, for any groups $A$ and $B$, we have $A {\text wr}  B\in \WR(A,B)$. Conversely, it is not difficult to show that $W\cong A {\text wr}  B$ whenever the extension \eqref{ext} splits. 

To enhance the analysis of II$_1$ factors arising from wreath-like product groups $W$, it is useful to embed specific groups into the quotient group $B$ from \eqref{ext} in such a way that these subgroups also naturally appear inside the wreath-like product $W$ itself. Motivated by this perspective, a \emph{peripheral} version of wreath-like product groups was introduced in \cite{CIOS5}. Let $A$ and $B$ be groups, and let $C < B$ be a subgroup. Throughout this paper, we denote by $\W\R(A, B, C < B)$ the category of all wreath-like groups $W \in \W\R(A, B)$ satisfying the following splitting condition:

\begin{enumerate}
    \item [(c)]\label{split} The subgroup $\pi^{-1}(C)<W$ is a split generalized wreath product group $\pi^{-1}(C)=A \wr_B   C$; in other words the natural $2$-cocycle associated with the extension \eqref{ext} is trivial on $C\times C$.
\end{enumerate}

\noindent Notice that $\W\R(A, B, C<B)\subseteq \W\R(A, B)$, and when $C=1$ the two classes coincide. 

In \cite{CIOS5}, the authors showed there is an abundance of property (T) such wreath-like products satisfying this splitting condition on prescribed subgroups of the quotient group. 
	
\begin{thm}[{\cite[Corollary 3.4]{CIOS5}}]\label{wreathlikea}For every finitely generated groups $A$ and $C$, there exist a group $B$, containing $C$, and a wreath-like product group $W\in \W\R(A, B, C<B)$ satisfying: \begin{enumerate}
    \item[(1)] $W$ has property (T);
    \item[(2)] $B$ is hyperbolic relative to $C$; and
    \item[(3)] When $A$ and $C$ are torsion free, then $B$, and hence $W$, can be chosen torsion free as well.
\end{enumerate}
\end{thm}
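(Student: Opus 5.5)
The statement is \cite[Corollary 3.4]{CIOS5}. The plan is to reproduce its proof by combining two ingredients: a group-theoretic Dehn filling construction producing $B$, and the wreath-like product machinery of \cite{CIOS1} producing $W$.

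\textbf{Step 1: construct a relatively hyperbolic pair with property (T).} Start from a torsion-free hyperbolic group $H_0$ with property (T), for instance a uniform lattice in $Sp(n,1)$ or a random group. Form the free product $P = H_0 \ast C$. Then $P$ is hyperbolic relative to $C$, and $C$ embeds as a peripheral subgroup. Property (T) of $P$ fails in general, since $C$ is arbitrary. To repair this, pass to a quotient in which generators of $C$ are identified with words in $H_0$. Concretely, use small cancellation over relatively hyperbolic groups (Osin's theorem on relatively hyperbolic quotients) to obtain $B_0$ hyperbolic relative to $C$, with $C$ injecting and $B_0$ generated by the image of $H_0$. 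Then $B_0$ is a quotient of a property (T) group and so has (T). If $A$ and $C$ are torsion free, the small cancellation words can be chosen so that no new torsion appears.

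\textbf{Step 2: pass to the wreath-like product.} Take a free group $F$ mapping onto $B_0$, with kernel $N$, arranged so that $C$ lifts to a free factor of $F$ and $N$ meets no conjugate of this lift. Choose a suitable element $w \in F$ (hyperbolic, not conjugate into $C$) and apply the Cohen--Lyndon / Dehn filling theorem \cite{Sun,DGO11} to the hyperbolically embedded cyclic subgroup $\langle w\rangle$ modulo $\langle w^{k}\rangle$ for $k$ large. Then set $W = F/[R,R]R^{m}$, adapted to $A$, as in the CIOS1 construction of wreath-like products, and let $B = F/R$. The Cohen--Lyndon property yields $R/[R,R]$ as a direct sum of copies of the filling indexed by cosets. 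Twisting by $A$ via $R \to A$ gives the extension \eqref{ext} with regular action, so $W \in \W\R(A,B)$. Choosing the filling to avoid $C$ keeps $B$ hyperbolic relative to $C$ (Dehn filling preserves peripheral structure). It also makes $R \cap C = 1$, so the cocycle restricted to $C$ is trivial and condition (c) holds.

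\textbf{Step 3: property (T) and torsion.} Property (T) for $W$ follows as in \cite{CIOS1}. One realizes $W$ as a quotient of a property (T) group, for example by running the construction from $B_0$, whose preimage in the relevant cover retains (T) because sufficiently deep Dehn fillings of (T) groups are (T) quotients. For (3), the torsion-freeness of $B$ comes from the torsion-free Dehn filling theorem. For $W$, torsion-freeness follows because $W$ is an extension of a torsion-free $B$ by $\bigoplus A_i$ with $A$ torsion free.

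\textbf{Main obstacle.} The hardest step is ensuring simultaneously that $W$ has property (T) and that the cocycle splits over $C$. Property (T) usually forces the base group to be a (T) hyperbolic group, while $C$ is arbitrary. I would first try to handle this by building $B$ as a relatively hyperbolic (T) quotient in which $C$ is peripheral and the filling relators are chosen in the hyperbolic part away from $C$.
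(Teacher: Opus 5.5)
\textbf{There is a genuine gap, in Step 2.} For orientation: the paper does not prove this statement. It quotes \cite[Corollary 3.4]{CIOS5} and gives its own argument only for free $C$ (Theorem \ref{existence}). That argument has three steps. Start from a torsion-free property (T) group hyperbolic relative to $C$ (via \cite[Theorem 1]{BO}). Pass via \cite[Proposition 4.21, Corollary 4.18]{CIOS3} to a quotient $U\in\mathcal{WR}(\mathbb F_k,B)$ with $B$ torsion free and hyperbolic relative to $C$. Then map each copy of $\mathbb F_k$ onto $A$ by \cite[Lemma 4.3]{CIOS3}. Property (T) survives because every step is a quotient.

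Your Step 1 matches the first step. Your splitting idea is also correct: if $C$ lies in the group being filled and meets the filling kernel trivially, its image in $W$ is a section of $\pi$ over $C$. This even avoids the paper's use of freeness of $C$.

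The problem is that in Step 2 you abandon $B_0$ and fill inside a free group $F$. The Cohen--Lyndon theorem in $F$ only applies if $R$ is the normal closure of $w^k$ in $F$. Then the kernel of $F\to B_0$ never enters, and $B=F/R$ is a one-relator group. Its abelianization $\mathbb Z^{\mathrm{rank}(F)}/\langle k[w]\rangle$ is infinite, since $\mathrm{rank}(F)\geq 2$. So neither $B$ nor $W\twoheadrightarrow B$ can have property (T). Also, $C$ cannot be a free factor of $F$ unless $C$ is free, so nothing guarantees $C<B$. The ``preimage in the relevant cover'' in Step 3 does not repair this. The fillings must be done inside the (T) group $B_0$ itself, so that $B$ and $W$ are quotients of $B_0$. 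Your ``main obstacle'' paragraph points in this direction, but the construction you wrote does something else.

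Even inside $B_0$, filling $\langle w\rangle$ by $\langle w^k\rangle$ with $k\geq 2$ is the wrong filling, for two reasons.
\begin{itemize}
\item It makes $\bar w$ an element of order $k$ in $B$, contradicting (3).
\item The Cohen--Lyndon decomposition $\langle\langle w^k\rangle\rangle=\ast_{t\in T}t\langle w^k\rangle t^{-1}$ is indexed by a transversal $T$ of $\langle w\rangle\langle\langle w^k\rangle\rangle$. So you land in $\mathcal{WR}(A,B\curvearrowright B/\langle\bar w\rangle)$, whose action has nontrivial finite stabilizers. But $\mathcal{WR}(A,B,C<B)\subseteq\mathcal{WR}(A,B)$ requires the regular action.
\end{itemize}
You need to kill an entire free subgroup $H\cong\mathbb F_k$ for which the Cohen--Lyndon property holds with $N=H$. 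Such an $H$ is obtained by small cancellation, with $\{C,H\}$ hyperbolically embedded and $C$ left unfilled. Then $\langle\langle H\rangle\rangle$ is a free product of conjugates of $H$ indexed by a transversal of $\langle\langle H\rangle\rangle$, that is, by $B=B_0/\langle\langle H\rangle\rangle$ itself. This $B$ stays torsion free and hyperbolic relative to $C$.

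Finally, $R/[R,R]$ is abelian, so your construction can only produce abelian bases, while the statement allows any finitely generated $A$. Instead, kill only the commutators between distinct conjugates of $H$, which gives $\mathcal{WR}(\mathbb F_k,B)$. Then push each copy onto $A=\mathbb F_k/N$, as in the paper's argument.
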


\vspace{1mm}


\begin{cor}\label{constructperipheral} For every finitely generated groups $A$ and $C$, there exists a a property (T) wreath-like product group $W\in \mathcal{WR}(A,B,C\leqslant B)$ such that $B$ is hyperbolic relative to $C$. In addition, if $A$ and $C$ are torsion free, then $W$ can be chosen torsion free as well. Moreover, in this case, if one assumes that the centralizer of every nontrivial element of $C$ is amenable then $B$ can be chosen to satisfy the same property. 
\end{cor}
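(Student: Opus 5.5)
The first two assertions follow directly from Theorem \ref{wreathlikea}. Given finitely generated $A$ and $C$, parts (1) and (2) of that theorem supply a group $B \supseteq C$ hyperbolic relative to $C$ and a property (T) group $W\in \mathcal{WR}(A,B,C\leqslant B)$. Part (3) gives the torsion-free version when $A$ and $C$ are torsion free. So the only real content is the last sentence: when $A,C$ are torsion free and every nontrivial element of $C$ has amenable centralizer in $C$, the group $B$ can be arranged so that every nontrivial $b\in B$ has amenable centralizer $C_B(b)$.

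For this I will use the standard structure of centralizers in a torsion-free relatively hyperbolic group $B$ with peripheral subgroup $C$. I split nontrivial elements $b\in B$ into two cases.

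\emph{Loxodromic elements.} If $b$ is not conjugate into $C$, then $b$ has infinite order (torsion freeness) and is loxodromic for the relatively hyperbolic structure. By Osin's results on relatively hyperbolic groups, $b$ lies in a unique maximal elementary subgroup $E(b)$, which is virtually cyclic. The centralizer $C_B(b)$ normalizes $\langle b\rangle$, hence is contained in $E(b)$. It is therefore virtually cyclic, and in particular amenable.

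\emph{Parabolic elements.} If $b=gcg^{-1}$ with $c\in C\setminus\{1\}$, it suffices to treat $b=c\in C$. The key fact is that peripheral subgroups of a relatively hyperbolic group are almost malnormal: $C\cap xCx^{-1}$ is finite for $x\notin C$, hence trivial in the torsion-free case. Now take $x\in C_B(c)$. Then $c=xcx^{-1}$ is a nontrivial element of $C\cap xCx^{-1}$, which forces $x\in C$. Hence $C_B(c)=C_C(c)$, which is amenable by hypothesis.

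I expect the main obstacle to be justifying these two standard inputs: that centralizers of loxodromics sit inside $E(b)$, and that peripheral subgroups are almost malnormal. In particular, I need to check that the group $B$ produced in \cite[Corollary 3.4]{CIOS5} is genuinely relatively hyperbolic in Osin's sense with respect to $\{C\}$, so that these results apply. If the construction only gives weak relative hyperbolicity, I would instead argue through the Dehn filling construction, which preserves hyperbolicity relative to $C$. I would also note that no change to the choice of $W$ is required: the same $W$ works once $B$ has the stated property.
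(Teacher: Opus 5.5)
Your proposal is correct and follows essentially the same route as the paper. The first assertions come from Theorem \ref{wreathlikea}; elements of the torsion-free $B$ not conjugate into $C$ are loxodromic with virtually cyclic centralizer (the paper cites \cite[Lemma 6.5]{DGO11}, where you cite Osin's maximal elementary subgroups); and for $t\in C\setminus\{1\}$, malnormality of $C$ in $B$ gives $C_B(t)=C_C(t)$, which is amenable by hypothesis, with conjugation handling the general case.
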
   

\begin{proof} Using Theorem \ref{wreathlikea} we only need to prove the moreover part. Assume that $B$ is torsion free and hyperbolic relative to $C<B$. Fix an element $b\in B$. If $b$ cannot be conjugated in $C$ then is hyperbolic and, by \cite[Lemma 6.5]{DGO11}, its centralizer $C_B(b)$ is virtually cyclic, and hence amenable. Now assume there is $k\in B$ such that $t:=kbk^{-1}\in C$. Since $t$ has infinite order and $C$ is malnormal in $B$ the centralizer satisfies $C_B(t)= C_C(t)$. By the additional assumption on $C$ we get that $C_B(t)$ is amenable, and so is $C_B(b)= k^{-1}C_B(t)k$. 
\end{proof}

\noindent Given an inclusion of groups $C < G$, we say that $C$ is \emph{almost malnormal} if for every $g \in G \setminus C$ the intersection $C \cap g C g^{-1}$ is finite; or equivalently, if for every $g,h\in G\setminus C$ the intersection $C\cap gCh$ is finite.

\begin{lem}[{\cite{CIOS5}}]\label{malnormal3}
If $C < B$ is almost malnormal, then for any $W \in \W\R(A, B, C < B)$, the subgroup $C < W$ is also almost malnormal.
\end{lem}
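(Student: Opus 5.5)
The plan is to reduce almost malnormality of $C$ in $W$ to almost malnormality of $C$ in $B$, using the splitting condition (c) and the projection $\pi\colon W\to B$. By (c), the $2$-cocycle of the extension \eqref{ext} vanishes on $C\times C$. Hence $\pi^{-1}(C)=A^{(I)}\rtimes C$, and $C$ sits inside $W$ as a subgroup $\tilde C$ with $\pi|_{\tilde C}$ an isomorphism onto $C$ and $\tilde C\cap A^{(I)}=1$. Here $I=B$, since $W\in\WR(A,B)$ is regular. Fix $g\in W\setminus\tilde C$; we must show that $\tilde C\cap g\tilde C g^{-1}$ is finite. There are two cases, according to whether $\pi(g)\in C$.

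\emph{Case 1: $\pi(g)\notin C$.} Then $\pi$ maps $\tilde C\cap g\tilde Cg^{-1}$ injectively into $C\cap \pi(g)C\pi(g)^{-1}$. Injectivity holds because $\pi|_{\tilde C}$ is injective. The target is finite by almost malnormality of $C<B$, so the intersection is finite.

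\emph{Case 2: $\pi(g)\in C$.} Write $g=a\tilde c$ with $\tilde c\in\tilde C$ and $a\in A^{(I)}\setminus\{1\}$; $a\neq1$ because $g\notin\tilde C$. Then
\[
g\tilde Cg^{-1}=a\tilde Ca^{-1},
\]
and it suffices to show that $\tilde C\cap a\tilde C a^{-1}$ is finite. Suppose $x\in\tilde C$ and $axa^{-1}\in\tilde C$. Since $axa^{-1}=a\,(xa^{-1}x^{-1})\,x$ and $a\,xa^{-1}x^{-1}\in A^{(I)}$, the identity $\tilde C\cap A^{(I)}=1$ forces $axa^{-1}=x$. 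So the intersection equals the centralizer $C_{\tilde C}(a)$. By condition (b), each $x$ in this centralizer permutes the finite nonempty support $\mathrm{supp}(a)\subset I=B$ through the left translation action of $\pi(x)$. The stabilizer of a finite subset of $B$ under left translation is finite, since it is contained in $\{h: h s_0\in \mathrm{supp}(a)\}$ for any fixed $s_0$ in the support. Hence $C_{\tilde C}(a)$ is finite.

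The step that needs the most care is Case 2. One must use the splitting (c) to guarantee $\tilde C\cap A^{(I)}=1$ and to obtain the decomposition $g=a\tilde c$. One must also check that the conjugation action on $A^{(I)}$ really permutes supports according to the left action $\pi(w)\cdot i$, which is exactly what condition (b) gives. Regularity of $B\curvearrowright I$ is what makes the stabilizers of finite sets finite. (If one wanted a non-regular action, one would instead need finite stabilizers, which do not hold in general.) Since the lemma is stated for the regular class $\WR(A,B,C<B)$, this suffices.
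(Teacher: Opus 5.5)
Your proof is correct and complete. The paper gives no argument of its own for this lemma; it is quoted from \cite{CIOS5}. So there is no internal proof to compare routes against, and your dichotomy on whether $\pi(g)\in C$ stands as a self-contained proof.

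Both cases check out:
\begin{enumerate}
\item[(1)] When $\pi(g)\notin C$, injectivity of $\pi$ on the complement $\tilde C$ transfers finiteness from $C\cap \pi(g)C\pi(g)^{-1}$.
\item[(2)] When $\pi(g)\in C$, you write $g=a\tilde c$. The fact that $\tilde C\cap A^{(B)}=\{1\}$ then correctly forces $\tilde C\cap a\tilde C a^{-1}=C_{\tilde C}(a)$. This centralizer is finite because $\pi(C_{\tilde C}(a))$ setwise stabilizes the finite support of $a\neq 1$ under the free translation action, and $\pi$ is injective on $\tilde C$.
\end{enumerate}

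Your write-up also makes two points visible that the bare citation hides. First, the argument works for any complement of $A^{(B)}$ in $\pi^{-1}(C)$, which matters because such complements need not be conjugate. Second, almost malnormality of $C<B$ is used only in the first case. The second case needs only that $C$ acts on the index set with finite point stabilizers, which holds automatically in the regular setting.
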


\noindent Next we present a proof of the previous corollary in the case when $W$ is torsion free and $C$ is any free group $C=\mathbb F_n$ which does not appeal to the results from \cite{CIOS5}.

\begin{thm}\label{existence} For every $n, k\in \mathbb N$, there is a property (T) group $W\in \mathcal {WR}(A, B, C<B )$ such that \begin{enumerate}
    \item[(i)] $A \cong \mathbb Z^k$ and $C\cong \mathbb F_n$;
    \item[(ii)] $B$ is icc, torsion free, and hyperbolic relative to $C<B$;
    \item[(iii)] $C$ is malnormal in $W$; and
    \item[(iv)] for every $1\neq b\in B$ the centralizer $C_B(b)$ is virtually cyclic.
\end{enumerate}
\end{thm}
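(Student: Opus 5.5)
The strategy is to build $B$ by small cancellation/Dehn filling over a hyperbolic group that contains a malnormal free subgroup, and then apply the wreath-like product construction of \cite{CIOS1} via Cohen--Lyndon. The point of this route is that the free subgroup survives the filling and the extension splits over it automatically.

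\textbf{Step 1: choose the ambient group.} Start with a torsion-free, non-elementary hyperbolic group $H$ with property (T), for instance a uniform lattice in $Sp(m,1)$. Inside $H$, fix a malnormal free subgroup $C\cong\mathbb F_n$ that is quasiconvex. Such a subgroup exists by the standard ping-pong argument with high powers of independent loxodromics, which gives a quasiconvex free subgroup; malnormality can be arranged by passing to a suitable subgroup, as in Kapovich's results. Then $H$ is hyperbolic relative to $C$.

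\textbf{Step 2: Dehn fill and record the properties of $B$.} By \cite{DGO11,Sun}, there is a loxodromic element $h\in H$ with $\langle h\rangle$ malnormal and hyperbolically embedded together with $C$. For a sufficiently deep finite-index subgroup $\langle h^N\rangle$, set $B=H/\langle\langle h^N\rangle\rangle$. By Dehn filling, the following hold:
\begin{itemize}
\item $C$ injects into $B$;
\item $B$ is hyperbolic relative to $C$ and remains torsion free, since $h^N$ is not a proper power, provided $N$ is chosen appropriately;
\item $C$ stays malnormal in $B$;
\item $B$ is icc, being non-elementary relatively hyperbolic with no finite normal subgroups.
\end{itemize}
Statement (iv) then follows as in the proof of Corollary \ref{constructperipheral}. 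A nontrivial $b$ not conjugate into $C$ is loxodromic, so $C_B(b)$ is virtually cyclic by \cite[Lemma 6.5]{DGO11}. Otherwise $b$ is conjugate into $C$, and $C_B(b)$ is conjugate to a centralizer in the free group $C$, which is cyclic.

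\textbf{Step 3: build $W$ and check (T) and splitting.} Let $N=\langle\langle h^N\rangle\rangle$ and define $W=H/[N,N]N^{\ell}$, so that $A\cong\mathbb Z^k$ requires adapting the exponent or using a $\mathbb Z^k$-valued version. By the Cohen--Lyndon property \cite{Sun}, $N/[N,N]\cong\bigoplus_{B}\mathbb Z$ with $B$ permuting the summands regularly; this is the argument of \cite{CIOS1}. To get $A\cong\mathbb Z^k$, fill $k$ independent elements $h_1^N,\dots,h_k^N$ simultaneously, taking one orbit per generator, and then merge orbits so that each summand is $\mathbb Z^k$.

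Property (T) of $W$ is the delicate point, because the kernel $\bigoplus\mathbb Z^k$ is abelian and $W$ is not a quotient of $H$ in a way that trivially preserves (T). To handle this, I would instead take $W=H/[N,N]$ directly. Then $W$ is a quotient of $H$, hence has (T), and $W\in\mathcal{WR}(\mathbb Z^k,B)$ with $A$ free abelian because Cohen--Lyndon gives a free product decomposition of $N$.

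Splitting condition (c) over $C$ is automatic: since $C\cong\mathbb F_n$ is free, any extension over it splits. Concretely, lift the free generators of $C$ to $H$, which is possible because $C<H$ maps isomorphically, and the resulting copy of $C$ in $W$ normalizes the kernel in the required way.

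Torsion-freeness of $W$ follows because it is an extension of torsion-free groups. Malnormality of $C$ in $W$ follows from Lemma \ref{malnormal3}, whose proof in the free case is a direct computation: if $w C w^{-1}\cap C\neq1$ then $\pi(w)\in C$, so $w\in\pi^{-1}(C)=A\wr_B C$, and in the split wreath product $C$ is malnormal since $C$ acts freely on the index set.

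\textbf{Main obstacle.} I expect the hardest step to be the simultaneous control in Step 2: choosing filling elements that preserve malnormality and torsion-freeness of $C$ in $B$ while still giving a free abelian kernel of rank exactly $k$ per orbit. I would first try the Dehn filling theorem applied to the hyperbolically embedded family $\{C,\langle h_1\rangle,\dots,\langle h_k\rangle\}$, filling only the cyclic subgroups.
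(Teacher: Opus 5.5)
\textbf{There is a genuine gap in Step 2, and it carries into Step 3.}

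\emph{Torsion.} Filling $\langle h\rangle$ along the proper subgroup $\langle h^N\rangle$ with $N\geq 2$ can never give a torsion-free $B$. The Dehn filling theorem gives $\langle\langle h^N\rangle\rangle\cap\langle h\rangle=\langle h^N\rangle$. Hence $\langle h\rangle/\langle h^N\rangle\cong\mathbb Z/N$ embeds in $B$, and the image $\bar h$ has order exactly $N$. Your justification that ``$h^N$ is not a proper power'' is false for $N\geq 2$. For $N=1$ the filling theorem is not available, because $\langle h\rangle$ need not avoid the finite exceptional set.

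\emph{Regularity.} The same defect breaks regularity. Cohen--Lyndon writes $\langle\langle h^N\rangle\rangle$ as a free product of conjugates $t\langle h^N\rangle t^{-1}$, with $t$ running over a transversal of $\langle\langle h^N\rangle\rangle\langle h\rangle$ in $H$. So the abelianized kernel is $\bigoplus_{B/\langle\bar h\rangle}\mathbb Z$, not $\bigoplus_B\mathbb Z$. Thus $H/[N,N]$ lies in $\mathcal{WR}(\mathbb Z,B\curvearrowright B/\langle\bar h\rangle)$, with nontrivial finite stabilizers $b\langle\bar h\rangle b^{-1}$, not in $\mathcal{WR}(\mathbb Z,B)$.

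\emph{Rank $k$.} For $h_1,\dots,h_k$ the $B$-sets $B/\langle\bar h_i\rangle$ are in general pairwise non-isomorphic. So the orbits cannot be merged into one index set with $\mathbb Z^k$ summands.

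\emph{Centralizers.} Your proof of (iv) uses the dichotomy ``loxodromic or conjugate into $C$''. This misses the torsion elements $\bar h$, whose centralizers \cite[Lemma 6.5]{DGO11} does not control.

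\textbf{What is missing.} You need a quotient in which the filled subgroups are killed outright, for instance by relators that are not proper powers. Then no torsion appears and the Cohen--Lyndon index set is $B$ itself. This is exactly what the paper imports.
\begin{itemize}
\item It starts from a torsion-free property (T) group hyperbolic relative to $C\cong\mathbb F_n$, taken from \cite{BO}. Your lattice with a malnormal quasiconvex free subgroup would also serve here.
\item \cite[Proposition 4.21, Corollary 4.18]{CIOS3} then give a quotient $U\in\mathcal{WR}(\mathbb F_k,B)$ with $B$ torsion-free and hyperbolic relative to $C$.
\item Since $C$ is free, $B$ is hyperbolic by \cite{DS05}; being torsion-free, it is icc; and (iv) follows from \cite{CIOS4}.
\item \cite[Lemma 4.3]{CIOS3} passes to a quotient $W\in\mathcal{WR}(\mathbb Z^k,B)$, which has (T) as a quotient of $U$.
\end{itemize}
Once such a torsion-free $B$ with regular action is available, your remaining steps agree with the paper: splitting over the free group $C$, and malnormality via Lemma \ref{malnormal3}.

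Separately, your worry that $H/[N,N]N^{\ell}$ might fail (T) is unfounded: every quotient of a property (T) group has property (T).
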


\begin{proof} Fix $n\in \mathbb N$ and let $G$ be a torsion free property (T) group that is hyperbolic relative to $C<G$ where $C\cong \mathbb F_n$ (see for instance \cite[Theorem 1]{BO}). Using \cite[Proposition 4.21]{CIOS3} and \cite[Corollary 4.18]{CIOS3} there is a quotient $U$ of $G$ such that $U\in \mathcal WR(\mathbb F_k, B)$, where $B$ is torsion free and hyperbolic relative to a copy of $C<B$. Note that, since $C$ is free it follows from \cite[Corollary 1.14]{DS05} that $B$ is hyperbolic. Since $B$ is torsion free, it is icc. Moreover, by \cite[Corollary 2.6]{CIOS4}, $B$ also satisfies condition (iv) on centralizers.

Write $A=\mathbb Z_k = \mathbb F_k/N$ for some normal subgroup $N\lhd \mathbb F_k$. Then applying \cite[Lemma 4.3]{CIOS3} for $U$ and $N$ we further obtain a property (T) quotient $W \in \mathcal{WR}(A, B)$. Next, let $\pi: W \ra B$ be the canonical homomorphism. Since $C$ is free group, it follows that the subgroup $\pi^{-1}(C)$ is an extension $1\ra A^{(B)}\hookrightarrow \pi^{-1}(C)\overset{\pi}{\twoheadrightarrow} C \ra 1$ that splits, which means $W \in \mathcal{WR}(A, B, C<B)$. The fact that $C$ is almost malnormal in $W$ now follows directly from Lemma \ref{malnormal3}. \end{proof}

\begin{lem}\label{malnormal4}
Let $G_1$ and $G_2$ be groups with a common subgroup $C < G_1, G_2$. If $C < G_i$ is almost malnormal for each $1 \leq i \leq 2$, then $C < G_1 \ast_C G_2$ is also almost malnormal.
\end{lem}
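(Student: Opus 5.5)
We argue through the Bass--Serre tree $T$ of $G=G_1\ast_C G_2$. Its vertices are the cosets $gG_1$, $gG_2$, its edges are the cosets $gC$, and the edge $gC$ joins $gG_1$ to $gG_2$. The stabilizer of the edge $gC$ is $gCg^{-1}$, and the stabilizer of the vertex $gG_i$ is $gG_ig^{-1}$. Fix $g\in G\setminus C$. We must show that $C\cap gCg^{-1}$ is finite. This intersection fixes both edges $e_0=C$ and $g e_0=gC$, which are distinct since $g\notin C$. Because $T$ is a tree, the intersection fixes pointwise the unique geodesic joining them, so it suffices to locate two distinct edges on this geodesic that share a vertex.

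Let $v$ be the endpoint of $e_0$ on the side of the geodesic toward $ge_0$, and let $e'$ be the next edge of the geodesic after $e_0$. This $e'$ exists and is different from $e_0$, because $ge_0\neq e_0$. Up to relabeling, $v=G_i$ for some $i\in\{1,2\}$. The edges at $G_i$ are exactly $hC$ with $h\in G_i$, so $e'=hC$ for some $h\in G_i\setminus C$. Hence
\[
C\cap gCg^{-1}\subseteq \mathrm{Stab}(e_0)\cap\mathrm{Stab}(e')=C\cap hCh^{-1}.
\]
The right-hand side is finite by the almost malnormality of $C$ in $G_i$.

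The one point needing care is the reduction to adjacent edges. A subgroup fixing two edges of a tree fixes every edge of the geodesic between them, and $e_0$ and $e'$ are consecutive on that geodesic. This uses only standard Bass--Serre theory and is not a real obstacle.

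If one prefers to avoid trees, there is an alternative argument by normal forms. Write $g=g_1\cdots g_m$ in reduced form, with each $g_j\in G_{i_j}\setminus C$ and consecutive factors from different $G_i$. Suppose $c\in C$ and $g^{-1}cg\in C$. Applying the normal form theorem to $g_1^{-1}cg_1\cdots$, one shows inductively that $g_1^{-1}cg_1$ must lie in $C$; otherwise the word would be reduced of length at least $1$ and could not equal an element of $C$. This forces $c\in C\cap g_1Cg_1^{-1}$, which is finite. The delicate step in this version is handling absorption into $C$ at each stage, and that is exactly what the tree argument packages cleanly.
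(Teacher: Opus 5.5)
Your proof is correct and is essentially the paper's argument. The paper uses the normal-form version you sketch at the end, showing $C\cap gCg^{-1}\subseteq C\cap g_1Cg_1^{-1}$ for the first letter $g_1$ of a reduced word for $g$; your tree argument is the geometric form of the same step, since the edge $e'$ following $e_0=C$ on the geodesic to $gC$ is $g_1C$, so $h=g_1$ (and no inductive absorption issue actually arises, because if $g_1^{-1}cg_1\notin C$ then $g_m^{-1}\cdots g_2^{-1}(g_1^{-1}cg_1)g_2\cdots g_m$ is already reduced of length $2m-1\geq 1$).
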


\begin{proof} Let $g\in G\setminus C$ with $g=g_1g_2...g_n$ its reduced form for $n\geq 1$ an integer, $j(k)\in\{1,2\}$ and $g_k\in G_{j(k)}\setminus C$, for every $1\leq k\leq n$, and $j(1)\neq j(2)\neq \cdots\neq j(k)$. Notice that for $x\in C\cap gCg^{-1}$, $g^{-1}xg=g_n^{-1}\cdots g_1^{-1}xg_1\cdots g_n\in C$, which forces $g_1^{-1}xg_1\in C$. Therefore, $C\cap gCg^{-1}\subset C\cap g_1Cg_1^{-1}$, which is finite since $g_1\in G_{j(1)}\setminus C$ and $C$ is malnormal in $G_{j(1)}$.
\end{proof}

\noindent We continue with a result illustrating that Theorem \ref{existence} can be effectively used to produce numerous examples of amalgamated free products of wreath-like groups satisfying the assumptions of our main rigidity results in Theorem \ref{ATrigid}.

\begin{thm} 
There exist infinitely many torsion free,  amalgamated free product groups $G = G_1 \ast_C G_2$ satisfying the following properties: 
\begin{enumerate}
    \item[(i)] Each $G_i \in \mathcal{WR}(A_i, B_i, C < B_i)$ is a property (T) wreath-like product group, where $A_i$ is a free abelian group of any given finite rank, and $B_i$ is a specific torsion free, icc hyperbolic group satisfying that, for every $1\neq b\in B_i$, its centralizer $C_{B_i}(b)$ is virtually cyclic.
    \item[(ii)] $C$ is isomorphic to a free group of any given finite rank and it is malnormal in $G$. In particular, when $C$ is icc, its virtual centralizer in $G$ satisfies ${\rm vC}_G(C)=\{1\}$.
    \item [(iii)] For every $h\neq 1$, its centralizer $C_G(h)$ is amenable. 
\end{enumerate}
\end{thm}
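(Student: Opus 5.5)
The plan is to build $G_1$ and $G_2$ from two applications of Theorem \ref{existence} with the same free group $C\cong\mathbb F_n$, and then read off (i)--(iii) from the structure of amalgamated free products.

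\textbf{Construction and properties (i), (ii).} Fix $n\geq 2$, so that $C$ is icc, and fix ranks $k_1,k_2$. Theorem \ref{existence} gives property (T) groups $G_i\in\mathcal{WR}(\mathbb Z^{k_i},B_i,C<B_i)$. Here $B_i$ is torsion free, icc and hyperbolic, its centralizers of nontrivial elements are virtually cyclic, and $C$ is malnormal in $G_i$. Running the construction over varying $n,k_i$, or over the distinct quotients it produces, gives infinitely many such pairs. Since $C$ is a subgroup of both $G_1$ and $G_2$, we may form $G=G_1\ast_C G_2$. Each $G_i$ is torsion free, being an extension of the torsion-free group $A^{(B_i)}$ by the torsion-free group $B_i$. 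By the standard torsion theorem for amalgams, every finite-order element of $G$ is conjugate into some $G_i$, so $G$ is torsion free. Property (i) is exactly what Theorem \ref{existence} provides. For (ii), Lemma \ref{malnormal4} shows $C$ is (almost) malnormal in $G$; by torsion freeness the finite intersections $C\cap gCg^{-1}$ are trivial, so $C$ is malnormal.

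\textbf{Virtual centralizer.} Let $g\in{\rm vC}_G(C)$, so $g$ has finite $C$-conjugacy class. Then $C_0:=C_C(g)$ has finite index in $C$. If $g\notin C$, then $C_0\subset C\cap gCg^{-1}=\{1\}$, contradicting that $C$ is infinite. Hence $g\in C$ has a finite $C$-conjugacy class, and icc forces $g=1$.

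\textbf{Centralizers, property (iii).} I expect this to be the main obstacle, since it requires a case analysis on the action of $G$ on its Bass--Serre tree $T$. Let $h\neq 1$.

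\emph{Case 1: $h$ is hyperbolic on $T$.} Then $C_G(h)$ preserves the axis of $h$. Its action on the axis gives a homomorphism to $D_\infty$ whose kernel fixes an edge of the axis, so the kernel lies in a conjugate of $C$. Moreover, the kernel is normalized by $h$. Malnormality of $C$ then shows the kernel is trivial: an edge stabilizer $xCx^{-1}$ normalized by $h$, with $h$ moving that edge, must be trivial. Hence $C_G(h)$ embeds in $D_\infty$ and is amenable.

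\emph{Case 2: $h$ is elliptic}, conjugate into $G_i$; assume $h\in G_i$.
\begin{itemize}
\item If $h$ fixes a unique vertex, then $C_G(h)$ fixes that vertex, so $C_G(h)=C_{G_i}(h)$.
\item If $h$ fixes an edge, we may assume $h\in C$. Malnormality of $C$ in both factors forces the fixed-point set of $h$ to be the star of edges at two adjacent vertices, and forces $C_G(h)$ to fix the edge. Hence $C_G(h)=C_C(h)$, which is cyclic because $C$ is free.
\end{itemize}
It therefore remains to show $C_{G_i}(h)$ is amenable for $1\neq h\in G_i$.
\begin{itemize}
\item If $\pi(h)\neq 1$, then $\pi(C_{G_i}(h))\subset C_{B_i}(\pi(h))$, which is virtually cyclic. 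The kernel $C_{G_i}(h)\cap A^{(B_i)}$ is abelian. So $C_{G_i}(h)$ is abelian-by-(virtually cyclic), hence amenable.
\item If $h\in A^{(B_i)}$ is nontrivial, its support $F\subset B_i$ is finite. Every element of $C_{G_i}(h)$ permutes the coordinate groups $A_b$ according to $\pi$, so $\pi(C_{G_i}(h))$ stabilizes the finite set $F$. Since $B_i$ acts freely on itself, a finite-index subgroup of the setwise stabilizer of $F$ fixes a point of $F$ and is therefore trivial. So the stabilizer is finite, and $C_{G_i}(h)$ is abelian-by-finite.
\end{itemize}
In every case $C_G(h)$ is amenable.
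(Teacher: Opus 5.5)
Your proof is correct. For (i), (ii), torsion freeness and the virtual centralizer you follow the paper essentially verbatim. You apply Theorem \ref{existence} twice with a common $C\cong\mathbb F_n$, use Lemma \ref{malnormal4} for almost malnormality in $G$ (upgraded to malnormality by torsion freeness), and observe that ${\rm vC}_G(C)\le {\rm vC}_C(C)=\{1\}$.

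The genuine difference is in (iii). The paper simply invokes \cite[Theorem 1]{KS07} on centralizers in amalgams, leaving implicit that centralizers of nontrivial elements in each factor $G_i$ are amenable; item (i) only provides this for $B_i$. You instead give a self-contained Bass--Serre argument:
\begin{enumerate}
\item hyperbolic elements have centralizers embedding in $D_\infty$;
\item elements fixing an edge have cyclic centralizers inside $C$;
\item elements with a unique fixed vertex have centralizer $C_{G_i}(h)$.
\end{enumerate}
You then actually verify that $C_{G_i}(h)$ is amenable from the wreath-like structure. When $\pi(h)\neq 1$ you use the virtually cyclic centralizers in $B_i$. When $h\in A^{(B_i)}$ you use freeness of the regular action $B_i\curvearrowright B_i$ on the finite support of $h$. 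This gives a complete, citation-free proof and shows exactly where malnormality of $C$ in each factor enters. The paper's route is shorter but leaves the factor-level input unchecked.

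Two phrasings should be corrected, though neither affects validity.
\begin{enumerate}
\item In Case 1, it is not the edge stabilizer $xCx^{-1}$ that is trivial, but the kernel $N\subseteq xCx^{-1}$. Since $N$ is normalized by $h$, we have $N\subseteq xCx^{-1}\cap (hx)C(hx)^{-1}$. Because $h$ moves the edge, $x^{-1}hx\notin C$, so malnormality gives $N=\{1\}$.
\item In the edge case, malnormality of $C$ in $G_1$ and in $G_2$ shows that $h\in C$ fixes no edge at $v_1$ or $v_2$ other than $e_0$. Hence $\mathrm{Fix}(h)$ is exactly the single edge $e_0$, not the stars at its endpoints. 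Then $C_G(h)$ preserves $e_0$, fixes both endpoints since there are no inversions, and so lies in $G_1\cap G_2=C$.
\end{enumerate}
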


\begin{proof}
Fix $n,k \in \mathbb{N}$ with $n \geq 2$, and set $C = \mathbb{F}_n$ and $A_i = \mathbb{Z}^k$. By Theorem~\ref{existence}, for each $1 \leq i \leq 2$, there exists a torsion-free property~(T) wreath-like product group $G_i \in \mathcal{WR}(A_i, B_i, C < B_i)$ satisfying all conditions listed in (i). Since $C$ is malnormal in each $G_i$, Lemma \ref{malnormal4} implies that $C$ is malnormal in the amalgamated free product $G = G_1 \ast_C G_2$. In particular, this implies that ${\rm vC}_G(C) \leq  {\rm vC}_C(C)$. Since $C$ is icc, we have ${\rm vC}_C(C) = \{1\}$, which further yields ${\rm vC}_G(C) = \{1\}$. Notice that every nontrivial element in $G$ has amenable stabilizer by \cite[Theorem 1]{KS07}. Moreover, as each $G_i$ is torsion free, it follows that $G$ is also torsion free. 

Finally, one can see that, varying $n\in\mathbb N$ or $k\in \mathbb N$, there are infinitely many such groups.
\end{proof}

\noindent Recall that in \cite{CIOS4}, property (T) wreath-like product groups were constructed with the additional properties that they have trivial abelianization and that all their injective endomorphisms are automatically inner automorphisms. In fact, \cite[Theorem 2.10]{CIOS4} shows that there exist infinitely many such groups. We denote the category of these groups by $\mathcal{C}_0$. In the following theorem, we show that any amalgamated free product of two groups in $\mathcal{C}_0$ over an free malnormal subgroup as in \cite{CIOS4} belongs to the class $\mathcal{AT}_0$. We use the notions of intertwining $\prec$ in the sense of Popa, recalled in Theorem \ref{corner}, and amalgamated free product of von Neumann algebras, recalled in Section \ref{SSec:amalgamated}. We include the theorem in here since it is a group-theoretic result and fits naturally with the theme of this section.

\begin{thm}\label{afpwithnoend} If $G_1,G_2 \in \mathcal C_0$ with $G_1\ncong G_2$ then for every $G= G_1\ast_C G_2\in \mathcal {AT}$ we have $G \in \mathcal {AT}_0$.
\end{thm}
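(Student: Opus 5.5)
Two properties must be verified for $G=G_1\ast_C G_2$: trivial abelianization, and $\mathrm{End}(G)=\mathrm{Inn}(G)$ (every injective endomorphism inner, consistent with how $\mathcal C_0$ is defined).

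\textbf{Abelianization.} $G^{ab}$ is the pushout of $G_1^{ab}$ and $G_2^{ab}$ over the image of $C^{ab}$. Since $G_1^{ab}=G_2^{ab}=1$, we get $G^{ab}=1$ immediately.

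\textbf{Injective endomorphisms.} Let $\delta\colon G\to G$ be an injective endomorphism. The strategy is to show $\delta$ conjugates each factor into a factor, then use the $\mathcal C_0$ property and malnormality of $C$ to glue the pieces together.

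\emph{Step 1: locate $\delta(G_i)$.} Since $\delta(G_i)\cong G_i$ has property (T), it fixes a vertex of the Bass--Serre tree of $G$. Hence there are $g_i\in G$ and $k_i\in\{1,2\}$ with $\delta(G_i)\subseteq g_iG_{k_i}g_i^{-1}$. This is where the paper's hint about Popa intertwining enters. In von Neumann language, $\mathrm L(\delta(G_i))$ has property (T), so by the Ioana--Peterson--Popa dichotomy for amalgamated free products it intertwines into some $\mathrm L(G_{k_i})$. That intertwining translates back to the group-theoretic containment above. The purely group-theoretic tree argument is the simplest route and I would use it.

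\emph{Step 2: same factor.} Replacing $\delta$ by $\mathrm{Ad}(g_i^{-1})\circ\delta$, we get an injection $G_i\hookrightarrow G_{k_i}$. I need $k_i=i$. This is the main obstacle. The assumption $G_1\ncong G_2$ does not by itself rule out a proper embedding $G_1\hookrightarrow G_2$. I would try to exploit the $\mathcal C_0$ construction in \cite{CIOS4}, whose groups are built to be mutually non-embeddable (as for $\mathcal C_1$). If that is not directly available, I would argue with wreath-like structure: an embedding $G_1\hookrightarrow G_2$ would have to map the nonamenable quotient structure compatibly, and I would invoke the embedding-rigidity results of \cite{CIOS4}. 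With $k_i=i$, the $\mathcal C_0$ property gives $\delta|_{G_i}=\mathrm{Ad}(h_i)$ for some $h_i\in G$, where $h_i\in g_iG_i$.

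\emph{Step 3: glue.} On $C$ the two descriptions agree: $\mathrm{Ad}(h_1)=\mathrm{Ad}(h_2)$. So $h_2^{-1}h_1$ centralizes $C$. Since $C$ is icc and malnormal in $G$ (Lemma \ref{malnormal4}), this centralizer is trivial, giving $h_1=h_2=:h$. Then $\delta=\mathrm{Ad}(h)$ on $G_1\cup G_2$, hence on all of $G$.

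Almost malnormality only gives finite intersections, so one must check that the centralizer of the infinite nonamenable group $C$ is trivial. Indeed, if $x$ centralizes $C$ then $C\subseteq C\cap xCx^{-1}$. If $x\notin C$, this intersection is finite, which contradicts $C$ being infinite. So $x\in C$, and then $x$ lies in the center of $C$, which is trivial because $C$ is icc.
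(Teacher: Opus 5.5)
Your abelianization argument and Step 3 are correct and match the paper. Your Step 1 uses property (FA) (property (T) groups fix a vertex of the Bass--Serre tree), which is a legitimate and cleaner replacement for the paper's route through \cite[Theorem 5.1]{IPP05}, \cite[Lemma 2.2]{CI17} and \cite[Theorem 1.1]{IPP05}.

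The genuine gap is Step 2. You leave it open and propose to close it with an ingredient that is not available. The definition of $\mathcal C_0$ only records property (T), trivial abelianization, and the fact that injective endomorphisms are inner. Mutual non-embeddability is the defining feature of $\mathcal C_1$, not of $\mathcal C_0$, and it is not among the hypotheses. As you yourself note, $G_1\ncong G_2$ does not exclude an embedding $G_1\hookrightarrow G_2$. The idea you are missing is that you never need to exclude a one-sided embedding. The $\mathcal C_0$ hypothesis makes every injective endomorphism of $G_i$ an inner automorphism, in particular \emph{surjective*}. This co-Hopfian property, applied to both factors at once and combined with malnormality of $C$, already forces $k_i=i$.

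Here is how the cases go. Suppose $k_1=k_2=1$. Then $\mathrm{Ad}(g_1^{-1})\circ\delta|_{G_1}$ is an injective endomorphism of $G_1$, hence inner, so $\delta(G_1)=g_1G_1g_1^{-1}$ exactly. The infinite group $\delta(C)$ lies in $g_1G_1g_1^{-1}\cap g_2G_1g_2^{-1}$. If $g_1G_1\neq g_2G_1$, these two vertices of the same type are at distance at least $2$ in the tree. Their common stabilizer then fixes two distinct edges at an intermediate vertex, so it lies in a conjugate of $C\cap yCy^{-1}$ with $y\in G_j\setminus C$, which is finite by almost malnormality. Hence $g_2G_1g_2^{-1}=\delta(G_1)$, so $\delta(G_2)\subseteq\delta(G_1)$. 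Injectivity then gives $G_2\subseteq G_1$, i.e.\ $G_2=C$. This is impossible, since $G_2$ has property (T) while $C$ is an infinite Haagerup group. The case $k_1=k_2=2$ is symmetric. Finally suppose $k_1=2$ and $k_2=1$. Then $\delta_1:=\mathrm{Ad}(g_1^{-1})\circ\delta|_{G_1}\colon G_1\hookrightarrow G_2$ and $\delta_2:=\mathrm{Ad}(g_2^{-1})\circ\delta|_{G_2}\colon G_2\hookrightarrow G_1$. Both composites $\delta_2\circ\delta_1$ and $\delta_1\circ\delta_2$ are injective endomorphisms, hence inner automorphisms, hence bijective. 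So $\delta_1$ is an isomorphism $G_1\cong G_2$, contradicting the hypothesis. This is exactly how the paper disposes of Step 2; with it in place, your Step 3 completes the proof.
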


\begin{proof} To get our conclusion we only need to show that an injective endomorphism of $G$ is an inner automorphism and that $G$ has trivial abelianization.

To see the second assertion, fix $A$ an abelian group and let $\delta: G \ra A$ be any group homomorphism. Since the restriction $\delta_{|G_i}: G_i \ra A$ is also a group homomorphism and $G_i$ has trivial abelianization we have $G_i =\ker(\delta_{|G_i})\leqslant  \ker (\delta)$ for all $i=1,2$. This further implies that $G = G_1\ast_ CG_2=G_1\vee G_2\subset\ker(\delta) $. Therefore, $G$ has trivial abelianization. 

Now let $\delta: G \ra G$ be any injective group endomorphism. Since $\delta(G_i)< G= G_1 \ast_C G_2$ is a property (T) subgroup, \cite[Theorem 5.1]{IPP05} gives that ${\rm L}(\delta(G_i))\prec_{{\rm L}(G)} {\rm L}(G_{j_i})$, for some $j_i\in\{1,2\}$. Using \cite[Lemma 2.2]{CI17}, there is $g_i\in G$ such that  $[\delta(G_i): \delta(G_i)\cap g_iG_{j_i}g_i^{-1}]<\infty$. This means that $[g^{-1}_i\delta(G_i)g_i: g^{-1}_i\delta(G_i)g_i\cap G_{j_i}]<\infty$. Since $G_i$ has property (T), so does $g_i^{-1}\delta(G_i)g_i\cap G_{j_i}$, and thus, ${\rm L}(g_i^{-1}\delta(G_i)g_i\cap G_{j_i})\nprec_{{\rm L}(G_{j_i})} {\rm L}(C)$. Thus, by \cite[Theorem 1.1]{IPP05} we have that ${\rm L}(g_i^{-1}\delta(G_i) g_i)\subseteq {\rm L}(G_{j_i})$, and hence 

\begin{equation}\label{conjfact}
    g^{-1}_i\delta(G_i) g_i \leqslant  G_{j_i}, \text{ for all }i=1,2.
\end{equation}

\noindent Next we show that $\{j_1,j_2\}$ is a permutation of $\{1,2\}$. Assume by contradiction that $j_1=j_2=1$. From relation \eqref{conjfact}, ${\rm Ad}(g_1)\circ \delta: G_1 \ra G_1$ is an injective endomorphism. Form the assumptions on $G_1$, it follows that there is $h\in G$ such that the restriction $\delta|_{G_1}$ is an isomorphism $\delta: G_1 \ra h G_1  h^{-1}$ given by $\delta(g)= hgh^{-1}$, for all $g\in G_1$. From \eqref{conjfact} we also have that $g^{-1}_2\delta(G_2)g_2\leqslant G_1$. Altogether, these relations show that $g_2^{-1}hCh^{-1}g_2=g_2^{-1}\delta|_{G_1}(C)g_2=g_2^{-1}\delta|_{G_2}(C)g_2\leqslant G_1$, and hence $g_2^{-1}h\in G_1$. This implies $\delta(G_2)\leqslant h G_1 h^{-1}$. Since $G_2$ has property (T) and $C$ is Haagerup, there exists $t\in G_1\setminus C$ and $k_2 \in G_2\setminus C$ such that $\delta(k_2)= ht h^{-1}$. On the other hand, since $\delta|_{G_1}$ is surjective onto $hG_1h^{-1}$, there is $k_1 \in G_1$ such that $\delta(k_1)=hth^{-1}$. Altogether, these show that $\delta(k_1)=\delta(k_2)$, and hence, $k_1^{-1}k_2\in \ker (\delta)$. On the other hand, by construction $k_1^{-1}k_2\neq 1$, contradicting that $\delta$ is injective.

Now assume by contradiction that $j_1=2$ and $j_2=1$. Letting $\delta_i ={\rm Ad}(g_i^{-1})\circ \delta$, relations \eqref{conjfact} show that $\delta_1 : G_1 \ra G_2$ and $\delta_2: G_2 \ra G_1$ are injective group homomorphisms. This means $\delta_2\circ \delta_1: G_1\ra G_1$ and $\delta_1\circ \delta_2: G_2\ra G_2$ are injective endomorphisms and from assumptions they must be inner automorphisms. In particular, this shows that $G_1 \cong G_2$, a contradiction. 

Hence, we must have that $j_i=i$. In this case relations \eqref{conjfact} and the assumptions imply that there exist $g_1,g_2\in G$ such that the restrictions satisfy $\delta: G_1 \ra g_1 G_1 g_1^{-1}$ and $\delta: G_2 \ra g_2G_2 g_2^{-1}$ and are given by $\delta(g)= {\rm Ad}(g_i)(g)$ for every $g\in G_i$. In particular, $g_1 cg_1^{-1}= g_2 cg_2^{-1}$ for all $c\in C$. Since the centralizer of $C$ in $G$ is trivial we get that $g_1=g_2$. This yields that $\delta= {\rm Ad} (g_1)$, as desired. 
\end{proof}

\noindent Consider the continuum family $\mathcal C_1$ introduced in \cite[Corollary 1.5]{CIOS4} which consists of all wreath-like product groups $G\in \mathcal WR(A, B\ca I)$ satisfying conditions (i)-(iv) from Definition \ref{def:AT} with the additional property that $G_1 \not\hookrightarrow G_2$ for any distinct groups $G_1, G_2 \in \mathcal C_1$. Consider the subclass $\mathcal{AT}_1\subset \mathcal{AT}$ consisting of all amalgamated free product groups $G= G_1\ast_C G_2$ for distinct groups $G_1, G_2 \in \mathcal C_1$ containing a copy of a malnormal free group $C$. 

\begin{thm}\label{AT1nonembedding} We have $|\mathcal{AT}| = 2^{\aleph_0}$, and for every pair of distinct groups $H_1, H_2 \in \mathcal{AT}_1$, one has $H_1 \not\hookrightarrow H_2$.
\end{thm}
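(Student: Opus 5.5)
The proof has two parts: a counting statement for $\mathcal{AT}$ (in fact for $\mathcal{AT}_1\subset\mathcal{AT}$), and a nonembedding statement for $\mathcal{AT}_1$. Both use the same rigidity argument as in the proof of Theorem \ref{afpwithnoend}.

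\emph{Cardinality.} Since groups in $\mathcal{AT}$ are countable and there are only $2^{\aleph_0}$ countable groups, we have $|\mathcal{AT}|\leq 2^{\aleph_0}$. For the lower bound I will show that $\mathcal{AT}_1$ already has $2^{\aleph_0}$ elements. Split the continuum family $\mathcal C_1$ into $2^{\aleph_0}$ disjoint pairs $\{G_1^\alpha,G_2^\alpha\}$. For each pair, fix a malnormal free subgroup $C\cong\mathbb F_n$ common to both factors, as in Theorem \ref{existence} and Corollary \ref{constructperipheral}, and form $H_\alpha=G_1^\alpha\ast_C G_2^\alpha$. Lemma \ref{malnormal4} then shows that $C$ is almost malnormal in $H_\alpha$. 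The nonembedding part below then forces $H_\alpha\not\cong H_\beta$ for $\alpha\neq\beta$, which gives continuum many distinct groups.

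\emph{Nonembedding.} Let $H_1=G_1\ast_C G_2$ and $H_2=G_3\ast_D G_4$ be distinct elements of $\mathcal{AT}_1$, and suppose $\delta\colon H_1\hookrightarrow H_2$ is injective. I follow the first step of the proof of Theorem \ref{afpwithnoend}.
\begin{itemize}
\item Each $\delta(G_i)$ has property (T). So \cite[Theorem 5.1]{IPP05} gives ${\rm L}(\delta(G_i))\prec_{{\rm L}(H_2)}{\rm L}(G_{j_i})$ for some $j_i\in\{3,4\}$.
\item By \cite[Lemma 2.2]{CI17}, some conjugate of $\delta(G_i)$ is virtually contained in $G_{j_i}$.
\item A finite index subgroup of a property (T) group has property (T), while $D$ is Haagerup. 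Hence the intersection does not intertwine into ${\rm L}(D)$, and \cite[Theorem 1.1]{IPP05} upgrades this to $g_i^{-1}\delta(G_i)g_i\leqslant G_{j_i}$.
\end{itemize}
This yields embeddings $G_i\hookrightarrow G_{j_i}$ for $i=1,2$. Since distinct members of $\mathcal C_1$ do not embed into each other, $G_i=G_{j_i}$ as members of $\mathcal C_1$. Hence $\{G_1,G_2\}\subseteq\{G_3,G_4\}$. If $j_1=j_2$, then $G_1=G_2$, which contradicts that the factors are distinct. So $\{G_1,G_2\}=\{G_3,G_4\}$.

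\emph{Main obstacle.} It remains to exclude the case where $H_1,H_2$ have the same factors and differ only in the amalgam or its embedding. To make this possible, I will choose the family so that the factor pair determines the amalgam. Concretely, $\mathcal{AT}_1$ should be indexed by pairs of factors with a fixed canonical $C$, and the pairs should be disjoint as in the counting step. Then identical factor pairs give identical groups, which contradicts $H_1\neq H_2$.

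If instead one insists on distinguishing different amalgams over the same factors, I would compare how $\delta$ maps the amalgam. The idea is to use that $\delta(C)$ must conjugate into both $G_3$ and $G_4$ compatibly, as in the $j_1=j_2$ argument of Theorem \ref{afpwithnoend}. I expect this step to be delicate, and I would avoid it by the indexing choice above.
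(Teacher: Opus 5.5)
Your proof follows the paper's. The intertwining step from the proof of Theorem \ref{afpwithnoend} conjugates each $\delta(G_i)$ into a factor of the target, and pairwise nonembeddability in $\mathcal C_1$ then matches the factor pairs. The case of identical factors with a different amalgam is excluded in the paper exactly as you propose, by convention (``by the choice of these groups, this implies that either $G^1_1\ncong G^2_1$ or $G^1_2\ncong G^2_2$''), so your indexing choice just makes that assumption explicit.

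Your cardinality count, which the paper's proof does not address, is sound apart from one input. Theorem \ref{existence} and Corollary \ref{constructperipheral} construct their own groups; they do not show that continuum many members of $\mathcal C_1$ share a common malnormal free subgroup. The paper also merely asserts this when it says $\mathcal{AT}_1$ has continuum many elements.
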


\begin{proof}
For $i = 1,2$, let $H_i = G^i_1 \ast_{C_i} G^i_2$, and assume that $H_1 \ncong H_2$. By the choice of these groups, this implies that either $G^1_1 \ncong G^2_1$ or $G^1_2 \ncong G^2_2$. Suppose, toward a contradiction, that there exists a group monomorphism $\theta \colon H_1 \to H_2$. The same arguments as those used at the beginning of the proof of Theorem~\ref{afpwithnoend} show the existence of an element $g_j \in H_2$ with $g_j \theta(G^1_j) g_j^{-1} \leqslant G^2_{k_j}$, for some $k_j \in \{1,2\}$. Since these groups belong to the class $\mathcal{C}_1$, it follows that $k_j = j$ for both $j = 1,2$. Moreover, we obtain that $G^1_1 \cong G^2_1$ and $G^1_2 \cong G^2_2$, which contradicts our initial assumption.
\end{proof}


\section{Preliminaries on von Neumann algebras}\label{SEC:vNalgprel}

We start by recalling some terminology and constructions involving tracial von Neumann algebras along with several results, mainly from \cite{CIOS1}.

A {\it tracial von Neumann algebra} is a pair $(\M,\tau)$ consisting of a von Neumann algebra $\M$ and a normal faithful tracial state $\tau\colon\M\rightarrow\mathbb C$. For $x\in \M$, we denote by $\|x\|$ the operator norm of $x$ and by $\|x\|_2=\tau(x^*x)^{1/2}$ its $2$-norm. We denote by $L^2(\M)$ the Hilbert space obtained as the closure of $\M$ with respect to the $2$-norm, by $\sU(\M)$ the group of {\it unitaries}  of $\M$, and by $(\M)_1=\{x\in \M :  \|x\|\leq 1\}$ the {\it unit ball} of $\M$. We will always assume that $\M$ is {\it separable} or, equivalently, that $L^2(\M)$ is a separable Hilbert space. We denote by ${Aut}(\M)$ the group of $\tau$-preserving automorphisms of $\M$.  For $u\in\sU(\M)$, the {\it inner} automorphism $\text{Ad}(u)$ of $\M$ is given by $\text{Ad}(u)(x)=uxu^*$. By von Neumann's bicommutant theorem, for any set $X\subset \M$ closed under adjoint, $X''\subset \M$ is the smallest von Neumann subalgebra which contains $X$.



Let $\Q\subset \M$ be a von Neumann subalgebra, which we always assume to be unital. We denote by $\Q'\cap \M=\{\text{$x\in \M\mid xy=yx$, for all $y\in \Q$}\}$ the {\it relative commutant} of $\Q$ in $\M$, and by $\sN_\M(\Q)=\{u\in\sU(\M)  :  u\Q u^*=\Q\}$ the {\it normalizer} of $\Q$ in $\M$. The {\it center} of $\M$ is given by $\sZ(\M)=\M'\cap \M$. We say that $\M$ is a {\it factor} if $\sZ(\M)=\mathbb C1$ and that $\Q\subset \M$ is an {\it irreducible subfactor} if $\Q'\cap \M=\mathbb C1$. We say that $\Q$ is {\it regular} in $\M$ if $\sN_\M(\Q)''=\M$. If $\Q\subset \M$ is regular and maximal abelian, we call it a {\it Cartan subalgebra}.

{\it Jones' basic construction} $\langle \M,e_\Q\rangle$ is defined as the von Neumann subalgebra of $\mathbb B(L^2(\M))$ generated by $\M$ and the orthogonal projection $e_\Q$ from $L^2(\M)$ onto $L^2(\Q)$. The basic construction $\langle \M,e_\Q\rangle$ has a faithful semi-finite trace given by $\text{Tr}(xe_\Q y)=\tau(xy)$, for every $x,y\in \M$. We denote by $L^2(\langle \M,e_\Q\rangle)$ the associated Hilbert space and endow it with the natural $\M$-bimodule structure. We also denote by $E_\Q\colon\M\rightarrow \Q$ the unique $\tau$-preserving {\it conditional expectation} onto $\Q$.

The tracial von Neumann algebra $(\M,\tau)$  is called {\it amenable} if there exists a sequence $\xi_n\in L^2(\M)\otimes L^2(\M)$ such that $\langle x\xi_n,\xi_n\rangle\rightarrow\tau(x)$ and $\|x\xi_n-\xi_nx\|_2\rightarrow 0$, for every $x\in \M$. Let $\P\subset p\M p$ be a von Neumann subalgebra. Following Ozawa and Popa \cite[Section 2.2]{OP07} we say that $\P$ is {\it amenable relative to $\Q$ inside $\M$} if there exists a sequence $\xi_n\in L^2(\langle \M,e_\Q\rangle)$ such that $\langle x\xi_n,\xi_n\rangle\rightarrow\tau(x)$, for every $x\in p\M p$, and $\|y\xi_n-\xi_ny\|_2\rightarrow 0$, for every $y\in \P$.
We say that $\P$ is {\it strongly nonamenable relative to $\Q$ inside $\M$} if there exist no nonzero projection $p'\in \P'\cap p\M p$ such that $\P p'$ is amenable relative to $\Q$ inside $\M$.




\subsection {Intertwining-by-bimodules}

We recall from  \cite [Theorem 2.1, Corollary 2.3]{Po03} Popa's {\it intertwining-by-bimodules} theory.

\begin{thm}[\cite{Po03}]\label{corner} Let $(\M,\tau)$ be a tracial von Neumann algebra and $\P\subset p\M p, \Q\subset q\M q $ be von Neumann subalgebras. Then the following conditions are equivalent:

\begin{enumerate}
    \item[(a)] There exist projections $p_0\in \P, q_0\in \Q$, a $*$-homomorphism $\theta\colon p_0\P p_0\rightarrow q_0\Q q_0$  and a nonzero partial isometry $v\in q_0\M p_0$ such that $\theta(x)v=vx$, for all $x\in p_0\P p_0$.
    \item[(b)] There is no sequence $u_n\in\sU(\P)$ satisfying $\|E_\Q(x^*u_ny)\|_2\rightarrow 0$, for all $x,y\in p\M$.
\end{enumerate}

\noindent If these conditions hold true, we write $\P\prec_{\M}\Q$ and say that {\it a corner of $\P$ embeds into $\Q$ inside $\M$.}
If $\P p'\prec_{\M}\Q$, for any nonzero projection $p'\in \P'\cap p\M p$, we write $\P\prec^{\text s}_{\M}\Q$.
\end{thm}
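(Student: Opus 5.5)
Since this is Popa's intertwining criterion, I would prove it by passing both conditions through the semifinite von Neumann algebra $\langle \M,e_\Q\rangle$. The intermediate condition is:
\begin{enumerate}
    \item[(c)] there is a nonzero positive element $d\in \P'\cap p\langle \M,e_\Q\rangle p$ with ${\rm Tr}(d)<\infty$.
\end{enumerate}
I would show (b) fails $\Rightarrow$ (c) $\Rightarrow$ (a) $\Rightarrow$ (c) $\Rightarrow$ (b) fails. The key identity throughout is that for $x,y\in p\M$ and $u\in\sU(\P)$,
\[
{\rm Tr}(e_\Q x^* u y e_\Q y^* u^* x e_\Q)=\|E_\Q(x^*uy)\|_2^2.
\]
This follows from $e_\Q z e_\Q=E_\Q(z)e_\Q$ and ${\rm Tr}(e_\Q)=1$.

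\emph{(b) false $\Rightarrow$ (c).} If no sequence as in (b) exists, then there are finitely many $x_1,\dots,x_k\in p\M$ and $\delta>0$ with
\[
\sum_{i,j}\|E_\Q(x_i^*ux_j)\|_2^2\ge\delta \quad\text{for all } u\in\sU(\P).
\]
I would put $a=\sum_i x_ie_\Qx_i^*$, which is positive with finite trace. By the identity, the left-hand side equals ${\rm Tr}(uau^*a)$, so ${\rm Tr}(uau^*a)\ge\delta$ for every $u$. Now take $d$ to be the unique element of minimal $\|\cdot\|_{2,{\rm Tr}}$ in the weakly closed convex hull $K$ of $\{uau^*: u\in\sU(\P)\}$. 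Uniqueness gives $udu^*=d$, and $d$ is positive with ${\rm Tr}(d)\le{\rm Tr}(a)$. Since the functional $y\mapsto{\rm Tr}(ya)$ is weakly continuous on the bounded set $K$, we get ${\rm Tr}(da)\ge\delta$, hence $d\neq0$.

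\emph{(c) $\Rightarrow$ (a).} This is the step I expect to be the main obstacle. Take a nonzero spectral projection $f=1_{[\epsilon,\infty)}(d)$. It lies in $\P'\cap p\langle \M,e_\Q\rangle p$ and has finite trace. Then $\mathcal H=fL^2(\M)$ is a $\P$-$\Q$ bimodule with finite right $\Q$-dimension. Using that $\langle \M,e_\Q\rangle$ is the commutant of right $\Q$, and the standard comparison of finite projections, I would cut $f$ down by a projection in $\P'\cap\langle \M,e_\Q\rangle$. The goal is to write $\mathcal H$ as isomorphic to a right $\Q$-submodule of $q_0L^2(\Q)$ for some projection $q_0\in\Q$, possibly after passing to a corner $p_0\P p_0$. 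If that does not work directly, first embed into $L^2(\Q)^{\oplus n}$ and then cut down to a matrix-unit corner.

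This yields a normal $*$-homomorphism $\theta\colon p_0\P p_0\to q_0\Q q_0$ together with a bimodule map. The bimodule map is implemented by a vector $\xi\in q_0L^2(\M)p_0$ satisfying $\theta(x)\xi=\xi x$. Next I would take the polar decomposition of $\xi$, after noting that $\xi^*\xi\in(p_0\P p_0)'\cap p_0\M p_0$ and so its spectral projections commute with $p_0\P p_0$. This replaces $\xi$ by a nonzero partial isometry $v\in q_0\M p_0$ with $\theta(x)v=vx$. The delicate points are getting a genuine corner rather than an amplification, and making sure $v$ actually lies in $\M$ rather than only in $L^2(\M)$.

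\emph{(a) $\Rightarrow$ (c) $\Rightarrow$ (b).} Given $v$ and $\theta$, set $a_0=v^*e_\Q v$. It is positive, nonzero because $v\neq0$, and has ${\rm Tr}(a_0)=\tau(vv^*)<\infty$. It commutes with $p_0\P p_0$, since $xv^*e_\Q v=v^*\theta(x)e_\Q v=v^*e_\Q\theta(x)v=v^*e_\Q vx$. To get commutation with all of $\P$, choose partial isometries $w_k\in\P$ with $w_k^*w_k\le p_0$ and $\sum_kw_kw_k^*=z$, the central support of $p_0$ in $\P$. Then $d=\sum_kw_ka_0w_k^*$ commutes with $\P$. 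If there are infinitely many $w_k$, I would first shrink $p_0$ so that the sum is finite and ${\rm Tr}(d)<\infty$.

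Finally, suppose $u_n\in\sU(\P)$ satisfied $\|E_\Q(x^*u_ny)\|_2\to0$ for all $x,y\in p\M$. Approximate $d$ in $\|\cdot\|_{2,{\rm Tr}}$ by finite sums $\sum_i x_ie_\Qy_i^*$. By the identity, ${\rm Tr}(u_ndu_n^*d)\to0$. But $u_ndu_n^*=d$, so ${\rm Tr}(u_ndu_n^*d)={\rm Tr}(d^2)>0$ for every $n$, a contradiction. Hence (b) fails, which closes the cycle of implications.
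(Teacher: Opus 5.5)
The paper does not prove this statement; it quotes it from \cite{Po03}. Your overall route, through a nonzero finite-trace element of $\P'\cap p\langle\M,e_\Q\rangle p$, is Popa's. The averaging argument producing $d$ is fine. So is the converse argument with $v^*e_\Q v$, spread over the central support of $p_0$ and then tested against ${\rm Tr}(u_ndu_n^*d)$. One labeling slip: you have the polarity of (b) reversed. The statement's (b) already says that no such sequence exists, so what you call ``(b) false'' is the statement's (b). The cycle of implications is unaffected.

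\textbf{The gap is in (c)$\Rightarrow$(a).} It sits exactly where the corner formulation in (a) differs from the amplified one, and your proposed remedy does not work.
\begin{itemize}
\item First cut $f$ by $JzJ$, $z\in\sZ(\Q)$, so that its center-valued $\Q$-dimension $D(f)$ is bounded, with $D$ normalized by $D(e_\Q)=1$.
\item Embedding $fL^2(\M)$ into $L^2(\Q)^{\oplus n}$ then only gives a $*$-homomorphism $\psi\colon\P\to p_1\mathbb M_n(\Q)p_1$.
\item Compressing by a matrix unit, $x\mapsto (1\otimes e_{11})\psi(x)(1\otimes e_{11})$, is not multiplicative unless $1\otimes e_{11}$ commutes with $\psi(\P)$.
\end{itemize}
Two examples show that you need two different mechanisms:
\begin{itemize}
\item For $\P=\mathbb M_2(\mathbb C)\subset\M$, $\Q=\mathbb C1$, $\psi={\rm id}$, compression gives $x\mapsto x_{11}$. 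The only cure is to pass to the corner $p_0=e_{11}$ of $\P$.
\item For $\P=\Q$ abelian and $\psi(x)=x\otimes 1_2$, no corner of $\P$ helps. One must instead cut by a projection in $\psi(\P)'$.
\end{itemize}
Your sketch gives no way to choose $p_0$ or such a cut.

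\textbf{What is missing.} You need a nonzero projection $p_0\in\P$ and a nonzero projection $f_0\in(p_0\P p_0)'\cap p_0f\langle\M,e_\Q\rangle fp_0$ with $f_0\precsim e_\Q$ in $\langle\M,e_\Q\rangle$. These come from the type decomposition of $\P$.
\begin{itemize}
\item On a type II$_1$ summand $\P c$ with $cf\neq 0$, take $p_0\le c$ with $\P$-central trace $c/k$. Then $cf$ is a sum of $k$ projections equivalent to $p_0f$. So $f_0=p_0f$, which commutes with $p_0\P p_0$ because $f\in\P'$, satisfies $D(f_0)\le\|D(f)\|/k\le 1$ for $k$ large.
\item On a type I summand, take $p_0$ abelian with central support $c$, so that $p_0\P p_0f$ is abelian and nonzero. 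Choose $f_0$ among the projections of a maximal abelian subalgebra of $p_0f\langle\M,e_\Q\rangle fp_0$ containing $p_0\P p_0 f$. Such a masa contains abelian projections on the type I part and projections of arbitrarily small center-valued dimension on the type II part; both kinds are $\precsim e_\Q$.
\end{itemize}

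\textbf{Finishing from $f_0$.} Write $f_0=V^*V$ and $VV^*=q_0e_\Q$ with $q_0\in\Q$. Then $VxV^*=\theta(x)e_\Q$ defines a $*$-homomorphism $\theta\colon p_0\P p_0\to q_0\Q q_0$. The vector $\xi=(V^*\hat 1)^*\in q_0L^2(\M)p_0$ is nonzero and satisfies $\theta(x)\xi=\xi x$.

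Your other worry, that $v$ might only lie in $L^2(\M)$, is not a real obstacle. $\xi^*\xi$ is an $L^1$ element affiliated with $p_0\M p_0$ and commuting with $p_0\P p_0$; it is not in $\M$, as you wrote. The partial isometry $v$ in the polar decomposition of $\xi$ automatically lies in $q_0\M p_0$ and satisfies $\theta(x)v=vx$.
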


\noindent The following two results are taken from \cite{CIOS1} and will be used in the proof of one of our main rigidity results. The first is about the structure of normalizers in crossed products arising from actions of hyperbolic groups based on \cite{PV12} and proven in \cite[Theorem 3.10]{CIOS1}, while the second is a solidity result established in \cite[Corollary 4.7]{CIOS1}.

\begin{thm}\label{relativeT} Let $G,H$ be countable groups, with $H$ a non-elementary subgroup of a hyperbolic group. Let $\delta:G\to H$ be a homomorphism and $G\curvearrowright(\mathcal{Q},\tau)$ a trace preserving action on a tracial von Neumann algebra $(\mathcal{Q},\tau)$. Let $\M=\mathcal{Q}\rtimes G$ and $\mathcal{P}\subset p\M p$ be an amenable von Neumann subalgebra. Assume there is a von Neumann subalgebra $\mathcal{R}\subset \mathscr{N}_{p\M p}(\mathcal{P})''$ with property (T) such that $\mathcal{R}\not\prec_{\M}\mathcal{Q}\rtimes\ker(\delta)$. Then $\mathcal{P}\prec_{\M}^s\mathcal{Q}\rtimes\ker(\delta)$.
\end{thm}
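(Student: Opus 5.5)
The plan is to use the Popa–Vaes comultiplication trick and transfer everything to $\M\bar\otimes {\rm L}(H)$. Writing $\M=\Q\rtimes G$ with canonical unitaries $(u_g)_{g\in G}$ and ${\rm L}(H)$ with unitaries $(v_h)_{h\in H}$, I would consider the trace-preserving unital embedding
\[
\Delta\colon \M\to \M\bar\otimes {\rm L}(H),\qquad \Delta(a u_g)=a u_g\otimes v_{\delta(g)}\quad (a\in\Q,\ g\in G).
\]
The target $\M\bar\otimes{\rm L}(H)$ is the crossed product $\M\rtimes H$ for the trivial action of $H$.

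The first step is a dictionary between intertwining into $\M\otimes 1$ and intertwining into $\Q\rtimes\ker(\delta)$. For $x=\sum_g a_gu_g\in\M$ and $h\in H$, pick $g_h\in\delta^{-1}(h)$ when this set is nonempty. Then
\[
E_{\M\otimes 1}\bigl(\Delta(x)(1\otimes v_h^*)\bigr)=\sum_{\delta(g)=h}a_gu_g\otimes 1 .
\]
Its $2$-norm equals $\|E_{\Q\rtimes\ker\delta}(xu_{g_h}^*)\|_2$, and it is $0$ if $h\notin\delta(G)$. Consequently, if $\mathcal S\subset r\M r$ and $\mathcal S\not\prec_\M \Q\rtimes\ker(\delta)$, then a sequence $w_n\in\sU(\mathcal S)$ witnessing this also satisfies $\|E_{\M\otimes1}(y^*\Delta(w_n)z)\|_2\to 0$. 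It suffices to check this for $y,z$ in the span of $\M\otimes 1$ and $1\otimes v_h$, and a density argument finishes it. Hence $\Delta(\mathcal S)\not\prec \M\otimes 1$. Conversely, the same formula shows that $\Delta(\mathcal S)\prec\M\otimes1$ forces $\mathcal S\prec_\M\Q\rtimes\ker(\delta)$.

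Next I pass to the right corner. Let $e\in \sZ(\sN_{p\M p}(\P)'\cap p\M p)$-type data be handled as follows: let $e\le p$ be the maximal projection in $\P'\cap p\M p$ with $\P e\prec^{s}_\M \Q\rtimes\ker(\delta)$. By maximality, $e$ is invariant under $\sN_{p\M p}(\P)$, so $z:=p-e$ commutes with $\mathcal R$. Suppose $z\neq 0$. Since $\P z$ is amenable, $\Delta(\P z)$ is amenable, hence amenable relative to $\M\otimes 1$ inside $\M\rtimes H$. Because $H$ is a subgroup of a hyperbolic group, it is weakly amenable and bi-exact, so Popa–Vaes' dichotomy \cite{PV12} applies. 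Thus either
\begin{itemize}
\item $\Delta(\P z')\prec\M\otimes1$ for some nonzero $z'\le z$, or
\item $\Delta(\sN_{z\M z}(\P z)'')$ is amenable relative to $\M\otimes 1$.
\end{itemize}
The first alternative translates by the dictionary into $\P z'\prec_\M \Q\rtimes\ker(\delta)$, contradicting the maximality of $e$. This step needs the corner version of the dichotomy, which I would obtain by applying it to the subalgebra $\P z'$ with the maximal choice of $z'$. In the second alternative, $\mathcal R z\subset \sN_{z\M z}(\P z)''$ because $z$ commutes with $\mathcal R$, so $\Delta(\mathcal R z)$ is amenable relative to $\M\otimes1$.

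The key remaining step, which I expect to be the main obstacle, is to show that relative amenability of a property (T) algebra forces intertwining: $\Delta(\mathcal R z)\prec \M\otimes 1$. Relative amenability gives almost-central, almost-tracial vectors $\xi_n\in L^2(\langle \M\bar\otimes{\rm L}(H),e_{\M\otimes1}\rangle)$ for $\Delta(\mathcal Rz)$. Property (T) of $\mathcal R z$, which passes to the corner since $z\in\mathcal R'$, then yields a nonzero $\Delta(\mathcal Rz)$-central vector close to $\xi_n$. The delicate point is controlling the trace: such a vector must have finite ${\rm Tr}$-support in the basic construction. The standard route is to truncate $\xi_n$ by spectral projections of $\xi_n\xi_n^*$ and use the almost-tracial condition, which produces a nonzero central projection of finite trace in $\Delta(\mathcal Rz)'\cap\langle\cdot,e_{\M\otimes1}\rangle$. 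By Theorem \ref{corner} this is equivalent to $\Delta(\mathcal Rz)\prec\M\otimes1$.

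Finally, the dictionary gives $\mathcal Rz\prec_\M\Q\rtimes\ker(\delta)$. Since $z\in\mathcal R'$, a corner of $\mathcal Rz$ is a corner of $\mathcal R$, so $\mathcal R\prec_\M\Q\rtimes\ker(\delta)$, contradicting the hypothesis. Hence $z=0$, that is, $\P\prec^s_\M\Q\rtimes\ker(\delta)$.
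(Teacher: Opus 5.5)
Your argument is correct and follows the route behind the result the paper cites without proof (\cite[Theorem 3.10]{CIOS1}, based on \cite{PV12}). You embed $\M$ into $\M\bar\otimes{\rm L}(H)=(\M\otimes 1)\rtimes H$ via $au_g\mapsto au_g\otimes v_{\delta(g)}$, apply the Popa--Vaes dichotomy to the amenable algebra, exclude relative amenability of the normalizer using property (T) of $\mathcal R$, and transfer intertwining back to $\Q\rtimes\ker(\delta)$.

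One step should be stated more carefully: in the first alternative, $\P z\prec_{\M}\Q\rtimes\ker(\delta)$ contradicts the maximality of $e$ only through the standard fact that it yields a nonzero projection $f\le z$ in $\P'\cap p\M p$ with $\P f\prec^{s}_{\M}\Q\rtimes\ker(\delta)$ (e.g.\ \cite[Lemma 2.4]{DHI16}). No ``corner version'' of the dichotomy is needed for this.
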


\begin{thm}\label{solidity} For $i=1,2$, let $G_i\in\mathcal{WR}(A_i,B_i\curvearrowright I_i)$, where $A_i$ is an abelian group and $B_i\curvearrowright I_i$ an action such that $\text{Stab}_B(j)$ is amenable for every $j\in I_i$ and $\{j\in I:g\cdot j\neq j\}$ is infinite for every $g\in B\setminus\{1\}$. Let $\mathcal{Q}\subset p(\text{L}(G_1)\bar{\otimes}\text{L}(G_2)\bar{\otimes}\mathbb M_n(\mathbb C))p$ be a von Neumann subalgebra such that $\mathcal Q\prec^s\text{L}(A_1)\bar{\otimes}\text{L}(A_2)\bar{\otimes}\mathbb M_n(\mathbb C)$ and $\mathcal{Q}\not\prec \text{L}(A_1)\bar{\otimes}\mathbb M_n(\mathbb C),\text{L}(A_2)\overline{\otimes}\mathbb M_n(\mathbb C)$. Then $\Q'\cap p\M p$ is amenable.
\end{thm}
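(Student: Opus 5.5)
Throughout, write $\M=\mathrm{L}(G_1)\bar\otimes\mathrm{L}(G_2)\bar\otimes\mathbb M_n(\mathbb C)$ and $\P=\Q'\cap p\M p$. I read $\mathrm{L}(A_i)$ in the statement as the abelian core $\mathrm{L}(A_i^{(I_i)})$. Each $\mathrm{L}(G_i)$ is then a cocycle-twisted crossed product $\mathrm{L}(A_i^{(I_i)})\rtimes_{c_i} B_i$, and the action of $B_i$ on the core is a generalized Bernoulli-type action. The plan is to show that $\P$ is amenable relative to each of the tensor "halves"
$$\mathcal N_2=\mathrm{L}(G_2)\bar\otimes\mathbb M_n(\mathbb C), \qquad \mathcal N_1=\mathrm{L}(G_1)\bar\otimes\mathbb M_n(\mathbb C).$$
I would then intersect the two relative amenabilities. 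The inclusions $\mathcal N_1,\mathcal N_2\subset\M$ form a commuting square with intersection $\mathbb M_n(\mathbb C)$. The Popa--Vaes result on relative amenability with respect to commuting subalgebras therefore gives that $\P$ is amenable relative to $\mathbb M_n(\mathbb C)$, that is, $\P$ is amenable. By symmetry, the core of the proof is the first of these two relative amenability statements.

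\textbf{Step 1: the deformation.} For the first statement I use the $G_1$ factor, with $\mathcal N_2$ carried along as a passive tensor factor. Since $A_1$ is abelian, $\mathrm{L}(A_1^{(I_1)})\cong L^\infty(\widehat{A_1})^{\bar\otimes I_1}$. This carries the coordinatewise (Bernoulli/tensor-length) malleable deformation $\alpha_t$ used in \cite{CIOS1}, defined on a dilation $\tilde\M\supset\M$. The deformation commutes with the $B_1$-action and is compatible with the cocycle. Its key feature is that the bimodule $L^2(\tilde\M)\ominus L^2(\M)$ is weakly contained in a direct sum of bimodules induced from
$$\mathrm{L}\bigl(A_1^{(I_1)}\rtimes \mathrm{Stab}_{B_1}(j)\bigr)\bar\otimes\mathcal N_2, \qquad j\in I_1.$$
Because every stabilizer $\mathrm{Stab}_{B_1}(j)$ is amenable, this bimodule is weakly contained in the coarse bimodule relative to $\mathcal N_2$.

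\textbf{Step 2: the dichotomy.} Suppose $\P z$ is not amenable relative to $\mathcal N_2$ for some nonzero central projection $z$; after shrinking $z$, $\P z$ is strongly nonamenable relative to $\mathcal N_2$. Popa's spectral gap argument then shows that $\alpha_t\to\mathrm{id}$ uniformly on the unit ball of $\P z$. Since $\P z$ commutes with $\Q z$, the transversality/$s$-malleability trick upgrades this to uniform convergence on $(\Q\vee\P)z$, and in particular on $\Q z$.

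\textbf{Step 3: the contradiction (main obstacle).} I expect this step to be the hardest. From $\Q\prec^{\rm s}\mathrm{L}(A_1^{(I_1)})\bar\otimes\mathrm{L}(A_2^{(I_2)})\bar\otimes\mathbb M_n(\mathbb C)$, a corner of $\Q z$ sits, after conjugation, inside the core tensor product. On the core, $\alpha_t$ acts coordinatewise on $I_1$ and strictly contracts every Fourier component supported on $k$ coordinates of $I_1$, roughly by a factor $\rho_t^{\,k}$. Uniform convergence on $\Q z$ therefore bounds the $I_1$-support of a corner of $\Q z$. A standard averaging argument then shows
$$\Q z\prec \Bigl(\textstyle\bigotimes_{j\in F}\mathrm{L}(A_1)\Bigr)\bar\otimes\mathrm{L}(A_2^{(I_2)})\bar\otimes\mathbb M_n(\mathbb C) \quad\text{for some finite } F\subset I_1 .$$
Since $\mathrm{L}(A_1)^{\bar\otimes F}$ is finite-dimensional on the relevant corner, or at least can be absorbed into a matrix amplification, this should yield $\Q\prec\mathrm{L}(A_2)\bar\otimes\mathbb M_n(\mathbb C)$, contradicting the hypothesis. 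The delicate points are twofold. First, when $A_1$ is infinite the finite block $\mathrm{L}(A_1)^{\bar\otimes F}$ is diffuse rather than finite-dimensional; the fix I would try is to use the full strength of $\prec^{\rm s}$ together with a clustering lemma to reduce to $F=\emptyset$. Second, the cocycle must not spoil the length-contraction estimate. Assuming both work, $\P$ is amenable relative to $\mathcal N_2$.

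\textbf{Step 4: symmetric argument and conclusion.} Running the same argument on the $G_2$ factor, now using $\Q\not\prec\mathrm{L}(A_1)\bar\otimes\mathbb M_n(\mathbb C)$, shows that $\P$ is amenable relative to $\mathcal N_1$. The intersection argument from the opening paragraph then shows that $\Q'\cap p\M p$ is amenable.
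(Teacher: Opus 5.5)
The paper gives no proof of this statement; it is quoted from \cite[Corollary 4.7]{CIOS1}, so your argument has to stand on its own. Its architecture is sound. Amenability of $\mathcal P$ relative to both $\mathcal N_1$ and $\mathcal N_2$ does give amenability by the Popa--Vaes commuting-square lemma, which applies because $\mathcal N_1$ is regular in $\mathcal M$.

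\textbf{Step 3 is a genuine gap.} You leave it conditional, and the fix you suggest aims at the wrong target. Set $\mathcal C=\mathrm L(A_2^{(I_2)})\bar\otimes\mathbb M_n(\mathbb C)$. Absorbing $\mathrm L(A_1^{F})$ into a matrix amplification only works when $A_1$ is finite. In the case that matters here ($A_j\cong\mathbb Z^k$ in Theorem \ref{existence}), an intertwining $\mathcal Qz\prec\mathrm L(A_1^{F})\bar\otimes\mathcal C$ with $\emptyset\neq F\subset I_1$ finite contradicts no hypothesis. Moreover, $\prec^{\rm s}$ into the core gives no mechanism for shrinking $F$ to $\emptyset$.

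The missing idea is to use $\mathcal P$ a second time:
\begin{itemize}
\item Take $F$ of minimal cardinality with $\mathcal Qz\prec\mathrm L(A_1^{F})\bar\otimes\mathcal C$. It is nonempty because $\mathcal Q\nprec\mathcal C$.
\item For $g\in B_1$ with $gF\neq F$, the translated block meets $\mathrm L(A_1^{F})\bar\otimes\mathcal C$ only in a strictly smaller block. By minimality, the intertwined corner of $\mathcal Qz$ does not embed into that smaller block.
\item A Popa-type quasi-normalizer argument (using the $E_{\mathrm L(A_1^{F})\bar\otimes\mathcal C}$-version of relative mixing) then places that corner's quasi-normalizer, and hence a corner of $\mathcal Pz$, inside $\mathrm L(\pi_1^{-1}(\mathrm{Stab}_{B_1}(F)))\bar\otimes\mathcal N_2$, where $\mathrm{Stab}_{B_1}(F)$ is the setwise stabilizer.
\item That stabilizer virtually lies in a point stabilizer, so it is amenable. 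The algebra above is therefore amenable relative to $\mathcal N_2$, contradicting strong nonamenability of $\mathcal Pz$.
\end{itemize}

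Reaching a finite $F$ at all is also not a routine averaging step for a completely positive tensor-length deformation. You must transfer uniform convergence through the intertwiner, and small mass on long words does not by itself localize unitaries on one finite block. With an automorphic s-malleable deformation you can instead push the intertwiner to $t=1$, where $\mathrm L(A_1^{(I_1)})$ is moved to an independent copy. You then land in $\mathcal C$ via the commuting square of the core with $\mathcal N_2$ (cf.\ \cite[Lemma 2.8(2)]{DHI16}), and no finite block ever appears.

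\textbf{Two further corrections.}
\begin{itemize}
\item In Step 2, spectral gap for the strongly nonamenable algebra $\mathcal Pz$ gives uniform convergence on the unit ball of its relative commutant $(\mathcal Pz)'\cap z\mathcal Mz\supseteq\mathcal Qz$. It does not give convergence on $\mathcal Pz$, and no transversality upgrade is involved. Your conclusion holds, but not for the reason you state.
\item In Step 1, ``compatible with the cocycle'' is not a formality. Write $\mathrm L(G_1)=\mathrm L(A_1^{(I_1)})\rtimes_{\sigma,c}B_1$. For a non-split extension, $c$ takes values in $\{u_a: a\in A_1^{(I_1)}\}$ and is not identically $1$ for any section. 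Any useful deformation moves these unitaries, so $\alpha_t(au_g)=\alpha_t(a)u_g$ is not multiplicative. The twist must first be removed by some reduction.
\end{itemize}
You also never use the hypothesis that every $g\neq1$ moves infinitely many points. That hypothesis makes $\mathrm L(A_1^{(I_1)})\bar\otimes\mathrm L(A_2^{(I_2)})\bar\otimes\mathbb D_n(\mathbb C)$ a Cartan subalgebra. This is a natural tool for such a reduction, since at the level of the orbit equivalence relation amenability does not see the 2-cocycle.
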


\subsection{Amalgamated free product of von Neumann algebras}\label{SSec:amalgamated}

In this section, we briefly recall the construction of the amalgamated free product for tracial von Neumann algebras. Let $(\M_1,\tau_1)$ and $(\M_2,\tau_2)$ be tracial von Neumann algebras with common von Neumann subalgebra $B$ such that $\tau_1|_B=\tau_2|_B$. Let $\mathbb{E}_{i}:\M_i\to B$ be the trace-preserving conditional expectation onto $B$, for $i=1,2$. The amalgamated free product $(\M,\mathbb{E}_B)=(\M_1,\mathbb{E}_1)\ast_B(\M_2,\mathbb{E}_2)$ is a pair consisting of the von Neumann algebra $\M$ generated by $\M_1$ and $\M_2$ and a trace-preserving conditional expectation $\mathbb{E}_B:\M\to B$ (with respect to its canonical trace $\tau$) such that $\M_1$, $\M_2$ are freely independent with respect to $\mathbb{E}_B$; that is, $\mathbb{E}_B(x_1\cdots x_n)=0$ whenever $x_j\in \M_{i_j}\ominus B$ and $i_1\neq\cdots\neq i_n$. We refer to the product $x_1\cdots x_n$, where $x_j\in \M_{i_j}\ominus B$ and $i_1\neq\hdots\neq i_n$, as a reduced word. The linear span of $B$ and all reduced words form a strongly dense $*$-subalgebra of $\M$. We call the resulting $\M$ the \textit{amalgamated free product von Neumann algebra} of $(\M_1,\tau_1)$ and $(\M_2,\tau_2)$.

In this paper, we primarily work with von Neumann algebras associated to amalgamated free product groups $G=G_1\ast_CG_2$. In this setting, one has $\text{L}(G)=\text{L}(G_1)\ast_{\text{L}(C)}\text{L}(G_2)$, as defined above. The only result we require for general amalgamated free product von Neumann algebras is the statement below, which follows as a consequence of \cite{Io12} and \cite{Va13}.

\begin{thm}\label{commutlocinamalgams} Let $\mathcal M=\mathcal M_1\ast_{\mathcal{B}}\mathcal M_2$ be an amalgamated free product of tracial von Neumann algebra and let $p\in \mathcal M$ be a nonzero projection. Let $\mathcal{C}_1,\mathcal{C}_2\subset p\mathcal M p$ be any two non-amenable commuting von Neumann subalgebras. Then one of the following must hold:

\begin{enumerate}
    \item $\mathcal C_i \prec_\mathcal{M} \mathcal{B}$ for some $1\leq i\leq 2$;
    \item $\mathcal{C}_1\vee \mathcal{C}_2 \prec_\mathcal{M} \mathcal{M}_i $ for some $1\leq i\leq 2$;
    \item $\mathcal{C}_1\vee\mathcal{C}_2$ is amenable relative to $\mathcal{B}$ inside $\mathcal{M}$.
\end{enumerate}
\end{thm}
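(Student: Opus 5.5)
The plan is to reduce the statement to the two main technical results of \cite{Io12} and \cite{Va13} on amalgamated free products. The first is Ioana's dichotomy \cite[Theorem 1.6]{Io12}, which was extended by Vaes \cite[Theorem A]{Va13}. In the form I will use it says: if $\mathcal Q\subset p\mathcal M p$ is amenable relative to $\mathcal B$ and $\mathcal P=\sN_{p\mathcal M p}(\mathcal Q)''$, then one of the following holds:
\begin{enumerate}
    \item[(a)] $\mathcal Q\prec_{\mathcal M}\mathcal B$;
    \item[(b)] $\mathcal P\prec_{\mathcal M}\mathcal M_i$ for some $i$;
    \item[(c)] $\mathcal P$ is amenable relative to $\mathcal B$ inside $\mathcal M$.
\end{enumerate}
The second is the spectral gap principle for the free malleable deformation $\theta_t$ of $\tilde{\mathcal M}=\mathcal M\ast_{\mathcal B}(\mathcal B\bar\otimes {\rm L}(\mathbb Z))$. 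The strategy is to split into cases according to whether $\mathcal C_1$ or $\mathcal C_2$ is amenable relative to $\mathcal B$ on some corner.

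\emph{Case 1.} Suppose $\mathcal C_1$ (say) is amenable relative to $\mathcal B$ inside $\mathcal M$. Since $\mathcal C_2\subset \mathcal C_1'\cap p\mathcal M p$, we have $\mathcal C_1\vee\mathcal C_2\subset \sN_{p\mathcal M p}(\mathcal C_1)''$. Applying the dichotomy to $\mathcal Q=\mathcal C_1$ then gives one of three outcomes:
\begin{enumerate}
    \item[(a)] $\mathcal C_1\prec\mathcal B$, which is conclusion 1;
    \item[(b)] the normalizer, and hence $\mathcal C_1\vee\mathcal C_2$, intertwines into some $\mathcal M_i$, which is conclusion 2;
    \item[(c)] the normalizer, and hence $\mathcal C_1\vee\mathcal C_2$, is amenable relative to $\mathcal B$, which is conclusion 3.
\end{enumerate}
If relative amenability only holds on a corner $\mathcal C_1 z$ with $z\in\mathcal C_1'\cap p\mathcal M p$, I would take $z$ maximal. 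A maximal such projection is invariant under the normalizer, so it can be chosen in $\mathcal Z(\sN_{p\mathcal M p}(\mathcal C_1)'')$ and in particular commutes with $\mathcal C_1\vee\mathcal C_2$. I then run the argument on $(\mathcal C_1\vee \mathcal C_2)z$. Intertwining on a corner is enough for conclusions 1 and 2. For conclusion 3, I would argue that on the remaining part $1-z$ we fall into Case 2, and combine the two pieces.

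\emph{Case 2.} Suppose $\mathcal C_1$ is strongly nonamenable relative to $\mathcal B$. The bimodule $L^2(\tilde{\mathcal M})\ominus L^2(\mathcal M)$ is contained in a multiple of $L^2(\mathcal M)\otimes_{\mathcal B}L^2(\mathcal M)$. Popa's spectral gap argument, as used in \cite{Io12}, therefore shows that $\theta_t\to{\rm id}$ uniformly on the unit ball of $\mathcal C_1'\cap p\mathcal M p\supset\mathcal C_2$. By Ioana's theorem on uniform convergence of $\theta_t$ (the intertwining result behind \cite[Theorem 1.6]{Io12}, sharpened in \cite{Va13}), this forces $\mathcal C_2\prec_{\mathcal M}\mathcal M_i$ for some $i$.

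If moreover $\mathcal C_2\prec\mathcal B$, we have conclusion 1. Otherwise I would invoke the control of quasi-normalizers and relative commutants in amalgamated free products from \cite{IPP05}. If a corner $\mathcal D$ of $\mathcal C_2$ lies in $\mathcal M_i$ and $\mathcal D\nprec_{\mathcal M_i}\mathcal B$, then $\mathcal D'\cap \mathcal M\subset\mathcal M_i$. Transporting this through the intertwining partial isometry should give $\mathcal C_1\vee\mathcal C_2\prec_{\mathcal M}\mathcal M_i$, which is conclusion 2.

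I expect the main obstacle to be the passage from the spectral gap to an intertwining statement. One must check that uniform convergence of $\theta_t$ on $\mathcal C_2$ really yields $\mathcal C_2\prec\mathcal M_i$ (not merely $\prec\mathcal B$) in the amalgamated setting, where $\mathcal B$ need not be amenable. This is exactly where the precise hypotheses of \cite{Io12} and \cite{Va13} have to be matched. The bookkeeping of the central projections across the two cases also needs some care.
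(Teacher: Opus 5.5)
Your argument is correct, but it is organized differently from the paper's.

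\emph{The paper's route.} The paper never touches the deformation $\theta_t$ directly. Assuming $\mathcal C_2\nprec\mathcal B$, it applies \cite[Theorem A]{Va13} to an arbitrary amenable subalgebra $\mathcal A\subset\mathcal C_1$, for which the relative amenability hypothesis is automatic.
\begin{itemize}
\item If every such $\mathcal A$ intertwines into $\mathcal B$, then \cite[Corollary F.14]{BO08} gives $\mathcal C_1\prec\mathcal B$, which is conclusion 1.
\item Outcome (ii) gives conclusion 2 via \cite[Theorem 1.1]{IPP05}.
\item In outcome (iii), the paper observes that $\mathcal C_2\subset\mathscr N_{p\mathcal Mp}(\mathcal A)''$ is now amenable relative to $\mathcal B$. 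A second application of \cite[Theorem A]{Va13}, this time to $\mathcal C_2$, whose normalizer contains $\mathcal C_1\vee\mathcal C_2$, finishes the proof.
\end{itemize}

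\emph{Your route.} You split according to the relative amenability of $\mathcal C_1$ itself. On the maximal corner $z$ where $\mathcal C_1$ is amenable relative to $\mathcal B$, you use Vaes' dichotomy. On $p-z$, you use Popa's spectral gap together with Ioana's uniform-convergence theorem. This avoids the reduction to amenable subalgebras via \cite{BO08}. In the strongly nonamenable case it also gives more: all of $\mathcal C_1'\cap p\mathcal Mp$ intertwines into some $\mathcal M_i$. The cost is that you re-run the spectral gap part of the Ioana--Vaes machinery, which the paper treats as a black box.

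\emph{Three remarks.}
\begin{itemize}
\item The obstacle you flag is exactly Ioana's theorem in \cite{Io12}: a uniform lower bound on $\tau(\theta_t(u)u^*)$ over $u\in\mathscr U(\mathcal Q)$ forces $\mathcal Q\prec\mathcal M_i$ for some $i$, with no assumption on $\mathcal B$.
\item The corner bookkeeping is simpler than you fear. If $z\neq p$, Case 2 applied to $p-z$ alone already yields conclusion 1 or 2, since intertwining of a corner suffices. If $z=p$, Case 1 applies globally. So nothing has to be combined to reach conclusion 3.
\item The free malleable deformation should live on $\mathcal M\ast_{\mathcal B}(\mathcal B\bar\otimes {\rm L}(\mathbb F_2))$, with one free unitary for each $\mathcal M_i$, not on $\mathcal M\ast_{\mathcal B}(\mathcal B\bar\otimes {\rm L}(\mathbb Z))$.
\end{itemize}
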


\begin{proof} Assume $\mathcal{C}_2\not\prec_{\mathcal{M}}\mathcal{B}$, and fix $\mathcal{A}\subset \mathcal{C}_1$ an amenable subalgebra. By \cite[Theorem A]{Va13} we either have:

\begin{enumerate}
    \item[(i)] $\mathcal{A}\prec_{\mathcal{M}}\mathcal{B}$,
    \item[(ii)] $\mathscr N_{p\mathcal M p}(\mathcal{A})''\prec_{\mathcal{M}}\mathcal{M}_i$, for some $i$, or
    \item[(iii)] $\mathscr N_{p\mathcal M p}(\mathcal{A})''$ is amenable relative to $\mathcal{B}$ inside $\mathcal{M}$.
\end{enumerate}

\noindent If (i) holds for all amenable subalgebras $\mathcal{A}\subset\mathcal{C}_1$, $\mathcal{C}_1\prec_{\mathcal{M}}\mathcal{B}$ by \cite[Corollary F.14]{BO08}, obtaining (1). Suppose (i) doesn't hold for some amenable $\mathcal{A}\subset \mathcal{C}_1$, and (ii) holds. In that case, since $\mathcal{C}_2\prec\mathcal{M}_i$ and $\mathcal{C}_2\not\prec\mathcal{B}$, using \cite[Theorem 1.1]{IPP05} we obtain $\mathscr N_{p\mathcal{M}p}(\mathcal{C}_2)''\prec_{\mathcal{M}}\mathcal{M}_i$. This means $\mathcal{C}_1\vee\mathcal{C}_2\prec_{\mathcal{M}}\mathcal{M}_i$, obtaining (2). Now, suppose neither (i) nor (ii) hold for some amenable subalgebra $\mathcal{A}\subset\mathcal{C}_1$. From (iii) we have $\mathcal{C}_2$ is amenable relative to $\mathcal{B}$ inside $\mathcal{M}$. Another application of \cite[Theorem A]{Va13}, gives that one of the following must hold:

\begin{enumerate}
    \item[(iv)] $\mathcal{C}_2\prec\mathcal{B}$,
    \item[(v)] $\mathscr N_{p\mathcal M p}(\mathcal{C}_2)''\prec_{\mathcal{M}}\mathcal{M}_i$, for some $i$, or
    \item[(vi)] $\mathscr N_{p\mathcal M p}(\mathcal{C}_2)''$ is amenable relative to $\mathcal{B}$ inside $\mathcal{M}$.
\end{enumerate}

\noindent By assumption, (iv) cannot hold, and if (v) holds, \cite[Theorem 1.1]{IPP05} gives $\mathscr N_{p\mathcal{M}p}(\mathcal{A})''\prec\mathcal{M}_i$, a contradiction. Thus, we must have that (vi) holds, and therefore, $\mathcal{C}_1\vee\mathcal{C}_2$ is amenable relative to $\mathcal{B}$ inside $\mathcal{M}$, obtaining (3).
\end{proof}

\subsection{Cartan subalgebras and equivalence relations}

For further use, we recall two conjugacy results for Cartan subalgebras from \cite{Io10}, which are proved in \cite[Lemmas 3.7 and 3.8]{CIOS1}.

\begin{lem}[\cite{Io10}]\label{conj1}
Let $\M$ be a II$_1$ factor, $\A\subset \M$ be a Cartan subalgebra and $\D\subset \M$ be an abelian von Neumann subalgebra. Let $\C=\D'\cap \M$ and assume that $\C\prec_{\M}^{s}\A$. Then there exists $u\in\sU(\M)$ such that $\D\subset u\A u^*\subset \C$.
\end{lem}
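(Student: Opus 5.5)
The plan is to build, by a maximality argument, a unitary $u\in\M$ such that $u\A u^*$ is a maximal abelian subalgebra of $\M$ lying inside $\C$ and containing $\D$.

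\textbf{Preliminary observation.} Since $\D$ is abelian we have $\D\subset\D'\cap\M=\C$. Since $\D\subset\C$, we also have $\C'\cap\M\subset\D'\cap\M=\C$, and therefore $\C'\cap\M=\sZ(\C)$. Two consequences follow:
\begin{enumerate}
\item[(1)] $\D$ commutes with $\C$, so $\D\subset\sZ(\C)$;
\item[(2)] whenever $v$ is a partial isometry implementing an intertwining of a corner $p_0\C p_0$, the projection $v^*v$ lies in $(p_0\C p_0)'\cap p_0\M p_0=p_0\sZ(\C)p_0\subset \C$.
\end{enumerate}

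\textbf{Local step.} Fix a nonzero projection $z\in\sZ(\C)$. Since $\C\prec^{\rm s}_\M\A$, Theorem \ref{corner} gives projections $p_0\in\C z$ and $q_0\in\A$, a $*$-homomorphism $\theta\colon p_0\C p_0\to\A q_0$, and a nonzero partial isometry $v$ with $\theta(x)v=vx$ for all $x\in p_0\C p_0$. I would first refine this data, using the standard argument of Popa and Ioana, so that $e:=vv^*\in\A$. The input is that $\A$ is maximal abelian and $\theta(p_0\C p_0)\subset\A q_0$, so $\A q_0$ sits inside the relative commutant $\theta(p_0\C p_0)'\cap q_0\M q_0$, which contains $e$. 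Passing to a further corner of $e$, one can move $e$ into $\A$; the regularity of $\A$ is what allows this. Granting the refinement, I set $f:=v^*v$. This is a projection in $\sZ(\C)p_0$ by (2), and $\A e$ is maximal abelian in $e\M e$. Hence $v^*\A v=v^*(\A e)v$ is maximal abelian in $f\M f$. Moreover $v^*\A v$ commutes with $v^*\theta(p_0\C p_0)v=p_0\C p_0 f$, which contains $\D f$ because $\D\subset\sZ(\C)$. By maximality, $\D f\subset v^*\A v\subset (\D f)'\cap f\M f=\C f$.

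\textbf{Global step.} Using Zorn's lemma, I take a maximal family of partial isometries $(v_i)$ with the following properties: the projections $f_i=v_i^*v_i$ are mutually orthogonal in $\sZ(\C)$; the projections $e_i=v_iv_i^*$ are mutually orthogonal in $\A$; and $\D f_i\subset v_i^*\A v_i\subset\C f_i$ for each $i$. If $z:=1-\sum_i f_i\neq 0$, I apply the local step to $\C z$. The intertwining it produces then has to be moved into the complementary corner $\A(1-\sum_i e_i)$, and for this I would use that $\M$ is a factor and $\A$ is Cartan. The idea is to cut $e$ into pieces and conjugate each piece by partial isometries from the full groupoid of normalizers of $\A$; these send projections of $\A$ to equivalent projections of $\A$ and leave $\A$ globally invariant. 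The traces match because $\tau(e_i)=\tau(f_i)$ for every $i$. This contradicts maximality, so $\sum_i f_i=1$, and then $\sum_i e_i=1$ by trace. Now $w:=\sum_i v_i$ is a unitary with $\D\subset w^*\A w\subset\C$, and $u=w^*$ is the required unitary.

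\textbf{Main obstacle.} I expect the hardest part to be the bookkeeping needed to arrange $vv^*\in\A$ and, in the global step, to make the new range projection orthogonal to the previously used ones. Both rely on the Cartan property of $\A$ together with the factoriality of $\M$, and this is exactly the step where I would follow \cite{Io10} and \cite[Lemma 3.7]{CIOS1} most closely.
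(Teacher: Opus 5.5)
The paper gives no argument of its own for this lemma; it defers to \cite{Io10} and \cite[Lemma 3.7]{CIOS1}. Your local step is sound, but there is a genuine gap where it feeds into the global step.

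\textbf{The gap: the pieces $f$ are not central.} Your maximal family requires $f_i=v_i^*v_i\in\sZ(\C)$. You then restart the local step on $\C z$ with $z=1-\sum_i f_i$ central. But by your own observation (2) the local step only yields $f\in\sZ(\C)p_0$, and such an $f$ is essentially never central. Indeed, $f\le p_0$ commutes with $p_0\C p_0$, so $f\C f=p_0\C p_0 f$. Moreover, $\mathrm{Ad}(v)$ maps $p_0\C p_0f$ onto $\theta(p_0\C p_0)vv^*\subset\A$. Hence $f$ is an \emph{abelian} projection of $\C$, and it is central only when $\C f$ is abelian. (Relatedly, $(\D f)'\cap f\M f$ equals $f\C f$, not $\C f$; in fact $v^*\A v=f\C f$.) Suppose, for instance, that $\C$ is homogeneous of type I$_n$ with $n\ge 2$. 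This is exactly the situation $\Q r_\iota=\sZ(\Q)r_\iota\bar\otimes\mathbb M_{n_\iota}(\mathbb C)$ in which the paper applies the lemma. Then no output of the local step is admissible in your family, so the claimed contradiction with maximality never occurs. Producing a central piece would require gluing several abelian pieces, which is what the global step was supposed to do, so the argument is circular as organized.

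\textbf{How to repair it.} Let the $f_i$ be mutually orthogonal, not necessarily central, projections of $\C$, with $e_i=v_iv_i^*\in\A$ mutually orthogonal and $\D f_i\subset v_i^*\A v_i\subset f_i\C f_i$.
\begin{enumerate}
\item If $z=1-\sum_i f_i\neq 0$, let $c\in\sZ(\C)$ be the central support of $z$ in $\C$. Then $\C c\prec\A$ by $\C\prec^{\rm s}\A$, and the local step gives an abelian projection $f\le c$ of $\C$ with $v^*\A v=f\C f$ and $vv^*\in\A$.
\item Comparison in $\C$ gives $w\in\C$ with $w^*w=f'\le f$ nonzero and $ww^*\le z$. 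Since $f$ is abelian, $f'=c'f$ for some $c'\in\sZ(\C)$.
\item Set $v'=vf'w^*$. Then $v'v'^*=\theta(c'p_0)vv^*\in\A$, and $v'^*\A v'=ww^*\C ww^*\supset\D ww^*$.
\item Finally, move $v'v'^*$ into $\A(1-\sum_ie_i)$ using the normalizing groupoid of $\A$, as you proposed. The trace condition holds since $\tau(ww^*)\le\tau(z)=\tau(1-\sum_ie_i)$. This is the only place where factoriality of $\M$ and the Cartan property of $\A$ are needed, and it contradicts maximality.
\end{enumerate}
The final gluing $w=\sum_iv_i$ then works verbatim, because each $f_i\in\C$ commutes with $\D$.

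\textbf{A smaller point: regularity is not needed for the refinement.} Arranging $vv^*\in\A$ does not use regularity of $\A$. By the same abelianness, $vv^*$ is an abelian projection of $\mathcal R=\theta(p_0\C p_0)'\cap q_0\M q_0$, in which $\A q_0$ is a masa. So $vv^*$ is equivalent in $\mathcal R$ to a projection of $\A q_0$, and you may replace $v$ by $uv$ for a suitable partial isometry $u\in\mathcal R$.
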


\begin{lem}[\cite{Io10}]\label{conj2}
Let $\M$ be a II$_1$ factor, $\A\subset \M$ a Cartan subalgebra, $\D\subset \M$ an abelian von Neumann subalgebra and let $\C=\D'\cap \M$. Assume that $\C\prec_{\M}^{\text{s}}\A$ and $\D\subset \A\subset \C$. Let $(\alpha_g)_{g\in G}$ be an action of a group $G$ on $\C$ such that $\alpha_g=\emph{Ad}(u_g)$, for some $u_g\in\sN_\M(\D)$, for every $g\in G$. Assume that the restriction of the action $(\alpha_g)_{g\in G}$  to $\D$ is free. Then there is an action $(\beta_g)_{g\in G}$ of $G$ on $\C$ such that

\begin{enumerate}
    \item\label{unu} for every $g\in G$ we have that $\beta_g=\alpha_g\circ\emph{Ad}(\omega_g)=\emph{Ad}(u_g\omega_g)$, for some $\omega_g\in\sU(\C)$, and
    \item\label{doi} $\A$ is $(\beta_g)_{g\in G}$-invariant and the restriction of $(\beta_g)_{g\in G}$ to $\A$ is free.
\end{enumerate}
\end{lem}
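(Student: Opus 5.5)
The proof of Lemma \ref{conj2} is essentially Ioana's argument from \cite{Io10}, as presented in \cite[Lemma 3.8]{CIOS1}; the plan is to recover it from Lemma \ref{conj1}. The idea is to move each $\alpha_g(\A)$ back onto $\A$ by a unitary of $\C$, and then correct the resulting maps by a cocycle so that they form a genuine action.

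\emph{Step 1: each $\alpha_g(\A)$ is unitarily conjugate to $\A$ inside $\C$.} Fix $g\in G$. Since $u_g$ normalizes $\D$, it also normalizes $\C=\D'\cap\M$, so $\alpha_g(\C)=\C$. The algebra $\alpha_g(\A)=u_g\A u_g^*$ is a Cartan subalgebra of $\M$. It satisfies $\D\subset\alpha_g(\A)\subset\C$, and $\C\prec^s_\M\alpha_g(\A)$, because $\prec^s$ is preserved under applying $\mathrm{Ad}(u_g)$ to both algebras and $\alpha_g(\C)=\C$. Both $\A$ and $\alpha_g(\A)$ are then maximal abelian in $\C$ and contain $\D$. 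I would apply Lemma \ref{conj1}, or more precisely its proof, to the pair of Cartan subalgebras $\A$ and $\alpha_g(\A)$ of $\M$, both containing $\D$ and both strongly intertwining $\C$. The aim is a unitary $\omega_g\in\sU(\C)$ with $\mathrm{Ad}(u_g\omega_g)(\A)=\A$.

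The mechanism would be as follows. Since $\C\prec^s\A$ with $\D\subset\A\subset\C$, one gets that $\C$ is of type I over its center, more precisely $\C\prec^s_\C\A$. Two masas of $\C$ that both contain $\D$, where $\C$ is homogeneous type I fibred over $\D$, are then unitarily conjugate inside $\C$. I expect this conjugacy inside $\C$ to be the most delicate point, because one must control the multiplicity functions. The plan is to handle it by decomposing $\C$ as a direct integral of type I factors over $\mathscr Z(\C)$ and noting that masas in type I factors are unitarily conjugate.

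\emph{Step 2: build the action.} Put $\beta'_g=\mathrm{Ad}(u_g\omega_g)$ on $\C$; each $\beta'_g$ leaves $\A$ invariant. The composition $\beta'_g\beta'_h(\beta'_{gh})^{-1}$ is $\mathrm{Ad}$ of a unitary in $\C$ normalizing $\A$. Its restriction to $\D$ is trivial, since $\omega_g$ commutes with $\D$ and $\alpha$ is an action on $\D$. Because $\D\subset\A$, the restriction to $\A$ of this composition is an automorphism of $\A$ fixing $\D$ pointwise, coming from $\sN_\C(\A)$.

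The next task is to show that the restrictions $\beta'_g|_\A$ already define an action. For this I would use that $\A\subset\C$ is Cartan-like in the type I algebra $\C$. Its normalizer acts on $\A$ through the "fibrewise permutation" groupoid, and the automorphisms obtained are trivial on $\D$. I would aim to choose the $\omega_g$ coherently, for example by fixing a measurable identification of $\A$ with $\D\otimes\ell^\infty(\text{fibre})$, so that the $\beta'_g$ act only through their action on $\D$ tensored with a fixed permutation. This would make $g\mapsto\beta'_g|_\A$ multiplicative. A 2-cocycle valued in $\sU(\A)$ then remains, and since $\A$ is abelian and the action on $\D$ is free, it can be absorbed by modifying the $\omega_g$ by unitaries in $\A$. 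The result is a genuine action $\beta_g=\mathrm{Ad}(u_g\omega_g)$ on $\C$. I would also verify $\beta_g=\alpha_g\circ\mathrm{Ad}(\omega_g)$, which holds by construction.

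\emph{Step 3: freeness.} If $\beta_g|_\A$ fixes a nonzero projection $e\in\A$ pointwise, then $\beta_g$ fixes $e\D\subset e\A$, so $\alpha_g|_\D$ fixes a corner of $\D$. This contradicts freeness of $\alpha|_\D$ unless $g=1$. Hence $\beta|_\A$ is free.
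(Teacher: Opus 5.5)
Your Steps 1 and 3 are essentially sound. In Step 1 the fibring should be over $\mathscr Z(\mathcal C)$ rather than $\mathcal D$, and homogeneity should not be assumed. In Step 3 you should pass from $e\in\mathcal A$ to the support of $E_{\mathcal D}(e)$, a nonzero projection of $\mathcal D$ on which $\alpha_g$ acts trivially.

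\textbf{The gap is in Step 2}, which is where the real content of the lemma lies, and it has two defects.

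\emph{First, the fibre structure sits over the wrong algebra.} You have $\mathcal D\subset\mathscr Z(\mathcal C)\subset\mathcal A$; the second inclusion holds because $\mathcal A\subset\mathcal C$ is maximal abelian in $\mathcal M$. In general $\mathscr Z(\mathcal C)$ is much larger than $\mathcal D$, so $\mathcal A$ has diffuse fibres over $\mathcal D$ and cannot be identified with $\mathcal D\otimes\ell^\infty(\text{fibre})$. For instance, in the application in the proof of Theorem \ref{super+torsion}, $\mathcal D=\Delta(\mathcal P_i)f$ is far smaller than $\mathcal A=\mathscr Pf$. The discrete structure lives over the center: $\mathcal C=\bigoplus_n\mathscr Z(\mathcal C)p_n\bar\otimes\mathbb M_n(\mathbb C)$.

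\emph{Second, the cocycle step is unjustified.} You claim the leftover $\mathscr U(\mathcal A)$-valued 2-cocycle can be absorbed because $\mathcal A$ is abelian and $\alpha|_{\mathcal D}$ is free. Neither property implies that such a 2-cocycle is a coboundary, and you give no other reason. Moreover, making $g\mapsto\beta'_g|_{\mathcal A}$ multiplicative is not the actual target. You need $\beta_g\beta_h\beta_{gh}^{-1}$ to be trivial on all of $\mathcal C$, that is, $u_g\omega_gu_h\omega_h\omega_{gh}^*u_{gh}^*\in\mathcal C'\cap\mathcal M=\mathscr Z(\mathcal C)$. Your argument never produces this.

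\textbf{The missing idea} is that no cohomology is needed if you build the action canonically instead of correcting arbitrary choices of $\omega_g$.
\begin{enumerate}
\item Since $\mathcal C$ is finite of type I and $\mathcal A$ is a masa of $\mathcal C$, fix identifications $\mathcal Cp_n=\mathscr Z(\mathcal C)p_n\bar\otimes\mathbb M_n(\mathbb C)$, with $p_n$ the canonical central projections of the type $\mathrm{I}_n$ summands, in which $\mathcal Ap_n=\mathscr Z(\mathcal C)p_n\bar\otimes\mathbb D_n(\mathbb C)$. This uses exactly the measurable conjugacy of masas from your Step 1.
\item Every automorphism of $\mathcal C$ preserves $\mathscr Z(\mathcal C)$ and each $p_n$. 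So setting $\beta_g:=\alpha_g|_{\mathscr Z(\mathcal C)p_n}\otimes{\rm id}$ on each summand defines an action of $G$ on $\mathcal C$, since $\alpha|_{\mathscr Z(\mathcal C)}$ is an action. This $\beta$ leaves $\mathcal A$ invariant by construction.
\item $\alpha_g^{-1}\circ\beta_g$ is an automorphism of the type I algebra $\mathcal C$ fixing its center pointwise, hence inner: $\alpha_g^{-1}\circ\beta_g={\rm Ad}(\omega_g)$ with $\omega_g\in\mathscr U(\mathcal C)$. This gives (1).
\item Freeness of $\beta|_{\mathcal A}$ then follows verbatim from your Step 3, since $\beta_g=\alpha_g$ on $\mathcal D\subset\mathcal A$.
\end{enumerate}
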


\subsection{Strong rigidity for orbit equivalence embeddings}\label{STRONG}

The following generalization of \cite[Theorem 0.5]{Po04} to a large class of actions built over was obtained in \cite[Theorem 4.1]{CIOS1}. If $\mathscr R$ is a countable p.m.p. equivalence relation on a standard probability space $(X,\mu)$, then the \textit{full group} $[\mathscr R]$ consists of all automorphisms $\theta$ on $(X,\mu)$ such that $(\theta(x),x)\in\mathscr R$, for almost every $x\in X$. If $G\curvearrowright (X,\mu)$ is a p.m.p. action, then $\mathscr R(G\curvearrowright X)=\{(x_1,x_2)\in X\times X:G\cdot x_1=G\cdot x_2\}$ is its \textit{orbit equivalence relation}.

\begin{thm}\label{SOE} Let $B$ be an icc group and $ B\curvearrowright^{\alpha}(X,\mu)=(Y^I,\nu^I)$ be a measure preserving action built over an action $B\curvearrowright I$, where $(Y,\nu)$ is a probability space. Let $\widetilde B= B\times\mathbb Z/n\mathbb Z$ and $(\widetilde X,\widetilde\mu)=(X\times\mathbb Z/n\mathbb Z,\mu\times c)$, where $n\in\mathbb N$ and $c$ is the counting measure of $\mathbb Z/n\mathbb Z$. Consider the action $\widetilde B\curvearrowright^{\widetilde\alpha} (\widetilde X,\widetilde\mu)$ given by $(g,a)\cdot (x,b)=(g\cdot x,a+b)$.

Let $D$ be a countable group with a normal subgroup $D_0$ such that the pair $(D,D_0)$ has the relative property (T).
Let $X_0\subset \widetilde X$ be a measurable set and $D\curvearrowright^{\beta}(X_0,\widetilde\mu_{|X_0})$ be a weakly mixing free measure preserving action such that $D\cdot x\subset\widetilde B\cdot x$, for almost every $x\in X_0$.
Assume that for every $i\in I$ and $g\in B\setminus\{1\}$, there is a sequence $(h_m)\subset D_0$ such that for every $s,t\in\widetilde B$ we have 

\begin{align*}
    \lim_{m\to\infty}&\widetilde\mu(\{x\in X_0\mid h_m\cdot x\in s(\text{Stab}_B(i)\times\mathbb Z/n\mathbb Z)t\cdot x\})= 0, \text{ and}\\
    \lim_{m\to\infty}&\widetilde\mu(\{x\in X_0\mid h_m\cdot x\in s(\text{C}_B(g)\times\mathbb Z/n\mathbb Z)t\cdot x\})= 0.
\end{align*}

\noindent Then there exist a subgroup $B_1<B$, a group isomorphism $\delta\colon D\rightarrow B_1$, and $\theta\in [\sR(\widetilde B\curvearrowright\widetilde X)]$ such that $\theta(X_0)=X\equiv X\times\{0\}$ and $\theta\circ\beta(h)=\alpha(\delta(h))\circ\theta$, for every $h\in D$.
\end{thm}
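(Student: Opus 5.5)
The statement is Theorem \ref{SOE}, which the paper attributes to \cite[Theorem 4.1]{CIOS1} as a generalization of Popa's strong rigidity theorem for orbit equivalence embeddings \cite[Theorem 0.5]{Po04}. I would prove it by adapting Popa's deformation/rigidity scheme to actions built over $B\curvearrowright I$. Work in $\M=L^\infty(\widetilde X)\rtimes\widetilde B$ with Cartan subalgebra $\A=L^\infty(\widetilde X)$. The inclusion $D\cdot x\subset\widetilde B\cdot x$ means there is a measurable cocycle $w\colon D\times X_0\to\widetilde B$ with $\beta(h)x=w(h,x)\cdot x$. Equivalently, the unitaries $v_h:=\sum_{s}1_{\{w(h,\cdot)=s\}}u_s\in p\M p$, where $p=1_{X_0}$, implement $\beta$ on $p\A$. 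The goal is to untwist $w$: find $\phi\colon X_0\to\widetilde B$ and a homomorphism $\delta$ with $w(h,x)=\phi(\beta(h)x)^{-1}\delta(h)\phi(x)$, up to the $\mathbb Z/n\mathbb Z$-component.

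The first step is to use the malleable deformation of the generalized Bernoulli action. Replace $Y$ by $Y\times Y$ and use the standard path $\alpha_t$ of automorphisms of $L^\infty(Y^I\times Y^I)\rtimes\widetilde B$, with $\alpha_1$ the flip and the usual symmetry $\beta$ (Popa's $s$-malleability). Relative property (T) of $(D,D_0)$ gives $t>0$ such that $\|\alpha_t(v_h)-v_h\|_2$ is uniformly small for $h\in D_0$. Popa's transversality and the symmetry argument then upgrade this to a nonzero intertwiner between $v_h$ and $\alpha_1(v_h)$, for $h\in D_0$. Next comes the spectral analysis of the Bernoulli bimodule: $L^2(\widetilde{\M})\ominus L^2(\M)$ decomposes as a sum over finite subsets $F\subset I$ of bimodules induced from $\mathrm{Stab}(F)\times\mathbb Z/n\mathbb Z$. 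The first vanishing hypothesis, namely $h_m\cdot x\notin s(\mathrm{Stab}_B(i)\times\mathbb Z/n\mathbb Z)t\cdot x$ asymptotically, shows that no nonzero $D_0$-intertwiner lives in these off-diagonal pieces. Hence the intertwiner lies in $L^2(\M)$, which forces $v_h$, $h\in D_0$, to be conjugate into $\{u_s\}$ modulo scalars from $\A$. Concretely, the cocycle restricted to $D_0$ is cohomologous to a homomorphism $\delta_0\colon D_0\to\widetilde B$.

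To extend from $D_0$ to $D$, I would use normality of $D_0$. The untwisted cocycle satisfies $w(h,\cdot)\delta_0(k)w(h,\cdot)^{-1}=\delta_0(hkh^{-1})$ on $D_0$, for $h\in D$. The second vanishing hypothesis, on $C_B(g)$, together with weak mixing of $\beta$, then forces $w(h,\cdot)$ to be essentially constant. The argument is that any two values would differ by an element of $C_B(g)$ against which $h_m$ has vanishing mass. This yields a homomorphism $\delta\colon D\to\widetilde B$. Weak mixing and the icc property of $B$ are used to discard the $\mathbb Z/n\mathbb Z$ part and to get injectivity, using freeness of $\beta$. This gives a monomorphism into $B$ with image $B_1$. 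Finally, $\phi$ defines $\theta\in[\sR(\widetilde B\curvearrowright\widetilde X)]$ by $\theta(x)=\phi(x)\cdot x$, possibly composed with a shift in the $\mathbb Z/n\mathbb Z$-coordinate. Measure counting, using ergodicity and the fact that $\theta$ conjugates $\beta$ onto a $\widetilde B$-subaction, shows $\theta(X_0)=X\times\{0\}$ and $\theta\circ\beta(h)=\alpha(\delta(h))\circ\theta$.

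The main obstacle is the spectral-gap/intertwining step: showing that the bimodules induced from $\mathrm{Stab}(F)$ carry no $D_0$-almost-invariant vectors. This is exactly where the mixing hypotheses on stabilizers replace Popa's plain mixing. I would first try to reduce it to singletons $F=\{i\}$, using that $\mathrm{Stab}(F)\subset\mathrm{Stab}(i)$, and then show the matrix coefficients vanish by a direct Fourier computation against $1_{\{h_m x\in s\,\mathrm{Stab}\,t\,x\}}$.
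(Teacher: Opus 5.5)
The paper does not prove this statement. It quotes it from [CIOS1, Theorem 4.1], which adapts Popa's proof of [Po04, Theorem 0.5], and your outline belongs to that family. There are, however, two genuine gaps.

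\textbf{First gap: intertwining is not untwisting.} The problem is the step you treat as routine: ``the intertwiner lies in $L^2(\mathcal M)$, which forces $v_h$ to be conjugate into $\{u_s\}$ modulo $\mathcal A$''.
\begin{itemize}
\item Your deformation acts on the \emph{target} algebra $\mathcal M=L^\infty(\widetilde X)\rtimes\widetilde B$. So the intertwiner produced by $s$-malleability satisfies $\alpha_1(v_h)V=Vv_h$ and lies in $\widetilde{\mathcal M}$. It intertwines two different copies of $\{v_h\}_{h\in D_0}$, so it is not a $D_0$-central vector of the $\mathcal M$-bimodule $L^2(\widetilde{\mathcal M})\ominus L^2(\mathcal M)$. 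The decomposition over finite $F\subset I$ therefore does not tell you where it sits.
\item What that decomposition actually does, together with the stabilizer hypothesis, is give the relative commutant control $\{v_h\}_{h\in D_0}'\cap p\widetilde{\mathcal M}p\subset p\mathcal Mp$. The stabilizer hypothesis amounts to $\{v_h\}_{h\in D_0}''\nprec_{\mathcal M}L^\infty(\widetilde X)\rtimes(\mathrm{Stab}_B(i)\times\mathbb Z/n\mathbb Z)$ for every $i$. The relative commutant control is what the $s$-malleability argument needs to get from $\alpha_t$ to $\alpha_1$ at all.
\item Once $V$ exists, the independence of $\mathcal M$ and $\alpha_1(\mathcal M)$ over $L^\infty(\mathbb Z/n\mathbb Z)\rtimes\widetilde B$ gives only $\{v_h\}_{h\in D_0}''\prec_{\mathcal M}{\rm L}(\widetilde B)$. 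Concretely, this says $\inf_{h\in D_0}\sum_{s\in\widetilde B}\widetilde\mu(\{x\in X_0:\beta(h)x=s\cdot x\})^2>0$.
\item That is a non-dispersion condition on $w(h,\cdot)$; it holds, for example, whenever each $w(h,\cdot)$ takes at most $N$ values. It is not a cohomological statement.
\end{itemize}
You seem to be transplanting the source-side cocycle superrigidity template. There the deformation lives on the domain of the cocycle, and weak mixing of the doubled source action does the untwisting. Here $\beta$ is an arbitrary weakly mixing action, so that mechanism is unavailable.

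What is missing is the passage from intertwining into ${\rm L}(\widetilde B)$ to group-like conjugacy. It requires three things:
\begin{itemize}
\item normalizer control for ${\rm L}(\widetilde B)\subset\mathcal M$ relative to stabilizers, which is the natural way to use normality of $D_0$ to reach $D$;
\item conjugating $p\mathcal A$ onto a corner of $\mathcal A$;
\item single-Fourier-support arguments in the spirit of Proposition \ref{singlesupport}, which is where the centralizer hypothesis enters.
\end{itemize}

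\textbf{Second gap: $\theta$ need not be injective.} Suppose you have a measurable $\phi\colon X_0\to\widetilde B$ with $w(h,x)=\phi(\beta(h)x)^{-1}\delta(h)\phi(x)$. Then $\theta(x)=\phi(x)\cdot x$ is $\delta$-equivariant and injective on each level set of $\phi$. Globally it can be finite-to-one, so it need not lie in $[\mathscr R(\widetilde B\curvearrowright\widetilde X)]$.

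Moreover, the conclusion $\theta(X_0)=X\times\{0\}$ forces $\widetilde\mu(X_0)=1$, and no measure count provides that. Here is an example.
\begin{itemize}
\item Take $n=2$ and $X_0=\widetilde X$, an injective $\delta\colon D\to B$, and a $\mathbb Z/2\mathbb Z$-valued cocycle $c$ of $\alpha\circ\delta$. Set $\beta(h)(x,j)=(\delta(h)x,j+c(h,x))$.
\item Then $\phi(x,j)=(1,-j)$ untwists the cocycle of $\beta$ to $h\mapsto(\delta(h),0)$, yet $\widetilde\mu(X_0)=2$.
\item Excluding this configuration for weakly mixing $\beta$ amounts to knowing that every such $c$ has the form $c(h,x)=b(\delta(h)x)+\chi(h)-b(x)$ with $\chi\colon D\to\mathbb Z/2\mathbb Z$ a homomorphism. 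Then $(-1)^{j-b(x)}$ is a nonconstant eigenfunction, contradicting weak mixing.
\end{itemize}
This is a $\mathcal U_{\rm fin}$-cocycle superrigidity input for the built-over action $\alpha\circ\delta$. In the operator-algebraic proof it appears as untwisting a matrix-valued cocycle after the $v_h$ have been conjugated to the form $\omega_h(u_{\delta(h)}\otimes\rho(h))$; weak mixing then kills the multiplicity.

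The same issue affects your $D_0\to D$ step. The centralizer hypothesis cannot separate values of $w(h,\cdot)$ that differ by an element of the central subgroup $\{1\}\times\mathbb Z/n\mathbb Z$. Also, $\beta|_{D_0}$ need not be weakly mixing. So what survives that step is exactly such a finite-valued cocycle, which ergodicity and counting cannot remove.
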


\noindent The following result due to Popa \cite{Po04} (see also \cite[Lemma 4.4]{CIOS1}) shows that the weakly mixing condition from the prior theorem is automatically satisfied after passing to an ergodic component of a finite index subgroup.

\begin{lem}[\cite{Po04}]\label{wmix} Assume the setting of Theorem \ref{SOE}. Then there exists a finite index subgroup $ S< D$ and a $\beta(S)$-invariant non-null measurable set $Y\subset X_0$ such that $\widetilde\mu(\beta(h)(Y)\cap Y)=0$, for every $h\in D\setminus S$, and the restriction of $\beta_{| S}$ to $Y$ is weakly mixing.
\end{lem}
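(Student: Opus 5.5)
This lemma is due to Popa and is stated here "in the setting of Theorem \ref{SOE}". I would prove it by decomposing the ergodic components of $\beta$ and using relative property (T) to bound the compact part. The plan follows Popa's original argument.

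\textbf{Step 1 (isolate the compact part).} Set $\mathcal A=L^\infty(X_0)$ with the action $\beta$ of $D$. Let $\mathcal K\subset L^2(X_0)$ be the closed span of all finite-dimensional $\beta(D)$-invariant subspaces, i.e.\ the compact (Kronecker) part. The action is weakly mixing on $Y$ precisely when no nonconstant $L^2$ function on $Y$ has finite-dimensional orbit span. So the goal is to find a finite-index subgroup killing the compact part, after restricting to an ergodic piece.

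\textbf{Step 2 (finiteness of the compact part).} I would first use relative property (T) of $(D,D_0)$ together with the hypothesis that $D\cdot x\subset \widetilde B\cdot x$. The aim is to show that the von Neumann algebra $\mathcal A_c\subset\mathcal A$ generated by finite-dimensional invariant subspaces is in fact finite-dimensional, i.e.\ atomic with finitely many atoms.

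The intended route is the standard deformation argument. An infinite-dimensional $\mathcal A_c$ would yield a compact (profinite-type) quotient action of $D$ that is not finite. Then $D_0$ acting with rigidity would intertwine into the $\widetilde B$-structure. This would contradict the mixing conditions imposed on $h_m\in D_0$ relative to the stabilizers and centralizers in the hypotheses of Theorem \ref{SOE}. More concretely, the property (T) of $D_0$ forces $\beta(D_0)$ to act on the compact part through a group with finitely many orbits of projections.

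This step is where I expect the main obstacle, since the mixing hypotheses must be translated into the absence of an infinite compact factor. What I would try first is Popa's argument: compact actions are limits of finite ones, relative (T) gives uniform $2$-norm control, and this yields finitely many atoms.

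\textbf{Step 3 (choose $S$ and $Y$).} Granting that $\mathcal A_c$ is finite-dimensional, $D$ permutes its finitely many minimal projections $Y_1,\dots,Y_k$. Choose $Y=Y_1$ and let $S=\{h\in D:\beta(h)(Y)=Y\}$, which has index at most $k$. For $h\notin S$, the set $\beta(h)(Y)$ is a different atom, hence disjoint from $Y$ up to null sets.

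It remains to see that $\beta_{|S}$ on $Y$ is weakly mixing. A finite-dimensional $S$-invariant subspace of $L^2(Y)$ induces, via $\mathrm{Ind}_S^D$, a finite-dimensional $D$-invariant subspace of $L^2(X_0)$, since $[D:S]<\infty$. That subspace lies in $\mathcal A_c$, which is spanned by the $1_{Y_j}$. Hence the only such functions on $Y$ are constants, so the restriction is weakly mixing, and in particular ergodic.
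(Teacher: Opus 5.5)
You are right to treat the version of the lemma without the weak mixing hypothesis. Read literally, the setting of Theorem~\ref{SOE} already assumes $\beta$ is weakly mixing, so $S=D$, $Y=X_0$ would do. The paper gives no proof; it cites \cite{Po04} and \cite[Lemma 4.4]{CIOS1}. Once weak mixing is dropped, ergodicity is no longer available either.

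Your Steps 1 and 3 are a correct reduction, and Step 3 needs less than you ask for. It suffices that $\mathcal A_c$ has \emph{one} minimal projection $q$. Its $\beta(D)$-orbit is then automatically finite, being a $\|\cdot\|_2$-precompact family of pairwise orthogonal projections of equal trace. Your induction argument then runs verbatim with $Y$ the support of $q$. Aiming for finite-dimensionality would also require controlling $L^\infty(X_0)^{\beta(D)}\subseteq\mathcal A_c$, which is not automatic without ergodicity.

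\textbf{The gap is Step 2.} It carries the whole content of the lemma and is not proved, and the mechanisms you suggest do not work:
\begin{itemize}
\item A compact ergodic action is a translation action on $K/L$ for a compact group $K$. This need not be profinite, nor a limit of finite actions.
\item $D_0$ is not assumed to have property (T); only the pair $(D,D_0)$ has relative property (T).
\item (Relative) property (T) by itself gives no control of the compact part. For example, $SL_3(\mathbb Z)$ acting by translation on its profinite completion is free, ergodic and compact.
\end{itemize}
So the hypothesis $D\cdot x\subset\widetilde B\cdot x$ and the relative mixing conditions on $(h_m)$ must be used essentially, and your sketch never says how.

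The contradiction you aim for is also misdirected. Those conditions only forbid intertwining the unitaries $u_h$ ($h\in D_0$) implementing $\beta$ into $L^\infty(\widetilde X)\rtimes(\mathrm{Stab}_B(i)\times\mathbb Z/n\mathbb Z)$ or $L^\infty(\widetilde X)\rtimes(C_B(g)\times\mathbb Z/n\mathbb Z)$. Intertwining them into the group part is not forbidden; it is exactly what rigidity produces.

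\textbf{The missing ingredient} is Popa's deformation/rigidity for the built-over action $\widetilde\alpha$, i.e.\ the part of the argument behind Theorem~\ref{SOE} that does not use weak mixing. Here is one way to supply it (a sketch, not checked in detail):
\begin{itemize}
\item Take an s-malleable deformation of $L^\infty(\widetilde X)\rtimes\widetilde B$ commuting with $\widetilde B$. Relative property (T) of $(D,D_0)$ gives uniform convergence on $\{u_h\}_{h\in D_0}$, hence a nonzero intertwiner.
\item The relative mixing conditions then push $\{u_h\}_{h\in D_0}''$ into the group part.
\item Translated back to the equivalence relation, this should give a non-null set $Z\subset X_0$ and a subgroup $D_1\leqslant D_0$ leaving $Z$ invariant. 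On $Z$, $\beta_{|D_1}$ is conjugate to the restriction of $\widetilde\alpha$ to a subgroup of $\widetilde B$ with infinite orbits on $I$; this is where the stabilizer condition enters. Hence $\beta_{|D_1}$ is weakly mixing on $Z$.
\item Every $\beta(D)$-almost periodic function is then constant on $Z$. So the smallest projection of $\mathcal A_c$ majorizing $1_Z$ is minimal in $\mathcal A_c$, and your Step 3 finishes the proof.
\end{itemize}
Without an input of this kind, Step 2 remains an unsupported assertion.
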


\section{W\texorpdfstring{$^*$}{*}-superrigidity results for groups in class \texorpdfstring{$\mathcal{AT}$}{AT}}\label{SEC:superresults}

In this section we show the class $\mathcal{AT}$ defined in Definition \ref{def:AT} consists of W$^*$-superrigid groups (Theorem \ref{ATrigid}). This result will be used to prove that finite direct sums of such groups are also W$^*$-superrigid (Theorem \ref{superr}). Before the proof, we introduce some notation and record several useful facts.

Let $A$ be a nontrivial abelian group, $B$ a nontrivial icc subgroup of a hyperbolic group and $B\curvearrowright I$ an action with $\text{Stab}_B(i)$ is amenable, for every  $i\in I$. Let $G\in\mathcal W\mathcal R(A,B\curvearrowright I)$ be a  property (T) group. Denote $\M=\text{L}(G)$ and assume that $\M^t=\text{L}(H)$, for a countable group $H$ and $t>0$. 

Let $\Delta_0\colon \text{L}(H)\rightarrow \text{L}(H) \bar{\otimes} \text{L}(H)$ be the comultiplication given by $\Delta_0(v_h)=v_h\otimes v_h$, for every $h\in H$. Let $n$ be the smallest integer such that $n\geq t$.
Denote $\sM=\M \bar{\otimes} \M \bar{\otimes} \mathbb M_n(\mathbb C)$.
Then $\Delta_0$ can be amplified to a unital $*$-homomorphism $\Delta\colon \M\rightarrow p\sM p$, where $p\in\sM$ is a projection with $(\tau\otimes\tau\otimes\text{Tr})(p)=t$.

\vspace{1mm}

\begin{rem} Assume that $t\in\mathbb N$, so that $n=t$, $p=1$ and $\text{L}( H)=\M^n=\M \bar{\otimes} \mathbb M_n(\mathbb C)$. For further reference, we make explicit the construction of $\Delta$ in terms of $\Delta_0$. To this end, let $\psi\colon \sM \bar{\otimes} \mathbb M_n(\mathbb C)\rightarrow \M \bar{\otimes} \mathbb M_n(\mathbb C) \bar{\otimes} \M \bar{\otimes} \mathbb M_n(\mathbb C)=\text{L}( H) \bar{\otimes} \text{L}( H)$ be the $*$-isomorphism given by $\psi(a\otimes b\otimes c\otimes d)=a\otimes c\otimes b\otimes d$, for every $a,b\in \M$ and $c,d\in\mathbb M_n(\mathbb C)$. Let $U\in \M \bar{\otimes} \mathbb M_n(\mathbb C) \bar{\otimes} \M \bar{\otimes} \mathbb M_n(\mathbb C)$ be a unitary such that $\Delta_0(1_\M\otimes x)=U\psi(1_{\sM}\otimes x)U^*$, for every $x\in\mathbb M_n(\mathbb C)$. Then $\psi^{-1}\circ\text{Ad}(U^*)\circ\Delta_0\colon \M \bar{\otimes} \mathbb M_n(\mathbb C)\rightarrow\sM \bar{\otimes} \mathbb M_n(\mathbb C)$ is a unital $*$-homomorphism which leaves $1 \bar{\otimes} \mathbb M_n(\mathbb C)$ fixed, so it can be written as $\Delta\otimes\text{Id}$, where $\Delta\colon \M\rightarrow\sM$ is the desired unital $*$-homomorphism that amplifies $\Delta_0$. Thus, we conclude

\begin{equation}\label{Delta}
    \Delta_0=\text{Ad}(U)\circ\psi\circ(\Delta\otimes\text{Id}).
\end{equation}

\noindent In the proof of Theorem \ref{superr}, we will combine \eqref{Delta} with the symmetry and associativity properties of $\Delta_0$:  $\zeta\circ\Delta_0=\Delta_0$ and $(\Delta_0\otimes\text{Id})\circ\Delta_0=(\text{Id}\otimes\Delta_0)\circ\Delta_0$, where $\zeta$ is the flip automorphism of $\text{L}( H) \bar{\otimes} \text{L}( H)$ given by $\zeta(x\otimes y)=y\otimes x$, for every $x,y\in \text{L}( H)$.
\end{rem}

\vspace{1mm}

\begin{lem}[\cite{IPV10}]\label{comultiply} Let $\Delta\colon \M\rightarrow p\sM p$ be as defined above. Then the following hold:
\begin{enumerate}
    \item[(a)] $\Delta(\Q)\nprec_{\sM}\M \bar{\otimes}  1 \bar{\otimes} \mathbb M_n(\mathbb C)$ and $\Delta(\Q)\nprec_{\sM}1 \bar{\otimes}  \M \bar{\otimes} \mathbb M_n(\mathbb C)$, for any diffuse von Neumann subalgebra $\Q\subset \M$.
    \item[(b)] $\Delta(\M)\nprec_{\sM}\M \bar{\otimes} \emph{L}(G_0) \bar{\otimes} \mathbb M_n(\mathbb C)$ and $\Delta(\M)\nprec_{\sM}\emph{L}(G_0) \bar{\otimes} \M \bar{\otimes} \mathbb M_n(\mathbb C)$, for any infinite index subgroup $G_0<G$.
    \item[(c)] If $\mathcal H\subset \emph{L}^2(p\sM p)$ is a $\Delta(\M)$-sub-bimodule which is right finitely generated, then we have  $\mathcal H\subset \emph{L}^2(\Delta(\M))$.
    \item[(d)] Assume $p=1$. Also let $\mathcal P \subseteq \mathcal M$ be any von Neumann algebra such that for every $1\neq h$ we have $\mathcal P \nprec_{\mathcal M} {\rm L}(C_H(h))$. Then $\Delta (\mathcal P)'\cap \mathscr M= \Delta(\mathcal P' \cap \mathcal M)$ and $\mathscr{QN}_{\mathscr M}(\Delta(\mathcal P))''= \Delta(\mathscr{QN}_{\mathcal M}(\mathcal P)'')$.
\end{enumerate}
\end{lem}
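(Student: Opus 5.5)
The plan is to reduce everything to computations with the untwisted comultiplication $\Delta_0(v_h)=v_h\otimes v_h$ on $\mathrm{L}(H)\bar\otimes\mathrm{L}(H)$, using the canonical basis $\{v_a\otimes v_b\}_{a,b\in H}$. Intertwining and bimodule statements are invariant under amplification and under the unitary conjugation and flip in \eqref{Delta}. Under these identifications, $\M\bar\otimes 1\bar\otimes\mathbb M_n(\mathbb C)$ corresponds, up to amplification, to $\mathrm{L}(H)\bar\otimes 1$; likewise $\M\bar\otimes\mathrm{L}(G_0)\bar\otimes\mathbb M_n(\mathbb C)$ corresponds to $\mathrm{L}(H)\bar\otimes \mathcal Q_0$ for an amplification $\mathcal Q_0$ of $\mathrm{L}(G_0)$. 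So I will prove each item for $\Delta_0$ and $\mathrm{L}(H)$, and then transport back.

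For (a) and (b), the key estimate is the following. Let $x=\sum_h x_h v_h\in \mathrm{L}(H)$ and $a,b,c,d\in\mathrm{L}(H)$ be finitely supported. Then
\[
E_{\mathrm{L}(H)\bar\otimes \mathcal Q}\big((a\otimes b)\Delta_0(x)(c\otimes d)\big)=\sum_h x_h\, a v_h c\otimes E_{\mathcal Q}(b v_h d).
\]
Its $2$-norm is controlled by finitely many terms of the form $\|E_{\mathcal Q}(b v_h d)\|_2$ weighted by the coefficients $x_h$.

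For (a), take $\mathcal Q=\mathbb C$ and unitaries $u_k\in\Q$ tending weakly to $0$. This is possible since $\Q$ is diffuse. Then $\sup_h|(u_k)_h|\to 0$, and the expression above tends to $0$, so Popa's criterion (Theorem \ref{corner}) gives $\Delta(\Q)\nprec \M\bar\otimes 1$. The other side follows by the symmetry $\zeta\circ\Delta_0=\Delta_0$.

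For (b), it suffices to test Theorem \ref{corner} on the group unitaries $\Delta_0(v_h)=v_h\otimes v_h$, since these generate. The estimate then reduces $\Delta(\M)\prec \M\bar\otimes\mathrm{L}(G_0)$ to $\mathrm{L}(H)\prec \mathcal Q_0$, that is, to $\M\prec\mathrm{L}(G_0)$. The latter fails for infinite index $G_0<G$, because elements $g$ escaping every finite union $\bigcup_i x_iG_0y_i$ give a witnessing sequence $u_g$.

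For (c), I decompose $L^2(\mathrm{L}(H)\bar\otimes\mathrm{L}(H))$ according to orbits of the action $(h,k)\cdot(a,b)=(hak,hbk)$, since $\Delta_0(v_h)(v_a\otimes v_b)\Delta_0(v_k)=v_{hak}\otimes v_{hbk}$. The diagonal orbit spans $L^2(\Delta_0(\mathrm{L}(H)))$. An off-diagonal orbit through $(a,b)$ with $s=ba^{-1}\neq 1$ has stabilizer $\{(h,a^{-1}h^{-1}a): h\in C_H(s)\}$. Hence the corresponding sub-bimodule is isomorphic to $L^2(\mathrm{L}(H))\otimes_{\mathrm{L}(C_H(s))}L^2(\mathrm{L}(H))$.

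The orthogonal projections onto orbit subspaces commute with the bimodule action. So if a right finitely generated $\mathcal H$ is not contained in the diagonal part, some compression gives a nonzero right-finite sub-bimodule of this induced bimodule. That would force $\mathrm{L}(H)\prec\mathrm{L}(C_H(s))$, hence $C_H(s)$ has finite index, i.e.\ $s$ has a finite conjugacy class. This contradicts $H$ being icc, which holds because $\mathrm{L}(H)=\M^t$ is a factor. I expect this step, namely making rigorous that a right-finite sub-bimodule of the induced bimodule forces intertwining into $\mathrm{L}(C_H(s))$, to be the main obstacle. I would handle it via the standard fact that such a bimodule yields a finite-trace projection in $\langle \mathrm{L}(H),e_{\mathrm{L}(C_H(s))}\rangle$ commuting with $\mathrm{L}(H)$.

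For (d), with $p=1$, the same orbit decomposition applies, now to a $\Delta(\mathcal P)$-central vector $x$. The component of $x$ on the off-diagonal orbit of $s$ is a $\Delta_0(\mathcal P)$-central vector in an induced bimodule over $\mathrm{L}(C_H(s))$. Its nonvanishing would give $\mathcal P\prec \mathrm{L}(C_H(s))$, which the hypothesis excludes. Hence $x$ lies in $L^2(\Delta(\M))$, and therefore $x\in\Delta(\mathcal P'\cap\M)$. For the quasi-normalizer I apply the same argument to the finite-dimensional $\Delta(\mathcal P)$-bimodules $\overline{\sum_i x_i\Delta(\mathcal P)}$. Each orbit component must vanish off the diagonal, again by the hypothesis $\mathcal P\nprec\mathrm{L}(C_H(h))$. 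Consequently $\mathscr{QN}(\Delta(\mathcal P))\subset\Delta(\M)$, and the equality follows since $\Delta$ is injective.
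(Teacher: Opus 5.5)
Your proposal is correct, but it takes a genuinely different route from the paper. The paper does not prove (a)–(c) at all; it quotes them from \cite{IPV10}. For (d) it quotes \cite[Proposition 7.2(3)]{IPV10} for the untwisted $\Delta_0$, applied to $\mathcal P\otimes 1\subset \mathrm{L}(H)=\M\bar\otimes\mathbb M_n(\mathbb C)$. Its only real work is the transport through \eqref{Delta}: since $\Delta_0(\mathcal P\otimes 1)=U\psi(\Delta(\mathcal P)\otimes 1)U^*$, the identity for $\Delta_0$ becomes $\Delta(\mathcal P'\cap\M)\bar\otimes\mathbb M_n(\mathbb C)=(\Delta(\mathcal P)'\cap\sM)\bar\otimes\mathbb M_n(\mathbb C)$, and the matrix factor cancels.

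You instead re-derive the underlying facts. For (a) and (b) you use Popa's criterion tested on $\Delta_0(v_h)$. For (c) and (d) you split $L^2(\mathrm{L}(H)\bar\otimes\mathrm{L}(H))$ into $L^2(\Delta_0(\mathrm{L}(H)))$ plus copies of $L^2(M)\otimes_{B}L^2(M)$, where $M=\mathrm{L}(H)$ and $B=\mathrm{L}(C_H(s))$. This is the standard mechanism behind such statements, and your stabilizer computation is right.

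The step you flag does go through. Let $\xi_1,\dots,\xi_k$ be a right basis of a nonzero right-finite sub-bimodule of $L^2\langle M,e_B\rangle$. Expanding $x\xi_j$ and $x^*\xi_i$ in that basis shows that $T=\sum_i\xi_i\xi_i^*\in L^1\langle M,e_B\rangle_+$ commutes with $M$ (with $\mathcal P$ in the quasi-normalizer case). Moreover $\mathrm{Tr}(T)=\sum_i\|\xi_i\|_2^2\in(0,\infty)$, so a spectral projection of $T$ gives the intertwining into $B$.

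Your route buys self-containedness and explains why centralizers appear in (d). The paper's route buys brevity and an explicit handling of the amplification, which you treat only "up to amplification". In (d) you still need exactly the paper's transport via \eqref{Delta}, with the hypothesis read as $\mathcal P\otimes 1\nprec_{\mathrm{L}(H)}\mathrm{L}(C_H(h))$.

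A few small corrections:

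\begin{itemize}
\item In (a), a weakly null sequence of unitaries need not satisfy $\sup_h|(u_k)_h|\to 0$; take $u_k=v_{h_k}$ with $h_k\to\infty$. You only need $(u_k)_h\to 0$ for each fixed $h$, since for finitely supported $b,d$ only finitely many $h$ contribute.
\item In (b), testing only on $\Delta_0(v_h)$ uses the version of Theorem \ref{corner} for a generating group of unitaries.
\item Also in (b), the existence of $g$ escaping every finite union $\bigcup_i x_iG_0y_i$ uses B.H.~Neumann's covering lemma.
\item In (d), injectivity of $\Delta$ is not by itself enough for the quasi-normalizer equality. If $\Delta(y)$ quasi-normalizes $\Delta(\mathcal P)$ with witnesses $x_i\in\sM$, replace them by $E_{\Delta(\M)}(x_i)$ to conclude $y\in\mathscr{QN}_{\M}(\mathcal P)$.
\end{itemize}
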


\begin{proof} We will only prove part (d). Using \cite[Proposition 7.2(3)]{IPV10} we have that $\Delta_0(\mathcal P)' \cap (\mathscr M \bar\otimes \mathbb M_n(\mathbb C))= \Delta_0(\mathcal P'\cap (\mathcal M\bar \otimes \mathbb M_n(\mathbb C)))= \Delta_0 ((\mathcal P'\cap \mathcal M )\bar\otimes \mathbb M_n(\mathbb C))$. Using this in combination with relation \eqref{Delta} we get 

\begin{align}\label{diagonal control}
    U\psi(\Delta (\mathcal P'\cap\mathcal M ) \otimes \mathbb M_n(\mathbb C))U^*&=U(\psi(\Delta (\mathcal P) \otimes 1)' \cap (\mathscr M \bar\otimes \mathbb M_n(\mathbb C) ))U^*\\
    &= U\psi((\Delta(\mathcal P )'\cap \mathscr M)\bar \otimes \mathbb M_n(\mathbb C))U^*.\nonumber
\end{align}

\noindent Hence, $\Delta (\mathcal P'\cap\mathcal M ) \otimes \mathbb M_n(\mathbb C) =(\Delta(\mathcal P )'\cap \mathscr M)\bar \otimes \mathbb M_n(\mathbb C)$ and since $\Delta (\mathcal P'\cap\mathcal M ) \subseteq \Delta(\mathcal P )'\cap \mathscr M$ we must have $\Delta (\mathcal P'\cap\mathcal M ) = \Delta(\mathcal P )'\cap \mathscr M$, as desired. The other assertion follows similarly. 
\end{proof}

\noindent In the proof of Theorem \ref{superr} we will also need the fact that the set $\{i\in I\mid b\cdot i\not=i\}$ is infinite, for every $b\in B\setminus \{1\}$. This more generally holds if $B$ is acylindrically hyperbolic:

\begin{lem}\label{infinite}
Let $B$ be an \emph{icc} group acting on a set $I$. Then the following hold:
\begin{enumerate}
\item[(a)]
Assume that $B$ is acylindrically hyperbolic and $\emph{Stab}_B(i)$ is amenable, for every $i\in I$. Then,  for every non-trivial $b\in B$, the set $\{ i\in I\mid b \cdot i\ne i\}$ is infinite.
\item[(b)]
Assume that $B\cdot i$ is infinite, for every $\in I$. Let $A$ be a group. Then every $G\in\WR(A,B\curvearrowright I)$ is \emph{icc}. 
\end{enumerate}
\end{lem}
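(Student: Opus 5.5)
For part (a), I will argue by contradiction. Fix $1\neq b\in B$ and suppose $F=\{i\in I\mid b\cdot i\neq i\}$ is finite. Consider the subgroup $N=\{g\in B\mid g\cdot i=i \text{ for all } i\in F\}$. It is contained in the pointwise stabilizer of each point of $F$ when $F\neq\emptyset$, and it has finite index in the setwise stabilizer $\mathrm{Stab}_B(F)$.

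The key observation is that $b$ fixes every $i\notin F$ and permutes $F$. Hence for any $g\in B$, the conjugate $gbg^{-1}$ moves exactly the finite set $g\cdot F$. So the union of the supports $g\cdot F$ over the conjugacy class of $b$ generates a normal subgroup $K=\langle gbg^{-1}\mid g\in B\rangle$, and every element of $K$ moves only finitely many points. Since $B$ is icc, $K$ is an infinite normal subgroup.

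I would then use that acylindrically hyperbolic groups have the following property: every infinite normal subgroup $K$ is itself acylindrically hyperbolic (Osin), in particular nonamenable, and it contains free subgroups. Since $K$ acts on $I$ with every element finitely supported, I aim to show that some point stabilizer in $K$ contains a nonamenable subgroup, contradicting amenability of $\mathrm{Stab}_B(i)$. Pick two elements $x,y\in K$ generating a free group $\mathbb F_2$. Their supports are finite, so $S=\mathrm{supp}(x)\cup\mathrm{supp}(y)$ is a finite, $\langle x,y\rangle$-invariant set. If $S=\emptyset$, then $\langle x,y\rangle$ fixes everything; since $I\neq\emptyset$, some stabilizer contains $\mathbb F_2$. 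Otherwise, the kernel of the action of $\langle x,y\rangle$ on the finite set $S$ has finite index. It therefore contains a nonabelian free group fixing $S$ pointwise, and it fixes everything outside $S$ as well, so it lies in $\mathrm{Stab}_B(i)$ for any $i$. This is a contradiction.

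Note that this argument only needs nonamenability of the normal closure, so the main obstacle is justifying that $K$ is nonamenable. Using the cited fact that infinite normal subgroups of acylindrically hyperbolic groups are acylindrically hyperbolic is the step I would invoke, since the amenable radical of an acylindrically hyperbolic group is finite and $K$ is infinite.

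For part (b), take $1\neq w\in G$ and show that its conjugacy class is infinite. If $\pi(w)\neq1$, the conjugacy class of $\pi(w)$ in the icc group $B$ is infinite, and it is the image of the conjugacy class of $w$, so the latter is infinite. If $\pi(w)=1$, then $w=(a_i)_i\in A^{(I)}$ has some $a_{i_0}\neq 1$. Conjugating by lifts $\tilde g$ of elements $g\in B$ moves the component $A_{i_0}$ to $A_{g\cdot i_0}$ by condition (b). Hence the conjugates $\tilde g w\tilde g^{-1}$ have nontrivial entries at positions $g\cdot i_0$ ranging over the orbit $B\cdot i_0$, which is infinite. Since each element of $A^{(I)}$ has finite support, infinitely many distinct supports arise, so the conjugacy class of $w$ is infinite.
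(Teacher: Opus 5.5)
Your proof is correct. The paper does not prove this lemma itself (it is imported from \cite{CIOS1}), so there is no in-paper argument to compare against.

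For (a), your route works as written. The normal closure $K$ of $b$ consists of elements moving only finitely many points, and it is infinite because $B$ is icc. By Osin's theorem on infinite normal subgroups, $K$ is acylindrically hyperbolic, so it contains a copy of $\mathbb F_2$. A finite-index subgroup of that $\mathbb F_2$ fixes the finite invariant set $S$ pointwise and everything outside $S$, contradicting amenability of the stabilizers. For (b), the support-translation argument along the infinite orbit $B\cdot i_0$ is also right.

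Three small remarks:
\begin{itemize}
\item The subgroup $N$ in your first paragraph is never used and can be dropped.
\item If you want to rely only on finiteness of the amenable radical to get nonamenability of $K$, you cannot pick free generators directly. Instead, take a finitely generated nonamenable subgroup of $K$, which exists because amenability is a local property; the finite-invariant-set argument then goes through verbatim.
\item As you note, (a) tacitly requires $I\neq\emptyset$; for $I=\emptyset$ the conclusion fails vacuously.
\end{itemize}
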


\noindent Before proceeding with the main proof, we establish the following result, which will be used repeatedly in the proof of Theorem \ref{super+torsion}.

\begin{prop}\label{singlesupport} Let $G$ and $K$ be groups and let $\mathcal T$ be a von Neumann algebra. Assume that $G \ca^{\sigma}\mathcal T$ is a trace preserving action and denote by $\mathcal M = \mathcal T\rtimes_\sigma G$. Let $\delta_1,\delta_2:K\to G$ be group homomorphisms and $\rho_1,\rho_2: K\ra \mathscr U(\mathcal T)$ a group representations. Assume that for every finite index $K_0 \leqslant K$ subgroup we have that $C_G(\delta_1(K_0))=\{1\}$. If $y\in \mathcal M$ satisfies

\begin{equation}\label{conj}
     y u_{\delta_1(k)}\rho_1(k)= u_{\delta_2(g)} \rho_2(k)y, \text{ for all } k\in K,
\end{equation}

\noindent then there exist $c\in \mathscr U(\mathcal T)$ and $l\in G$ with $y= c u_{l}$, $\delta_2 ={\rm Ad}(l)\circ \delta_1$ and $\rho_2(k)= \sigma_{\delta_2(k)^{-1}}(c) \sigma_l(\rho_1(k))c^*$. 
\end{prop}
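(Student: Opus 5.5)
We expand $y$ in its Fourier decomposition with respect to the crossed product, $y=\sum_{g\in G} y_g u_g$ with $y_g=E_{\mathcal T}(yu_g^*)\in\mathcal T$, converging in $\|\cdot\|_2$. (We read the $u_{\delta_2(g)}$ in \eqref{conj} as $u_{\delta_2(k)}$, and we assume $y\neq 0$; for a unitary conclusion we take $y$ to be a unitary, as in the intended applications.) Substituting into \eqref{conj}, and using $u_g\rho_1(k)=\sigma_g(\rho_1(k))u_g$ and $u_{\delta_2(k)}\rho_2(k)y_g u_g=\sigma_{\delta_2(k)}(\rho_2(k)y_g)u_{\delta_2(k)g}$, we compare the coefficients of $u_{\delta_2(k)g}$ on both sides. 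This gives, for all $k\in K$ and $g\in G$,
\[
y_{\delta_2(k)g\delta_1(k)^{-1}}\,\sigma_{\delta_2(k)g\delta_1(k)^{-1}}(\cdot)\text{-twisted term}=\sigma_{\delta_2(k)}(\rho_2(k)y_g),
\]
and in particular the unitarity of $\rho_i(k)$ and the trace-preservation of $\sigma$ give $\|y_{\delta_2(k)g\delta_1(k)^{-1}}\|_2=\|y_g\|_2$. Hence the function $g\mapsto\|y_g\|_2$ on $G$ is invariant under the twisted conjugation action $k\cdot g=\delta_2(k)g\delta_1(k)^{-1}$ of $K$ on $G$.

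Since $\sum_g\|y_g\|_2^2=\|y\|_2^2<\infty$, every $g$ with $y_g\neq 0$ must have a finite $K$-orbit under this action. The key step is to show that the support of $y$ consists of a single element. Let $g,g'$ both have finite orbits. Their stabilizers $K_g,K_{g'}$ are then finite index subgroups of $K$, and $K_0=K_g\cap K_{g'}$ has finite index as well. For $k\in K_0$ we have $\delta_2(k)=g\delta_1(k)g^{-1}=g'\delta_1(k)g'^{-1}$, so $g'^{-1}g$ commutes with $\delta_1(K_0)$. The hypothesis $C_G(\delta_1(K_0))=\{1\}$ then gives $g=g'$. Thus $y=y_l u_l$ for a single $l\in G$.

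Taking a stabilizer $K_l$ of finite index, we have $\delta_2={\rm Ad}(l)\circ\delta_1$ on $K_l$. To extend this to all of $K$, note that the support of $y$ is $K$-invariant and consists of the single point $l$. Therefore $\delta_2(k)l\delta_1(k)^{-1}=l$ for every $k\in K$, which is exactly $\delta_2={\rm Ad}(l)\circ\delta_1$ on $K$.

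Set $c=y_l\in\mathcal T$. Substituting $y=cu_l$ into \eqref{conj} gives
\[
c\,\sigma_l(\rho_1(k))\,u_{l\delta_1(k)}=\sigma_{\delta_2(k)}(\rho_2(k)c)\,u_{\delta_2(k)l}.
\]
Since $l\delta_1(k)=\delta_2(k)l$, we get $c\,\sigma_l(\rho_1(k))=\sigma_{\delta_2(k)}(\rho_2(k)c)$. Applying $\sigma_{\delta_2(k)^{-1}}$ and rearranging yields the stated formula for $\rho_2$, provided $c$ is a unitary. Unitarity follows when $y$ is unitary, since then $y^*y=u_l^*c^*cu_l=1$.

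The main obstacle is the coefficient bookkeeping: one has to track the twisting by $\sigma$ and $\rho_i$ correctly so that the orbit-invariance of $\|y_g\|_2$ holds precisely. The formula for $\rho_2$ may also need small adjustments of conventions, for instance $\sigma_{\delta_2(k)^{-1}}$ versus $\sigma_{\delta_2(k)}$, to match the statement.
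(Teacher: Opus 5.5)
Your argument is correct and essentially the paper's. Both expand $y$ in Fourier coefficients, note that $g\mapsto\|y_g\|_2$ is invariant under the twisted conjugation $g\mapsto\delta_2(k)g\delta_1(k)^{-1}$, and use $C_G(\delta_1(K_0))=\{1\}$ to force a single support point. The paper does this through the finite level sets $F=\{g:\|y_g\|_2\ge\epsilon\}$ and the $\delta_1(K)$-conjugation-invariant set $F^{-1}F$, which is the same idea as your intersection of stabilizers. Your caveat that $c$ is unitary only when $y$ is unitary is a fair point that the paper's proof glosses over.

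One bookkeeping fix in your last step: $c\,u_lu_{\delta_1(k)}\rho_1(k)=c\,\sigma_{l\delta_1(k)}(\rho_1(k))\,u_{l\delta_1(k)}$, not $c\,\sigma_l(\rho_1(k))\,u_{l\delta_1(k)}$. With the corrected term, applying $\sigma_{\delta_2(k)^{-1}}$ and using $\delta_2(k)^{-1}l\delta_1(k)=l$ gives exactly the stated $\rho_2(k)=\sigma_{\delta_2(k)^{-1}}(c)\,\sigma_l(\rho_1(k))\,c^*$.
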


\begin{proof} Consider the Fourier expansion $y=\sum_g y_gu_g$ and using relation \eqref{conj} we have \begin{equation}\label{condexpanded}\sum_g y_g \sigma_{g \delta_1(k)}(\rho_1(k)) u_{g\delta_1(k)}=y u_{\delta_1(k)}\rho_1(k)= u_{\delta_2(g)} \rho_2(k)y=\sum_g \sigma_{\delta_2(k)}(\rho_2(k) y_g) u_{\delta_2(k)g}. \end{equation}
Fix $\epsilon>0$ such that $F=\{k\in K:\|y_{k}\|_2\geq \epsilon\}$ is nonempty. Since $\rho_1,\rho_2$ are unitary representations, relation \eqref{condexpanded} implies that $F$ is finite and  $F\delta_1(g)=\delta_2(g)F$. Thus, 

\begin{equation}\label{orbit}
    \delta_1(g)^{-1}F^{-1}F\delta_1(g)=F^{-1}F, \text{ for every }g\in K.
\end{equation} 

\noindent Notice that for any $g\in F^{-1}F$ relation \eqref{orbit} implies that there is a finite index subgroup $K_0 \leqslant K$ with $h \in C_{G}(\delta_1(K_0))$. Since the latter is trivial we get $g=1$. However, $F^{-1}F=\{1\}$ entails that $F$ consists of a single element. As this holds for all $\epsilon>0$, we get that $y = c u_{l}$ for some $c\in \mathscr U(\mathcal T)$ and $l\in G$. Relation \eqref{conj} also implies that $\delta_2 ={\rm Ad}(l)\circ \delta_1$ and $\rho_2(k)= \sigma_{\delta_2(k)^{-1}}(c) \sigma_l(\rho_1(k))c^*$. \end{proof}

\noindent Next, we present the main result of this section. Some aspects of its proof build on ideas and techniques developed in \cite{CIOS1,CIOS4}. However, for the reader’s convenience, we include all the details here.

\begin{thm}\label{super+torsion} For every $j=1,2$ let $A_j$ be a nontrivial abelian group, $B_j$ a nontrivial icc subgroup of a hyperbolic group. Let $B_j\curvearrowright I_j$ be an action such that, for every $i\in I_j$, $\emph{Stab}_{B_j}(i)$ is amenable. Suppose that $G_j\in \WR (A_j,B_j\curvearrowright I_j)$ is a group with property (T). Assume that $C<G_1,G_2$ a common malnormal subgroup with Haagerup property. Consider the corresponding amalgamated free product $G= G_1\ast_C G_2$. Let $H$ be an arbitrary group and let $\theta\colon \emph{L}(G)^t\rightarrow \emph{L}(H)$ be a $*$-isomorphism for some $t>0$. If either \begin{enumerate}
    \item [(a)] $0< t\leq 1$, or
    \item [(b)] $H$ is torsion free 
\end{enumerate} then $G\cong H$ and $t=1$. Moreover, there exist a group isomorphism $\delta\colon G\rightarrow H$, a character $\eta\colon G\rightarrow\mathbb T$ and a unitary $w\in \emph{L}(H)$ such that $\theta(u_g)=\eta(g)wv_{\delta(g)}w^*$ for every $g\in G$, where $(u_g)_{g\in G}$ and $(v_h)_{h\in H}$ denote the canonical generating unitaries of $\emph{L}(G)$ and $\emph{L}(H)$, respectively.
\end{thm}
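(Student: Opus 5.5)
The plan follows the comultiplication strategy of \cite{IPV10} as adapted in \cite{CIOS1,CIOS4}. We first identify $\M={\rm L}(G)$ with ${\rm L}(H)^{1/t}$ and consider the amplified comultiplication $\Delta\colon \M\to p\sM p$, where $\sM=\M\oo\M\oo\mathbb M_n(\mathbb C)$. Write $\M={\rm L}(G_1)\ast_{{\rm L}(C)}{\rm L}(G_2)$. The first step locates $\Delta({\rm L}(G_j))$ for $j=1,2$. Since ${\rm L}(G_j)$ has property (T) and $\sM$ is an amalgamated free product along each tensor coordinate, \cite[Theorem 5.1]{IPP05}, applied in each coordinate, gives $\Delta({\rm L}(G_j))\prec_{\sM}{\rm L}(G_{k})\oo{\rm L}(G_l)\oo\mathbb M_n(\mathbb C)$ for some $k,l$. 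Because $C$ is Haagerup and $G_j$ has (T), the image cannot fall into ${\rm L}(C)$ in either coordinate. The intertwining therefore upgrades, via the normalizer control of \cite[Theorem 1.1]{IPP05}, to a unitary conjugation of $\Delta({\rm L}(G_j))$ into ${\rm L}(G_k)\oo{\rm L}(G_l)\oo\mathbb M_n(\mathbb C)$.

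Next we run the wreath-like argument inside this corner. Set $\mathcal P=\Delta({\rm L}(A_j^{(I_j)}))$, which is amenable and regular in $\Delta({\rm L}(G_j))$, and apply Theorem \ref{relativeT} in each coordinate with $\delta$ the quotient map onto $B_k$ or $B_l$, using that ${\rm L}(G_j)$ has (T). This yields $\mathcal P\prec^s {\rm L}(A_k^{(I_k)})\oo{\rm L}(A_l^{(I_l)})\oo\mathbb M_n(\mathbb C)$. Lemma \ref{comultiply}(a), combined with the solidity result Theorem \ref{solidity}, then controls relative commutants. Lemmas \ref{conj1} and \ref{conj2} conjugate $\Delta({\rm L}(A_j^{(I_j)}))'\cap\cdots$ onto the Cartan subalgebra coming from the abelian cores. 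This puts us in the orbit-equivalence setting of Theorem \ref{SOE}, with $D=G_j$ (or $B_j$) and $D_0$ all of it by property (T). Its mixing hypotheses follow from the amenability of stabilizers and centralizers (conditions (iii) and (iv) of Definition \ref{def:AT}) together with Lemma \ref{infinite}, after first passing to a weakly mixing piece via Lemma \ref{wmix}. The output is the standard ``height'' conclusion: there is a unitary $\Omega$ such that $\Omega\Delta(u_g)\Omega^*$ is, up to a scalar and a unitary of the core, a group unitary $u_{\delta_1(g)}\otimes u_{\delta_2(g)}$ for $g\in G_j$. Here $t$ is forced to be an integer, and hence $t=1$ in case (a). In case (b), torsion-freeness of $H$ rules out the $\mathbb Z/n\mathbb Z$ component in Theorem \ref{SOE}.

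The main obstacle is gluing the two pieces over ${\rm L}(C)$. After this step we have conjugating unitaries $\Omega_1$ and $\Omega_2$ that work on $G_1$ and $G_2$ separately, and they must coincide up to a controlled error on ${\rm L}(C)$. We plan to compare them through $\Omega_2\Omega_1^*$, which intertwines the two group-like representations restricted to $C$. Since $C$ is icc and almost malnormal in $G$ (Lemma \ref{malnormal4}), the centralizer hypothesis of Proposition \ref{singlesupport} holds for finite index subgroups of $C$. The proposition then shows that $\Omega_2\Omega_1^*$ is of the form $c\,u_l$, so the two homomorphisms agree up to conjugation by $l$ on $C$. Adjusting accordingly yields a single homomorphism $G\to G\times G$ and a character $\eta$ that intertwine $\Delta$ on all of $\M$. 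Finally, Lemma \ref{comultiply}(c),(d) and the \cite{IPV10} criterion (a comultiplication conjugate to a group-like one) give that the canonical unitaries $v_h$ are, after a unitary conjugation $w$, scalar multiples of $u_g$. This produces the isomorphism $\delta\colon G\to H$, the character $\eta$ and the unitary $w$ with $\theta(u_g)=\eta(g)wv_{\delta(g)}w^*$.
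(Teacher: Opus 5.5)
Your overall architecture matches the paper's, and case (a) works essentially as you describe. That architecture is: \cite[Theorem 5.1]{IPP05} to localize $\Delta({\rm L}(G_i))$, the wreath-like machinery through Theorem \ref{SOE}, gluing over ${\rm L}(C)$ with Proposition \ref{singlesupport}, and then \cite{IPV10} or \cite{KV15}. The gap is in what hypotheses (a)/(b) are actually for.

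\cite[Theorem 5.1]{IPP05} does not conjugate all of $\Delta({\rm L}(G_i))$ into a single ${\rm L}(G_j)\oo{\rm L}(G_k)\oo\mathbb M_n(\mathbb C)$. It only conjugates corners $\Delta({\rm L}(G_i))p_i^{j,k}$, with $p_i^{j,k}\in\Delta({\rm L}(G_i))'\cap p\sM p$. Likewise, the weakly mixing pieces produced by Theorem \ref{SOE} only assemble into a projection $p_0\in\Delta({\rm L}(K))'\cap f\mathcal N f$. Since ${\rm L}(G_i)$ is a proper subalgebra of $\M$, and $\M$ has no property (T), these relative commutants need not be trivial. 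Controlling them is exactly the role of (a)/(b).
\begin{itemize}
\item In (a), $n=1$ and every nonzero corner has integer trace $\widetilde\mu(X_0)\le n$. So $\tau(p)=t\le 1$ forces $f=p_0=1$ and $\mathcal T=\mathbb C$.
\item In (b), torsion-freeness enters through Lemma \ref{comultiply}(d), not through Theorem \ref{SOE}. Once $p=1$, suppose an irreducible $\mathcal S\subseteq\M$ (such as ${\rm L}(G_i)$ or ${\rm L}(K)$) satisfied $\mathcal S\prec{\rm L}(C_H(h))$ with $h\neq 1$. Passing to relative commutants gives ${\rm L}(\langle h\rangle)\prec(\mathcal S'\cap\M)\oo\mathbb M_n(\mathbb C)=\mathbb M_n(\mathbb C)$, so $h$ would have finite order. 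Hence $\Delta(\mathcal S)'\cap\sM=\Delta(\mathcal S'\cap\M)=\mathbb C1$, and there is a single corner.
\end{itemize}

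Your claim that torsion-freeness ``rules out the $\mathbb Z/n\mathbb Z$ component in Theorem \ref{SOE}'' is not a valid step. That factor encodes the amplification $\mathbb D_n(\mathbb C)\subset\mathbb M_n(\mathbb C)$, not torsion in $H$. In case (b) it genuinely persists: Theorem \ref{SOE} yields $\tau(f)=n$ weakly mixing pieces. Your phrase ``up to a scalar and a unitary of the core'' skips the untwisting by cocycle superrigidity \cite[Theorems 3.5, 3.7]{CIOS3}. After that untwisting one gets $\Delta(u_g)=z(u_{\gamma(g)}\otimes\rho(g))z^*$, where $\rho$ is an $n$-dimensional unitary representation, not a character.

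The paper obtains $n=1$ only at the very end. After gluing, flip symmetry and coassociativity give $\gamma(g)=(g,g)$. The algebra generated by $\{u_{(g,g)}\otimes x\}$ then sits between $z^*\Delta(\M)z$ and $z^*\Delta(\M)z\,(1\otimes 1\otimes\mathbb M_n(\mathbb C))$. Lemma \ref{comultiply}(c) therefore puts $1\otimes1\otimes\mathbb M_n(\mathbb C)$ inside $z^*\Delta(\M)z$. This is impossible unless $n=1$, because the expectation onto $1\otimes1\otimes\mathbb M_n(\mathbb C)$ maps $z^*\Delta(\M)z$ into $\mathbb C1$. You do cite Lemma \ref{comultiply}(c),(d) at the end, but these are precisely the tools for the relative-commutant control and for $n=1$, not for the final identification.

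Separately, your justification of the gluing step is misdirected. Proposition \ref{singlesupport} needs $C_{G\times G}(\delta_1(C_0))=\{1\}$ for the homomorphism $\delta_1\colon G_1\to G\times G$ you have just produced. Icc and almost malnormality of $C$ only give $C_G(C_0)=\{1\}$. You do not yet know that $\delta_1|_C$ is diagonal; that comes from coassociativity, which itself requires the glued formula. The paper instead uses that $C$ is nonamenable and that centralizers of nontrivial elements of $G$ are amenable (\cite[Theorem 1]{KS07}).
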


\begin{proof} We start by fixing some notations that will be used consistently throughout the proof. For $i=1,2$ let $\mathcal M_i =\mathcal L(G_i)$ and let $\mathcal C = \mathcal L(C)$. Withe these notations at hand we establish the following:

\begin{claim}\label{cornerofMiintoMjMk}
For every $i,j,k=1,2$ there exist projections $p_i^{j,k}\in \Delta(\mathcal M_i)'\cap p \mathscr M p $ with $\sum^2_{j,k=1}{p_i^{j,k}}=p$ and unitaries $u_i^{j,k}\in \mathscr M$ satisfying $u_i^{j,k} \Delta(\mathcal M_i) p_i^{j,k}( u_i^{j,k})^*\subseteq \mathcal M_j\bar\otimes \mathcal M_k\bar\otimes \mathbb M_{n}(\mathbb C).$
\end{claim}

\noindent \emph{Proof of Claim \ref{cornerofMiintoMjMk}.}  Since $\mathscr M$ is a II$_1$ factor after conjugating by a unitary in $\mathscr M$ we can assume that $p\in \mathcal C   \bar \otimes  \mathcal C    \bar \otimes  \mathbb D_n$. Fix $i\in \{1,2\}$. Next we argue that  $\Delta(\M_i) \nprec  \M \bar \otimes   \mathcal C  \bar\otimes \mathbb M_n(\mathbb C)$. Since $\M_i$ has property (T) and $\mathcal{C}$ has Haagerup property, if $\Delta(\M_i)\prec\M \bar{\otimes} \mathcal{C} \bar{\otimes} \mathbb M_n(\mathbb C)$ by \cite[Lemma 1]{HPV11} we must have $\Delta(\M_i)\prec \M \bar{\otimes} 1 \bar{\otimes} 1$, a contradiction to $\M_i$ being diffuse and Lemma \ref{comultiply}.


Now note that $\Delta(\mathcal M_i)\subseteq p\mathscr M p = p(\mathcal M   \bar\otimes  \mathcal \M_1   \bar\otimes  \mathbb M_n(\mathbb C))\ast_{\mathcal M  \bar\otimes \mathcal{C} \bar \otimes \mathbb M_n(\mathbb C)} (\mathcal M  \bar \otimes   \M_2  \bar\otimes  \mathbb M_n(\mathbb C))$ is a property (T) von Neumann algebra and $\M  \bar\otimes\M_l   \bar\otimes  \mathbb M_n(\mathbb C)$ is a II$_1$ factor, for $l=1,2$. Using \cite[Theorem 5.1]{IPP05} one can find projections $p^j_i\in \mathscr Z(\Delta(\M_i)'\cap p\mathscr M p)$ and unitaries $u_i^j\in\mathscr M$ such that $p^1_i+p_i^2=p$ and $u_i^{j} \Delta(\mathcal M_i) p_i^{j}( u_i^{j})^*\subseteq \mathcal M\bar\otimes \mathcal M_j\bar\otimes \mathbb M_{n}(\mathbb C).$ Repeating the same argument on the left tensor we get the desired conclusion. 
$\hfill\blacksquare$

Fix $i,j,k$ such that $p_i^{j,k}\neq 0$. To simplify the notations let $f= u_{i}^{j,k}p_i^{j,k}(u_i^{j,k})^*$. Notice that after replacing $\Delta$ by ${\rm Ad}(u_i^{j,k})\circ \Delta$, Claim \ref{cornerofMiintoMjMk} entails that 

\begin{equation}\label{inclusionMiintoMjMk}
    \Delta(\mathcal M_i) f\subseteq \mathcal M_j\bar\otimes \mathcal M_k\bar\otimes \mathbb M_{n}(\mathbb C)=\mathcal N.
\end{equation}

\noindent We now follow closely the proof of \cite[Theorem 4.1]{CIOS3}, as the map $\Delta$ is seen as an embedding of $\M_i$ into an amplification of $\M_j \overline{\otimes} \M_k$ by relation \eqref{inclusionMiintoMjMk}.

Let $\pi_i\colon G_i\rightarrow  B_i$ be the quotient homomorphism. For $g\in B_i$, fix $\widehat{g}\in G_i$ with $\pi_i(\widehat{g})=g$. 
Let $\mathbb D_n(\mathbb C)\subset\mathbb M_n(\mathbb C)$ be the subalgebra of diagonal matrices. For $1\leq i\leq n$, let $e_i={\bf 1}_{\{i\}}\in\mathbb D_n(\mathbb C)$.
Denote $$\text{$\P_i=\text{L}(A_i^{(I_i)})$,\;\;  $\sP=\text L(A_j^{(I_j)}) \bar{\otimes} \text L(A_k^{(I_k)}) \bar{\otimes} \mathbb D_n(\mathbb C)$\;\; and \;\;
$\Q=(\Delta(\P_i)f)'\cap f\mathcal N f$.}$$

\noindent We now continue with the following claim.

\begin{claim}\label{int1}$\Q\prec_{\mathcal N}^{\text s}\sP$.\end{claim}

\noindent{\textit{Proof of Claim \ref{int1}}.} We first prove that $\Delta(\P_i)f\prec_{\mathcal N}^{\text s}\sP$. Write $\mathcal N=(\M_j \bar{\otimes}  1 \bar{\otimes} \mathbb M_n(\mathbb C))\rtimes G_k$, using the trivial action of $G_k$. As $\ker(\pi_k)=A^{(I_k)}_k$, we have $(\M_j \bar{\otimes}  1 \bar{\otimes} \mathbb M_n(\mathbb C))\rtimes \ker(\pi_k)=\M_j \bar{\otimes} \P_k \bar{\otimes} \mathbb M_n(\mathbb C)$. Since  $\Delta(\M_i)f$ has property (T) and $\P_k$ is amenable, by \cite[Lemma 1]{HPV11} and Lemma \ref{comultiply}(a) we must have that $\Delta(\M_i)f\nprec_{\mathcal N} \M_j \bar{\otimes} \P_k \bar{\otimes} \mathbb M_n(\mathbb C)$. Therefore, since $\Delta(\P_i)f$ is amenable and $\Delta(\M_i)f\subset\sN_{f\mathcal N f}(\Delta(\P_i)f)''$ has property (T), by Theorem \ref{relativeT} we derive that $\Delta(\P_i)\prec_{\mathcal N}^{\text s}\M_j \bar{\otimes} \P_k \bar{\otimes} \mathbb M_n(\mathbb C)$.
Similarly, $\Delta(\P_i)f\prec_{\mathcal N}^{\text s}\P_j \bar{\otimes} \M_k \bar{\otimes} \mathbb M_n(\mathbb C)$. Combining these facts with \cite[Lemma 2.8(2)]{DHI16} gives that $\Delta(\P_i)f\prec_{\mathcal N}^{\text s}\P_j \bar{\otimes} \P_k \bar{\otimes} \mathbb M_n(\mathbb C)$, which proves our claim.

Next, we have that $\Delta(\P)f\nprec_{\mathscr M} \P_j \bar{\otimes}  1 \bar{\otimes}  \mathbb M_n(\mathbb C)$ and $\Delta(\P)f\nprec_{\mathscr M} 1 \bar{\otimes} \P_k \bar{\otimes}  \mathbb M_n(\mathbb C)$ by Lemma \ref{comultiply}(a). Since the action $B\curvearrowright I$ has amenable stabilizers, $\{i\in I\mid b\cdot i\not=i\}$ is infinite, for every $b\in B\setminus\{1\}$, by Lemma \ref{infinite}(a). Thus, using that $\Delta(\P_i)f\prec_{\mathcal N}^{\text s}\sP$, Corollary \ref{solidity} implies that $\Q$ is amenable. Finally, since $\Q$ is amenable and $\Delta(\M_i)f\subset\sN_{f\mathcal N f}(\Q)''$, repeating the first paragraph of the proof with $\Q$ instead of $\Delta(\P_i)f$ completes the proof of this step.
$\hfill\blacksquare$ 

\vspace{1mm}

\noindent As $\sP\subset\mathcal N$ is a Cartan subalgebra, there is a unitary $w\in \mathcal N$ such that $f\in w\mathscr P w^*$ and $w \mathscr P w^* f \subset f\mathcal N f$ is also a Cartan subalgebra. Moreover, one can see that Claim \ref{int1} further implies that $\mathcal Q \prec^{\rm s}_{f\mathcal N f} w \mathscr Pw^* f$. Combining this with Lemma \ref{conj1}, one can find a new unitary $u\in \mathcal N $ such that  $\Delta(\P_i)f\subset  u \sP u^*f\subset\Q$. Thus replacing $\Delta$ with $\text{Ad}(u^*)\circ\Delta$ and $f$ by $u^*f u$ we may assume that $f\in \sP$ and


\begin{equation}\label{deltaP}
    \Delta(\P_i)f\subset \sP f\subset\Q.
\end{equation}

\noindent If $g\in B_i$, then $\Delta(u_{\widehat{g}})f$ normalizes $\Delta(\P_i)f$ and thus $\Q$. Denote $\sigma_g=\text{Ad}(\Delta(u_{\widehat{g}})f)\in\text{Aut}(\Q)$. From $\widehat{g}\widehat{h}{\widehat{(gh)}}^{-1}\in\ker(\pi)=A_i^{(I_i)}$, we have $\Delta(u_{\widehat{g}})\Delta(u_{\widehat{h}})\Delta(u_{\widehat{gh}})^*\in\Delta(\P_i)$, for every $g,h\in B_i$. Since $\Delta(\P_i)f\subset\sZ(\Q)$, $\sigma =(\sigma_g)_{g\in B_i}$ defines an action of $B_i$ on $\Q$ which leaves $\Delta(\P_i)f$ invariant. Notice that the restriction of $\sigma$ to $\Delta(\P_i)f$ is conjugate to an action $B_i\curvearrowright \text{L}(A_i)^{I_i}$ built over $B_i\curvearrowright I_i$ and thus, it is free. Hence, Lemma \ref{conj2} yields an action $\beta=(\beta_g)_{g\in B_i}$ of $B_i$ on $\Q$ satisfying:

\begin{enumerate}
    \item[(i)] for every $g\in B_i$ we have that $\beta_g=\sigma_g\circ\text{Ad}(\omega_g)=\text{Ad}(\Delta(u_{\widehat{g}})f\omega_g)$, for some $\omega_g\in\sU(\Q)$,
    \item[(ii)] $\sP f$ is $\beta(B_i)$-invariant and the restriction of $\beta$ to $\sP f$ is free.
\end{enumerate}

\noindent Our next goal is to apply Theorem \ref{SOE}. Let $(Y_j,\nu_j)$ be the dual of $A_j$ with its Haar measure. Let $(X,\mu)=(Y_j^{I_j}\times Y_k^{I_k},\nu_j^{ I_j}\times\nu_k^{I_k})$ and $(\widetilde X,\widetilde\mu)=(X\times\mathbb Z/n\mathbb Z, \mu\times c)$, where $c$ is the counting measure of $\mathbb Z/n\mathbb Z$. Identify $\P_i=\text{L}^{\infty}(Y_i^{I_i})$ and $\sP=\text{L}^{\infty}(\widetilde X)$. Consider the action $B_j\curvearrowright^{\alpha_j} (Y_j^{I_j},\nu_k^{I_k})$ given by $\alpha_j(g)=\text{Ad}(u_{\widehat{g}})$, for all $g\in B_j$. Observe that $\alpha_j$ is conjugate to an action built over $B_j\curvearrowright I_j$. Let $ B_j\times B_k\curvearrowright^{\alpha}(X,\mu)$ be  given by $(g_1,g_2)\cdot (x_1,x_2)=(g_1\cdot x_1,g_2\cdot x_2)$, for all $g_1\in B_j,g_2\in B_k$ and $x_1\in Y_j^{I_j},x_2\in Y_k^{I_k}$. Denote by $\tilde \alpha$ the action $\tilde{\alpha}: B_j \times B_k\times\mathbb Z/n\mathbb Z\curvearrowright (\widetilde X,\widetilde\mu)$ given by $(g,a)\cdot (x,b)=(g\cdot x,a+b)$. Let $X_0\subset \widetilde X$ be a measurable set with $f=\textbf{1}_{X_0}$. Since $\sP f=\text{L}^{\infty}(X_0)$ is $\beta(B_i)$-invariant, we get a measure preserving action $B_i\curvearrowright^\beta (X_0,\widetilde\mu_{|X_0})$.

Since the restriction of $\beta$ to $\sP f$ is implemented by unitaries in $f\mathcal N f$ and we have that $\sR(\sP\subset\mathcal N)=\sR( B_j\times  B_k\times\mathbb Z/n\mathbb Z\curvearrowright^{\widetilde\alpha}\widetilde X)$, we deduce that

\begin{equation}\label{include}
    \beta( B_i)\cdot x\subset\widetilde\alpha( B_j\times B_k\times\mathbb Z/n\mathbb Z)\cdot x,  \text{for almost every $x\in X_0$.}
\end{equation}

\noindent Notice that $\alpha$ isomorphic to an action $B_j\times B_k\curvearrowright ((Y_j\times Y_k)^I,(\nu_j\times \nu_k)^I)$ built over $B_j\times B_k\curvearrowright I= I_j\sqcup I_k$ given by $(g_1,g_2)\cdot i=g_l\cdot i$ for $i\in I_l$ where $l=j$ or $k$. This, combined with the fact that $B_i$ has property (T) and \cite[Lemma 3.11]{CIOS3}, gives us a partition $X_0=\bigsqcup_{t=1}^lX_t$ into non-null measurable subsets, $l\in \mathbb N\cup\{\infty\}$, and finite index subgroups $S_t\leqslant B_i$ such that $X_t$ is $\beta(S_t)$-invariant and $\beta|_{S_t}$ on $X_t$ is weakly mixing for all $1\leq t\leq l$.

In order to apply Theorem \ref{SOE} to \eqref{include}, we establish the following claim.

\begin{claim}\label{intertwiningintoamenablestab} Fix $ r\in I_j$ (or $r\in I_k$). Then there is a sequence $(h_m)\subset S_t$ such that for every $s,t\in B_j$ (or $s,t\in B_k$) we have $\widetilde\mu(\{x\in X_0\mid \beta_{h_m}(x)\in\widetilde\alpha(sStab_{B_j}(r)t\times B_k\times\mathbb Z/n\mathbb Z)(x)\})\rightarrow 0$ (or $\widetilde\mu(\{x\in X_0\mid \beta_{h_m}(x)\in\widetilde\alpha(B_j\times sStab_{B_k}(r)t\times\mathbb Z/n\mathbb Z)(x)\})\rightarrow 0$).
\end{claim}

\noindent{\textit{Proof of Claim \ref{intertwiningintoamenablestab}}.} Let $G_0=\pi^{-1}(\text{Stab}_{B_k}(r)$). Then $G_0<G_k$ is an amenable subgroup. Furthermore, since $\pi_i(S_t)^{-1}$ has property (T), using Lemma \ref{comultiply}(a) we find a sequence $(k_m)\subset \pi_i(S_t)^{-1}$ such that for every $x,y\in\mathcal N$ we have

\begin{equation}\label{weak}
    \|E_{\M_j \bar{\otimes} \text{L}(G_0) \bar{\otimes} \mathbb M_n(\mathbb C)}(x\Delta(u_{k_m})fy)\|_2\rightarrow 0.
\end{equation}

\noindent We will show that $h_m=\pi_i(k_m)\in S_t$ satisfies the assertion of the claim. For that, we will first show that $\|E_{\M_j \bar{\otimes} \text{L}(G_0) \bar{\otimes} \mathbb M_n(\mathbb C)}(x\Delta(u_{\widehat{h_m}})f\omega_{h_m}y)\|_2\rightarrow 0$ as $m\to\infty$, for every $x,y\in(\mathcal{N})_1$. Observe that, since $\mathscr P$ is regular in $\mathcal N$, basic approximations show that it suffices to prove our claim only when $x, y\in \mathscr N_{\mathcal N}(\mathscr P)$, which will assume for the rest of the proof.

Notice that $k_m^{-1}\widehat{h_m}\in A_i^{(I_i)}$, $f\in\mathscr P$ and $\omega_{h_m}\in\sU(\Q)$, and so, we obtain that $\Delta(u_{\widehat{h_m}})f\omega_{h_m}\in\Delta(u_{k_m}) \sU(\Q)$. Let $\mathscr Z(\mathcal{Q})$ be the center of $\mathcal{Q}$, and notice it is contained in $\mathscr P\subseteq \mathcal{Q}$, as $\mathscr P\subset\mathcal{N}$ is a masa. Hence, there exist (countably many) projections $(r_\iota)_\iota\subset\mathscr{Z}(\mathcal{Q})$ with $\sum_\iota r_\iota=1$ such that $\mathcal{Q} r_\iota =\mathscr Z(\mathcal{Q})r_\iota \bar{\otimes} \mathbb M_{n_\iota}(\mathbb C)$, for some $n_\iota\in\mathbb N$. In particular, the inclusion $\mathscr Z(\mathcal{Q})r_\iota\subseteq \mathcal{Q}r_\iota$ admits a finite Pimsner-Popa basis and thus so is $\mathscr P r_\iota \subseteq \mathcal Q r_\iota$, for all $\iota$. Denote by $(v_{\iota,l})_{l=1}^{k_\iota}\subset \mathcal Q r_\iota$ a Pimsner-Popa basis for the inclusion $\mathscr Pr_\iota\subseteq\mathcal{Q}r_\iota$ so that for each $z\in \mathcal{Q}r_\iota$ we have $z=\sum_{l=1}^{k_\iota}v_{\iota,l}\mathbb E_{\mathscr P r_\iota}(v_{\iota,l}^*z)$.   

Fix $\epsilon>0$, and approximate $f\in\mathscr P$ by $f_{\epsilon}=\sum_{\iota\in F}fr_\iota$, for some finite set $F$. Notice that, for $x,y\in \mathscr N_{\mathcal N}(\mathscr P)$, we obtain the following uniform bound

\begin{equation*}
    \|\mathbb E_{\mathcal{M}_j\bar{\otimes}\text{L}(G_0)\bar{\otimes}\mathbb M_n(\mathbb C)}(x\Delta(u_{\widehat{h_m}})(f-f_\epsilon)\omega_{h_m}y)\|_2\leq \epsilon.
\end{equation*}

\noindent Thus, we only need to consider $\|\mathbb E_{\mathcal{M}_j\bar{\otimes}\text{L}(G_0)\bar{\otimes}\mathbb M_n(\mathbb C)}(x\Delta(u_{\widehat{h_m}})f_{\epsilon}\omega_{h_m} y)\|_2$. Write $\Delta(u_{\widehat{h_m}})f_{\epsilon}\omega_{h_m}=\sum_{\iota\in F}\Delta(u_{k_m})\Delta(u_{k_m^{-1}\widehat{h_m}})fr_\iota\omega_{h_m}$. Letting, $z_m=\Delta(u_{k_m^{-1}\widehat{h_m}})\omega_{h_m}\in\mathcal{Q}$, we may further rewrite $r_\iota z_m=\sum_{l=1}^{k_\iota}v_{\iota,l}\mathbb E_{\mathscr Pr_\iota}(v_{\iota,l}^*z_mr_\iota)$.  As  $\mathscr  P\subseteq \M_j \bar{\otimes} \text{L}(G_0) \bar{\otimes} \mathbb M_n(\mathbb C)$, the prior relation and $x,y\in \mathscr N_{\mathcal N}(\mathscr P)$ yield  

\begin{align}\label{v_h_m}
    \|\mathbb E_{\M_j \bar{\otimes} \text{L}(G_0) \bar{\otimes} \mathbb M_n(\mathbb C)}&(x\Delta(u_{\widehat{h_m}})f_{\epsilon}\omega_{h_m}y)\|_2\leq \|\sum_{\iota\in F}\sum_{l=1}^{k_\iota}\mathbb E_{\M_j \bar{\otimes} \text{L}(G_0) \bar{\otimes} \mathbb M_n(\mathbb C)}(x\Delta(u_{k_m})fv_{\iota,l}\mathbb E_{\mathscr Pr_\iota}(v_{\iota,l}^*z_mr_\iota)y)\|_2\nonumber\\
    &\leq\sum_{\iota\in F}\sum_{l=1}^{k_\iota}\|\mathbb E_{\M_j \bar{\otimes} \text{L}(G_0) \bar{\otimes} \mathbb M_n(\mathbb C)}(x\Delta(u_{k_m})fv_{\iota,l}y)\|_2,
\end{align}

\noindent Since neither $F$ nor $k_{\iota}$ depend on $m$, it follows that the last expression in \eqref{v_h_m} converges to zero by \eqref{weak}. Thus, $\lim_{m\to\infty}\|\mathbb E_{\mathcal{M}_j\bar{\otimes}\text{L}(G_0)\bar{\otimes}\mathbb M_n(\mathbb C)}(x\Delta(u_{\widehat{h_m}})f\omega_{h_m}y)\|_2\leq \epsilon$, for all $\epsilon>0$, yielding the claim.

On the other hand, we have that $\beta_{h_m}=\text{Ad}(\Delta(u_{\widehat{h_m}})f\omega_{h_m})$ and $\alpha(g_1,g_2)=\text{Ad}(u_{(\widehat{g_1},\widehat{g_2})})$, for every $(g_1,g_2)\in B_j\times B_k$. These facts imply that $\widetilde\mu(\{x\in X_0\mid  \beta_{h_m}(x)\in\widetilde{\alpha}(B_j\times s\text{Stab}_{B_k}(r)t\times\mathbb Z/n\mathbb Z)(x)\})=\|E_{\M_j\bar{\otimes} \text{L}(G_0) \bar{\otimes} \mathbb M_n(\mathbb C)}((u_{\widehat{s}}^*\otimes 1\otimes 1)\Delta(u_{\widehat{h_m}})f\omega_{h_m}(u_{\widehat{t}}^*\otimes 1\otimes 1))\|_2^2$ is less than $\epsilon$, as $m\rightarrow \infty$. As $\epsilon>0$ was arbitrary, this proves one of the assertion of this claim. The other follows similarly.
$\hfill\blacksquare$

\vspace{1mm} 

\noindent Since, by assumption, the centralizers $C_{B_j}(g)$ and $C_{B_k}(h)$, for $g\in B_j\setminus\{1\}$ and $h\in B_k\setminus\{1\}$, are amenable, the prior claim also holds after replacing stabilizers by centralizers.

Using Theorem \ref{SOE} we obtain injective homomorphisms $\varepsilon_t=(\varepsilon_{t,1},\varepsilon_{t,2}):S_t\to B_j\times B_k$ and $\varphi_t\in[\mathscr R(B_j\times B_k\times \mathbb Z/n\mathbb Z\curvearrowright \tilde{X})]$ such that $\varphi_t(X_t)=X\times\{t\}\equiv X$ and $\varphi_t\circ\beta_h|_{X_t}=\alpha_{\varepsilon_t(h)}\circ\varphi_t|_{X_t}$, for each $h\in S_t$. In particular, $(\tau_{j}\otimes\tau_k\otimes \text{Tr})(f)=\tilde\mu(X_0)=\sum_{t=1}^l\tilde{\mu}(X_t)=l\in\mathbb N$ and $l\leq n$. This shows that either $p_i^{j,k}$ are zero or have integer trace which in turn implies $(\tau\otimes\tau\otimes\text{Tr})(p)\in\mathbb N$; that is, $p=1_{\mathscr M}$ and $t=n$.

Denote by $p_t=\textbf{1}_{X_t}$, and let $u_t\in\sN_{\mathcal N}(\sP)$ be such that $u_tau_t^*=a\circ\varphi_t^{-1}$, for every $a\in\sP$. Then $u_tp_tu_t^*=1\otimes 1\otimes e_t$ and the last relation implies that we can find $(\zeta_{t,h})_{h\in B}\subset\sU(\P_j \overline{\otimes} \P_k)$ such that

\begin{equation}\label{conjug}
    \text{$u_t\Delta(u_{\widehat{h}})f\omega_hp_tu_t^*=\zeta_{t,h}u_{(\widehat{\varepsilon_{t,1}(h)},\widehat{\varepsilon_{t,2}(h)})}\otimes e_t$, for every $h\in S_t$.}
\end{equation}

\noindent After replacing $S_t$ by $S=\bigcap_{t=1}^l S_t$, we may assume $S_t=S$ for all $1\leq t\leq l$. Letting $K=\pi_i^{-1}(S)$ and $\M_0=\text{L}(K)$, we will show that we can find a projection $p_0\in f(\Delta(\M_0)'\cap \mathcal{N})f$ with $(\tau_j\otimes\tau_k\otimes\text{Tr})(p_0)=k\in\{1,\ldots,l\}$, homomorphisms $\delta:K\to G_j\times G_k$, $\rho:K\to\mathscr{U}_k(\mathbb C)$, a unitary $w\in \mathscr{U}(\mathcal{N})$ such that $\delta$ is injective, $wp_0w^*=1\otimes(\sum_{i=1}^ke_i)$ and $w\Delta(u_g)p_0w^*=u_{\delta(g)}\otimes \rho(g)$, for all $g\in K$.

\begin{claim}\label{conjugatedmaps} There exist $1\leq k\leq l$ and $\varepsilon:S\to B_j\times B_k$ such that after renumbering we have $p_0=\sum_{t=1}^kp_t\in f(\Delta(\M_0)'\cap\mathcal{N})f$ and we can take $\varepsilon_t=\varepsilon$ for every $1\leq t\leq k$.
\end{claim}

\noindent{\textit{Proof of Claim \ref{conjugatedmaps}}.} First we show that $\varepsilon_{t,1}(S)\leqslant B_j$ is nonamenable. Otherwise, $G_0=\pi_j^{-1}(\varepsilon_{t,1}(S))\leqslant G_j$ is amenable and from \eqref{conjug} we have $\Delta(\M_0)\prec \text{L}(G_0) \bar{\otimes} \M_k \bar{\otimes} \mathbb M_n(\mathbb C)$. Since $\M_0$ has property (T), \cite[Lemma 1]{HPV11} implies $\Delta(\M_0)\prec 1 \bar{\otimes} \M_k$, a contradiction to the fact that $\M_0$ is diffuse. Similarly, we obtain $\varepsilon_{t,2}(S)\leqslant B_k$ is a nonamenable subgroup.

Next we show that if $1\leq t_1,t_2\leq l$ satisfy $p_{t_1}\mathcal{Q}p_{t_2}\neq\{0\}$ then $\varepsilon_{t_1}$ is conjugated to $\varepsilon_{t_2}$. Let $x\in (\mathcal{Q})_1$ with $p_{t_1}xp_{t_2}\neq 0$, $x_h=\text{Ad}(\Delta(u_{\hat{h}})f\omega_h)(p_{t_1}x)=\beta_h(p_{t_1}x)\in (\mathcal{Q})_1$, and notice that $(\Delta(u_{\hat{h}})f\omega_hp_{t_1})xp_{t_2}=x_h(\Delta(u_{\hat{h}})f\omega_hp_{t_2})$, for $h\in S$. Using \eqref{conjug} we obtain

\begin{equation}\label{conjugxxh}
    (u_{t_1}^*(\zeta_{t_1,h}u_{\widehat{\varepsilon_{t_1}(h)}}\otimes e_{t_1})u_{t_1})xp_{t_2}=x_h(u_{t_2}^*(\zeta_{t_2,h}u_{\widehat{\varepsilon_{t_2}(h)}}\otimes e_{t_2})u_{t_2}), \text{for all }h\in S.
\end{equation}

\noindent For a finite set $F\subset B_j\times B_k$, denote by $P_F$ the orthogonal projection from $L^2(\mathcal{N})$ onto the $\|\cdot\|_2$-closed linear span of $\{u_g\otimes x:g\in (\pi_j\times \pi_k)^{-1}(F), x\in\mathbb M_n(\mathbb C)\}$. Since $(\zeta_{t_1,h})_{h\in S}, (\zeta_{t_2,h})_{h\in S}\subset\mathscr{U}(\mathcal{P}_j \overline{\otimes} \mathcal{P}_k)$ and $(x_h)_{h\in S}\subset(\mathcal{Q})_1$, using basic $\|\cdot\|_2$-approximations and \eqref{conjugxxh}, there exists a finite set $F\subset B_j\times B_k$ so that for all $h\in S$ we have

\begin{enumerate}
    \item[(a)] $\|(u_{t_1}^*(\zeta_{t_1,h}u_{\widehat{\varepsilon_{t_1}(h)}}\otimes e_{t_1})u_{t_1})xp_{t_2}-P_{F\varepsilon_{t_1}(h)F}((u_{t_1}^*(\zeta_{t_1,h}u_{\widehat{\varepsilon_{t_1}(h)}}\otimes e_{t_1})u_{t_1})xp_{t_2})\|_2<\|p_{t_1}xp_{t_2}\|_2/2$,
    \item[(b)] $\|x_h(u_{t_2}^*(\zeta_{t_2,h}u_{\widehat{\varepsilon_{t_2}(h)}}\otimes e_{t_2})u_{t_2})-P_{F\varepsilon_{t_2}(h)F}(x_h(u_{t_2}^*(\zeta_{t_2,h}u_{\widehat{\varepsilon_{t_2}(h)}}\otimes e_{t_2})u_{t_2}))\|_2<\|p_{t_1}xp_{t_2}\|_2/2$.
\end{enumerate}

\noindent Combining items (a) and (b) with \eqref{conjugxxh}, we derive that $F\varepsilon_{t_1}(h)F\cap F\varepsilon_{t_2}(h)F\neq\emptyset$, for each $h\in S$. Since $\varepsilon_{t_1}$ is injective and $S\leqslant B_i$ is a finite index subgroup, $\varepsilon_{t_1}(S)$ is an infinite group with property (T). For any $g\in B_j\setminus \{1\}$ (respectively $g\in B_k\setminus\{1\}$) $C_{B_j}(g)$ (respectively $C_{B_k}(g)$) is amenable, and for each $(g_1,g_2)\in B_j\times B_k\setminus\{(1,1)\}$, $C_{B_j\times B_k}((g_1,g_2))$ is contained in $C_{B_j}(g_1)\times B_k$ or $B_j\times C_{B_k}(g_2)$. Using that $\varepsilon_{t_1,1}(S), \varepsilon_{t_1,2}(S)$ are nonamenable, for $(g_1,g_2)\in B_j\times B_k\setminus\{(1,1)\}$ we obtain that $\{\varepsilon_{t_1}(h)(g_1,g_2)\varepsilon_{t_1}(h)^{-1}:h\in S\}$ is infinite. By \cite[Lemma 7.1]{BV13} there exists $g\in B_j\times B_k$ for which $\varepsilon_{t_1}(h)=g\varepsilon_{t_2}(h)g^{-1}$ for all $h\in S$. This proves our assertion that $\varepsilon_{t_1}$ and $\varepsilon_{t_2}$ are conjugate.
\vskip 0.07in
Let $\varepsilon=\varepsilon_1:S\to B_j\times B_j$. After renumbering, we may assume that $\varepsilon_1,\ldots,\varepsilon_k$ are conjugate to $\varepsilon$ while $\varepsilon_{k+1},\ldots,\varepsilon_l$ are not conjugate to $\varepsilon$, for some $1\leq k\leq l$. The prior assertion implies $p_t\mathcal{Q}p_{t'}=\{0\}$ for each $1\leq t\leq k$ and $k+1\leq t'\leq l$. Thus, $p_0:=\sum_{t=1}^kp_t$ belongs to the center of $\mathcal{Q}f$. As $p_0$ commutes with $\Delta(u_{\hat{h}})f\omega_h$ and $\omega_h\in \mathcal{Q}$, $p_0$ commutes with $\Delta(u_{\hat{h}})f$ for each $h\in S$. Since $p_t\in\mathscr Pf\subset\mathcal{Q}=\Delta(\mathcal{P}_i)'\cap\mathcal{N}$, $p_0$ also commutes with $\Delta(\mathcal{P}_i)f$. Therefore, $p_0\in f(\Delta(\M_0)'\cap\mathcal{N})f$, as wanted. Finally, for $1\leq t\leq k$ there exists $g_t\in B_j\times B_k$ with $\varepsilon_t(h)=g_t\varepsilon(h)g_t^{-1}$, for each $h\in S$, so after replacing $\varphi_t$ with $\alpha_{g_t^{-1}}\circ\varphi_t$, we may assume $\varepsilon=\varepsilon_t$ for every $1\leq t\leq k$.
$\hfill\blacksquare$

\vspace{1mm}

\noindent Let $u=\sum_{t=1}^ku_tp_k$ and $e=\sum_{t=1}^ke_t$. Notice that $u$ is a partial isometry with $uu^*=1\otimes e$ and $u^*u=p_0$. Let $\zeta_h=\sum_{t=1}^k\zeta_{t,h}\otimes e_t\in \mathscr{U}(\mathscr P(1\otimes 1\otimes e))$ and use \eqref{conjug} to obtain

\begin{equation*}
    u\Delta(u_{\hat{h}})f\omega_hp_0u^*=\zeta_h(u_{\widehat{\varepsilon(h)}}\otimes 1), \text{ for every }h\in S.
\end{equation*}

\noindent Identify $\mathcal{N}_1=(1\otimes 1\otimes e)\mathcal{N}(1\otimes 1\otimes e)$ with $\M_j \bar{\otimes} \M_k \bar{\otimes} \mathbb M_k(\mathbb C)$ and $\mathscr P_1=\mathscr P(1\otimes 1\otimes e)$ with $\mathcal{P}_j \bar{\otimes} \mathcal{P}_k \bar{\otimes}\mathbb D_k(\mathbb C)$. Consider the unital $*$-homomorphism $\Delta_1:\M_0\to\mathcal{N}_1$ given by $\Delta_1(x)=u\Delta(x)p_0u^*$ for $x\in M_0$. Denote by $\mathcal{Q}_1=\Delta_1(\mathcal{P}_i)'\cap\mathcal{N}_1$, and notice that $w_h:=u\omega_hp_0u^*\in \mathscr U(\mathcal{Q}_1)$. Moreover, from the prior equation we have

\begin{equation*}
    \Delta_1(u_{\hat{h}})w_h=\zeta_h(u_{\widehat{\varepsilon(h)}}\otimes 1), \text{ for every }h\in S.
\end{equation*}

\noindent From \eqref{deltaP}, $\mathscr{P}_1\subseteq\mathcal{Q}_1$. Since $(\Delta_1(u_{\hat{h}})w_h)_{h\in S}$ normalizes $\mathcal{Q}_1$ and $(\zeta_h)_{h\in S}\subset\mathscr{U}(\mathscr P_1)$, the equation above implies $(u_{\widehat{\varepsilon(h)}}\otimes 1)_{h\in S}$ normalizes $\mathcal{Q}_1$. Letting $\eta_h:=\zeta_h\text{Ad}(u_{\widehat{\varepsilon(h)}}\otimes 1)(w_h^*)\in\mathscr U(\mathcal{Q}_1)$, we obtain

\begin{equation}\label{conjugDelta1}
    \Delta_1(u_{\hat{h}})=\eta_h(u_{\widehat{\varepsilon(h)}}\otimes 1),\text{ for every }h\in S.
\end{equation}

\begin{claim}\label{Q1=PjPkT} $\mathcal{Q}_1=\mathcal{P}_j \bar{\otimes} \mathcal{P}_k \bar{\otimes} \mathcal{T}$, for some von Neumann subalgebra $\mathcal{T}\subseteq \mathbb M_k(\mathbb C)$.
\end{claim}

\noindent{\textit{Proof of Claim \ref{Q1=PjPkT}}.} We first show that $\mathcal{Q}_1\subseteq \mathcal{P}_j \bar{\otimes} \mathcal{P}_k \bar{\otimes} \mathbb{M}_k(\mathbb C)$. Denote the map $\varepsilon=(\varepsilon_1,\varepsilon_2)$ where $\varepsilon_l:S\to B_l$ with $l=j$ or $k$. Since $\varepsilon_1(S)$ and $\varepsilon_2(S)$ are nonamenable, there exists a sequence $(h_m)_m\subset S$ such that for each $g_1,g_2\in B_j\times B_k$ and $g_3\in B_j\times B_k\setminus\{(1,1)\}$ we have $g_1\text{Ad}(\varepsilon(h_m))(g_3)g_2\neq 1$ for $m$ large enough. 

Next we prove that $\|\mathbb E_{\mathcal{P}_j\bar{\otimes}\mathcal{P}_k}(a_1\text{Ad}(u_{\widehat{\varepsilon(h_m)}})(b)a_2)\|_2\to 0$ for every $a_1,a_2\in \M_j \bar{\otimes} \M_k$ and $b\in \M_j \bar{\otimes} \M_j\ominus(\mathcal{P}_j \bar{\otimes} \mathcal{P}_k)$. We check this for $a_1=u_{g}$, $a_2=u_{h}$, $b=u_s$ where $g,h\in G_j\times G_k$ and $s\in G_j\times G_k\setminus(A_j^{(I_j)}\times A_k^{(I_k)})$. Notice that $(\pi_j\times\pi_k)(s)\neq (1,1)$, so $(\pi_j\times \pi_k)(g\text{Ad}(\widehat{\varepsilon(h_m)})(s)h)=(\pi_j\times \pi_k)(g)\text{Ad}(\varepsilon(h_m))((\pi_j\times \pi_k)(s))(\pi_j\times \pi_k)(h)\neq (1,1)$ for $m$ large enough. Thus, $\mathbb E_{\mathcal{P}_j\bar{\otimes}\mathcal{P}_k}(a_1\text{Ad}(u_{\widehat{\varepsilon(h_m)}})(b)a_2)=0$ for $m$ large enough. Using basic approximations, this fact further implies

\begin{equation}\label{expectationP1to0}
    \lim_{m\to\infty}\|\mathbb E_{\mathscr{P}_1}(\text{Ad}(u_{\widehat{\varepsilon(h_m)}}\otimes 1)(b))\|_2=0, \text{ for all } b\in\mathcal{N}_1\ominus (\mathcal{P}_j \bar{\otimes} \mathcal{P}_j \bar{\otimes} \mathbb M_k(\mathbb C)).
\end{equation}

\noindent Let $\mathscr Z(\mathcal{Q}_1)$ be the center of $\mathcal{Q}_1$, and notice it is contained in $\mathscr P_1\subseteq \mathcal{Q}_1$. Moreover, $\mathcal Q_1\prec^s_{\mathcal{N}_1}\mathscr P_1$ by Claim \ref{int1}, and since $\mathscr P_1$ is abelian, it follows that $\mathcal{Q}_1$ is a type I von Neumann algebra. Hence, there exist projections $(r_{\iota})_{\iota}\subset\mathscr{Z}(\mathcal{Q}_1)$ with $\sum_{\iota} r_{\iota}=1$ such that $\mathcal{Q}_1r_{\iota}=\mathscr Z(\mathcal{Q}_1)r_{\iota} \overline{\otimes} \mathbb M_{n_{\iota}}(\mathbb C)$, for some $n_{\iota}\in\mathbb N$. In particular, the inclusion $\mathscr Z(\mathcal{Q}_1)r_{\iota}\subseteq \mathcal{Q}_1r_{\iota}$ admits a finite Pimsner-Popa basis, and therefore, so does the inclusion $\mathscr P_1r_{\iota}\subseteq\mathcal{Q}_1r_{\iota}$, for all ${\iota}$. Combining this with \eqref{expectationP1to0}, basic $\|\cdot\|_2$-approximations show that

\begin{equation*}
    \lim_{m\to \infty}\|\mathbb E_{\mathcal{Q}_1}(\text{Ad}(u_{\widehat{\varepsilon(h_m)}}\otimes 1)(b))\|_2=0, \text{ for all }b\in\mathcal{N}_1\ominus (\mathcal{P}_j \bar{\otimes} \mathcal{P}_k \bar{\otimes} \mathbb M_k(\mathbb C)).
\end{equation*}

\noindent To see that $\mathcal{Q}_1\subseteq \mathcal{P}_j \bar{\otimes} \mathcal{P}_k \bar{\otimes} \mathbb M_k(\mathbb C)$, let $a\in\mathcal{Q}_1$, denote by $c=\mathbb E_{\mathcal{P}_j\bar{\otimes}\mathcal{P}_k\bar{\otimes}\mathbb M_k(\mathbb C)}(a)$ and let $b=a-c$. Since $u_{\widehat{\varepsilon(h_m)}}\otimes 1$ normalizes $\mathcal{Q}_1$, $\text{Ad}(u_{\widehat{\varepsilon(h_m)}}\otimes 1)(a)\in \mathcal{Q}_1$, which means $\|\mathbb E_{\mathcal{Q}_1}(\text{Ad}(u_{\widehat{\varepsilon(h_m)}}\otimes 1)(a))\|_2=\|a\|_2$. By the prior equation, $\|\mathbb E_{\mathcal{Q}_1}(\text{Ad}(u_{\widehat{\varepsilon(h_m)}}\otimes 1)(b))\|_2\to 0$ which means $\|\mathbb E_{\mathcal{Q}_1}(\text{Ad}(u_{\widehat{\varepsilon(h_m)}}\otimes 1)(c))\|_2\to\|a\|_2$. Since $\|\mathbb E_{\mathcal{Q}_1}(\text{Ad}(u_{\widehat{\varepsilon(h_m)}}\otimes 1)(c))\|_2\leq \|c\|_2\leq\|a\|_2$, it means that $\|c\|_2=\|a\|_2$ and so $a=c\in \mathcal{P}_j \bar{\otimes} \mathcal{P}_k \bar{\otimes} \mathbb M_k(\mathbb C)$.

Therefore, $\mathcal{P}_j \bar{\otimes} \mathcal{P}_k \bar{\otimes} \mathbb D_k=\mathscr P_1\subseteq \mathcal{Q}_1\subseteq \mathcal{P}_j \bar{\otimes} \mathcal{P}_k \bar{\otimes} \mathbb M_k(\mathbb C)$. Since $\mathcal{P}_j \bar{\otimes} \mathcal{P}_k=\text{L}^{\infty}(X,\mu)$, we can desintegrate $\mathcal{Q}_1=\int_X^{\oplus}\mathcal{T}_xd\mu(x)$, where $(\mathcal{T}_x)_x$ is a measurable field of von Neumann subalgebras of $\mathbb{M}_k(\mathbb C)$ containing $\mathbb D_k$ for almost every $x\in X$. Denote by $\gamma_h=\text{Ad}(u_{\widehat{\varepsilon(h)}})\in \text{Aut}(\mathcal{P}_j \bar{\otimes} \mathcal{P}_k)$, for $h\in S$. Since $(u_{\widehat{\varepsilon(h)}}\otimes 1)_{h\in S}$ normalizes $\mathcal{Q}_1$, we obtain $\mathcal{T}_{\gamma_h(x)}=\mathcal{T}_x$ for every $h\in S$ and almost every $x\in X$. Notice that $\gamma$ is built over the action $S\curvearrowright I=I_j\sqcup I_k$ given by $s\cdot i=\varepsilon_l(s)\cdot i$ for $i\in I_l$, where $l=j$ or $k$. Since $\varepsilon$ is injective, $S$ has property (T) and $\text{Stab}_{B_l}(i)$, for $i\in I_l$ with $l$ either $j$ or $j$, is amenable, the action $S\curvearrowright I$ has infinite orbits. This implies $\gamma$ is weakly mixing, hence ergodic. Thus, we can find a subalgebra $\mathcal{T}\subset \mathbb M_k(\mathbb C)$ with $\mathcal{T}=\mathcal{T}_x$ for almost every $x$. This shows that $\mathcal{Q}_1=\mathcal{P}_j \bar{\otimes} \mathcal{P}_k \bar{\otimes} \mathcal{T}$.
$\hfill\blacksquare$

\vspace{1mm}

\noindent Let $\mathscr U=\mathscr U(\mathcal{T})/\mathscr U(\mathscr Z(\mathcal{T}))$, where $\mathscr Z(\mathcal{T})$ is the center of $\mathcal{T}$, and denote by $q:\mathscr U(\mathcal{T})\to \mathscr U$ the quotient map. We continue with the following claim.

\begin{claim}\label{Delta1intocenterT} There exists maps $\xi:S\to\mathscr U(\mathcal T)$, $\nu:S\to\mathscr U(\mathscr Z(\mathcal{Q}_1))$ such that $S\ni h\mapsto q(\xi_h)\in\mathscr U$ is a homomorphism and after replacing $\Delta_1$ with ${\rm Ad}(u)\circ\Delta_1$, for some $u\in\mathscr U(\mathcal{Q}_1)$, we obtain $\Delta_1(u_{\hat{h}})=\nu_h(u_{\widehat{\varepsilon(h)}}\otimes\xi_h)$, for all $h\in S$.
\end{claim}

\noindent{\textit{Proof of Claim \ref{Delta1intocenterT}}.} Note that $(\text{Ad}\Delta_1(u_{\hat{h}}))_{h\in S}$ and $(\gamma_h\otimes \text{Id}_{\mathcal{T}})_{h\in S}$ define actions of $S$ on $\mathcal{Q}_1$. Combining this with \eqref{conjugDelta1} gives that

\begin{equation*}
    \eta_{h_1h_2}^*\eta_{h_1}(\gamma_{h_1}\otimes\text{Id})(\eta_{h_2})\in\mathscr Z(\mathcal{Q}_1)=\mathcal{P}_j \bar{\otimes} \mathcal{P}_k \bar{\otimes} \mathscr Z(\mathcal{T}).
\end{equation*}

\noindent Viewing $\eta$ as a measurable function $\eta:X\to\mathscr U(\mathcal{T})$, the prior equation rewrites as $\eta_{h_1h_2}(x)^*\eta_{h_1}(x)\eta_{h_2}(\gamma_{h_1^{-1}}(x))\in\mathscr U(\mathscr Z(\mathcal{T}))$ for every $h_1,h_2\in S$ and for almost every $x\in X$. Let $c:S\times X\to \mathscr U$ be given by $c(h,x)=q(\eta_h(x))$, and notice that it is a 1-cocycle for $\gamma$. Since $\mathscr U(\mathscr Z(\mathcal{T}))$ is a closed central subgroup of the compact Polish group $\mathscr U(\mathcal T)$, $\mathscr U$ is a compact Polish group with respect to the quotient topology. In particular, it is a $\mathscr U_{\text{fin}}$ group \cite[Lemma 2.7]{Po05}. Since $S$ has property (T), $\gamma$ is built over $S\curvearrowright I$ and $S\curvearrowright I$ has infinite orbits, \cite[Theorem 3.5]{CIOS3} implies $c$ is cohomologous to a homomorphism $\psi:S\to \mathscr U$.

Let $u:X\to \mathscr U(\mathcal T)$ be a measurable map satisfying $q(u(x))c(h,x)q(u(\gamma_{h^{-1}}(x)))^{-1}=\psi_h$, for all $h\in S$ and almost every $x\in X$. Let $\xi:S\to\mathscr U(\mathcal T)$ be such that $q(\xi_h)=\psi_h$. Thus, we find a measurable map $\nu_h:X\to\mathscr U(\mathscr Z(\mathcal T))$ such that $u(x)\eta_h(x)u(\gamma_{h^{-1}}(x))^{-1}=\nu_h(x)\xi_h$ for all $h\in S$ and almost every $x\in X$. Equivalently, $u\in\mathscr U(\mathcal{Q}_1)$ and $\nu:S\to\mathscr U(\mathscr Z(\mathcal T))$ satisfy $u\eta_h(\gamma_h\otimes\text{Id})(u^*)=\nu_h(1\otimes\xi_h)$, for every $h\in S$. Thus, $u\Delta_1(u_{\hat{h}})u^*=u\eta_h(u_{\widehat{\varepsilon(h)}}\otimes 1)u^*=\nu_h(u_{\widehat{\varepsilon(h)}}\otimes\xi_h)$, for all $h\in S$, as wanted.
$\hfill\blacksquare$

\vspace{1mm}

\noindent Take $g\in K=\pi_i^{-1}(S)$ and let $h=\pi_i(g)\in S$. Then $a=g\widehat{h^{-1}}\in A_i^{(I_i)}$ and $\Delta_1(u_a)\in\mathscr Z(\mathcal Q_1)$. Let $k_g=\Delta_1(u_a)\nu_h\in\mathscr{Z}(\mathcal{Q}_1)$ and notice that

\begin{equation}\label{Delta1onK}
    \Delta_1(u_g)=k_g(u_{\widehat{\varepsilon(\pi_i(g))}}\otimes\xi_{\pi_i(g)}),\text{ for all }g\in K.
\end{equation}

\begin{claim}\label{kg} There exists $c:K\to \mathcal{P}_j \bar{\otimes} \mathcal{P}_k$, $e:K\to\mathscr Z(\mathcal{T})$ and $z\in\mathscr U(\mathscr Z(\mathcal Q_1))$ such that $k_g=(u_{c_g}\otimes e_g)z^*(\gamma_{\pi_i(g)}\otimes {\rm Id})(z)$.
\end{claim}

\noindent{\textit{Proof of Claim \ref{kg}}.} Define $s:K\times K\to A_j^{(I_j)}\times A_k^{(I_k)}$ by $s_{g,h}=\widehat{\varepsilon(\pi_i(g))}\widehat{\varepsilon(\pi_i(h))}\widehat{\varepsilon(\pi_i(gh))}^*$, and $d:K\times K\to\mathscr U(\mathscr Z(\mathcal T))$ by $d_{g,h}=\xi_{\pi_i(gh)}\xi_{\pi_i(h)}^*\xi_{\pi_i(g)}^*$. From the prior equation \eqref{Delta1onK}, we obtain

\begin{equation}\label{sgh}
    u_{s_{g,h}}\otimes 1=k_{gh}k_g^*((\gamma_{\pi_i(g)}\otimes\text{Id})(k_h))^*(1\otimes d_{g,h}),\text{ for every }g,h\in K.
\end{equation}

\noindent Notice, moreover, the map $K\times K\ni (g,h)\mapsto u_{s_{g,h}}\in\mathscr U(\mathcal P_j \overline\otimes \mathcal{P}_k)$ is a 2-cocycle for the action $K\curvearrowright^{\gamma\circ\pi_i}\mathcal P_j \overline\otimes \mathcal{P}_k$, while the map $K\times K\ni(g,h)\mapsto d_{g,h}\in\mathscr U(\mathscr Z(\mathcal T))$ is a 2-cocycle for the trivial action. Since $\gamma$ is built over $S\curvearrowright I$, $\gamma\circ\pi_i$ is an action built over $K\curvearrowright I$ given by $k\cdot m=\pi_i(k)\cdot m$, for $k\in K$ and $m\in I$. Since the action $K\curvearrowright I$ has infinite orbits, $\gamma\circ\pi_i$ is weakly mixing. If $n\in\mathbb N$, then the action $(\gamma\circ\pi_i)^{\otimes n}$ is built over the diagonal product $K\curvearrowright I^n$, which has infinite orbits. Since $K$ has property (T), \cite[Theorem 3.5]{CIOS3} implies that $(\gamma\circ\pi_i)^{\otimes n}$ is $\mathscr{U}_{\text{fin}}$-cocycle superrigid. By \cite[Theorem 3.7]{CIOS3}, there exists $c:K\to A_j^{(I_j)}\times A_k^{(I_k)}$, $e:K\to\mathscr U(\mathscr Z(\mathcal{T}))$ satisfying $u_{s_{g,h}}=u_{c_{gh}}u_{c_g}^*\gamma_{\pi_i(g)}(u_{c_h})^*$ and $d_{g,h}=e_{gh}^*e_ge_h$ for every $g,h\in K$. From \eqref{sgh}, we can see that $S\ni g\mapsto k_g^*(u_{c_g}\otimes e_g)\in\mathscr U(\mathscr Z(\mathcal{Q}_1))$ is a 1-cocycle for $((\gamma\circ\pi_i)\otimes \text{Id}):K\curvearrowright \mathcal{P}_j \bar{\otimes} \mathcal{P}_k \bar{\otimes} \mathscr Z(\mathcal T)$. Since $\gamma\circ\pi_i$ is $\mathscr U_{\text{fin}}$ superrigid, we obtain the desired $z\in\mathscr U(\mathscr Z(\mathcal{Q}_1))$.
$\hfill\blacksquare$

\vspace{2mm}

\noindent It follows that $\Delta_1(u_g)=z^*(u_{c_g\widehat{\varepsilon(\pi_i(g))}}\otimes \xi_{\pi_i(g)}e_g)z$. The maps $\delta:K\to G_j\times G_k$ given by $\delta(g)=c_g\widehat{\varepsilon(\pi_i(g))}$ and $\rho:K\to\mathscr U(\mathcal{T})$ given by $\rho(g)=\xi_{\pi_i(g)}e_g$ are homomorphisms with 

\begin{equation}\label{this rel}
    \Delta_1(u_g)=z^*(u_{\delta(g)}\otimes \rho(g))z, \text{ for every } g\in K.
\end{equation} 

\noindent By construction $A_i^{(I_i)}\subset K$ and $\delta(A_i^{(I_i)})\subset A_j^{(I_j)}\times A_k^{(I_k)}$. Moreover, if $g\in\ker\delta$, then we have $\Delta_1(u_g)=z^*(1\otimes\rho(g))z\in z^*(1\otimes\mathbb M_k(\mathbb C))z$. This implies $\ker\delta$ is finite. Since $K$ is icc, it must be the case that $\delta$ is injective. This finishes the inductive step.

From here one we split the proof in two parts. First we assume case (a) i.e. $t\leq 1$. This automatically implies that $n=1$, $f=1$ and $\mathcal T =\mathbb C$. Moreover, we have $\Delta=\Delta_0$. In this case, relation \eqref{this rel} implies the existence of a finite index subgroup $K\leqslant G_i$, a unitary $z\in \mathcal M \bar\otimes \mathcal M$, a monomorphism $\delta=(\delta_1,\delta_2): K \ra G_j \times G_k$ and a multiplicative character $\mu: K \ra \mathbb T$ such that 

\begin{equation}\label{grouphomeq}
    \Delta(u_g) = \mu(g) z u_{\delta(g)}z^*, \text{ for all }g\in K.
\end{equation}

\vspace{0.5mm}

\begin{claim}\label{grouphom} Relation \eqref{grouphomeq} holds for $K=G_i$.
\end{claim}

\noindent\textit{Proof of Claim \ref{grouphom}.} Passing to a finite index subgroup we can assume that $K\leqslant G_i$ is normal. Now fix $g\in G_i$ and $h\in K$. Using relation \eqref{grouphomeq} for $h, ghg^{-1}\in K$ we see that

\begin{equation*}
    \mu(h)\Delta(u_g) z u_{\delta(h)}z^*\Delta(u_{g^{-1}})= \Delta (u_g)\Delta(u_h)\Delta(u_{g^{-1}})=\Delta(u_{ghg^{-1}})= \mu(ghg^{-1}) zu_{\delta(ghg^{-1})} z^*.
\end{equation*}
    
\noindent Letting $y:= z^* \Delta(u_g)z$ and $t_h:= \overline{\mu(h)}\mu(ghg^{-1})\in \mathbb T$, the previous equation implies that $ y u_{\delta(h)} =t_h  u_{\delta(ghg^{-1})} y$ for all $h\in K$. Since $K$ is infinite property (T) and the centralizers in $G_j, G_k$ are amenable, for every finite index subgroup $K_0\leqslant K$ we have $C_{G_j} (\delta_1(K_0))=\{1\}$ and $C_{G_k}(\delta_2(K_0))=\{1\}$. In particular, we have $C_{G_j \times G_k}(\delta(K_0))=\{1\}$. Using Proposition \ref{singlesupport} we get that $y=z^*\Delta(u_g)z\in\mathbb T(u_{(h,l)})_{(h,l)\in G_j\times G_k}$ for all $g\in G_i$. Thus, $\mu$ and $\delta$ extend to homomorphisms $\mu:G_i\to\mathbb T$ and $\delta:G_i\to G_j\times G_k$ with $\Delta(u_g)=\mu(g)zu_{\delta(g)}z^*$, for every $g\in G_i$. $\hfill\blacksquare$

\vspace{1mm}

\begin{claim}\label{j=k} In Claim \ref{grouphom}, we may take $j=k$ and $\delta_1=\delta_2$.
\end{claim}

\noindent\textit{Proof of Claim \ref{j=k}.}  
Since $\zeta\circ\Delta=\Delta$, where $\zeta$ is the flip map, from \eqref{grouphomeq}, we obtain

\begin{equation*}
    \zeta(z^*)z(u_{\delta_1(g)}\otimes u_{\delta_2(g)})=(u_{\delta_2(g)}\otimes u_{\delta_1(g)})\zeta(z^*)z, \text{ for every } g\in G_i.
\end{equation*}

\noindent Arguing as in the proof of the previous claim, we obtain that for every finite index subgroup $K_0 \leqslant G_i$ we have that $C_{G_j \times G_k}(\delta(K_0) )=\{1\}$. Using Proposition \ref{singlesupport}, one can find $c\in \mathbb T$ and $(h,l) \in G_j\times G_k$ with $\zeta(z^*)(z)= c u_h \otimes u_l$, so that ${\rm Ad} (h)\circ \delta_1=\delta_2$ and ${\rm Ad} (l)\circ \delta_2=\delta_1$. Replacing $z$ by $z (1\otimes u_h)$ we get $j=k$ and we can take $\delta_1 =\delta_2=:\upsilon$. 
$\hfill\blacksquare$

\vspace{1mm}

\noindent Since $i$ was arbitrary, we may now describe the comultiplication on both $\text{L}(G_1)$ and $\text{L}(G_2)$. In the claim below, we show that these descriptions can be chosen to be the same.

\vspace{1mm}

\begin{claim}\label{grouphomtotal} There exist a unitary $y\in \mathcal M\bar \otimes \mathcal M$ and a monomorphism $\gamma=(\gamma_1,\gamma_2): G \ra G \times G$ and a multiplicative character $\mu: G \ra \mathbb T$  such that  

\begin{equation}\label{untwist}
    \Delta(u_g) = \mu(g) y u_{\gamma(g)}y^*, \text{ for all }g\in G.
\end{equation}
\end{claim}

\noindent\textit{Proof of Claim \ref{grouphomtotal}.}  Using Claim \ref{grouphom} we can find unitaries $z_i\in \mathcal M\bar \otimes \mathcal M$, a monomorphism $\delta_i=(\upsilon_i,\upsilon_i): G_i \ra G_{j_i} \times G_{j_i}$ and a multiplicative character $\mu_i: G_i \ra \mathbb T$ satisfying 

\begin{equation}\label{completeuntwist1}
    \Delta(u_g) = \mu_i(g) z_i u_{\delta_i(g)}z^*_i, \text{ for all }g\in G_i.
\end{equation}

\noindent Thus, for every $g\in C \leqslant G_1 \cap G_2$ we have $\mu_1(g) z_1 u_{\delta_1(g)}z^*_1 = \Delta(u_g)=\mu_2(g) z_2 u_{\delta_2(g)}z^*_2 $ and hence if we let $z = z_2^*z_1$ and $t(g)= \overline{\mu_2(g)}\mu_1(g)$ we further have that $t(g) z u_{\delta_1(g)}  = u_{\delta_2(g)}z^*$ for all $g\in C$. 

Next we will argue that for every finite index subgroup $C_0\leqslant C$ we have that $C_{G \times G}(\delta_1 (C_0))=\{1\}$. Since by assumption for each $i=1,2$ and every $1\neq h\in G_i$ the centralizers $C_{G_i}(h)$ are amenable, the same holds for $G$ \cite[Theorem 1]{KS07}. Now since $C$ is nonamenable then so is $C_0$ and also $\delta_1(C_0)$, which means $C_{G\times G}(\delta_1(C_0))=\{1\}$. Using Proposition \ref{singlesupport} one can find scalars $c\in \mathbb C$ and $(h,l)\in G\times G$ such that  $z=c u_h\otimes u_l$. In particular, after conjugating $\delta_2$ by a inner group homomorphism of $G \times G$, we can assume without any loss of generality that in equations \eqref{completeuntwist1} we have $z_1 =z_2=:y$. However, these relations then automatically imply that the existence of an injective group homomorphism $\gamma =(\gamma_1,\gamma_2): G\ra G \times G$ and a multiplicative character $\mu: G \ra \mathbb T$ satisfying \eqref{untwist}.
$\hfill\blacksquare$

\vspace{1mm}

\begin{claim}\label{positiveheight} $h_H(G)>0$.
\end{claim}

\noindent\textit{Proof of Claim \ref{positiveheight}.} Fix $\epsilon>0$ and let $F_{\epsilon}=F_1\times F_2\subset H\times H$ be a finite set with $y_{\epsilon}\in \M\overline{\otimes}\M$ supported on $F_{\epsilon}$ with $\|y-y_{\epsilon}\|_2<\epsilon$. In that case, letting $u_g=\sum_h\tau(u_gv_h^*)v_h$, $y_{\epsilon}=\sum_{f\in F_{\epsilon}}\tau(y_{\epsilon}v_f^*)v_{(f_1,f_2)}$ and $u_{\gamma(g)}=\sum_{h_1,h_2}\tau(u_{\gamma_1(g)}v_{h_1}^*)\tau(u_{\gamma_2(g)}v_{h_2}^*)v_{(h_1,h_2)}$, we obtain

\begin{align*}
    \|y&\|_2^2=|\langle \Delta(u_g)yu_{\gamma(g^{-1})},y\rangle|\leq \epsilon(\|y\|_2+\|y_{\epsilon}\|_2)+|\langle \Delta(u_g),y_{\epsilon}u_{\gamma(g)}y_{\epsilon}^*\rangle|\\
    &\leq \epsilon(2\|y\|_2+\epsilon)+\sum_{h,h_1,h_2\in H, f,k\in F_{\epsilon}}|\tau(y_{\epsilon}v_{f^{-1}})\tau(y_{\epsilon}v_k)\tau(u_{g}v_{h^{-1}})\tau(u_{\gamma_1(g)}v_{h_1^{-1}})\tau(u_{\gamma_2(g)}v_{h_2}^{-1})|\delta_{h,f_1h_1k_1}\delta_{h,f_2h_2k_2}\\
    &\leq \epsilon(2\|y\|_2+\epsilon)+\|y_{\epsilon}\|_2^2\cdot |F_{\epsilon}|^2\cdot h_H(u_g)\leq \epsilon(2\|y\|_2+\epsilon)+(\|y\|_2+\epsilon)^2|F_{\epsilon}|^2\cdot h_H(u_g).
\end{align*}

\noindent Hence, for all $g\in G$, we have $h_H(u_g)\geq \frac{\|y\|_2^2-\epsilon(2\|y\|_2+\epsilon\|_2)}{(\|y\|_2+\epsilon)^2\cdot|F_{\epsilon}|^2}$. Taking $\epsilon>0$ small enough, $h_H(G)>0$.
$\hfill\blacksquare$

\vspace{2mm}

\noindent From \cite[Theorem 4.1]{KV15} we obtain a unitary $w\in\text{L}(H)$, a character $\mu:G\to\mathbb T$ and a group isomorphism $\rho:G\to H$ with $u_g=\mu(g)wv_{\rho(g)}w^*$. One can alternatively use the following claim to finish the theorem in this case. We write it here as we will use a version of it in Claim \ref{grouprep}.

\vspace{1mm}

\begin{claim}\label{deltaid} $\gamma(g)=(g,g)$, for every $g\in G$.
\end{claim}

\noindent\textit{Proof of Claim \ref{deltaid}.} By repeating Claim \ref{j=k}, we may assume $\gamma(g)=(\upsilon(g),\upsilon(g))$, for every $g\in G$. Now, since $\Delta$ satisfies $(\Delta\otimes\text{id})\circ \Delta=(\text{id}\otimes\Delta)\circ\Delta$, we obtain that

\begin{align*}
    (1\otimes y^*)(\text{id}\otimes\Delta)(y^*)(\Delta\otimes\text{id}&)(y)(y\otimes 1)(u_{\upsilon(\upsilon(g))}\otimes u_{\upsilon(\upsilon(g))}\otimes u_{\upsilon(g)})\\
    &=(u_{\upsilon(g)}\otimes u_{\upsilon(\upsilon(g))}\otimes u_{\upsilon(\upsilon(g))})(1\otimes y^*)(\text{id}\otimes\Delta)(y^*)(\Delta\otimes \text{id})(y)(y\otimes 1),
\end{align*}

\noindent for all $g\in G$. By Proposition \ref{singlesupport}, there exists $h\in G$ with $\upsilon(g)=hgh^{-1}$ for every $g\in G$. Replacing $y$ by $y(u_h\otimes u_h)$ gives $\Delta(u_g)=y(u_g\otimes u_g)y^*$, for every $g\in G$. 
$\hfill\blacksquare$

\vspace{1mm}

\noindent By \cite[Theorem 3.4]{IPV10}, there exists a unitary $w\in\text{L}(H)$, a character $\mu:G\to\mathbb T$ and a group isomorphism $\rho:G\to H$ with $u_g=\mu(g)wv_{\rho(g)}w^*$. This yields our conclusion in case (a).

Now assume case (b); that is, $H$ is torsion free. First we prove the following claim.

\begin{claim}\label{trivialrelcom} For every irreducible von Neumann subalgebra $\mathcal S\subseteq \M$ we have $\Delta(\mathcal S)'\cap (\mathcal M \bar\otimes \mathcal M \bar{\otimes} \mathbb M_n(\mathbb C))=\mathbb C 1$. 
\end{claim}

\noindent\textit{Proof of Claim \ref{trivialrelcom}.}
First we argue that for every $h\neq 1$ we have $\mathcal S\nprec {\rm L}(C_H(h))$. Indeed, if $\mathcal S\prec {\rm L}(C_H(h))$, passing to intertwining of relative commutants we obtain ${\rm L} (\langle h\rangle ) \subset {\rm L}(C_H(h))'\cap {\rm L}(H)\prec \mathcal S' \cap (\mathcal M \bar\otimes \mathbb M_n(\mathbb C))=(\mathcal S'\cap \mathcal M) \bar\otimes \mathbb M_n(\mathbb C)=\mathbb M_n(\mathbb C)$. However, by \cite[Proposition 2.6]{CdSS15}, this further implies that $\langle h\rangle $ is a finite group, contradicting that $H$ is torsion free. Using Proposition \ref{comultiply}(d) we obtain $\Delta(\mathcal S)'\cap \mathcal M\overline{\otimes}\M\overline{\otimes}\mathbb M_n(\mathbb C) = \Delta(S'\cap\mathcal M) =\mathbb C1$,  as $\mathcal S\subseteq \mathcal M$ is assumed to be irreducible. $\hfill\blacksquare$

\vspace{1mm}

\noindent Since $\mathcal M_i \subset \mathcal M$ is irreducible, we have $\Delta(\M_i)'\cap \mathcal M \bar\otimes \mathcal M \bar\otimes \mathbb M_n(\mathbb C)=\mathbb C1$, and hence, $f=1$ and $p_i^{j,k}=1$ in Claim \ref{cornerofMiintoMjMk}. Similarly, as $K\leqslant G_i$ has finite index one can see that $\mathcal M_0=\text{L}(K) \subseteq \mathcal M$ is also irreducible. Thus $\Delta(\M_0)'\cap \mathcal M \bar\otimes \mathcal M \bar\otimes \mathbb M_n(\mathbb C)=\mathbb C1$ and $p_0=1$ in Claim \ref{conjugatedmaps}. Altogether, relations \eqref{this rel} show the existence of a finite index subgroup $K\leqslant G_i$, a unitary $z\in \mathcal M \bar\otimes \mathcal M$, a monomorphism $\delta=(\delta_1,\delta_2): K \ra G_j \times G_k$ and a unitary representation $\rho: K \ra \mathscr U(\mathbb C^n)$ such that 

\begin{equation}\label{grouphomeq1}
    \Delta(u_g) = z(u_{\delta(g)}\otimes \rho(g))z^*, \text{ for all }g\in K.
\end{equation}

\noindent Next we prove similar statements as in case (a), leaving some details to the reader.

\vspace{1mm}

\begin{claim}\label{grouprep} Relation \eqref{grouphomeq1} holds for $K=G_i$, and we can take $\delta_1=\delta_2=:\upsilon_i$.
\end{claim}

\noindent\textit{Proof of Claim \ref{grouprep}.} The first part is similar to the proof of Claim \ref{grouphom}, and for the second part, we follow Claim \ref{deltaid} and \cite[Theorem 1.3 Step 6]{CIOS1}. From \eqref{Delta}, 

\begin{equation*}
    \Delta_0(u_g\otimes 1)=U\psi(z\otimes 1)(u_{\delta_1(g)}\otimes\rho(g)\otimes u_{\delta_2(g)}\otimes 1)\psi(z^*\otimes 1)U^*.
\end{equation*}

\noindent Since $\zeta\circ\Delta_0=\Delta_0$, where $\zeta$ is the flip map, letting $V=\zeta(\psi(z^*\otimes 1)U^*)U\psi(z\otimes 1)$ we obtain

\begin{equation*}
    V(u_{\delta_1(g)}\otimes\rho(g)\otimes u_{\delta_2(g)}\otimes 1)=(u_{\delta_2(g)}\otimes 1\otimes u_{\delta_1(g)}\otimes\rho(g))V,\text{ for all }g\in G_i.
\end{equation*}

\noindent By Proposition \ref{singlesupport}, we obtain $V=cu_{(h,l)}$ for some unitary $c\in\mathbb M_n(\mathbb C)\otimes\mathbb M_n(\mathbb C)$ and some $(h,l)\in G\times G$. This shows that $\text{Ad}(h)\circ\delta_1=\delta_2=:\upsilon_i$. Replacing $z$ by $z(u_h\otimes 1\otimes 1)$ gives $\Delta(u_g)=z(u_{\upsilon_i(g)}\otimes u_{\upsilon_i(g)}\otimes\rho(g))z^*$, for every $g\in G_i$. 
$\hfill\blacksquare$

\vspace{2mm}

\noindent To summarize, the prior claim and relation \eqref{grouphomeq1} imply existence of a unitary $z_i\in \mathcal M \bar\otimes \mathcal M\bar\otimes \mathbb M_n(\mathbb C)$, a monomorphism $\delta_i=(\upsilon_i,\upsilon_i): G_i \ra G_{j_i} \times G_{k_i}$ and a unitary representation  $\rho_i: G_i \ra \mathscr U(\mathbb C^n)$ such that 

\begin{equation*}
    \Delta(u_g) = z_i(u_{\delta_i(g)}\otimes \rho_i(g))z_i^*, \text{ for all }g\in G_i.
\end{equation*}

\noindent Until now, we have only worked with $i$. Just as in Claim \ref{grouphomtotal}, we show that the prior equation can be extended to all of $\text{L}(G)$.

\vspace{1mm}

\begin{claim}\label{unit} There exists $g\in G\times G$ and a unitary $c\in \mathbb M_n(\mathbb C)$ such that $z_2^*z_1= u_g \otimes c$. 
\end{claim}

\noindent\textit{Proof of Claim \ref{unit}.} Notice that for every $g\in C \leqslant G_1 \cap G_2$ we have $z_1( u_{\delta_1(g)}\otimes\rho_1(g))z^*_1 = \Delta(u_g)  =z_2(u_{\delta_2(g)}\otimes\rho_2(g))z^*_2$, and hence, if we let $z = z_2^*z_1$ we further have that $z( u_{\delta_1(g)}\otimes\rho_1(g))=(u_{\delta_2(g)}\otimes\rho_2(g))z^*$ for all $g\in C$. From the proof of Claim \ref{grouphomtotal}, we have that every finite index subgroup $C_0\leqslant C$ satisfies $C_{G \times G}(\delta_1(C_0))=\{1\}$. Using Proposition \ref{singlesupport} one can find a unitary $c\in \mathbb M_n(\mathbb C)$ and $(h,l)\in G\times G$ with $z=u_{(h,l)}\otimes c$. 
$\hfill\blacksquare$

\vspace{1mm}

\noindent After conjugating $\delta_2$ by an inner group homomorphism of $G\times G$ and conjugating $\rho_2$ by a unitary in $\mathbb M_n(\mathbb C)$, we obtain the existence a group existence a unitary $z\in \mathcal M \bar\otimes \mathcal M\bar\otimes \mathbb M_n(\mathbb C)$, a monomorphism $\gamma=(\gamma_1,\gamma_2): G \ra G \times G$ and a unitary representation  $\rho: G \ra \mathscr U(\mathbb C^n)$ with

\begin{equation}\label{grouphomeq2}
    \Delta(u_g) = z(u_{\gamma(g)}\otimes \rho(g))z^*, \text{ for all }g\in G.
\end{equation}

\noindent As in Claim \ref{grouprep}, and using that $(\Delta_0\otimes\text{id})\circ\Delta_0=(\text{id}\otimes\Delta_0)\circ\Delta_0$, we obtain that after perturbing $z$ to a new unitary and $\rho$ to a new representation one can show that $\gamma(g)=(g,g)$. 

We finally follow the end of the proof \cite[Theorem 1.3]{CIOS1} to show $n=1$. Let $\mathscr N$ be the von Neumann algebra generated by $\{u_{(g,g)}\otimes x:g\in G, x\in\mathbb M_{n}(\mathbb C)\}$. Observe that $\Delta(\text{L}(G))\subset\mathscr N\subset \Delta(\text{L}(G))\mathbb M_{n}(\mathbb C)$. By Lemma \ref{comultiply}(c), $\mathscr{N}=\Delta(\text{L}(G))$, and so, $1\otimes 1\otimes\mathbb M_n(\mathbb C)\subset \Delta(\text{L}(G))$. From \eqref{grouphomeq2}, $n=1$. Also, $\rho(g)\in\mathbb T$ and $\Delta(u_g)=\rho(g)zu_{(g,g)}z^*$, for all $g\in G$. By \cite[Theorem 3.4]{IPV10}, there exists a unitary $w\in\text{L}(H)$, a character $\mu:G\to\mathbb T$ and a group isomorphism $\rho:G\to H$ with $u_g=\mu(g)wv_{\rho(g)}w^*$. This yields our conclusion in case (b).
\end{proof}

\section{Product and embedding rigidity}\label{SEC:product}

We now present the proofs of the remaining main results, from Theorem \ref{superr} to Corollary \ref{nonembeddingAt1}. For the first two results, which concern product rigidity, we begin by recalling a criterion for the reconstruction of groups from isomorphisms of their von Neumann algebras (Theorem \ref{decompositionofH}).

\subsection{Reconstruction of group direct product splitting under W\texorpdfstring{$^*$}{*} equivalence}

The primary aim of this subsection is to present a criterion (Theorem \ref{decompositionofH}) for reconstructing the direct product structure of a group from its W$^*$-equivalence. This criterion will be used essentially in Section \ref{reconstruction} to derive our main result. The result is implicitly established through the combined proofs of \cite[Theorem 4.3]{CdSS15}, \cite[Theorem 6.1]{CdSS17}, and \cite[Theorem 3.6]{CU18}. Accordingly, rather than reproducing these arguments verbatim, we provide a brief sketch indicating how the criterion follows from the relevant components of those proofs. We encourage the reader to consult the cited works for full details.  

\begin{lem}\label{unionoffinitegroupswithficentralizer} Let $G$ be a countable group and let ${\rm L}(G)$ be the corresponding von Neumann algebra. Assume there exists nonzero projection $p\in {\rm L} (G)$ such that the compression $p{\rm L}(G) p$ is a factor.  Then the FC-radical can be written as $G^{\rm fc}=\bigcup_n F_n$ for an increasing (possible stationary) tower of finite  normal subgroups $(F_n)_{n\in \mathbb N}$ of $G$ satisfying $[G:C_G(F_n)]<\infty$ for all $n\in \mathbb N$. 
\end{lem}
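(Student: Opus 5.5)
The plan is to use the factoriality of a corner to show that the FC-radical $G^{\rm fc}$ has virtually central finite pieces. First we show that every finitely generated subgroup of $G^{\rm fc}$ is contained in a finite normal subgroup; then we exhaust $G^{\rm fc}$ by such subgroups.

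\emph{Step 1: the factoriality constraint.} If $g\in G^{\rm fc}$, its conjugacy class $\mathcal O_g$ is finite, so $z_g=\sum_{h\in\mathcal O_g}u_h$ lies in $\mathscr Z({\rm L}(G))$. In fact $\mathscr Z({\rm L}(G))$ is spanned ($\|\cdot\|_2$-densely) by such class sums. Since $p{\rm L}(G)p$ is a factor, $\mathscr Z({\rm L}(G))p=\mathscr Z(p{\rm L}(G)p)=\mathbb C p$. Let $z$ be the central support of $p$. Then $\mathscr Z({\rm L}(G))z=\mathbb C z$, so $z$ is a minimal central projection, and ${\rm L}(G)z$ is a factor.

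Write $z=\sum_g c_g u_g$. Since $z$ is central, its coefficients are constant on conjugacy classes and square summable. Hence $c_g=0$ unless $g\in G^{\rm fc}$, and the support of $z$ is a union of finite classes. Minimality of $z$ gives $z_g z=\lambda_g z$ for every $g\in G^{\rm fc}$. The aim is to extract from this a finite normal subgroup, in the spirit of the classical proof that a group whose von Neumann algebra has a minimal central projection has a finite normal subgroup carrying it.

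\emph{Step 2: producing finite normal subgroups.} Let $N$ be the subgroup generated by the support of $z$. It is normal, since the support is conjugation invariant, and finitely generated, since $\|z\|_2<\infty$ and $c$ is constant on classes, so the support is a finite union of finite classes. Thus $N$ is a finitely generated FC-group, hence finite-by-abelian with finite torsion subgroup. I would then argue that $z\in{\rm L}(N)$ is a nonzero central projection of ${\rm L}(N)$, and the harmonic-analysis fact that a projection supported on a finitely generated FC-group forces its support to sit in the finite torsion part, so $N$ is finite. Next, for an arbitrary finite set $S\subset G^{\rm fc}$, let $N_S$ be the normal subgroup generated by $S$ together with the support of $z$; it is again a finitely generated normal FC-subgroup. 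Applying the relation $z_g z=\lambda_g z$ for $g\in N_S$, I would show that $N_S$ has trivial free abelian part: a non-torsion element $g$ would give a diffuse abelian algebra generated by $u_g$ whose cutdown by $z$ must be scalar, which I expect is incompatible with $\lambda_g z=z_g z$ and the support of $z$ being finite. Hence each $N_S$ is a finite normal subgroup. This is the step I expect to be the main obstacle, and I would mine \cite{CdSS15,CU18} for the precise argument.

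\emph{Step 3: exhaustion and centralizers.} Enumerate $G^{\rm fc}=\{g_1,g_2,\dots\}$ and set $F_n=N_{\{g_1,\dots,g_n\}}$. This is an increasing tower of finite normal subgroups whose union is $G^{\rm fc}$, possibly stationary. Finally, $C_G(F_n)=\bigcap_{f\in F_n}C_G(f)$ is a finite intersection of subgroups, each of finite index because $f\in G^{\rm fc}$ has a finite conjugacy class. Therefore $[G:C_G(F_n)]<\infty$.
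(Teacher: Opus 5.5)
\textbf{There is a genuine gap: the key finiteness step is false as you set it up, and unproved even once repaired.}

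Your Step 2 rests on a false claim, and the error breaks the construction itself, not only its justification. Square summability of the coefficients of $z$, together with their constancy on conjugacy classes, does not force the support of $z$ to be a finite union of classes: the coefficients can decay along infinitely many classes. Here is a counterexample. Take primes $3\le p_1<p_2<\cdots$ with $\sum_n 1/p_n<\infty$, and let $G=\bigoplus_n \mathrm{Aff}(\mathbb F_{p_n})$, where $\mathrm{Aff}(\mathbb F_p)=\mathbb F_p\rtimes\mathbb F_p^*$.
\begin{itemize}
\item In ${\rm L}(\mathrm{Aff}(\mathbb F_p))$, the central projection onto the $(p-1)$-dimensional irreducible representation is $z_p=1-\frac1p\sum_{v\in\mathbb F_p}u_v$.
\item Hence $z=\prod_n z_{p_n}$ is a minimal central projection of ${\rm L}(G)$, with $\tau(z)=\prod_n(1-1/p_n)>0$, and ${\rm L}(G)z\cong\bar\otimes_n\mathbb M_{p_n-1}(\mathbb C)$ is a factor.
\item Yet the coefficient of $z$ at any $(v_n)_n\in\bigoplus_n\mathbb F_{p_n}$ equals $\prod_{v_n\neq0}(-1/p_n)\prod_{v_n=0}(1-1/p_n)\neq0$.
\end{itemize}
So the group generated by the support of $z$ is $\bigoplus_n\mathbb F_{p_n}$, which is infinite and not finitely generated. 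Consequently every $N_S$, and hence every $F_n$ in your Step 3, is infinite. The lemma nevertheless holds for this $G$, with $F_n=\bigoplus_{k\le n}\mathrm{Aff}(\mathbb F_{p_k})$. The fix is to leave the support of $z$ out. As in the paper, take $F_n$ to be the normal subgroup generated by the first $n$ finite conjugacy classes. Your Step 3 then goes through, with $[G:C_G(F_n)]<\infty$ coming from the finitely many generators.

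Even after this repair, the heart of the lemma, namely that $F_n$ is finite, is not proved. You leave it as the ``main obstacle'', and your heuristic is incorrect as stated. Since $u_g$ is not central, nothing forces $\{u_g\}''z$ to be scalar; only $G$-invariant elements are cut down to scalars by $z$. The missing idea is a finite-index averaging (Pimsner--Popa) argument, which the paper carries out as follows.
\begin{itemize}
\item It first reduces to the center $Z_n=Z(F_n)=F_n\cap C_G(F_n)$, using $[F_n:Z_n]\le[G:C_G(F_n)]<\infty$.
\item Set $C_n=C_G(F_n)$ and let $D_n=\{g\in G: u_gz\in{\rm L}(C_n)z\}$, a finite-index subgroup. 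Averaging $u_kxu_k^*$ over coset representatives of $D_n$ gives a trace-preserving conditional expectation from $({\rm L}(C_n)z)'\cap{\rm L}(G)z$ onto $({\rm L}(G)z)'\cap{\rm L}(G)z=\mathbb Cz$. This is exactly where factoriality is used. The expectation satisfies $\tau(\mathbb E_n(x)x)\ge[G:D_n]^{-1}\tau(x^2)$ for $x\ge0$.
\item Hence every nonzero projection of this relative commutant has trace at least $[G:D_n]^{-1}\tau(z)$, so the relative commutant is finite dimensional.
\item It contains ${\rm L}(Z_n)z$, so $Z_n$ is finite.
\end{itemize}

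A variant closer to your intuition also works. $G$ acts on $Z_n$ through the finite group $Q=G/C_G(Z_n)$, and the fixed-point algebra ${\rm L}(Z_n)^Q$ lies in $\mathscr Z({\rm L}(G))$. Minimality of $z$ then produces a minimal projection $e\in{\rm L}(Z_n)^Q$. The averaging expectation satisfies $E_Q(x)\ge|Q|^{-1}x$ for $x\ge0$, which shows that ${\rm L}(Z_n)e$ is finite dimensional. This forces $Z_n$ to be finite.
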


\begin{proof} Letting $z\in {\text L}(G)$ be the central support of $p$ we also have that ${\text L}(G)z$ is a factor. Recall that $G^{fc}=\{ g\in G  :  |\{hgh^{-1}:h\in G\}| \text{ is finite}\}$.  Let $\{\mathcal O_n :  n\in \mathbb N \}$ be an enumeration of all the finite conjugacy orbits of $G$, and consider the normal subgroup generated by  $F_n =\langle \mathcal O_1, \ldots, \mathcal O_n\rangle \lhd G$.  For every $n\in\mathbb N$, notice that $F_n \leqslant F_{n+1}$, $[G:C_G(F_n)]<\infty$ and also $G^{fc}=\bigcup_n F_n$. To get our conclusion it only remains to show that each $F_n$ is finite.  

For every $n\in \mathbb N$ denote the center of $F_n$ by $Z_n=Z(F_n)$.  Since we have $[F_n : Z_n]\leq [G: C_G(F_n)]<\infty$ we only need to show that each $Z_n$ is finite.  Towards this let $C_n =C_G(F_n)$ and consider $D_n =\{ g\in G  :  u_g z\in {\text L}(C_n)z\}$.  Note that $D_n$ forms an intermediate subgroup $C_n\leqslant D_n \leqslant G$. As $[G:C_n]<\infty$, there is a finite set $K\subset G$ of representatives for the left cosets of $D_n\leqslant G$. Consider the map $\mathbb E_n:({\text L}(C_n)z)'\cap \text L(G)z\ra({\text L}(G)z)'\cap \text L(G)z=\mathbb C z$ given by

\begin{equation}\label{formula1}
    \mathbb E_n (x)= [G:D_n]^{-1}\sum_{k\in K} u_k x u_{k^{-1}} \text{ for all }x\in ({\text L}(C_n)z) ' \cap \text L(G)z. 
\end{equation}

\noindent One can easily check that $\mathbb E_n$ is a conditional expectation preserving the trace $\tau_z$. Moreover, using \eqref{formula1}, we see that for every positive operator $x\in ({\text L}(C_n)z)'\cap \text L(G)z$ we have 

\begin{align*}
    \| \mathbb E_n(x)\|^2_{2,z}&=\tau_z(\mathbb E_n (x) \mathbb E_n(x))= \tau_z(\mathbb E_n (x) x)=[G:D_n]^{-1}\sum_k \tau_z (u_k x u_{k^{-1}}x)\geq [G:D_n] ^{-1}\tau_z(x^2). 
\end{align*}

\noindent Since $\|\mathbb E_n(x)\|_{2,z}^2=\|\tau_z(x)z\|_{2,z}^2=\frac{|\tau(x)|^2}{\tau(z)^2}$, from the prior equation we obtain $|\tau(x)|^2\geq \tau(z)[G:D_n]^{-1}\tau(x^2)$, for every $x\geq 0$. Letting $x$ be a nonzero projection in $(\text L(C_n)z)'\cap \text L(G)z$ we obtain $\tau(x)\geq [G:D_n]^{-1}\tau(z)>0$. This implies $(\text L(C_n)z)'\cap \text L(G)z$ must be finite dimensional and since $\text L(Z_n) z\subset  (\text L(C_n)z)'\cap\text L(G)z$, so is $\text L(Z_n) z$. By \cite[Proposition 2.6]{CdSS15}, $Z_n$ is finite. 
\end{proof}

\begin{thm}\label{decompositionofH} Let $G_1,G_2$ be nonamenable icc groups and let $G=G_1\times G_2$. Assume that $H$ is an arbitrary group such that $\M= {\rm L}(G)^t={\rm L}(H)$, for some $t>0$. Moreover, denote by $\Delta:\M \ra \M \bar\otimes \M $ the $\ast$-embedding given by $\Delta(v_h)=v_h\otimes v_h$. Assume the following are satisfied: 

\begin{enumerate}
    \item[(1)] ${\rm L}(G_2)$ is a solid von Neumann algebra;
    \item[(2)] $\Delta({\rm L}(G_1)^t)\prec^s_{\M\oo\M}{\rm L}(G_1)^t\bar\otimes {\rm L}(G_1)^t$;
    \item[(3)] There is no infinite strictly increasing tower of finite subgroups $F_n< H$ such that the intersection $\cap_n C_H(F_n)$ is nonamenable.
\end{enumerate}

\noindent Then $H$ admits a product decomposition $H= H_1 \times H_2$ and there exist a scalar $s>0$ and a unitary $u\in \M$ such that ${\rm L}(G_1)^s= u {\rm L}(H_1) u^*$ and ${\rm L}(G_2)^{t/s}=u{\rm L}(H_2)u^*$. \end{thm}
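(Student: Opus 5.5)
The plan follows the strategy of \cite[Theorem 4.3]{CdSS15}, \cite[Theorem 6.1]{CdSS17} and \cite[Theorem 3.6]{CU18}. Write $\M_1={\rm L}(G_1)^{t}$ and $\M_2={\rm L}(G_2)$, so that $\M=\M_1\oo\M_2$ (absorbing the amplification into the first factor). The aim is first to locate, via the comultiplication, a subgroup $\Sigma<H$ such that ${\rm L}(\Sigma)$ and $\M_1$ are intertwined, and then to upgrade this to a genuine splitting of $H$.

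\textbf{Step 1 (from $\Delta$ to a height bound).} Start from hypothesis (2), $\Delta(\M_1)\prec^s\M_1\oo\M_1$. Together with $\Delta(\M_1)\nprec \M\oo 1$ and $\Delta(\M_1)\nprec 1\oo\M$ (Lemma \ref{comultiply}(a), since $\M_1$ is diffuse), this is the input for the dichotomy argument in \cite[Theorem 4.3]{CdSS15}. Solidity of $\M_2$ (hypothesis (1)) controls relative commutants. Concretely, $\Delta(\M_2)$ commutes with $\Delta(\M_1)$, so $\Delta(\M_2)$ is pushed into the commutant of a corner of $\M_1\oo\M_1$ inside $\M\oo\M$. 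This should yield the symmetric statement $\Delta(\M_2)\prec^s \M_2\oo\M_2$. The \cite{CdSS15} height/Ozawa--Popa-type argument then produces subgroups $\Sigma_1,\Sigma_2<H$ that commute (perhaps only virtually), with $\M_i\prec {\rm L}(\Sigma_i)$ and ${\rm L}(\Sigma_i)\prec \M_i$. Nonamenability and the icc property of $G_1$ and $G_2$ are what exclude degenerate amenable corners in this step.

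\textbf{Step 2 (commensurators and the FC-radical).} Replace $\Sigma_1,\Sigma_2$ by their (quasi-)normalizer closures, as in \cite[Theorem 6.1]{CdSS17}, to arrange that $\Sigma_1\Sigma_2$ has finite index in $H$ and that $\Sigma_1\cap\Sigma_2$ lies in the FC-radical of a finite index subgroup. At this point Lemma \ref{unionoffinitegroupswithficentralizer} applies, because the relevant compressions are factors. It writes that FC-radical as an increasing union of finite normal subgroups $F_n$ whose centralizers $C_H(F_n)$ have finite index. Since $\cap_n C_H(F_n)$ contains a finite-index piece of the nonamenable $\Sigma_1\Sigma_2$, hypothesis (3) forces the tower to stabilize. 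Hence the obstruction $\Sigma_1\cap\Sigma_2$ is a single finite normal subgroup. Following \cite[Theorem 3.6]{CU18}, quotienting by this finite group and using that $\M$ is a factor (since $H$ is icc, because $\M$ is a II$_1$ factor), one refines $\Sigma_1,\Sigma_2$ to commuting normal subgroups $H_1,H_2$ with $H=H_1\times H_2$.

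\textbf{Step 3 (unitary conjugacy).} We now have ${\rm L}(H)={\rm L}(H_1)\oo{\rm L}(H_2)$ with ${\rm L}(H_i)$ mutually intertwined with $\M_i$. Apply the standard unitary conjugacy for tensor decompositions (\cite[Proposition 12]{OP03}-type result, as used in \cite{CdSS15}): mutual intertwining of the tensor factors of a II$_1$ factor gives $s>0$ and $u\in\sU(\M)$ with ${\rm L}(G_1)^s=u{\rm L}(H_1)u^*$ and ${\rm L}(G_2)^{t/s}=u{\rm L}(H_2)u^*$.

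I expect the main obstacle to be Step 2. Passing from virtual, intertwining-level commuting subgroups to an actual direct product decomposition is exactly where finite FC-subgroups can obstruct, and hypothesis (3) with Lemma \ref{unionoffinitegroupswithficentralizer} is the tool designed to remove that obstruction. The first thing I would check is that the tower produced there genuinely lives in a nonamenable common centralizer, so that (3) can be applied.
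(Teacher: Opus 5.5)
Your outline has the same architecture as the paper's sketch: comultiplication $\Rightarrow$ a subgroup of $H$ commensurable with a tensor factor $\Rightarrow$ removal of an FC-obstruction via Lemma~\ref{unionoffinitegroupswithficentralizer} and (3) $\Rightarrow$ splitting and unitary conjugacy. However, the step where you invoke hypothesis (3) does not go through. You argue that $\bigcap_n C_H(F_n)$ contains a finite-index piece of the nonamenable group $\Sigma_1\Sigma_2$ because each $C_H(F_n)$ has finite index. This does not follow: a decreasing chain of finite-index subgroups can have amenable, even trivial, intersection (e.g.\ a residual chain in a free group). The obstruction is also misplaced. If $\Sigma_1,\Sigma_2$ genuinely commute and $[H:\Sigma_1\Sigma_2]<\infty$, then every element of $\Sigma_1\cap\Sigma_2$ is centralized by a finite-index subgroup. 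It therefore lies in $H^{\rm fc}=\{1\}$, since $H$ is icc, and (3) would be superfluous. Likewise, ``quotienting by this finite group'' cannot mean quotienting $H$, which has no nontrivial finite normal subgroups. The actual obstruction is the FC-radical $\Lambda_1^{\rm fc}=\Lambda_1\cap vC_H(\Lambda_1)$ of the first subgroup. It appears precisely because one only controls the \emph{virtual} centralizer.

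The missing ingredient is that the comultiplication step must output a subgroup with nonamenable \emph{genuine} centralizer. The paper gets this from (2) via \cite[Theorem 4.1]{DHI16} (cf.\ \cite{Io11}): a nonamenable $\Lambda_1<H$ with ${\rm L}(\Lambda_1)$ commensurable to ${\rm L}(G_1)^t$ and with $C_H(\Lambda_1)$ nonamenable. The first part of \cite[Theorem 3.6]{CU18} provides $q\in{\rm L}(\Lambda_1)$ and $r\in{\rm L}(\Lambda_1)'\cap\M$ with $q{\rm L}(\Lambda_1)qr$ a II$_1$ factor. Hence Lemma~\ref{unionoffinitegroupswithficentralizer} applies to $\Lambda_1$ itself and gives $\Lambda_1^{\rm fc}=\bigcup_n F_n$ with $F_n\lhd\Lambda_1$ finite. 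Now $\bigcap_n C_H(F_n)\supseteq C_H(\Lambda_1)$ is nonamenable, so (3) makes $\Lambda_1\cap\Omega$ finite, where $\Omega=vC_H(\Lambda_1)$ satisfies $[H:\Lambda_1\Omega]<\infty$ by \cite[Claim 4.7]{CdSS15}. One then passes to a finite-index icc subgroup $\Theta\le\Lambda_1$ with $[\Theta,\Omega]=1$ (Claims 4.9--4.12 of \cite{CdSS15}). The second factor $\Omega_1\le C_H(\Theta)$ comes from \cite[Theorem 4.6]{CdSS17}, and the proof concludes with \cite[Theorem 4.7]{CdSS17}. In particular, the symmetric statement $\Delta(\M_2)\prec^s\M_2\oo\M_2$ in your Step 1 is never needed, and as stated it is unsupported. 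Passing to relative commutants reverses intertwining, so the natural consequence of (2) is $\M_2\oo\M_2\prec\Delta(\M_1)'\cap(\M\oo\M)$, the opposite direction.
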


\begin{proof} From assumption (2), and \cite[Theorem 4.1]{DHI16} (see also \cite{Io11}) there exists a nonamenable group $\Lambda_1\leq H$ with nonamenable centralizer $C_H(\Lambda_1)$ for which ${\text L}(G_1)^t\cong_{\M}^{\text{com}}{\text L}(\Lambda_1)$ (see \cite[Definition 4.1]{CdSS17}). Moreover, by the first part of \cite[Theorem 3.6]{CU18} (until equation (3.16)), there exists a projection $q\in {\text L}(\Lambda_1)$ and $r\in {\text L}(\Lambda_1)'\cap\M$ for which $q({\text L}(\Lambda_1)\vee({\text L}(\Lambda_1)'\cap\M))qr\subseteq qr\M qr$ is an inclusion of finite index II$_1$ factors. In particular, $q{\text L}(\Lambda_1)qr$ and $r({\text L}(\Lambda_1)'\cap\M)rq$ are commuting II$_1$ factors.

Denoting $\Omega=\{\lambda\in H:|\mathscr O_{\Lambda_1}(\lambda)|<\infty\}=vC_H(\Lambda_1)$, \cite[Claim 4.7]{CdSS15} gives that $[H:\Omega \Lambda_1]<\infty$. Since $q{\text L}(\Lambda_1)qr$ is a II$_1$ factor, Lemma \ref{unionoffinitegroupswithficentralizer}, the fact that $C_H(\Lambda_1)$ is nonamenable and the assumption (3) on $H$ give that $\Lambda_1\cap\Omega$ is finite. Now, the proof of \cite[Theorem 3.6 Claims 4.9-4.12]{CdSS15} (see also \cite[Theorem 4.4]{CdSS17}) gives a nonamenable icc subgroup $\Theta\subseteq \Lambda_1$ of finite index for which $[\Theta,\Omega]=\{1\}$ and $[H:\Theta\Omega]<\infty$. Moreover, ${\text L}(G_1)^t\cong_{\M}^{\text{com}}{\text L}(\Theta)$ as ${\text L}(G_1)^t\cong_{\M}^{\text{com}}{\text L}(\Lambda_1)$ and $\Theta\leqslant\Lambda_1$ is finite index.

Finally, using the above together with \cite[Theorem 4.6]{CdSS17}, we obtain a nonamenable icc group $\Omega_1\leqslant C_H(\Theta)$ satisfying ${\text L}(G_2)\cong_{\M}^{\text{com}}{\text L}(\Omega_1)$ and $[H:\Theta\Omega_1]<\infty$. The rest follows by \cite[Theorem 4.7]{CdSS17}.
\end{proof}

\begin{prop}\label{3isnotnecessary} Suppose $G=G_1\times \cdots\times G_n$ is a product of $n$ nonamenable icc groups. Assume that for $2\leq i\leq n$ the groups $G_i$ are amalgamated free products $G_i=\Gamma_{i,1}\ast_{\Sigma_i}\Gamma_{i,2}$ where $\Gamma_{i,k}$ are nonamenable and $\Sigma_i$ is an almost malnormal and infinite index subgroup of $\Gamma_{i,k}$, for $k=1,2$. Let $H$ be an arbitrary group with $\M= {\rm L}(G)^t={\rm L}(H)$, for some $t>0$. Moreover, denote by $\Delta:\M \ra \M \bar\otimes \M $ the $\ast$-embedding given by $\Delta(v_h)=v_h\otimes v_h$. Assume the following are satisfied:

\begin{enumerate}
    \item[(1)] ${\rm L}(G_1)$ is a solid von Neumann algebra, and
    \item[(2)] $\Delta({\rm L}(G_{\hat{1}})^t)\prec^s_{\M\oo\M}{\rm L}(G_{\hat{1}})^t\bar\otimes {\rm L}(G_{\hat{1}})^t$.
\end{enumerate}

\noindent Then $H$ admits a product decomposition $H= H_1 \times H_2$ and there exist a scalar $s>0$ and a unitary $u\in \M$ such that ${\rm L}(G_1)^s= u {\rm L}(H_1) u^*$ and ${\rm L}(G_{\hat{1}})^{t/s}=u{\rm L}(H_2)u^*$.
\end{prop}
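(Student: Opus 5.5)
The plan is to reduce Proposition \ref{3isnotnecessary} to Theorem \ref{decompositionofH}, applied to the splitting $G=G_{\hat 1}\times G_1$. Here $G_{\hat 1}=G_2\times\cdots\times G_n$ plays the role of the first factor and $G_1$ the role of the second. Assumptions (1) and (2) of the proposition then become exactly assumptions (1) and (2) of Theorem \ref{decompositionofH}, with the roles of the indices swapped. Both factors are nonamenable icc groups, since products of nonamenable icc groups are nonamenable icc. The whole task is therefore to verify condition (3) of Theorem \ref{decompositionofH}: $H$ admits no infinite strictly increasing tower of finite subgroups $F_k<H$ for which $\bigcap_k C_H(F_k)$ is nonamenable. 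Once (3) holds, the conclusion is read off verbatim.

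To check (3), I would argue by contradiction. Suppose such a tower exists and set $\Lambda=\bigcap_k C_H(F_k)$, a nonamenable group, and $F=\bigcup_k F_k$, an infinite locally finite group commuting with $\Lambda$. Then ${\rm L}(\Lambda)$ and ${\rm L}(F)$ are commuting subalgebras of $\M$. The first is nonamenable and the second is diffuse and amenable, in fact hyperfinite. I would then transport this to $\M={\rm L}(G)^t$ and exploit the structure of $G$. Using the solidity of ${\rm L}(G_1)$ together with the product structure, a standard argument (relative solidity / \cite{DHI16}-type dichotomies) would show that ${\rm L}(\Lambda)$ must intertwine into ${\rm L}(G_{\hat 1})^t$ (up to corners); otherwise ${\rm L}(F)$ would have to sit in an amenable piece and the relative commutant of a diffuse subalgebra of a solid factor would be amenable. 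Working one factor $G_i=\Gamma_{i,1}\ast_{\Sigma_i}\Gamma_{i,2}$ at a time, with $i\ge 2$, I would then apply Theorem \ref{commutlocinamalgams} to the commuting pair built from ${\rm L}(\Lambda)$ and ${\rm L}(F)\vee(\text{other factors})$. That theorem yields three alternatives: intertwining into ${\rm L}(\Sigma_i)$, intertwining into some ${\rm L}(\Gamma_{i,k})$, or relative amenability over ${\rm L}(\Sigma_i)$.

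The key extra input is that $F$ is a union of finite groups, so ${\rm L}(F)$ is an increasing union of finite-dimensional algebras. Its relative commutant is therefore controlled through the groups $C_H(F_k)$, which are finite-index-like objects in the sense of Lemma \ref{unionoffinitegroupswithficentralizer}. I would use almost malnormality and infinite index of $\Sigma_i$: via \cite[Theorem 1.1]{IPP05}, a diffuse algebra intertwining into ${\rm L}(\Sigma_i)$ has its quasi-normalizer controlled inside ${\rm L}(\Gamma_{i,k})$, and by malnormality even into ${\rm L}(\Sigma_i)$. This should force the nonamenable algebra ${\rm L}(\Lambda)$, which normalizes the increasing tower, into a corner of ${\rm L}(\Gamma_{i,k})$ or make ${\rm L}(F)$ nondiffuse. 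Iterating over $i=2,\dots,n$ and combining the outcomes with \cite[Lemma 2.8(2)]{DHI16}, I would obtain that a corner of ${\rm L}(F)$ embeds into a product of ${\rm L}(\Sigma_i)$'s and ${\rm L}(\Gamma_{i,k})$'s in a way incompatible with its commutant being nonamenable and "large". Alternatively, the tower would lead to nontrivial finite normal subgroups of a finite-index subgroup, contradicting icc via \cite[Proposition 2.6]{CdSS15}.

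The main obstacle is this last step. Condition (3) is a purely group-theoretic statement about the unknown group $H$, while all the rigidity tools only return intertwining statements. Converting the dichotomies from Theorem \ref{commutlocinamalgams} into a genuine contradiction requires care with the amplification $t$ and the corners. To handle it, I would first try to show that ${\rm L}(F)'\cap\M$ is amenable relative to the fixed amalgams. For that I would use the fact that ${\rm L}(F)$, being an increasing union of finite-dimensional subalgebras, is "approximately central", so its commutant contains a nonamenable algebra only if ${\rm L}(F)$ intertwines into ${\rm L}(\Sigma_i)$. Malnormality should then exclude that possibility.
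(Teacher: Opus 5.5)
Your reduction is set up correctly: with $G_{\hat 1}$ as the first factor and $G_1$ as the second, hypotheses (1) and (2) are exactly those of Theorem \ref{decompositionofH}. The gap is in the step everything else rests on. Condition (3) of that theorem does \emph{not} follow from the hypotheses of the proposition, so no argument can verify it.

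Here is a counterexample. Take $n=2$, $G_1=\mathbb F_2$, $\Gamma_{2,1}=\mathbb F_2\times S_\infty$, $\Gamma_{2,2}=\mathbb F_2$ and $\Sigma_2=\{1\}$, with $H=G$ and $t=1$. If you prefer a nontrivial amalgam, take $\Sigma_2\cong\mathbb Z$ a free factor of $\Gamma_{2,1}=(\mathbb F_2\times S_\infty)\ast\mathbb Z$ and of $\Gamma_{2,2}=\mathbb F_2\ast\mathbb Z$.
\begin{itemize}
\item All hypotheses hold; (2) holds trivially since $\Delta({\rm L}(G_{\hat 1}))\subseteq{\rm L}(G_{\hat 1})\bar\otimes{\rm L}(G_{\hat 1})$.
\item Yet $F_k={\rm Sym}(k)<S_\infty$ is a strictly increasing tower of finite subgroups of $H$ with $\bigcap_k C_H(F_k)\supseteq G_1\times\mathbb F_2$ nonamenable.
\end{itemize}
The same example refutes the heuristic in your last paragraph. ${\rm L}(S_\infty)$ has nonamenable relative commutant, but it is diffuse and sits in $G_2$, so it does not intertwine into ${\rm L}(G_1)\bar\otimes{\rm L}(\Sigma_2)={\rm L}(G_1)$. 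The underlying reason is that an arbitrary nonamenable $\bigcap_k C_H(F_k)$ bears no a priori relation to ${\rm L}(G_{\hat 1})^t$. The intertwining tools you invoke (Theorem \ref{commutlocinamalgams}, \cite{IPP05}, \cite{DHI16}) therefore have nothing to act on.

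The missing observation, which the paper uses, is that (3) is invoked only once in the proof of Theorem \ref{decompositionofH}. It is used to show that $\Lambda_1\cap\Omega$ is finite, where $\Lambda_1\leqslant H$ is the specific subgroup with ${\rm L}(G_{\hat 1})^t\cong^{\rm com}_{\M}{\rm L}(\Lambda_1)$ and $\Omega=vC_H(\Lambda_1)$. This local statement does follow from the hypotheses, and the paper proves it as follows.
\begin{itemize}
\item It writes $\Lambda_1\cap\Omega=\bigcup_k R_k$, where $R_k$ is generated by the intersections with $\Lambda_1$ of the first $k$ finite $\Lambda_1$-conjugacy orbits. Each $R_k$ is normal in $\Lambda_1$ with $[\Lambda_1:C_{\Lambda_1}(R_k)]<\infty$, hence virtually abelian. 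So $\Lambda_1\cap\Omega$ is a \emph{normal amenable} subgroup of $\Lambda_1$.
\item A corner of ${\rm L}(\Lambda_1)$ is a finite-index extension of an amplification of ${\rm L}(G_{\hat 1})$. For each $2\leq l\leq n$, one applies \cite[Theorem A]{Va13} to the corresponding corner of the amenable algebra ${\rm L}(\Lambda_1\cap\Omega)$, whose normalizer contains that corner of ${\rm L}(\Lambda_1)$. The ambient algebra is ${\rm L}(G_{\hat 1})^t\bar\otimes\mathbb M_k(\mathbb C)$, viewed as an amalgamated free product over ${\rm L}(G_{\widehat{1,l}})^t\bar\otimes{\rm L}(\Sigma_l)\bar\otimes\mathbb M_k(\mathbb C)$.
\item The hypotheses on $\Gamma_{l,j}$ and $\Sigma_l$ exclude the two normalizer alternatives of that dichotomy.
\item Almost malnormality of $\Sigma_l$, via \cite[Theorem 4.10]{AMACK25}, upgrades the remaining alternative to strong intertwining into ${\rm L}(G_{\widehat{1,l}})^t\bar\otimes\mathbb M_k(\mathbb C)$.
\item Intersecting over $l$ with \cite[Lemma 2.8(2)]{DHI16} gives intertwining into $\mathbb M_k(\mathbb C)$, and \cite[Proposition 2.6]{CdSS15} then gives finiteness.
\end{itemize}
What makes the amalgam structure of the $G_l$ usable is normality inside $\Lambda_1$, whose algebra is commensurable with ${\rm L}(G_{\hat 1})^t$. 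In the example above, $\Lambda_1=G_2$ gives $\Omega=G_1$ and $\Lambda_1\cap\Omega=\{1\}$, even though $H$ violates (3).
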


\begin{proof} The proof follows by the prior theorem, once we justify that assumption (3) in Theorem \ref{decompositionofH} is satisfied in our case.  

From the prior proof, we obtain a nonamenable group $\Lambda_1\leq H$ with nonamenable centralizer $C_H(\Lambda_1)$ for which ${\text L}(G_{\hat{1}})^t\cong_{\M}^{\text{com}}{\text L}(\Lambda_1)$. Define $\Omega=vC_H(\Lambda_1)$. Notice that the assumption (3) in the prior theorem is only used to show that $\Lambda_1\cap\Omega$ is finite. We follow \cite[Claim 4.8]{CdSS15} to obtain still $|\Lambda_1\cap \Omega|<\infty$. Let $\{\mathcal{O}_i:i\in\mathbb N\}$ be a (countable) enumeration of all the finite orbits of the action by conjugation of $\Lambda_1$ on $H$ and notice $\Omega=\bigcup_i\mathcal{O}_i$. Define $\mathcal{O}_i'=\mathcal{O}_i\cap \Lambda_1$ and notice that $\Lambda_1\cap\Omega=\bigcup_i\mathcal{O}_i'$. For each $k$, let $R_k=\langle\bigcup_{i=1}^k\mathcal{O}_i'\rangle$ and notice it forms an ascending sequence of normal subgroups of $\Lambda_1$ with $\bigcup_kR_k=\Lambda_1\cap \Omega$ and $[\Lambda_1:C_{\Lambda_1}(R_k)]<\infty$. Since $R_k\cap C_{\Lambda_1}(R_k)$ is abelian and $[\Lambda_1:C_{\Lambda_1}(R_k)]<\infty$, it follows that $R_k$ is virtually abelian; and thus, $\Lambda_1\cap \Omega$ is a normal amenable subgroup of $\Lambda_1$.

From \cite[Claim 4.8]{CdSS15}, letting $z=z(pr)$ be the central support of $qr$ in $\text{L}(\Lambda_1)$, we get that $\text{L}(G_{\hat{1}})^s\subset \text{L}(\Lambda_1)z\subset \text{L}(G_{\hat{1}})^{\mu}$ is a finite index inclusion of nonamenable II$_1$ factors, for some $s, \mu>0$. Moreover, we can write $\text{L}(G_{\hat{1}})^{\mu}=(\text{L}(G_{\hat{1}})\bar\otimes\mathbb M_k(\mathbb C))^{t_1/k}$ where $t_1=\tau(qr)\mu$. Take $k$ large enough and denote by $r=t_1/k\in (0,1)$. Fix $2\leq l\leq n$. By \cite[Theorem A]{Va13}, since $\text{L}(G_{\hat{1}})^t\bar\otimes\mathbb M_k(\mathbb C)=(\text{L}(G_{\widehat{1,l}})^t\bar{\otimes} \text{L}(\Gamma_{l,1})\bar\otimes\mathbb M_k(\mathbb C))\ast_{\text{L}(G_{\widehat{1,l}})^t\bar{\otimes}\text{L}(\Sigma_l)\bar\otimes\mathbb M_k(\mathbb C)}(\text{L}(G_{\widehat{1,l}})^t\bar{\otimes}\text{L}(\Gamma_{l,2})\bar \otimes\mathbb M_k(\mathbb C))$, $\text{L}(\Lambda_1\cap \Omega)$ is amenable, and $\text{L}(\Lambda_1)z$ is a factor, we either have:

\begin{enumerate}
    \item[(i)] $\text{L}(\Lambda_1\cap \Omega)z\prec^s\text{L}(G_{\widehat{1,l}})^t\bar{\otimes}\text{L}(\Sigma_l)\bar\otimes\mathbb M_k(\mathbb C)$,
    \item[(ii)] there exists $j=1,2$ with $\text{L}(\Lambda_1)z\prec^s \text{L}(G_{\widehat{1,l}})^t\bar{\otimes}\text{L}(\Gamma_{l,j})\bar\otimes\mathbb M_k(\mathbb C)$, or 
    \item[(iii)] $\text{L}(\Lambda_1)z$ is amenable relative to $\text{L}(G_{\widehat{1,l}})^t\bar{\otimes}\text{L}(\Sigma_l)\bar\otimes\mathbb M_k(\mathbb C)$.
\end{enumerate}

\noindent Since $\text{L}(G_{\hat{1}})^s$ is finite index in $\text{L}(\Lambda_1)z$, (ii) gives $\text{L}(G_{\hat{1}})^s\prec^s\text{L}(G_{\widehat{1,l}})^t\bar{\otimes}\text{L}(\Gamma_{l,j})$, which implies $\Gamma_{l,j}\leqslant G_l$ is a finite index subgroup, a contradiction. Similarly, (iii) gives $\text{L}(G_{\hat{1}})^s$ is amenable relative to $\text{L}(G_{\widehat{1,l}})^t\bar{\otimes}\text{L}(\Sigma_l)\bar\otimes\mathbb M_k(\mathbb C)$. In particular, this means that $G_l$ is amenable relative to $\Sigma_l$, a contradiction. Hence, it must be the case that $\text{L}(\Lambda_1\cap \Omega)z\prec^s\text{L}(G_{\widehat{1,l}})^t\bar{\otimes}\text{L}(\Sigma_l)\bar\otimes\mathbb M_k(\mathbb C)$. If $\text{L}(\Lambda_1\cap \Omega)z\not\prec\text{L}(G_{\widehat{1,l}})^t\bar{\otimes}\mathbb M_k(\mathbb C)$, the fact that $\Sigma_l$ is almost malnormal, $\Lambda_1\cap\Omega$ is normal in $\Lambda_1$, $\text{L}(\Lambda_1)\cong^{\text{cong}}_{\M}\text{L}(G_{\hat{1}})^t$ and \cite[Theorem 4.10]{AMACK25} would give that $\text{L}(G_{\hat{1}})^t\prec \text{L}(G_{\widehat{1,l}})^t\bar{\otimes}\text{L}(\Sigma_l)\bar{\otimes}\mathbb M_k(\mathbb C)$, a contradiction again. Therefore, we must have that $\text{L}(\Lambda_1\cap \Omega)z\prec^s\text{L}(G_{\widehat{1,l}})^t\bar{\otimes}\mathbb M_k(\mathbb C)$. As $l$ was arbitrary, we obtain $\text{L}(\Lambda_1\cap\Omega)z\prec^s\text{L}(G_{\widehat{1,l}})^t\bar\otimes\mathbb M_k(\mathbb C)$ for all $2\leq l\leq n$, and so \cite[Lemma 2.8(2)]{DHI16} implies that $\text{L}(\Lambda_1\cap\Omega)z\prec\mathbb M_k(\mathbb C)$. Using \cite[Proposition 2.6]{CdSS15}, we obtain $\Lambda_1\cap \Omega$ is finite.
\end{proof}


\subsection{W\texorpdfstring{$^*$}{*} and C\texorpdfstring{$^*$}{*} superrigidity for product groups}\label{reconstruction} In this subsection, we establish Theorems \ref{superr} and \ref{superr2}. We begin by proving the following theorem, which provides the main ingredient for both results.

\begin{thm}\label{DeltaGhatjintertwines} For every $i\in \mathbb N$, let $G_i=\Lambda _{k,1}\ast_{\Sigma_i} \Lambda_{k,2}$ be a countable amalgamated free product group where $[\Lambda_{k,i}:\Sigma_i]=\infty$ and $\Sigma_i <\Lambda_{k,i}$ is an almost malnormal subgroup for all $1\leq i\leq 2$, and all $k$. Let $G=\oplus_{i\in\mathbb N}G_i$. Assume that $H$ is an arbitrary group such that $\mathcal{M}= \mathcal{{\rm L}}(G)^t=\mathcal{{\rm L}}(H)$, for some $t>0$. Moreover denote by $\Delta:\mathcal{M} \ra \mathcal{M} \bar\otimes \mathcal{M}$ the $\ast$-embedding given by $\Delta(v_h)=v_h\otimes v_h$. Then for any $j\in \mathbb N$ we have

\begin{equation*}
    \Delta(\mathcal{{\rm L}}(G_{\hat j})^t)\prec^s_{\mathcal{M}\bar{\otimes}\mathcal{M}}\mathcal{{\rm L}}(G_{\hat j})^t\bar\otimes \mathcal{{\rm L}}(G_{\hat j})^t.
\end{equation*}

\noindent Above, for every $s\in \mathbb N$, we denoted by $G_{\hat s}=\{(g_i)_i\in G  |  g_s=1\}$.
\end{thm}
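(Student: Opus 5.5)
We want to show that $\mathcal P:=\Delta({\rm L}(G_{\hat j})^t)$ intertwines strongly into ${\rm L}(G_{\hat j})^t\bar\otimes{\rm L}(G_{\hat j})^t$ inside $\mathcal M\bar\otimes\mathcal M$. Since $G=G_j\times G_{\hat j}$, we have $\mathcal M\bar\otimes\mathcal M={\rm L}(G\times G)^{t^2}$ (after amplification). By symmetry it suffices to prove two one-sided statements and then combine them with \cite[Lemma 2.8(2)]{DHI16}:
\[
\mathcal P\prec^s \mathcal M\bar\otimes {\rm L}(G_{\hat j})^t
\quad\text{and}\quad
\mathcal P\prec^s {\rm L}(G_{\hat j})^t\bar\otimes\mathcal M .
\]
To get the first, we view $\mathcal M\bar\otimes\mathcal M$ as an amalgamated free product over the second tensor factor in the $j$-th coordinate. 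Writing $G_j=\Lambda_{j,1}\ast_{\Sigma_j}\Lambda_{j,2}$, we have $\mathcal M\bar\otimes\mathcal M=\mathcal N_1\ast_{\mathcal B}\mathcal N_2$, where $\mathcal N_k=\mathcal M\bar\otimes{\rm L}(G_{\hat j}\times\Lambda_{j,k})$ (suitably amplified) and $\mathcal B=\mathcal M\bar\otimes{\rm L}(G_{\hat j}\times\Sigma_j)$.

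The key input is that $\mathcal P$ contains many commuting nonamenable pieces. For every $s\neq j$, $\Delta({\rm L}(G_{\hat j})^t)$ contains the commuting pair $\mathcal C_1=\Delta({\rm L}(G_s))$ and $\mathcal C_2=\Delta({\rm L}(G_{\widehat{j,s}}))$, up to amplification by a projection. Both are nonamenable and generate $\mathcal P$. We apply Theorem \ref{commutlocinamalgams} to them and go through its three alternatives.
\begin{enumerate}
\item[(1)] Suppose $\mathcal C_i\prec\mathcal B$. Then $\Delta({\rm L}(G_{s}))$ or $\Delta({\rm L}(G_{\widehat{j,s}}))$ embeds into $\mathcal M\bar\otimes{\rm L}(G_{\hat j}\times\Sigma_j)$. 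We need to rule out, or exploit, this case. The plan is to use Lemma \ref{comultiply}(b)-type results (from \cite{IPV10}) for the comultiplication, which say $\Delta(\mathcal Q)\prec \mathcal M\bar\otimes{\rm L}(K)$ forces $\mathcal Q\prec{\rm L}(K)$ for subgroups $K$. Here that would give ${\rm L}(G_s)\prec{\rm L}(G_{\hat j}\times\Sigma_j)$, which is harmless since $G_s\leqslant G_{\hat j}$. The real point is to push this to an intertwining into $\mathcal M\bar\otimes{\rm L}(G_{\hat j})$. For that, \cite[Lemma 2.4]{DHI16}-style arguments show that $\Delta(\mathcal Q)\prec\mathcal M\bar\otimes{\rm L}(K)$ iff ${\rm L}$ of the pulled-back group intertwines, and a statement intertwining into ${\rm L}(G_{\hat j}\times\Sigma_j)$ of something commuting with the nonamenable $\Delta$-image of the other piece should, by almost malnormality of $\Sigma_j$ (the controlled quasi-normalizer), upgrade to ${\rm L}(G_{\hat j})$.
\item[(2)] Suppose $\mathcal P\prec\mathcal N_k$. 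Then $\Delta({\rm L}(G_{\hat j}))\prec\mathcal M\bar\otimes{\rm L}(G_{\hat j}\times\Lambda_{j,k})$. Since $\Lambda_{j,k}$ has infinite index in $G_j$, the same comultiplication principle makes this an intertwining of ${\rm L}(G_{\hat j})$ into ${\rm L}(G_{\hat j}\times\Lambda_{j,k})$ inside ${\rm L}(G)$, which is fine. To remove $\Lambda_{j,k}$, we iterate by intersecting with the analogous statement for the other factor $\Lambda_{j,k'}$, or use that $\Delta({\rm L}(G_{\hat j}))$ commutes with nothing large.
\item[(3)] Suppose $\mathcal P$ is amenable relative to $\mathcal B$. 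By \cite[Proposition 7.2]{IPV10}-type transfer for $\Delta$, this gives ${\rm L}(G_{\hat j})$ amenable relative to ${\rm L}(G_{\hat j}\times\Sigma_j)$, which is consistent. We then use that $\Sigma_j$ has infinite index and is almost malnormal to force relative amenability into $\mathcal M\bar\otimes{\rm L}(G_{\hat j})$, and then use the product structure with many $s$ (Ozawa--Popa/Ioana dichotomy) to upgrade relative amenability to intertwining.
\end{enumerate}

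I expect the main obstacle to be exactly this upgrading step. Theorem \ref{commutlocinamalgams} only locates $\mathcal P$ relative to $\mathcal N_k$ or $\mathcal B$, which contain an extra factor of $\Lambda_{j,k}$ or $\Sigma_j$. Eliminating this requires intersecting the conclusions over both free factors and over both alternatives. The tool for this is \cite[Lemma 2.8(2)]{DHI16} together with the fact that $\Lambda_{j,1}\cap\Lambda_{j,2}=\Sigma_j$, with $\Sigma_j$ almost malnormal so that its conjugates intersect finitely. The almost malnormality is what lets one pass from $\Sigma_j$ to trivial intersections, similarly to how \cite[Theorem 4.10]{AMACK25} was used in Proposition \ref{3isnotnecessary}. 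Finally, strongness ($\prec^s$) follows by applying the whole argument to every corner $\mathcal P p'$ with $p'\in\mathcal P'\cap\mathcal M\bar\otimes\mathcal M$, since all hypotheses are stable under such cut-downs.
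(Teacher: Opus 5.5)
The gap is structural, not just in the final ``upgrading step''. You decompose along the $j$-th coordinate, so the target $\mathcal M\bar\otimes{\rm L}(G_{\hat j})^t$ already lies inside the core $\mathcal B$. Both algebras you feed into Theorem~\ref{commutlocinamalgams} are pieces of $\mathcal P$. In the model case $H=G$, $t=1$, all three alternatives hold trivially, so the dichotomy helps only if each alternative can be converted back into the conclusion.

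For your alternatives (2) and (3) this conversion is possible, but not by intersecting over $\Lambda_{j,1},\Lambda_{j,2}$. That only brings you back to $\Sigma_j$, i.e.\ to $\mathcal B$, and the theorem gives you a single $k$ anyway. The working argument is this: $\Delta(\mathcal M)\subseteq\mathscr N(\mathcal P)''$, and $\Delta(\mathcal M)$ neither embeds into $\mathcal N_k$ nor is amenable relative to $\mathcal B$. So \cite[Theorem 1.1]{IPP05} in case (2), resp.\ \cite[Theorem A]{Va13} applied to $\mathcal P$ in case (3), force $\mathcal P\prec\mathcal B$. The almost malnormality argument of \cite[Theorem 3.1]{Po03} then removes $\Sigma_j$.

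Your alternative (1), however, is not handled. It only gives that $\Delta({\rm L}(G_s))$ or $\Delta({\rm L}(G_{\widehat{j,s}})^t)$ embeds into $\mathcal M\bar\otimes{\rm L}(G_{\hat j})^t$. That is exactly what happens in the model case. Intertwining of one, or even each, of two commuting pieces does not in general give intertwining of their join, and nothing in your plan bridges this. The transfers through \cite[Proposition 7.2]{IPV10} in your case analysis only produce statements that are true anyway.

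The paper's proof uses two ideas your plan lacks.

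First, it pairs $\Delta({\rm L}(G_{\hat j})^t)$ with its commuting complement $\Delta({\rm L}(G_j))$. The join is then $\Delta(\mathcal M)$, so the two ``join'' alternatives are outright contradictions by \cite[Proposition 7.2]{IPV10} and \cite[Theorem 4.1(b)]{BV13}. What remains is that either $\Delta({\rm L}(G_{\hat j})^t)$ or $\Delta({\rm L}(G_j))$ embeds strongly into $\mathcal M\bar\otimes{\rm L}(G_{\hat k})^t$, using almost malnormality and \cite[Lemma 2.4]{DHI16}.

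Second, and crucially, it decomposes along an \emph{arbitrary} coordinate $k$. The bad alternative for $\Delta({\rm L}(G_j))$ is not ruled out for an individual $k$, in particular not just for $k=j$. It cannot hold for every $k$, however: \cite[Lemma 2.6]{DHI16} and \cite[Claim 3.4]{CU18} would then make $\Delta({\rm L}(G_j))$ amenable relative to $\mathcal M\bar\otimes 1$, hence ${\rm L}(G_j)$ amenable. So for some $k$ one gets $\Delta({\rm L}(G_{\hat j})^t)\prec^s\mathcal M\bar\otimes{\rm L}(G_{\hat k})^t$. The flip and \cite[Lemma 2.8(2)]{DHI16} upgrade this to ${\rm L}(G_{\hat k})^t\bar\otimes{\rm L}(G_{\hat k})^t$. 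Only at the end is $k$ replaced by $j$, via \cite[Theorem 4.3]{Dr20}.

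Using all coordinates at once and this final switch from $k$ to $j$ is what your fixed-$j$ strategy is missing.
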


\begin{proof} First, we present a shorter proof for the case when $\Sigma_i$ is amenable. Note that $G_i$ admits an array into a weakly-$\ell^2$ representation that is proper relative to $\{\Lambda_{i,1}, \Lambda_{i,2}\}$. Proceeding as in \cite[Lemma 8.3]{AMCOS25}, the group $G \times G$ admits an array into one of its weakly-$\ell^2$ representations that is proper relative to the family of subgroups $\{  G \times (G_{\hat k} \times \Lambda_{k,s}),  (G_{\hat j} \times \Lambda_{j,t}) \times G \mid s,t = 1,2;\ j,k \in \mathbb{N} \}$. In particular, $G \times G$ is biexact relative to this family of subgroups. Fix $j \in \mathbb{N}$. Since $\Delta(\mathcal{{\text L}}(G_j)^t)$ has no amenable direct summand, \cite[Theorem 15.1.5]{BO08} implies existence of $k \in \mathbb{N}$ and $1 \leq s \leq 2$ such that $\Delta(\mathcal{{\text L}}(G_{\hat j})^t) \prec \mathcal{M}  \bar{\otimes}  \mathcal{{\text L}}(G_{\hat k})^t  \bar{\otimes}  \mathcal{{\text L}}(\Lambda_{k,s})$.

Now assume by contradiction that $\Delta(\mathcal{{\text L}}(G_{\hat j})^t) \nprec \mathcal{M}  \bar{\otimes}  \mathcal{{\text L}}(G_{\hat k})$. Since $\Sigma_k$ is almost malnormal inside $G_k$ (Lemma \ref{malnormal4}), we also have that $\Lambda_{k,s}$ is malnormal inside $G_k$. The same argument as in \cite[Theorem 3.1]{Po03} (see also \cite[Theorem 4.10]{AMACK25}) shows that the normalizer satisfies $\Delta(\mathcal{M}) \subseteq \mathscr{N}_{\mathcal{M}  \bar{\otimes}  \mathcal{M}}(\Delta(\mathcal{{\text L}}(G_{\hat j})^t))'' \prec \mathcal{M}  \bar{\otimes}  \mathcal{{\text L}}(G_{\hat k})^t  \bar{\otimes}  \mathcal{{\text L}}(\Lambda_{k,s})$. Using \cite[Proposition 7.2]{IPV10}, this further implies that $\mathcal{M} \prec_{\mathcal{M}} \mathcal{{\text L}}(G_{\hat k})^t  \bar{\otimes}  \mathcal{{\text L}}(\Lambda_{k,s})$ and hence $[G : G_{\hat k} \times \Lambda_{k,s}] < \infty$, which is a contradiction. Therefore, we must have that $\Delta(\mathcal{{\text L}}(G_{\hat j})^t) \prec \mathcal{M}  \bar{\otimes}  \mathcal{{\text L}}(G_{\hat k})^t$. Since the normalizer satisfies $\mathscr{N}_{\mathcal{M}  \bar{\otimes}  \mathcal{M}}(\Delta(\mathcal{{\text L}}(G_{\hat j})^t))' \cap (\mathcal{M}  \bar{\otimes}  \mathcal{M}) \subseteq \Delta(\mathcal{M})' \cap (\mathcal{M}  \bar{\otimes}  \mathcal{M})$, and the latter relative commutant is trivial by \cite[Proposition 7.2]{IPV10}, from \cite[Lemma 2.4]{DHI16} we get that 

\begin{equation}\label{goodint}
    \Delta(\mathcal {{\text L}}(G_{\hat j})^t)\prec^s_{\mathcal M \bar{\otimes}\M} \M \bar{\otimes} {\text L}(G_{\hat k})^t.
\end{equation}

\noindent Note that if $\zeta$ denotes the flip automorphism on $\mathcal{M}  \bar{\otimes}  \mathcal{M}$, then we have $\zeta \circ \Delta = \Delta$. This, together with the previous inclusion, implies that $\Delta(\mathcal{{\text L}}(G_{\hat j})^t) \prec^s_{\mathcal{M}  \bar{\otimes}  \mathcal{M}} \mathcal{{\text L}}(G_{\hat k})^t  \bar{\otimes}  \mathcal{M}$, and using \cite{DHI16} gives $\Delta(\mathcal{{\text L}}(G_{\hat j})^t) \prec^s_{\mathcal{M}  \bar{\otimes}  \mathcal{M}} \mathcal{{\text L}}(G_{\hat k})^t  \bar{\otimes}  \mathcal{{\text L}}(G_{\hat k})^t$. The same argument as in \cite[Theorem 4.3]{Dr20}, allows us to pick $k=j$.

\vspace{2mm}

Next, we present a proof in the general case, based on different arguments. Fix $j,k\in\mathbb N$. Write $\M \bar\otimes \M$ as an amalgamated free product $\M\bar{\otimes}\M= (\M \bar{\otimes}\M_{\hat{k}}\bar{\otimes} {\text L}(\Lambda_{k,1}))\ast_{\M\bar{\otimes}\M_{\hat{k}} \bar{\otimes} {\text L}(\Sigma_k)}(\M \bar{\otimes}\M_{\hat{k}}\bar{\otimes} {\text L}(\Lambda_{k,2}))$, for $\M_{\hat{k}}=\text{L}(G_{\hat{k}})^t$. Notice that $\Delta({\text L}(G_{\hat j})^t),  \Delta({\text L}(G_j)^t)$ are commuting nonamenable von Neumann subfactors of $\M\bar{\otimes}\M$. By Theorem \ref{commutlocinamalgams} we obtain one of the following must hold:

\begin{enumerate}
    \item[(1)] $\Delta({\text L}(G_{\hat j})^t)\prec \M \bar{\otimes} {\text L}(G_{\hat k})^t \bar{\otimes} {\text L}(\Sigma_k)$,
    \item[(2)] $\Delta({\text L}(G_j))\prec\M \bar{\otimes} {\text L}(G_{\hat k})^t \bar{\otimes} {\text L}(\Sigma_k)$, 
    \item[(3)] $\Delta(\M)=\Delta({\text L}(G_{\hat j})^t)\vee \Delta({\text L}(G_j))\prec \M \bar{\otimes} {\text L}(G_{\hat k})^t\bar{\otimes} {\text L}(\Lambda_{k,s})$, for some $1\leq s\leq 2$, or
    \item[(4)] $\Delta(\M)=\Delta({\text L}(G_{\hat j})^t)\vee \Delta({\text L}(G_j))$ is amenable relative to $\M \bar{\otimes} {\text L}(G_{\hat k})^t \bar{\otimes} {\text L}(\Sigma_k)$ inside $\M \bar{\otimes} \M$.
\end{enumerate}

\noindent If (3) holds then \cite[Proposition 7.2]{IPV10} implies that $[G:G_{\hat{k}} \times \Lambda_{k,s}]<\infty$ and hence $[G_k: \Lambda_{k,s}]<\infty$, a contradiction. If (4) holds then \cite[Theorem 4.1(b)]{BV13} implies that ${\text L}(G)$ is amenable relative to {\text L}$(G_{\hat k}\times \Sigma_k)$. Therefore, $G_k$ is amenable relative to $\Sigma_k$ inside $G_k$, a contradiction. 

Assume (1) holds and assume by contradiction that $\Delta({\text L}(G_{\hat j})^t)\not\prec \M \bar{\otimes} {\text L}(G_{\hat k})^t$. Since $\Sigma_k$ is almost malnormal the same argument as in \cite[Theorem 3.1]{Po03} (see also \cite[Theorem 4.10]{AMACK25}) shows that $\Delta(\M)\subseteq \mathscr N_{\M\bar{\otimes}\M}(\Delta({\text L}(G_{\hat j})^t))''\prec\M \bar{\otimes} {\text L}(G_{\hat k})^t \bar{\otimes} {\text L}(\Sigma_k)$. Using \cite[Proposition 7.2]{IPV10} we obtain $\M \prec_\M {\text L}(G_{\hat k})^t\bar{\otimes}{\text L}(\Sigma_k)$ and hence $[G: G_{\hat k}\times \Sigma_k]<\infty$, which is a contradiction. In conclusion, (1) gives that $\Delta({\text L}(G_{\hat j})^t)\prec \M \bar{\otimes} {\text L}(G_{\hat k})^t$. Since the normalizer satisfies $\mathscr N_{\M\bar{\otimes}\M}(\Delta({\text L}(G_{\hat j})^t))'\cap (\M \bar{\otimes}\M) \subseteq \Delta (\M)'\cap (\M \bar{\otimes}\M) $ and the last relative commutant is trivial by \cite[Proposition 7.2]{IPV10}, using \cite[Lemma 2.4]{DHI16} we further get that

\begin{equation*}
    \Delta({\text L}(G_{\hat j})^t)\prec^s_{\M \bar{\otimes}\M} \M \bar{\otimes} {\text L}(G_{\hat k})^t.
\end{equation*}

\noindent Note that if $\zeta$ is the flip automorphism on $\M \bar{\otimes}\M$ then we have $\zeta \circ \Delta =\Delta$. This together with the prior equation gives $\Delta({\text L}(G_{\hat j})^t)\prec^s_{\M \bar{\otimes}\M}  {\text L}(G_{\hat k})^t\bar{\otimes}\M$. Therefore, $\Delta({\text L}(G_{\hat j})^t)\prec^s_{\M \bar{\otimes}\M}  {\text L}(G_{\hat k})^t\bar{\otimes}{\text L}(G_{\hat k})^t$, as wanted.

Proceeding in a similar manner one can show that (2) also gives 

\begin{equation}\label{badint}
    \Delta({\text L}(G_{j}))\prec^s_{\M \bar{\otimes}\M} \M \bar{\otimes} {\text L}(G_{\hat k})^t.
\end{equation}

\noindent Assume \eqref{badint} holds for every $k\in\mathbb N$. By \cite[Lemma 2.6]{DHI16}, $\Delta({\text L}(G_j))\prec^s_{\M\bar{\otimes}\M}\M\bar{\otimes}{\text L}(G_{\widehat{F}})^t$, for every finite set $F\subset\mathbb N$. By \cite[Claim 3.4]{CU18}, it follows that $\Delta({\text L}(G_j))$ is amenable relative to $\M\bar{\otimes}1$, and by \cite[Theorem 7.2(4)]{IPV10} ${\text L}(G_j)$ is amenable, a contradiction.

Hence, we have shown that for every $j\in\mathbb N$ there exists $k\in\mathbb N$ for which $\Delta({\text L}(G_{\hat{j}})^t)\prec_{\M\bar{\otimes}\M}{\text L}(G_{\hat{k}})^t\bar{\otimes}{\text L}(G_{\hat{k}})^t$ holds. Similarly as before, \cite[Theorem 4.3]{Dr20} allows us to pick $k=j$.
\end{proof}

\begin{thm}\label{productforamalgamated} For every $i\in \mathbb N$, let $G_i=\Lambda_{i,1}\ast_{\Sigma_i} \La_{i,2}$ be a countable amalgamated free product where $[\La_{i,k}:\Sigma_k]=\infty$  and  $\Sigma_i <\La_{i,k}$
is an almost malnormal amenable subgroup for all $1\leq i\leq 2$. 
Assume that ${\rm L}(\La_{i,j})$ is solid for any $i\in \mathbb N,\: 1\leq j\leq 2$.
Let $G=\oplus_{i\in\mathbb N}G_i$. Assume that $H$ be a torsion free group or more generally a group for which there is $C\in \mathbb N$ such that any finite subgroup $K<H$ satisfies $|K|\leq C$. Assume that $\M= {\rm L}(G)^t={\rm L}(H)$, for some $t>0$. 

Then $H$ admits a direct sum decomposition $H=(\oplus_{i\in \mathbb N} H_i) \times A$, where $H_i$ is an icc group for all $i\in \mathbb N$ and $A$ is an icc amenable group. Moreover, there is a sequence $(t_i)_{i\in \mathbb N}\subset \mathbb R_+^*$ such that for every $n\in \mathbb N$ one can find a unitary $u_n\in {\rm L}(H)$ such that 

\begin{equation*}
    \begin{split}& u_n{\rm L}(G_j)^{t_j}u_n^*={\rm L}(H_j)\quad \text{ for all } j\leqslant  n \text{; and }\\& u_n{\rm L}(\oplus_{j> n} G_j)^{t/(t_1\cdots t_{n+1})}u_n^*={\rm L}((\oplus_{j>n} H_j) \times A).\end{split}
\end{equation*} 
\end{thm}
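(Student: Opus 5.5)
The plan is an induction on $n$. At each step one direct summand is split off using Theorem \ref{DeltaGhatjintertwines} together with Theorem \ref{decompositionofH}, and the bound on finite subgroups is what verifies hypothesis (3) there.

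\emph{Base step.} Write $G=G_1\times G_{\hat 1}$. Theorem \ref{DeltaGhatjintertwines} with $j=1$ gives hypothesis (2) of Theorem \ref{decompositionofH}:
\[
\Delta({\rm L}(G_{\hat 1})^t)\prec^s {\rm L}(G_{\hat 1})^t\bar\otimes{\rm L}(G_{\hat 1})^t .
\]
For hypothesis (1), I claim ${\rm L}(G_1)$ is solid. Each ${\rm L}(\Lambda_{1,k})$ is solid and $\Sigma_1$ is amenable. The amalgamated free product over an amenable, almost malnormal subgroup therefore has relative biexactness with respect to $\{\Lambda_{1,1},\Lambda_{1,2}\}$ (the array into a weakly-$\ell^2$ representation mentioned in the proof of Theorem \ref{DeltaGhatjintertwines}). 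This reduces a commutant of a diffuse subalgebra to one living in a corner of some ${\rm L}(\Lambda_{1,k})$. Malnormality then forces that commutant into ${\rm L}(\Lambda_{1,k})$ itself, where solidity applies.

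For hypothesis (3), note that an infinite strictly increasing tower of finite subgroups $F_n<H$ would have $|F_n|\to\infty$, contradicting $|K|\le C$. Hence (3) holds vacuously.

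With the roles of $G_1$ and $G_{\hat 1}$ arranged as in Theorem \ref{decompositionofH}, that theorem gives $H=H_1\times H_1'$ and a unitary $u_1$ with
\[
u_1{\rm L}(G_1)^{t_1}u_1^*={\rm L}(H_1),\qquad u_1{\rm L}(G_{\hat 1})^{t/t_1}u_1^*={\rm L}(H_1').
\]
Since $G_1$ is icc and nonamenable, ${\rm L}(H_1)$ is a II$_1$ factor. So $H_1$ has trivial FC-radical by Lemma \ref{unionoffinitegroupswithficentralizer} and the subgroup bound, i.e. $H_1$ is icc.

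\emph{Inductive step.} $H_1'$ again satisfies the finite-subgroup bound, and ${\rm L}(H_1')\cong {\rm L}(\oplus_{j>1}G_j)^{t/t_1}$. Reindexing, the same argument applies and gives $H_1'=H_2\times H_2'$, and so on. Composing the unitaries yields $u_n$ and the displayed identifications for every $n$.

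\emph{Assembling $H$.} This is the main obstacle. We get a decreasing chain $H=H_0'\supset H_1'\supset\cdots$ with $H_{n-1}'=H_n\times H_n'$, and the subgroups $H_1,H_2,\dots$ generate $\oplus_i H_i$ inside $H$. Set $A=\bigcap_n H_n'$.
\begin{itemize}
\item \emph{$H=(\oplus_i H_i)\times A$.} An element $h\in H$ has a component in $H_n$ for every $n$, and I must show these are trivial for large $n$. I would use that ${\rm L}(G)$ is the inductive limit of ${\rm L}(\oplus_{j\le n}G_j)$. Then $v_h$ is $\|\cdot\|_2$-approximated inside $u_n^*({\rm L}(H_1\times\cdots\times H_n)\bar\otimes 1)u_n$-type algebras. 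Passing the finite-support argument through the Fourier expansion shows that the $H_n$-coordinates of $h$ vanish eventually. This step needs care with the changing unitaries $u_n$; I would try to control them via the asymptotic centrality of ${\rm L}(\oplus_{j>n}G_j)$.
\item \emph{$A$ is amenable.} ${\rm L}(A)$ commutes with ${\rm L}(\oplus_i H_i)$, which up to the $u_n$ exhausts ${\rm L}(G)$ up to amplification. Hence ${\rm L}(A)$ lies in the asymptotic commutant of a McDuff-type tail, which should give amenability, e.g. via \cite[Claim 3.4]{CU18}.
\item \emph{$A$ is icc.} This follows because ${\rm L}(H)$ is a factor: an FC element of $A$ would be FC in $H$, since the $H_i$ are icc. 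By Lemma \ref{unionoffinitegroupswithficentralizer} and the subgroup bound, $H$ has trivial FC-radical.
\end{itemize}
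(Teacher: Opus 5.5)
Your inductive splitting is the paper's argument. Theorem \ref{DeltaGhatjintertwines} supplies hypothesis (2) of Theorem \ref{decompositionofH} for $G_{\hat 1}$, you split off $H_1$, and you iterate inside the complement, composing unitaries. Your check of hypothesis (3) from the bound on finite subgroups, and your sketch of solidity of ${\rm L}(G_1)$, fill in points the paper leaves implicit. Icc-ness of $H$ and $H_1$ is immediate from ${\rm L}(H)$, ${\rm L}(H_1)$ being factors, so Lemma \ref{unionoffinitegroupswithficentralizer} is not needed there.

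You are right that the crux is the final identity $H=(\bigoplus_iH_i)\times A$. You leave it open, and the route you sketch does not work as stated. Knowing that $v_h$ is $\|\cdot\|_2$-close to ${\rm L}(\bigoplus_{j\le n}G_j)$ gives, by itself, no information on the coordinates of $h$. The reason is that ${\rm L}(H_1\times\cdots\times H_n)=u_n{\rm L}(\bigoplus_{j\le n}G_j)^{t_1\cdots t_n}u_n^*$. Since the unitary produced at stage $n+1$ lies in ${\rm L}(H_n')$, the composed unitaries only satisfy $u_{n+1}=u_nw_{n+1}$ with $w_{n+1}$ a unitary in an amplification of ${\rm L}(\bigoplus_{j>n}G_j)$. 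Nothing forces $(u_n)$ to converge. Hence asymptotic centrality of the tails ${\rm L}(\bigoplus_{j>n}G_j)$ does not transfer to ${\rm L}(H_n')=u_n{\rm L}(\bigoplus_{j>n}G_j)^{t/(t_1\cdots t_n)}u_n^*$.

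The tower alone cannot give the conclusion. Take any icc (even torsion free) group $Q$ and let $H$ be the group of eventually constant sequences in $Q$. Let $Q_n$ be the $n$-th coordinate copy of $Q$ and $H_n'=\{x\in H: x_1=\cdots=x_n=1\}$. Then $H=Q_1\times\cdots\times Q_n\times H_n'$ for all $n$ and $\bigcap_nH_n'=\{1\}$, yet $H\neq\bigoplus_nQ_n$. So this step needs genuinely operator-algebraic input, which your proposal does not supply. The paper records the decomposition directly after the induction and invokes \cite[Proposition 2.7]{CU18} for the properties of $A$.

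Your amenability argument for $A$ is also not valid. Commuting with the large subfactor ${\rm L}(\bigoplus_iH_i)$ does not force ${\rm L}(A)$ to be amenable, and \cite[Claim 3.4]{CU18} is not a statement about asymptotic commutants. What works is the comultiplication argument from the end of the proof of Theorem \ref{DeltaGhatjintertwines}:
\begin{itemize}
\item Since $A\leqslant H_n'$ for every $n$, you have $\Delta({\rm L}(A))\subseteq{\rm L}(A)\bar\otimes{\rm L}(A)\subseteq\M\bar\otimes{\rm L}(H_n')$.
\item The latter is a unitary conjugate of $\M\bar\otimes{\rm L}(\bigoplus_{j>n}G_j)^{t/(t_1\cdots t_n)}$, so $\Delta({\rm L}(A))\prec^s\M\bar\otimes{\rm L}(G_{\widehat F})$ for every finite $F\subset\mathbb N$.
\item Then \cite[Claim 3.4]{CU18} makes $\Delta({\rm L}(A))$ amenable relative to $\M\bar\otimes 1$, and \cite[Proposition 7.2]{IPV10} gives that ${\rm L}(A)$, hence $A$, is amenable.
\end{itemize}
This part does not need the decomposition. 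Icc-ness of $A$, by contrast, only follows once $H=(\bigoplus_iH_i)\times A$ is established, since $A$ is then a direct factor of the icc group $H$.
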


\begin{proof} Let $\Delta:{\text L}(H)\to{\text L}(H)\oo{\text L}(H)$ be the $*$-homomorphism given by $\Delta(v_h)=v_h\otimes v_h$, for $h\in H$, and consider it as a map $\Delta:{\text L}(G)^t\to \text L(G)^t\oo{\text L}(G)^t$. Consider ${\text L}(G)^t={\text L}(G_{\hat{1}})^t\oo{\text L}(G_1)$, where $\text{L}(G_1)$ is solid. By Theorem \ref{DeltaGhatjintertwines}, $\Delta(\text{L}(G_{\hat{i}})^t)\prec_{\M\oo\M}{\text L}(G_{\hat{i}})^t\oo{\text L}(G_{\hat{i}})$, and from Theorem \ref{decompositionofH} we obtain commuting, nonamenable, icc subgroups $H_1, K_1\leqslant H$, $t_1>0$ and a unitary $u_1\in\M$ for which $u_1{\text L}(G_1)^{t_1}u_1^*={\text L}(H_1)$ and $u_1{\text L}(G_{\hat{1}})^{t/t_1}u_1^*={\text L}(K_1)$. Applying Theorem \ref{decompositionofH} again in the last relation for the group $G_{\hat{1}}$, we obtain a decomposition $K_1=H_2\oplus K_2$, $u_2\in \mathscr U(u_1{\text L}(G_{\hat 1}^{t/t_1})u_1^*)$ and $t_2>0$ such that $u_2{\text L}(G_2)^{t_2}u_2^*={\text L}(H_2)$ and $u_2{\text L}(G_{\widehat{\{1,2\}}})^{t/(t_1t_2)}u_2^*={\text L}(K_2)$. Proceeding inductively one has $K_{n-1}=H_n\oplus K_n$, a unitary $u_n\in\mathscr U(u_{n-1}{\text L}(G_{\mathbb N\setminus\overline{1,n-1}})^{t/(t_1\cdots t_{n-1})}u_{n-1}^*)$ and $t_n>0$ such that $u_n{\text L}(G_n)^{t_n}u_n^*={\text L}(H_n)$ and $u_n{\text L}(G_{\mathbb N\setminus\overline{1,n}})^{t/(t_1\cdots t_{n})}u_n^*={\text L}(K_n)$. Therefore, $K_{n+1}\leqslant K_n$ and $H=\oplus_nH_n\oplus A$, where $A=\cap_nK_n$. 

Letting $v_n=u_1u_2\cdots u_n$ we see that $v_n{\text L}(G_i)^{t_i}v_n^*={\text L}(H_i)$ for all $1\leq i\leq n$, and $v_n{\text L}(G_{\mathbb N\setminus\overline{1,n}})^{t/(t_1\cdots t_n)}v_n^*={\text L}((\oplus_{i\geq n+1}H_i)\times A)$. By \cite[Proposition 2.7]{CU18}, $A$ is icc and amenable.  
\end{proof}

\vspace{1mm}


\begin{proof}[Proof of Theorem \ref{superr}] Let $\Delta:{\text L}(H)\to{\text L}(H)\oo{\text L}(H)$ be the $*$-homomorphism given by $\Delta(v_h)=v_h\otimes v_h$, for $h\in H$, and consider it as a map $\Delta:{\text L}(G)^t\to \text L(G)^t\oo{\text L}(G)^t$, where $0<t\leq 1$. Observe that ${\text L}(G)^t={\text L}(G_{\hat{1}})^t\oo{\text L}(G_1)$, where $\text{L}(G_1)$ is solid. By Theorem \ref{DeltaGhatjintertwines}, $\Delta(G_{\hat{i}})^t\prec_{\M\oo\M}{\text L}(G_{\hat{i}})^t\oo{\text L}(G_{\hat{i}})^t$, and from Proposition \ref{3isnotnecessary}, we obtain commuting, nonamenable, icc subgroups $H_1, K_1\leqslant H$, $t_1>0$ and a unitary $u_1\in\M$ for which $u_1{\text L}(G_1)^{t_1}u_1^*={\text L}(H_1)$ and $u_1{\text L}(G_2\oplus\cdots\oplus G_k)^{t/t_1}u_1^*={\text L}(K_1)$. Working with $u_1{\text L}(G_2\oplus\cdots\oplus G_n)^{t/t_1}u_1^*={\text L}(K_1)$, and by induction, one has $K_{n-2}=H_{n-1}\oplus K_{n-1}$, a unitary $u_{n-1}\in\mathscr U(u_{n-2}{\text L}(G_{n-2})^{t/(t_1\cdots t_{n-2})}u_{n-2}^*)$ and $t_{n-1}>0$ such that $u_{n-1}{\text L}(G_{n-1})^{t_{n-1}}u_{n-1}^*={\text L}(H_{n-1})$ and $u_{n-1}{\text L}(G_{n})^{t/(t_1\cdots t_{n-1})}u_{n-1}^*={\text L}(K_{n-1})$. Denote by $H_n=K_{n-1}$ and $t_n=t/(t_1\cdots t_{n-1})$. Since $\text{L}(\oplus H_i)=\text{L}(H)$, it follows that $H=\oplus_{i=1}^nH_i$.

Letting $w=u_1u_2\cdots u_{n-1}$ we see that $w{\text L}(G_i)^{t_i}w^*={\text L}(H_i)$ for all $1\leq i\leq n$, and $t_1\cdots t_n=t\leq 1$. It follows that there exists $1\leq i\leq n$ for which $t_i\leq 1$. For such $i$, $w\text{L}(G_i)^{t_i}w^*=\text{L}(H_i)$. Since $G_i$ is W$^*$-superrigid, $t_i=1$ and $G_i\cong H_i$. Thus, $t_1\cdots t_{i-1}t_{i+1}\cdots t_n=t$. Continuing by induction we see that $t_i=1$, for each $i$, so that $t=1$ and $G_i\cong H_i$. From $w\text{L}(G_i)w^*=\text{L}(H_i)$, for $1\leq i\leq n$, and Theorem \ref{ATrigid}, we see that there exists a character $\eta_i:G_i\to\mathbb T$ and a group isomorphism $\delta_i:G_i\to H_i$ for which $u_iu_gu_i^*=\eta_i(g)v_{\delta_i(g)}$, for all $g\in G_i$. Taking $\delta=\oplus_i\delta_i$ and $\eta=\prod_i\eta_i$, we see that $\delta:G\to H$ is a group isomorphism satisfying $wu_{g}w^*=\eta(g)v_{\delta(g)}$, for all $g\in G$. This shows that the map $\text{L}(G)\cong \text{L}(H)$ is group-like.
\end{proof}

\noindent To derive our applications to C$^*$-superrigidity we need the following elementary result.

\begin{prop}\label{extendingembedding}
Let $G$ and $H$ be discrete countable groups such that their reduced $C^*$-algebras $C_r^*(G)$ and $C_r^*(H)$ have a unique trace. Then any $\ast$-embedding $\theta \colon C_r^*(G) \to C_r^*(H)$ extends to a $\ast$-embedding $\tilde{\theta} \colon \mathrm{L}(G) \to \mathrm{L}(H)$. In particular, if $\theta$ is a $\ast$-isomorphism, then so is $\tilde{\theta}$.
\end{prop}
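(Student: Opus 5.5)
The plan is to use uniqueness of trace to show that $\theta$ intertwines the canonical traces, and then to extend $\theta$ to the GNS completions. Let $\tau_G$ and $\tau_H$ denote the canonical traces on $C_r^*(G)$ and $C_r^*(H)$, given by $\tau_G(x)=\langle x\delta_e,\delta_e\rangle$. The composition $\tau_H\circ\theta$ is a tracial state on $C_r^*(G)$: it is positive and tracial, and unital because $\theta$ is unital. I will assume $\theta$ is unital, as is implicit for embeddings between unital algebras. If it is not, then $\theta(1)$ is a projection, and I would normalize by $\tau_H(\theta(1))$; in the isomorphism case $\theta$ is automatically unital. By uniqueness of trace on $C_r^*(G)$ we get $\tau_H\circ\theta=\tau_G$.

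Next I would define a linear isometry on the GNS spaces. Set
\[
U\colon L^2(C_r^*(G),\tau_G)=\ell^2G\to \ell^2H,\qquad U(x\delta_e)=\theta(x)\delta_e .
\]
This is isometric because
\[
\|\theta(x)\delta_e\|^2=\tau_H(\theta(x^*x))=\tau_G(x^*x)=\|x\delta_e\|^2,
\]
and $x\delta_e$ ranges over a dense subset of $\ell^2G$. A direct check gives $U x=\theta(x)U$ on $\ell^2 G$ for all $x\in C_r^*(G)$. Let $p=UU^*$, the projection onto $\overline{\theta(C_r^*(G))\delta_e}$. This projection commutes with $\theta(C_r^*(G))$, and therefore with its weak closure $\mathcal N=\theta(C_r^*(G))''\subseteq \mathrm L(H)$.

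I would then define $\tilde\theta$ on ${\rm L}(G)$ by
\[
\tilde\theta(y)=\text{the unique }z\in\mathcal N\text{ with } zU=UyU^*U\ (=Uy).
\]
To make this rigorous, I would show that the map $x\mapsto\theta(x)$ is continuous from the strong operator topology on bounded sets of $C_r^*(G)$ to the strong operator topology on $\mathrm L(H)$. The standard route uses the $2$-norms: on bounded sets, the strong topology in a finite von Neumann algebra is given by $\|\cdot\|_2$ (since $\delta_e$ is cyclic for the commutant), and trace preservation gives $\|\theta(x)\|_{2,\tau_H}=\|x\|_{2,\tau_G}$. By Kaplansky density, $\theta$ then extends uniquely to a normal, trace-preserving, unital $*$-homomorphism $\tilde\theta\colon\mathrm L(G)\to\mathrm L(H)$. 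Concretely, for $y\in{\rm L}(G)$, choose a bounded net $x_i\to y$ strongly; then $\theta(x_i)$ is $\|\cdot\|_2$-Cauchy and bounded, so it converges strongly in $\mathrm L(H)$ (bounded $\|\cdot\|_2$-Cauchy nets converge strongly in a finite von Neumann algebra). The limit is independent of the net, multiplicativity and the $*$-property pass to the limit on bounded sets, and trace preservation gives injectivity since the traces are faithful.

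For the isomorphism case, apply the same construction to $\theta^{-1}$ to obtain $\widetilde{\theta^{-1}}$. Both $\tilde\theta\circ\widetilde{\theta^{-1}}$ and $\widetilde{\theta^{-1}}\circ\tilde\theta$ are normal and restrict to the identity on the weakly dense $C^*$-algebras, so they are the identity. The main point needing care is the strong continuity on bounded sets, that is, that $\|\cdot\|_2$-convergence of a bounded net in $\mathrm L(H)$ implies strong convergence. This follows because $\|\xi_{z}\|$ for $\xi=a'\delta_e$ with $a'$ in the commutant is controlled by $\|a'\|\,\|z\|_2$, and vectors of this form are dense.
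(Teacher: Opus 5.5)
Your proposal is correct and follows the same route as the paper: uniqueness of the trace on $C_r^*(G)$ gives $\tau_H\circ\theta=\tau_G$, so $U(x\delta_e)=\theta(x)\delta_e$ is an isometry with $Ux=\theta(x)U$, and the extension is built from this. The only difference is the final step. The paper sets $\tilde\theta(x)=UxU^*$, which strictly speaking lands in $\theta(C_r^*(G))''UU^*$ rather than ${\rm L}(H)$, and so tacitly uses that $z\mapsto zUU^*$ is injective on ${\rm L}(H)$ (true because $\delta_e\in U(\ell^2G)$ is separating). Your $\|\cdot\|_2$/Kaplansky extension instead gives a normal trace-preserving map into ${\rm L}(H)$ directly, and it also handles the isomorphism case explicitly.
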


\begin{proof}
Let $\tau \colon C_r^*(G) \to \mathbb{C}$ and $\rho \colon C_r^*(H) \to \mathbb{C}$ denote the canonical traces. Since $C_r^*(G)$ has a unique trace, we have $\rho \circ \theta = \tau$. Define a map $U_0 \colon \ell^2G \to \ell^2H$ by $U_0(x\delta_e) = \theta(x)\delta_e$ for all $x \in C_r^*(G)$. For any $x,y \in C_r^*(G)$, we compute

\begin{align*}
    \langle U_0(x\delta_e), U_0(y\delta_e) \rangle
    &= \langle \theta(x)\delta_e, \theta(y)\delta_e \rangle = \langle \theta(y^*x)\delta_e, \delta_e \rangle = \rho(\theta(y^*x)) = \tau(y^*x) = \langle x\delta_e, y\delta_e \rangle.
\end{align*}

\noindent Thus $U_0$ extends by linearity to an isometry
$U \colon \ell^2G \to \ell^2H$. Define $\tilde{\theta} \colon \mathbb B(\ell^2G) \to \mathbb B(\ell^2H)$ by
$\tilde{\theta}(x) = U x U^*$. Since $U$ is an isometry, $\tilde{\theta}$ is a $\ast$-embedding. Let $\mathcal{H} = U(\ell^2G)$, which is a Hilbert subspace of $\ell^2H$. We claim that for every $x \in C_r^*(G)$ we have $\tilde{\theta}(x)\big|_{\mathcal{H}} = \theta(x)\big|_{\mathcal{H}}$. Indeed, fix $\eta \in \mathcal{H}$ and choose a net $\{x_i\} \subset C_r^*(G)$ such that $\theta(x_i)\delta_e \to \eta$. Then

\begin{align*}
    \tilde{\theta}(x)\eta = U x U^* \eta = \lim_i U x U^* \theta(x_i)\delta_e = \lim_i U x x_i \delta_e = \lim_i \theta(x x_i)\delta_e = \theta(x)\eta,
\end{align*}

\noindent as claimed. Finally, since $U$ is an isometry, a net $\{x_i\} \subset \mathbb B(\ell^2G)$ converges to $x$ in the weak operator topology if and only if $\{U x_i U^*\}$ converges to $U x U^*$ in the weak operator topology. It follows that $\tilde{\theta}(\mathrm{L}(G)) = \theta(C_r^*(G))'' \subseteq \mathrm{L}(H)$. Therefore, $\tilde{\theta}$ extends $\theta$ to a $\ast$-embedding $\mathrm{L}(G) \hookrightarrow \mathrm{L}(H)$.
\end{proof}

\begin{lem}\label{productCamalgamated} For every $i\in \mathbb N$, let $G_i=\Lambda_{i,1}\ast_{\Sigma_i} \La_{i,2}$ be an amalgamated free product where $\Sigma_i$ is almost malnormal in $G_i$ and  $\La_{i,k}\in {\mathcal WR}(A_{i,k}, B_{i,k})$ with $A_{i,k}$ abelian torsion free and $B_{i,k}$ torsion free and hyperbolic relative to residually finite subgroup with Haagerup property. Let $G=\oplus_{i\in\mathbb N}G_i$. Assume that $H$ be any group such that $ C^*_r(G)\cong C^*_r(H)$. Then $H$ is torsion free and has trivial amenable radical. 
\end{lem}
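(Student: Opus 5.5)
Both assertions will come from invariants of the reduced $C^*$-algebra: the $K$-theory/projection structure for torsion-freeness, and the Breuillard--Kalantar--Kennedy--Ozawa characterization for the amenable radical.

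\emph{Torsion-freeness.} I first show that $C^*_r(G)$ has no nontrivial projections. Each $\La_{i,k}$ is an extension of $\bigoplus A_{i,k}$ (torsion free abelian) by $B_{i,k}$, which is torsion free and relatively hyperbolic with residually finite Haagerup peripherals. So $\La_{i,k}$ is torsion free, and it satisfies the Baum--Connes conjecture: $B_{i,k}$ does by relative hyperbolicity with Baum--Connes peripherals, and Baum--Connes is stable under extensions with a-T-menable kernel. Baum--Connes passes to amalgamated free products (Oyono-Oyono) and to directed unions and direct sums. Hence $G$ is torsion free and satisfies Baum--Connes, so the Kadison--Kaplansky conjecture holds: $C^*_r(G)$ contains no projections other than $0,1$. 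Transported through the isomorphism, $C^*_r(H)$ has no nontrivial projections. If $h\in H$ had finite order $m>1$, then $\frac1m\sum_{l=0}^{m-1}v_{h^l}$ would be a projection in $C^*_r(H)$ of trace $1/m\notin\{0,1\}$, a contradiction. So $H$ is torsion free.

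\emph{Trivial amenable radical.} By Breuillard--Kalantar--Kennedy--Ozawa, $C^*_r(H)$ has a unique trace if and only if $H$ has trivial amenable radical, and unique trace is an invariant of the isomorphism class of $C^*_r$. It therefore suffices to show that $G$ has trivial amenable radical.

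Let $N\lhd G$ be amenable. Its projection to each $G_i$ is an amenable normal subgroup of $G_i$, so it is enough to show that each $G_i$ has trivial amenable radical. For this I use the action of $G_i$ on its Bass--Serre tree. Since $\Sigma_i$ has infinite index in both factors (both factors are nonamenable property-(T)-type wreath-like groups and $\Sigma_i$ is almost malnormal), the action is minimal and of general type. An amenable normal subgroup of $G_i$ therefore acts trivially on the tree, so it is contained in $\Sigma_i$. Almost malnormality then forces it to be finite: for $g\in\La_{i,1}\setminus\Sigma_i$ we have $N\subseteq \Sigma_i\cap g\Sigma_i g^{-1}$, which is finite. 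Finally, torsion-freeness of $G_i$ makes a finite normal subgroup trivial. Thus every amenable normal subgroup of $G$ has trivial projections and is trivial.

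\emph{Expected obstacle.} The main difficulty is justifying the Baum--Connes conjecture for the wreath-like factors $\La_{i,k}$, which requires carefully combining the relative hyperbolicity result with permanence of Baum--Connes under extensions. If that combination fails, I would instead argue directly: non-torsion-freeness of $H$ produces a nontrivial element of $K_0$, or a non-integral trace value on $K_0$, contradicting the Baum--Connes computation of $K_0(C^*_r(G))$ as the $K$-homology of a torsion-free group, where the trace is integer-valued.
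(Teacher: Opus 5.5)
Your proof is correct. The torsion-freeness half follows the paper's route exactly: Baum--Connes for $\Lambda_{i,k}$ via Oyono-Oyono's extension theorem, then for the amalgams and the direct sum, then Kadison--Kaplansky for the torsion-free group $G$, and finally the projection $\frac1m\sum_l v_{h^l}$. On the step you flag, note that the permanence results you chain together require Baum--Connes \emph{with coefficients}. The paper gets this for $B_{i,k}$ by treating it as a subgroup of a hyperbolic group and citing Lafforgue \cite{La12}. It does not use a relative-hyperbolicity permanence statement; if you keep that version, you need a precise reference for it. Your fallback does not bypass the issue, because integrality of the trace on $K_0(C^*_r(G))$ is itself what Baum--Connes for $G$ provides.

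For the amenable radical you take a genuinely different route. The paper argues inside ${\rm L}(G_i)={\rm L}(\Lambda_{i,1})\ast_{{\rm L}(\Sigma_i)}{\rm L}(\Lambda_{i,2})$:
\begin{itemize}
\item for an amenable $A\lhd G_i$, Vaes's theorem \cite[Theorem A]{Va13} applied to the regular amenable subalgebra ${\rm L}(A)$ gives ${\rm L}(A)\prec{\rm L}(\Sigma_i)$;
\item regularity and \cite[Proposition 8]{HPV11} upgrade this to intertwining into ${\rm L}$ of finite intersections of conjugates of $\Sigma_i$, hence into $\mathbb C1$ by almost malnormality;
\item \cite[Proposition 2.6]{CdSS15} then gives $A$ finite, and icc-ness gives $A=\{1\}$.
\end{itemize}
You instead use the Bass--Serre tree. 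A normal subgroup with no nonabelian free subgroup, in a group acting minimally and of general type, must act trivially. Indeed, if it has a fixed vertex, its fixed subtree is $G_i$-invariant, hence everything by minimality. Otherwise its fixed end(s) or its invariant axis would be $G_i$-invariant, contradicting general type. So $N\leqslant\Sigma_i\cap g\Sigma_ig^{-1}$, which is finite, and torsion-freeness finishes.

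Your argument is elementary, avoids intertwining-by-bimodules entirely, and proves slightly more: every normal subgroup of $G_i$ without nonabelian free subgroups is trivial. The paper's argument stays inside its deformation/rigidity toolkit and adapts to arbitrary regular amenable subalgebras of ${\rm L}(G_i)$, not only group subalgebras ${\rm L}(A)$.

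One small correction: property (T) is not among the lemma's hypotheses, but you do not need it. For a nondegenerate amalgam ($\Sigma_i\neq\Lambda_{i,k}$, which both proofs tacitly assume), almost malnormality already forces $[\Lambda_{i,k}:\Sigma_i]=\infty$. A proper finite-index almost malnormal subgroup of an infinite group cannot exist, since $\Sigma_i\cap g\Sigma_ig^{-1}$ would be both finite and of finite index in $\Sigma_i$.
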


\begin{proof} First we argue that $H$ is a torsion free group. Since $B_i$ is a subgroup of a hyperbolic group it satisfies the Baum-Connes conjecture \cite{La12}. Since $A_{i,k}$ is Haagerup then so is $A_{i,k}^{(B_{i,k})}$. Therefore using \cite{O-O01b} we conclude that $\Lambda_{i,k}$ satisfies the Baum-Connes conjecture for all $i\in\mathbb N$, $k=1,2$. Moreover, by \cite[Corollary 1.2]{O-O01b}, $G_i$ also satisfies Baum-Connes conjecture. Since this passes to direct sums \cite[Theorem 8.7(2)(d)]{Lu18}, it follows that $G$ satifies the Baum-Connes conjecture. As $G$ is torsion free it satisfies the Kaplansky conjecture, i.e.\ $C^*_r(G)$ has no nontrivial projections. Hence, so does $C^*_r(H)\cong C^*_r(G)$, and thus $H$ is torsion free.

Fix $i\in \mathbb N$. Next we argue that $G_i$ has trivial amenable radical. Let $A \lhd G_i$ be an amenable normal subgroup and let $\mathcal A :={\rm L}(A)\subset {\rm L}(G_i)$. Using \cite[Theorem A]{Va13} we get that $\mathcal A \prec_{{\rm L}(G_i)} {\rm L}(\Sigma_i)$. Since $\mathcal A \subseteq {\rm L}(G_i)$ is regular, by \cite[Proposition 8]{HPV11} we get that $\mathcal A \prec {\rm L}\left (\cap_k (g_k \Sigma_i g_k^{-1})\right )$ for every finite family of elements $g_1,\ldots,g_l \in G_i$. The malnormality of $\Sigma_i$ inside $G_i$ implies that $\mathcal A \prec \mathbb C1$. Using \cite[Proposition 2.6]{CdSS15} we get that $A$ is finite. However since $G_i$ is icc we must have that $A=\{1\}$. Now let $A\lhd G$ be  an amenable normal subgroup. Fix $i\in \mathbb N$ and let $\pi_i : G\ra G_i$ be the canonical projection. Observe that $\pi_i(A)$ is a normal amenable subgroup of $G_i$ and by the previous part we get $\pi_i(A)=\{1\}$ so that $A\leqslant \ker(\pi_i)=\oplus_{j\neq i} G_j$. Since this holds for all $i$, we have $A\leqslant \cap_{i\in \mathbb N} \ker(\pi_i)=\{1\}$. Therefore, $G$ has trivial amenable radical. By \cite{BKKO14},  $C^*_r(G)$ has unique trace and thus so does $C^*_r(H)$. Using \cite{BKKO14} again, we get $H$ has trivial amenable radical.\end{proof}

\begin{proof}[Proof of Theorem \ref{superr2}] By Lemma \ref{productCamalgamated}, $H$ is torsion free and has trivial amenable radical, and thus, by Proposition \ref{extendingembedding}, the isomorphism $C_r^*(G)\cong C_r^*(H)$ can be extended to an isomorphism $\text{L}(G)\cong\text{L}(H)$ (see also \cite{BKKO14}). By Theorem \ref{productforamalgamated}, $H$ admits a direct sum decomposition $H=(\oplus_{i\in \mathbb N} H_i) \oplus A$, where $H_i$ is an icc group for all $i\in \mathbb N$ and $A$ is an icc amenable group. Moreover, there is a sequence $(t_i)_{i\in \mathbb N}\subset \mathbb R_+^*$ such that for every $n\in \mathbb N$ one can find a unitary $u_n\in {\text L}(H)$ such that 

\begin{equation*}
    \begin{split}& u_n{\text L}(G_j)^{t_j}u_n^*={\text L}(H_j)\quad \text{ for all } j\leqslant  n \text{; and }\\& u_n{\text L}(\oplus_{j> n} G_j)^{1/(t_1\cdots t_{n+1})}u_n^*={\text L}((\oplus_{j>n} H_j) \times A).\end{split}
\end{equation*} 

\noindent Since $A$ is amenable, and $H$ has trivial amenable radical, $A=\{1\}$. Moreover, since each $H_j$ is torsion free, Theorem \ref{super+torsion}(b) gives that $t_j=1$ and $G_j\cong H_j$; and hence, $G\cong H$. The remaining part follows similarly as in the end of the proof of Theorem \ref{superr}.
\end{proof}

\subsection{Automorphisms and embeddings}\label{SEC:embedding} In this part, we explain how our previous results and methods enable us to provide examples of nonamenable groups whose reduced C$^*$-algebras have only weakly inner outer automorphisms. A similar result for embeddings of such algebras is also obtained.


\begin{proof}[Proof of Theorem \ref{ATembeddings}] Since $G$ has trivial amenable radical (by Theorem \ref{superr2}), and using Proposition \ref{extendingembedding}, we can extend $\theta:C_r^*(G)\to C_r^*(G)$ to a $*$-embedding $\tilde{\theta}:\text{L}(G)\to\text{L}(G)$. Now, notice that the proof of Theorem \ref{superr} treats the comultiplication on $\text{L}(G)$ as an embedding until Claim \ref{j=k}. Thus, the same methods apply here verbatim and for each $i=1,2$, we obtain an injective group homomorphism $\delta_i:G_i\to G$, a character $\mu_i:G_i\to\mathbb T$ and a unitary $z_i\in\mathscr U(\text{L}(G))$ such that $\tilde\theta(u_g)=\mu_i(g)z_iu_{\delta_i(g)}z_i^*$, for all $g\in G_i$. The proof of Claim \ref{grouphomtotal}, gives an injective group homomorphism $\delta:G\to G$, a character $\eta:G\to\mathbb T$ and a unitary $z\in\mathscr{U}(\text{L}(G))$ satisfying $\tilde\theta(u_g)=\eta(g)zu_{\delta(g)}z^*$, for every $g\in G$.
\end{proof}

\noindent When the group $G$ is assumed to further be contained in the subclass $\mathcal{AT}_0$, we immediately obtain the conclusion of Corollary \ref{AT0inner}, as every injective homomorphism $\delta:G\to G$ is an inner automorphism (from Theorem \ref{afpwithnoend}). 

\begin{proof}[Proof of Corollary \ref{nonembeddingAt1}.] Let $G,H\in\mathcal{AT}_1$ be non-isomorphic groups, and assume that $\theta:C_r^*(G)\to C_r^*(H)$ is an embedding. By Theorem \ref{extendingembedding}, $\theta$ can be extended to an embedding between the von Neumann algebras $\text{L}(G)$ and $\text{L}(H)$, which we still denote by $\theta$. Similarly as in the proof of Theorem \ref{ATembeddings}, we can use the proof of Theorem \ref{superr} (since $H\in\mathcal{AT}$) to obtain injective group homomorphisms $\delta_i:G_i\to H$, a character $\mu_i:G_i\to\mathbb T$ and a unitary $z_i\in\mathscr U(\text{L}(H))$ with $\theta(u_g)=\mu_i(g)z_iv_{\delta_i(g)}z_i^*$, for all $g\in G_i$. Applying Claim \ref{grouphomtotal} gives an injective group homomorphism $\delta:G\to H$, contradicting Theorem \ref{AT1nonembedding}.
\end{proof}

\begin{rem} The previous results also enables us to provide examples of icc groups $G$ such that ${\rm L}(G)$ is McDuff and has the property that for every automorphism $\theta\in \text{Aut}(C_r^*(G))$ there is a sequence of unitaries $w_n\in {\rm L}(G)$ such that

\begin{equation}
    \|\theta(x)w_n -w_n x\|_\infty\ra 0 \text{ for all } x\in C_r^*(G).
\end{equation}

\noindent Indeed just take $G=\oplus_{\in \mathbb N} G_i$, where $G_i \in \mathcal{AT}$ are pairwise nonisomorphic groups with trivial outer automorphism group and trivial abelianization. We leave the details to the reader.
\end{rem}

\end{document}